\documentclass[12pt, reqno]{amsart}
\usepackage{amsmath}
\usepackage{amssymb}
\usepackage{enumerate}
\usepackage{mathrsfs}
\usepackage{amsfonts}
\usepackage{color}
\usepackage{vmargin}
\usepackage{amsthm}
\usepackage{graphicx} 
\usepackage{esint}
\usepackage{mathtools}
\usepackage{MnSymbol}
\usepackage[shortcuts]{extdash}
\usepackage{hyperref}

\hypersetup{
  colorlinks=true,
  linkcolor=blue,
  citecolor=blue,
  urlcolor=blue
}

\makeatletter
\renewcommand{\tocsection}[3]{%
  \indentlabel{\@ifnotempty{#2}{\bfseries\ignorespaces#1 #2\quad}}\bfseries#3}
\renewcommand{\tocsubsection}[3]{%
  \indentlabel{\@ifnotempty{#2}{\ignorespaces#1 #2\quad}}#3}
\renewcommand{\tocsubsubsection}[3]{%
  \indentlabel{\@ifnotempty{#2}{\ignorespaces#1 #2\quad}}#3}

\newcommand\@dotsep{4.5}
\def\@tocline#1#2#3#4#5#6#7{\relax
  \ifnum #1>\c@tocdepth % then omit
  \else
    \par \addpenalty\@secpenalty\addvspace{#2}%
    \begingroup \hyphenpenalty\@M
    \@ifempty{#4}{%
      \@tempdima\csname r@tocindent\number#1\endcsname\relax
    }{%
      \@tempdima#4\relax
    }%
    \parindent\z@ \leftskip#3\relax \advance\leftskip\@tempdima\relax
    \rightskip\@pnumwidth plus1em \parfillskip-\@pnumwidth
    #5\leavevmode\hskip-\@tempdima{#6}\nobreak
    \leaders\hbox{$\m@th\mkern \@dotsep mu\hbox{.}\mkern \@dotsep mu$}\hfill
    \nobreak
    \hbox to\@pnumwidth{\@tocpagenum{\ifnum#1=1\bfseries\fi#7}}\par% <-- \bfseries for \section page
    \nobreak
    \endgroup
  \fi}
\AtBeginDocument{%
\expandafter\renewcommand\csname r@tocindent0\endcsname{0pt}
}
\def\l@subsection{\@tocline{2}{0pt}{2.5pc}{5pc}{}}
\def\l@subsubsection{\@tocline{2}{0pt}{4.6pc}{1pc}{}}
\makeatother

\allowdisplaybreaks
\usepackage[T1]{fontenc}
\usepackage{setspace}

\def\XXint#1#2#3{{\setbox0=\hbox{$#1{#2#3}{\int}$ }
\vcenter{\hbox{$#2#3$ }}\kern-.6\wd0}}

\long\def\symbolfootnote[#1]#2{\begingroup%
\def\thefootnote{\fnsymbol{footnote}}\footnote[#1]{#2}\endgroup}

\setmarginsrb{20mm}{20mm}{20mm}{20mm}{10mm}{10mm}{10mm}{10mm}

\newtheoremstyle{remark}
{}{}{}{}{\bfseries}{.}{.5em}{{\thmname{#1 }}{\thmnumber{#2}}{\thmnote{ (#3)}}}
\theoremstyle{remboldstyle}

\RequirePackage{amsthm}

\newtheorem{tw}{Theorem}[section]
\newtheorem{defi}[tw]{Definition}

\newtheorem{lemma}[tw]{Lemma}

\newtheorem{prop}[tw]{Proposition}
\newtheorem{remark}[tw]{Remark} 
\newtheorem{example}[tw]{Example}

\theoremstyle{definition}

\newcommand{\lip}{\textrm{lip} \,}

\newcommand{\diam}{\textnormal{diam} \,}

\DeclareMathOperator*{\esssup}{ess\,sup}
\DeclareMathOperator*{\essinf}{ess\,inf}

\numberwithin{equation}{section}

\def\={\hspace{-3mm}&=&\hspace{-3mm}}

\title[\bf Embeddings of variable Sobolev, Besov, and Triebel-Lizorkin spaces]{\bf Embeddings of variable Sobolev, Besov, and Triebel-Lizorkin spaces on metric measure spaces}
\author{Ryan Alvarado, Micha{\l} Dymek,  Przemys{\l}aw G\'{o}rka, Nijjwal Karak}

\subjclass[2020]{Primary 46E35; Secondary 30L99, 42B35, 28A80}

\keywords{Sobolev embedding, metric measure space, variable exponent, variable smoothness, variable dimension,
Haj{\l}asz-Sobolev space, Haj{\l}asz-Triebel-Lizorkin space, Haj{\l}asz-Besov space,
Ahlfors regularity,  geometric doubling, log-Holder continuity,
Morrey embedding, Moser-Trudinger inequality}

\begin{document}
\begin{abstract}
Sobolev-type embeddings on metric measure spaces encode a subtle interaction between the analytic regularity of functions and the geometry of the underlying domain space. In this paper we develop an embedding theory for variable Haj{\l}asz-type smoothness spaces on metric measure spaces whose ``dimension'' is allowed to vary pointwise through a bounded exponent $Q(\cdot)$ that governs a lower Ahlfors growth condition on the measure. We introduce variable exponent Haj{\l}asz-Sobolev spaces $M^{s(\cdot),p(\cdot)}$, Haj{\l}asz-Triebel-Lizorkin spaces $M^{s(\cdot)}_{p(\cdot),q(\cdot)}$, and Haj{\l}asz-Besov spaces $N^{s(\cdot)}_{p(\cdot),q(\cdot)}$, and establish Sobolev, Morrey, and Moser-Trudinger type embeddings into variable exponent Lebesgue and H\"older spaces. These embeddings are proved both locally (on balls) under a lower Ahlfors $Q(\cdot)$-regularity condition on the measure and regularity assumptions on the exponents (notably log-H\"older continuity), and globally under additional geometric hypotheses such as geometric doubling and mild uniform bounds on the measure of unit balls. We also identify geometric conditions that are not only sufficient but, in appropriate forms, necessary for the validity of these embeddings, showing in particular that such inequalities force a lower growth bound on the measure of order $r^{Q(x)}$.
\end{abstract}

\maketitle
\tableofcontents

\section*{Introduction}

Sobolev-type embeddings are among the central structural results in analysis, describing how certain bounds on the derivative of a function can translate into it having improved integrability and regularity properties (measured on the Lebesgue and H\"older scales); see, for example, \cite{sob36,sob38,gag,Niren,trud,AF03}. A persistent and fascinating theme in this theory is the interplay between analytic information (viewed through these embeddings) and the geometric and measure-theoretic properties of the underlying domain space; see \cite{AF03,BK96,hajlaszkt1,AGH20,HHL06,Haj96}.
In the Euclidean setting $\mathbb{R}^n$, the classical Sobolev, Morrey, and Moser-Trudinger embeddings for Sobolev spaces (including fractional ones) and, more generally, for Triebel-Lizorkin and Besov spaces are by now standard, and their numerous variants permeate nonlinear PDE, geometric analysis, and harmonic analysis; see \cite{AF03,Tri}. In the last two decades, two directions in this vein have proved particularly fruitful and technically demanding: (i) the passage from constant to variable integrability and smoothness, and (ii) the passage from $\mathbb{R}^n$ to metric measure spaces, where the geometry is encoded in measure growth conditions and covering properties rather than in coordinates; see \cite{DHHR11,AH,HHL06,Coifman}.

Regarding the passage to variable integrability and smoothness, function spaces with variable exponents and related parameters were originally motivated 
by models with non-standard growth conditions in PDE and by applications in elasticity, image processing, and fluid mechanics; see, for example, \cite{Z87,R00,LLP10}.
Since then, a broad variable function space theory
has emerged, including the theory of variable exponent Lebesgue and Sobolev spaces $L^{p(\cdot)}$ and $W^{1,p(\cdot)}$, where certain structural regularity assumptions on the exponents (notably log-H\"older continuity)  guarantee boundedness of fundamental operators and allow one to recover many tools from the constant exponent theory;
see \cite{CUF13,DHHR11,Gaczkowski} and the references therein.  A parallel development concerns smoothness spaces of Triebel-Lizorkin and Besov type with variable integrability and smoothness parameters. On $\mathbb{R}^n$ these spaces admit several equivalent descriptions (Littlewood-Paley, local means, differences), and a substantial embedding theory has been established; see, for example, \cite{AH,AS09,Rn}.

As concerns the passage from $\mathbb{R}^n$ to metric measure spaces, by the late 1970s it had been well recognized that much of contemporary real analysis required little structure on the ambient. In fact, one significant development in the theory of function spaces emerged in the 1990s with the introduction of Sobolev spaces defined on metric measure spaces, where several robust substitutes for classical Sobolev spaces that do not rely on the notion of a derivative have been developed; see, for example, \cite{Haj96,cheeger,SMP,SMP2,shanmugalingam}. In particular, the Haj{\l}asz pointwise gradient approach in \cite{Haj96} provides a flexible framework for defining Sobolev spaces on very general metric measure spaces, even in settings without rectifiable curves, and for proving Sobolev-type inequalities under natural geometric hypotheses; see \cite{Haj03,Haj96,AGH20}. The Haj{\l}asz pointwise approach has also led to metric-space analogues of the classical Triebel-Lizorkin and Besov spaces (often called Haj{\l}asz-Triebel-Lizorkin and Haj{\l}asz-Besov spaces), with embeddings and characterizations developed in many settings; see, for instance, \cite{KYZ11,Rn,AWYY21,AYY22,AYY24,zwartestale,K20,K19}.

The present paper lies at the intersection of these two directions, and additionally incorporates a third feature: variable dimension. Following the viewpoint initiated in \cite{HHL06}, we work on metric measure spaces  for which the Ahlfors-regularity condition on the measure is governed by a variable exponent $Q(\cdot)$. This framework accommodates spaces that, from the viewpoint of measure and geometry, behave like objects of different dimensions glued together,  and it allows one to formulate and prove Sobolev embeddings that reflect this pointwise dimensional information. 
While \cite{HHL06} shows that variable dimension naturally leads to embeddings for first-order Sobolev spaces into variable exponent Lebesgue spaces, many aspects of the Sobolev theory remain unsettled on very general metric measure spaces (for example, without doubling, boundedness, or connectedness assumptions), and in this level of generality the interaction between variable smoothness and variable integrability has not yet been developed for Triebel--Lizorkin and Besov spaces.

The main aim of this paper is to develop a systematic embedding theory for variable Haj{\l}asz-type smoothness spaces on very general metric measure spaces having variable dimension, and to identify necessary and sufficient geometric conditions on the underlying metric measure space ensuring that these embeddings hold.
More specifically, in the setting of variable-dimension metric measure spaces, for measurable exponents $s(\cdot)$, $p(\cdot)$, and $q(\cdot)$, we introduce variable Haj{\l}asz-Sobolev spaces $M^{s(\cdot),p(\cdot)}$,  Haj{\l}asz-Triebel-Lizorkin spaces $M^{s(\cdot)}_{p(\cdot),q(\cdot)}$, and the Haj{\l}asz-Besov spaces $N^{s(\cdot)}_{p(\cdot),q(\cdot)}$, defined in the spirit of \cite{Rn}, and establish  Sobolev-type embeddings of these spaces into variable exponent Lebesgue  and H\"older-type spaces.
These embeddings are proved both in local form (on balls) under Ahlfors-type lower bounds on the measure and regularity assumptions on the exponents (notably log-H\"older continuity, as in \cite{DHHR11,GKP}), and in global form under additional geometric hypotheses such as geometric doubling and mild uniform bounds on the measure of unit balls.

In our second set of results, we address the geometric side of the theory by identifying conditions on the metric measure space that are not merely sufficient but, in appropriate forms, also necessary for the validity of the embeddings under consideration. In particular, we show that the validity of Sobolev-type embeddings proven in this paper forces a lower Ahlfors-regularity growth condition on the measure of order $r^{Q(x)}$. 
In this sense, our embedding results provide an intimate link between functional inequalities for variable Haj{\l}asz--Sobolev, Triebel--Lizorkin, and Besov spaces and quantitative geometric properties of the underlying variable-dimension metric measure spaces.

The paper is organized as follows. In Section~\ref{sect:preliminaries}, we fix notation and provide some basic background on metric measure spaces, including a few technical lemmas that will be used in subsequent sections. Section~\ref{sect:varexptheory}
establishes fundamental aspects of the variable exponent theory, including variable exponent Lebesgue and H\"older spaces and log-H\"older continuity.
In Section~\ref{sect:variableexponentspaces}, we introduce the variable exponent Haj{\l}asz-Sobolev, Haj{\l}asz-Triebel-Lizorkin, and Haj{\l}asz-Besov spaces and establish technical norm estimates for Lipschitz cut-off functions. 
Section~\ref{sect:mainresults} contains the core results of the paper, detailing structural embeddings and the local and global Sobolev, Morrey, and Moser--Trudinger type inequalities. Finally, Section~\ref{sect:necessity} addresses the necessity of the geometric conditions by showing that the aforementioned functional embeddings force a lower growth bound on the measure.

Lastly, we note here that, although we formulate our main results in the setting of metric measure spaces, these results could also be equally well formulated in quasi-metric measure spaces (as was done in, e.g., \cite{AYY24}), but we omit such generalizations.

\section{Notation and Preliminaries}\label{sect:preliminaries}

In this section, we fix our notation and collect some foundational material used throughout the paper. More specifically, we recall our standing assumptions on metric measure spaces, including  the concept of variable dimension. Additionally, we record several auxiliary lemmas concerning geometric doubling and covering properties that serve as prerequisites for our main results. Since much of this material is included for completeness and ease of reference, the reader may return to this section as needed.

\subsection{Notation}
Open (metric) balls in a given metric space, $(X,d)$, shall be denoted by $B(x,r)=\{y\in X:\, d(x,y)<r\}$ while the notation $\overline{B}(x,r)=\{y\in X:\, d(x,y)\leq r\}$ will be used 
for closed balls. Note that in general $\overline{B}(x,r)$ is not necessarily equal to the topological closure of $B(x,r)$. We allow the radius of a ball to  equal zero. If $r=0$, then
$B(x,r)=\emptyset$, but $\overline{B}(x,r)=\{ x\}$. We use $\mathbb{Z}$ and $\mathbb Q$ to denote the sets of integers and rational numbers, respectively. We also write
$\mathbb{N}:=\{1,2,\dots\}$, set $\mathbb{N}_0:=\mathbb{N}\cup\{ 0\}$, and denote by $\overline{\mathbb R}$ the extended real line $[-\infty,\infty]$. The characteristic function of a set $E$ will be denoted by $\chi_E$, and $\# E$ will stand for the cardinality of $E$.
The integral average of a  measurable function $u$ on a measurable set
$E\subseteq X$ with $\mu(E)\in(0,\infty)$ will be denoted by
$$
u_E:=\fint_Eu\, d\mu :=\frac{1}{\mu(E)}\int_E u\, d\mu,
$$	
whenever the integral is well defined, and for any non-empty set $E\subseteq X$, we let
$$
\diam E:=\sup\{d(x,y):\, x,y\in E\}.
$$

\subsection{Metric measure spaces}
We say that the triple $(X,d,\mu)$ is a \texttt{metric measure space} if $(X,d)$ is a metric space and $\mu$ is a non-negative  Borel measure on $X$ such that for every $x\in X$ and $r\in (0,\infty)$,
\begin{equation}\label{ballsmeas}
	0< \mu(B(x,r)) < \infty.
\end{equation}
It is widely known (see \cite{G}) that the condition \eqref{ballsmeas} is equivalent to the separability of the metric space $(X,d)$. Thus, all metric measure spaces considered in this article are separable.

Now we shall introduce the concept of metric measure spaces with variable dimension. Such spaces were first studied in \cite{HHL06}. Let $(X,d,\mu)$ be a metric measure space and assume that $Q: X \rightarrow (0,\infty)$ is a bounded measurable function. We say that the measure $\mu$ is \texttt{lower Ahlfors $Q(\cdot)$-regular}, if there exists $b_1\in(0,\infty)$ such that for all $x\in X$ and $r\in (0,1]$,
\begin{equation*}
	\mu\left(B(x,r)\right) \geq b_1r^{Q(x)}.
\end{equation*}
Similarly, we say that $\mu$ is \texttt{upper Ahlfors $Q(\cdot)$-regular} if there is $b_2\in(0,\infty)$ such that
\begin{equation*}
	b_2r^{Q(x)} \geq \mu\left(B(x,r)\right)
\end{equation*}
for every $x\in X$ and $r\in (0,1]$. Finally, we say that $\mu$ is \texttt{$Q(\cdot)$-Ahlfors regular} if it is both upper and lower Ahlfors $Q(\cdot)$-regular. Note that if $Q$ is is essentially bounded away from 0 and $\infty$ then the upper bound 1 for the radii in the lower and upper Ahlfors $Q(\cdot)$-regularity conditions can be replaced by any strictly positive and finite threshold; see Remark~\ref{measurethreshold}.

Next, let us recall the notion of doubling metric spaces. If $(X,d,\mu)$ is a metric measure space, we say that the measure $\mu$ is \texttt{doubling} if there exists $C_d\in[1,\infty)$ such that
\begin{equation*}
	\mu\left(B(x,2r)\right)\leq C_d\mu\left(B(x,r)\right),
\end{equation*}
for every ball $B(x,r)\subseteq X$.
Furthermore, we say that metric space $(X,d)$ is \texttt{geometrically doubling} if there exists constant $M\in(0,\infty)$, called \texttt{the doubling constant of $X$}, such that every ball $B(x,r) \subseteq X$ can be covered by at most $M$ balls having radius $r/2$. It is well known that a metric space equipped with a doubling measure is geometrically doubling (see, e.g., \cite{Coifman}). On the other hand, in \cite{lukkainen} it was proven that every complete geometrically doubling metric space carries a doubling measure.

Now we prove a covering characterization of geometrically doubling metric spaces, which shall be useful in the proofs of the main results in this paper.

\begin{lemma}\label{covering}
Let $(X,d)$ be a metric space. Then, the following two statements are equivalent.
\begin{enumerate}
\item[(i)] The space $(X,d)$ is geometrically doubling.
\item[(ii)] There exist constants $A, B\in(0,\infty)$ such that for every $r\in (0,\infty)$, there is a sequence $\left\{x_i\right\}_{i=1}^{\infty}\subseteq X$ such that the balls $\left\{B(x_i,r)\right\}_{i=1}^{\infty}$ cover $X$ and have the additional property that, for every $R\in (r,\infty)$, each point $x\in X$ belongs to at most $\displaystyle A\left(R/r\right)^{B}$ balls $B(x_i,R)$.
\end{enumerate}
\end{lemma}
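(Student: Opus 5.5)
For (i)$\Rightarrow$(ii), I will first prove the standard consequence of geometric doubling: there are constants $A_0, B$ such that every $r$-separated subset of a ball $B(x,R)$ has at most $A_0(R/r)^{B}$ points. Here $B=\log_2 M$. To see this, pick $k\in\mathbb N$ with $2^{k-1}< 4R/r\le 2^k$ and iterate the doubling property $k$ times. This covers $B(x,R)$ by at most $M^k\le M^2(R/r)^{\log_2 M}$ balls of radius $R2^{-k}\le r/4$, i.e.\ of diameter at most $r/2<r$. Each such ball contains at most one point of an $r$-separated set, which gives the bound.

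Next, for fixed $r>0$, I will take a maximal $r$-separated set $\{x_i\}$ in $X$, which exists by Zorn's lemma. By maximality the balls $B(x_i,r)$ cover $X$. The set is countable because separated sets in a geometrically doubling space are countable: intersect with $B(x_0,n)$ and apply the counting bound for each $n$. If it happens to be finite, I will repeat points to get a sequence; repetition only multiplies multiplicities boundedly if done once, but it is cleaner to allow finite families, or to repeat just one point. If the statement insists on repetitions being counted as distinct balls, I will instead note that a finite family suffices.

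For the overlap estimate, let $x\in X$ and $R>r$. Every $x_i$ with $x\in B(x_i,R)$ lies in $B(x,R)$, and these centres are $r$-separated. The counting bound then gives at most $A_0(R/r)^B$ such balls.

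For (ii)$\Rightarrow$(i), let $B(x,r)$ be given and apply (ii) with radius $r/4$, obtaining centres $\{x_i\}$. Take $R=2r>r/4$. The point $x$ lies in at most $A8^B$ balls $B(x_i,2r)$. Every $x_i$ with $B(x_i,r/4)\cap B(x,r)\neq\emptyset$ satisfies $d(x,x_i)<5r/4<2r$, so there are at most $A8^B$ such indices. The corresponding balls $B(x_i,r/4)$ cover $B(x,r)$. Replacing each by $B(x_i,r/2)$, or keeping them as they are since they are smaller, gives a cover of $B(x,r)$ by at most $M:=A8^B$ balls of radius $r/2$.

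The only delicate point is the counting lemma in (i)$\Rightarrow$(ii), specifically getting the power-law dependence on $R/r$ uniformly in $r$. This is handled by the iteration above with the explicit exponent $\log_2 M$.
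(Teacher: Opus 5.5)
Your proof is correct and follows the paper's argument. For (i)$\Rightarrow$(ii) both use a maximal separated net and bound the number of separated points in $B(x,R)$ by iterating the doubling property. For (ii)$\Rightarrow$(i) both count the centres near $x$ via the bounded-overlap condition: the paper uses radius $r$ with $R=3r$ to cover $B(x,2r)$, while you use radius $r/4$ with $R=2r$ to cover $B(x,r)$.

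One harmless slip: from $2^{k-1}<4R/r$ you only get $k<3+\log_2(R/r)$. The constant is therefore $M^3$ rather than $M^2$, exactly as in the paper.
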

\begin{proof}
	To begin, we will assume that $(X,d)$ possesses the geometric doubling property. Fix $r\in (0,\infty)$. As previously mentioned, it is well known that geometrically doubling spaces are separable. Let $\left\{x_i\right\}_{i=1}^{\infty}$ be a maximal $r/2$-separated subset of $X$. Then
	\begin{equation*}
		X=\bigcup_{i=1}^{\infty} B(x_i,r).
	\end{equation*}
	Now, let $R\in (r,\infty)$ and $x\in X$. We aim to prove that $x$ lies within at most $\displaystyle M^3\left(R/r \right)^{\log_2 M}$ balls $B(x_i,R)$, where $M>0$ is a doubling constant of $X$. Since $\left\{x_i\right\}_{i=1}^{\infty}$ is $r/2$-separable, it suffices to derive an upper bound for the cardinality of any $r$-separated subset of $B(x_i,R)$. Let $p\in\mathbb{N}$ be the smallest positive integer such that $R/r<2^p$. The geometric doubling property of $X$ ensures that 
	\begin{equation*}
		B(x,R) \subseteq \bigcup_{i=1}^{M^{p+2}} B\left(y_i,r/4\right)
	\end{equation*}
	for some $y_i\in X$ and $i=1,\dots,M^{p+2}$. Hence, the cardinality of every $r$-separated subset of $B(x,R)$ is at most $M^{p+2}$. Moreover, since $R/r \geq 2^{p-1}$, we conclude that $p-1\leq \log_2(R/r)$. Thus, it follows
	\begin{equation*}
		M^{p+2}\leq M^3 M^{\log_2(R/r)}=M^3 e^{\log M \frac{\log (R/r)}{\log 2}}=M^3\left(R/r\right)^{\log_2 M},
	\end{equation*}
	which completes the first part of the proof.
	
	We now proceed to show that $(ii)$ implies $(i)$. Let $r\in (0,\infty)$ and $x\in X$  be fixed and let $A,B$ be the constants as in condition $(ii)$. Then, we can find the sequence $\left\{x_i\right\}_{i=1}^{\infty}\subseteq X$ such that the balls $\left\{B(x_i,r)\right\}_{i=1}^{\infty}$ cover $X$. Additionally, for $R:=3r$ there is a set $S\subseteq \mathbb N$ such that $\# S \leq A3^B$ and 
	\begin{equation*}
		x \notin \bigcup_{\mathbb N \setminus S} B(x_i,3r).
	\end{equation*}
	We claim that
	\begin{equation*}
		B(x,2r) \subseteq \bigcup_{i\in S} B(x_i,r).
	\end{equation*}
	Indeed, let $y$ be any point in $B(x,2r)$. If $y\notin \displaystyle\bigcup_{i\in S} B(x_i,r)$, then since the collection $\left\{B(x_i,r)\right\}_{i=1}^{\infty}$ cover $X$, we can find the index $i\in \mathbb N\setminus S$ such that $d(y,x_i)<r$. But on the other hand
	\begin{equation*}
		d(x,x_i) \leq d(x,y)+d(y,x_i)<2r+r=3r,
	\end{equation*}
	which contradicts the fact that $d(x,x_i)\geq 3r$. Therefore, the inclusion stated in our claim holds, confirming that $X$ is geometrically doubling with the constant $M:=A3^B$.
\end{proof}

\subsection{Auxiliary lemmas}
\begin{lemma}\label{pokrycieprodukt}
	Let $(X,d)$ be separable metric space and $\delta\in(0,\infty)$. Then, there exists a sequence $\left\{z_i\right\}_{i=1}^\infty\subseteq X$ such that
	\begin{equation*}
		\left\{ (x,y)\in X \times X: 0<d(x,y)<\delta/4 \right\} \subseteq \bigcup_{i=1}^{\infty} \left\{ (x,y)\in X \times X: x,y\in B\left(z_i,\delta/2\right) \textnormal{ and } x \neq y\right\}.
	\end{equation*}
\end{lemma}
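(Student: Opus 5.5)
We take $\{z_i\}_{i=1}^\infty$ to be a countable dense subset of $X$, which exists because $(X,d)$ is separable. (If $X$ is finite, we simply list its points and repeat them to get a sequence.) No further structure is needed; the whole argument is a triangle-inequality check at the scale $\delta/4$.

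Fix a pair $(x,y)\in X\times X$ with $0<d(x,y)<\delta/4$. The condition $d(x,y)>0$ gives $x\neq y$, which is the second requirement in the right-hand sets. By density we can choose an index $i$ with $d(x,z_i)<\delta/4$. Then $x\in B(z_i,\delta/2)$ trivially. For $y$, the triangle inequality gives
\begin{equation*}
d(y,z_i)\le d(y,x)+d(x,z_i)<\delta/4+\delta/4=\delta/2,
\end{equation*}
so $y\in B(z_i,\delta/2)$ as well. Hence $(x,y)$ lies in the $i$-th set of the union, which proves the inclusion.

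There is no genuine obstacle here. The only points needing care are the strict inequalities, which hold because both $d(x,y)$ and $d(x,z_i)$ are strictly less than $\delta/4$, and the degenerate finite case, handled by repeating points as above. The room between $\delta/4$ and $\delta/2$ is more than enough; a cover at scale $\delta/4$ works directly.
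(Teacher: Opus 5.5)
Your proof is correct and is essentially the paper's argument. The paper also uses separability to cover $X$ by countably many balls $B(z_i,\delta/4)$, which is exactly what your dense sequence provides, and then applies the same triangle-inequality estimate $d(y,z_j)<\delta/4+\delta/4=\delta/2$.
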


\begin{proof}
	By separability of $X$, there exists sequence $\left\{z_i\right\}_{i=1}^{\infty} \subseteq X$ such that
	\begin{equation}\label{cover}
		X=\bigcup_{i=1}^{\infty} B\left(z_i,\delta/4\right).
	\end{equation}
	Let $x,y\in X$ be such that $\displaystyle 0<d(x,y)<\delta/4$. Due to \eqref{cover}, there exists $j\in \mathbb N$ such that $\displaystyle x\in B\left(z_j,\delta/4\right)$. Moreover, by triangle inequality
	\begin{equation*}
		d(y,z_j)\leq d(y,x)+d(x,z_j) < \frac{\delta}{4}+\frac{\delta}{4}=\frac{\delta}{2},
	\end{equation*}
	which means that $\displaystyle x,y\in B\left(z_j,\delta/2\right)$, where $x\neq y$, and proof is complete.
\end{proof}

\begin{lemma}\cite[Lemma 16]{AGH20}\label{iterative lemma}
	Suppose $0<a<b<\infty,$ $0<p<q<\infty$ and $\rho,\tau\in (0,\infty).$ If a sequence $\{a_j\}_{j=1}^\infty$ satisfies
	$$a\leq a_j\leq b\quad\text{and}\quad a_{j+1}^{\frac{1}{q}}\leq \rho\tau^ja_j^{\frac{1}{p}},$$
	for all $j\in\mathbb{N}$. Then $a_1^{1-p/q}\rho^p\tau^{pq/(q-p)}\geq 1.$
\end{lemma}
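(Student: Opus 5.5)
The plan is to linearize the recursion by taking logarithms, iterate it, and then let the number of steps tend to infinity, using only the lower bound $a_j\ge a>0$. The upper bound $a_j\le b$ is needed only to guarantee that all $a_j$ are finite positive numbers, so that their logarithms are real.

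Set $\gamma:=q/p>1$ and $x_j:=\log a_j\in[\log a,\log b]$. Raising the hypothesis to the power $q$ gives $a_{j+1}\le \rho^q\tau^{qj}a_j^{\gamma}$. Taking logarithms, we get
$$
x_{j+1}\le \gamma x_j+q\log\rho+qj\log\tau ,\qquad j\in\mathbb N .
$$
This is a linear inequality with positive coefficient $\gamma$ in front of $x_j$. We may therefore iterate it without worrying about the signs of $\log\rho$ or $\log\tau$. By induction on $n$,
$$
x_{n+1}\le \gamma^{n}x_1+\sum_{k=1}^{n}\gamma^{\,n-k}\bigl(q\log\rho+qk\log\tau\bigr).
$$

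Next we combine this with $x_{n+1}\ge\log a$ and divide by $\gamma^n$, which yields
$$
\frac{\log a}{\gamma^{n}}\le x_1+\sum_{k=1}^{n}\gamma^{-k}\bigl(q\log\rho+qk\log\tau\bigr).
$$
Now we let $n\to\infty$. The left-hand side tends to $0$ because $\gamma>1$. The series on the right converge absolutely, with
$$
\sum_{k\ge1}\gamma^{-k}=\frac{1}{\gamma-1}=\frac{p}{q-p},\qquad \sum_{k\ge1}k\gamma^{-k}=\frac{\gamma}{(\gamma-1)^2}=\frac{pq}{(q-p)^2}.
$$
Hence
$$
0\le x_1+\frac{pq}{q-p}\log\rho+\frac{pq^2}{(q-p)^2}\log\tau .
$$

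Finally, multiplying by $(q-p)/q>0$ gives
$$
0\le\Bigl(1-\frac pq\Bigr)\log a_1+p\log\rho+\frac{pq}{q-p}\log\tau .
$$
Exponentiating this inequality gives exactly $a_1^{1-p/q}\rho^p\tau^{pq/(q-p)}\ge1$.

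No step is a real obstacle. The only points requiring care are the bookkeeping in the iteration and the observation that the limit $n\to\infty$ is legitimate. That legitimacy is where the hypotheses enter: $a>0$ makes $\log a$ finite, so $\log a/\gamma^n\to0$, and $\gamma>1$ (that is, $p<q$) makes the geometric-type series converge.
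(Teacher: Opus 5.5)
Your proof is correct: the iteration is right, the values $\sum_{k\ge1}\gamma^{-k}=p/(q-p)$ and $\sum_{k\ge1}k\gamma^{-k}=pq/(q-p)^2$ are right, the limit passage is legitimate, and you are right that only $a_j\ge a>0$ is really used. The paper gives no proof of this lemma and simply quotes \cite[Lemma 16]{AGH20}; your argument is the standard one, namely the logarithmic form of raising $a_{j+1}\le\rho^q\tau^{qj}a_j^{q/p}$ to the power $(p/q)^j$, telescoping to $a^{(p/q)^n}\le a_{n+1}^{(p/q)^n}\le a_1\prod_{j=1}^n\rho^{q(p/q)^j}\tau^{qj(p/q)^j}$, and letting $n\to\infty$.
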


The following result may be well known but for the sake of clarity we state it with the proof. 
\begin{lemma}\label{zero}
	Let $(X,d,\mu)$ be a metric measure space and fix $B_0:=B(x_0, r_0)$, where $x_0 \in X$ and $r_0 \in (0,\infty)$. If $u:X \to \overline{\mathbb R}$ is finite $\mu$-almost everywhere, then there exists a null set $N\subseteq B_0$ such that, for every $c\in \mathbb R$ and every $x, y \in B_0 \setminus N$,
	\begin{equation*}
		\left|u(x)-u(y)\right|\leq 2\left\|u-c\right\|_{L^{\infty}(B(x,2d(x,y)))}.
	\end{equation*}
\end{lemma}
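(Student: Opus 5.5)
The plan is to take $N$ to be the set of non-Lebesgue-type points of $u$ in a suitable ``essential'' sense, chosen independently of $c$ and of the radius. Since no doubling is assumed, the Lebesgue differentiation theorem is not available, so I will work with essential-range information instead. For $x\in X$ and $\rho>0$, write $I(x,\rho)$ for the closed interval between $\operatorname{ess\,inf}_{B(x,\rho)}u$ and $\operatorname{ess\,sup}_{B(x,\rho)}u$. Call $x$ \emph{good} if $u(x)$ is finite and $u(x)\in I(x,\rho)$ for every $\rho>0$. The claim then reduces to two facts: almost every point of $B_0$ is good, and the inequality holds at good points.

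\textbf{Step 1 (inequality at good points).} Take $x\neq y$ good and set $\rho:=2d(x,y)$. Then $B(y,d(x,y))\subseteq B(x,\rho)$, and of course $B(x,d(x,y))\subseteq B(x,\rho)$ as well. Since $x$ is good, $|u(x)-c|\le \|u-c\|_{L^\infty(B(x,d(x,y)))}\le \|u-c\|_{L^\infty(B(x,\rho))}$. Since $y$ is good, $|u(y)-c|\le \|u-c\|_{L^\infty(B(y,d(x,y)))}\le\|u-c\|_{L^\infty(B(x,\rho))}$. The triangle inequality then gives the factor $2$. If $x=y$ the claim is trivial. This step is routine.

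\textbf{Step 2 (a.e. point is good).} This is the main obstacle, because it needs a countable reduction that works without any covering theorem. First I will restrict to rational radii $\rho\in\mathbb Q_{>0}$. This suffices: $\operatorname{ess\,sup}_{B(x,\rho)}u$ is nondecreasing in $\rho$, so for any real $\rho$ one can pass to a rational $\rho'<\rho$, which only shrinks the interval. Next, using separability (recalled after \eqref{ballsmeas}), fix a countable dense set $\{z_k\}$. Consider the countable family of balls $B(z_k,\rho)$ with $\rho$ rational. For each such ball $B$, the set
\begin{equation*}
E_B:=\{w\in B:\ u(w)>\operatorname{ess\,sup}_B u\}\cup\{w\in B:\ u(w)<\operatorname{ess\,inf}_B u\}
\end{equation*}
is $\mu$-null by definition of the essential supremum and infimum. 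Let $N$ be the union over $B$ of the sets $E_B\cap B_0$, together with $\{|u|=\infty\}\cap B_0$; this is a null set.

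To finish, take $x\in B_0\setminus N$ and any $\rho>0$. Choose $z_k$ with $d(x,z_k)<\rho/4$ and a rational $\rho'\in(d(x,z_k),\rho-d(x,z_k))$. Then $x\in B(z_k,\rho')\subseteq B(x,\rho)$. Since $x\notin E_{B(z_k,\rho')}$, we get $u(x)\le \operatorname{ess\,sup}_{B(z_k,\rho')}u\le \operatorname{ess\,sup}_{B(x,\rho)}u$, and similarly for the infimum. Hence $x$ is good. The only delicate point is that $N$ must be chosen before $c$, $x$, $y$; the construction achieves this because $N$ depends on $u$ and $B_0$ alone.
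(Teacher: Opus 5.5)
Your argument is correct and is essentially the paper's: use separability to get a countable family of balls, discard the null sets on which $u$ escapes its essential bounds on those balls, and apply the triangle inequality inside $B(x,2d(x,y))$. The only real difference is in what you discard: you remove points where $u$ leaves $[\essinf_B u,\esssup_B u]$, whereas the paper removes points where $|u-q|>\|u-q\|_{L^{\infty}(B_{i,j})}$ for rational $q$; your version handles every real $c$ at once and avoids the paper's $\varepsilon$-approximation of $c$ by rationals.
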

\begin{proof}Since the measure of every ball is positive and finite we have that $X$ is separable (see \cite{G}). Therefore for every $j \in \mathbb{N}$ there exists $\left\{x_{i,j}\right\}_{i=1}^\infty$ such that
	\begin{equation*}
		B_0 =\bigcup_{i=1}^\infty B_{i,j},
	\end{equation*}
	where $B_{i,j} :=B(x_{i,j},1/j) \cap B_0$.
	
	Next, for $q \in \mathbb{Q}$, $i, j \in \mathbb{N}$ there exists a null set $N_{q,i,j} \subseteq B_{i,j}$ such that, for all $x \in B_{i,j} \setminus N_{q,i,j}$,
	\begin{equation*}
		\left|u(x)-q\right| \leq \left\|u-q\right\|_{L^{\infty}\left(B_{i,j}\right)}.
	\end{equation*}
	Next, if we define the following null set
	\begin{equation*}
		N:=\bigcup_{q \in \mathbb{Q}}\bigcup_{i, j \in \mathbb{N}} N_{q,i,j} \cup \{x \in B_0: \left|u(x)\right|= \infty\}
	\end{equation*}
	then, for every $q \in \mathbb{Q}$, $ i,j \in \mathbb{N}$, and $x \in B_{i,j} \setminus N$, we have
	\begin{equation*}
		\left|u(x)-q\right| \leq \left\|u-q\right\|_{L^{\infty}(B_{i,j})}.
	\end{equation*}
	Let us fix $x,y \in B_0 \setminus N$, $x\neq y$, and $\varepsilon \in(0,\infty)$. Then, there exists $q \in \mathbb{Q}$ such that
	\begin{equation*}
		\left|c-q\right| < \varepsilon.
	\end{equation*}
	Let $j \in \mathbb{N}$ be such that $1/j < d(x,y)/2$ and $i_x, i_y \in \mathbb{N}$ be such that $x\in B_{i_x,j}$ and $y\in B_{i_y,j}$. Then
	\begin{equation*}
		B_{i_x,j} \cup B_{i_y,j} \subseteq B(x,2d(x,y)).
	\end{equation*}
	Hence,
	\begin{align*}
		\left|u(x) -u(y) \right| &\leq  \left|u(x)-q \right| + \left|u(y)-q \right|\\
		&\leq    \left\|u-q\right\|_{L^{\infty}\left(B_{i_x,j}\right)} + \left\|u-q\right\|_{L^{\infty}\left(B_{i_y,j}\right)}\\
		&\leq  2\left\|u-q\right\|_{L^{\infty}\left(B(x,2d(x,y))\right)}\\
		&\leq 2\left\|u-c\right\|_{L^{\infty}\left(B(x,2d(x,y))\right)} + 2\left|q-c\right|\\
		&\leq 2\left\|u-c\right\|_{L^{\infty}\left(B(x,2d(x,y))\right)} + 2\varepsilon.
	\end{align*}
	Since $\varepsilon$ was arbitrary, the proof follows.
\end{proof}

\section{Variable Exponent Theory}\label{sect:varexptheory}

This section establishes the analytic foundations for spaces with variable integrability. We summarize the properties of semi-modular spaces and define the variable exponent Lebesgue and H\"older spaces. Significant focus is placed on the regularity of exponents, notably log-H\"older continuity, which is essential for the boundedness of fundamental operators. We also discuss the median operator and its behavior within the variable exponent setting. Similar to Section~\ref{sect:preliminaries}, the results recorded here are largely standard and thus, the reader may safely skip to the subsequent results and return to these reference materials on an as-needed basis.

\subsection{Modular spaces}
Let $V$ be a vector space over $\mathbb K \in \left\{ \mathbb R, \mathbb C\right\}.$ The function $\rho: V \to [0,\infty]$ is called a \texttt{semi-modular} on $V$ if the following conditions are satisfied:
\begin{enumerate}
	\item[(i)] $\rho(0)=0$;
	\item[(ii)] $\rho\left(\lambda v\right)=\rho(v)$ for all $v\in V$ and $\lambda \in \mathbb K$ with $\left|\lambda\right|=1$;
	\item[(iii)] If $\rho\left(\lambda v\right)=0$ for all $\lambda\in(0,\infty)$, then $v=0$;
	\item[(iv)] The function $[0,\infty)\ni \lambda \mapsto \rho\left(\lambda v\right)$ is left-continuous for each fixed $v\in V$;
	\item[(v)] The function $[0,\infty) \ni \lambda \mapsto \rho\left(\lambda v\right)$ is non-decreasing for each fixed $v\in V$.
\end{enumerate}
In the literature, condition $\rm (v)$ is  often replaced by a convexity condition on $\rho$, which is a stronger assumption. If $\rho$ is a semi-modular on a vector space $V$, then the space
\begin{equation*}
	V_{\rho}:=\left\{v\in V: \exists_{\lambda\in(0,\infty)}\, \rho\left(\lambda v\right)<\infty \right\}
\end{equation*}
called a \texttt{semi-modular space}. We endow  $V_\rho$ with the functional $\left\|\cdot \right\|_{\rho}: V_{\rho} \to [0,\infty]$, which is given by the formula
\begin{equation*}
\left\|v\right\|_{\rho}:=\inf\left\{\lambda\in(0,\infty): \rho\left(v/\lambda\right) \leq 1\right\},
\end{equation*}
for $v\in V_{\rho}$. If $\rho$ is a convex semi-modular, then $\left\|\cdot\right\|_{\rho}$ defines a norm on $V_{\rho}$ (called the \texttt{Luxemburg norm}).

\begin{prop}\label{ballprop}\cite{DG}
Let $\rho$ be a semi-modular on vector space $V$. Then, for every $v\in V$,
\begin{equation*}
\rho(v) \leq 1\, \text{ if and only if }\, \left\|v\right\|_{\rho} \leq 1.
\end{equation*}
\end{prop}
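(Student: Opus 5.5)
We prove the two implications separately. Both rest on the monotonicity in (v) and the left-continuity in (iv) of the map $\lambda\mapsto\rho(\lambda v)$. Throughout, $\|v\|_\rho$ is read as the infimum in the definition, with the convention $\inf\emptyset=\infty$, so that the statement makes sense for every $v\in V$ and not only for $v\in V_\rho$.

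\emph{First implication.} Assume $\rho(v)\le 1$. Then $\lambda=1$ satisfies $\rho(v/\lambda)=\rho(v)\le 1$, so $1$ belongs to the set over which the infimum is taken. Hence $\|v\|_\rho\le 1$, and this direction is immediate.

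\emph{Second implication.} Assume $\|v\|_\rho\le 1$. Fix any $\lambda>1$. Since $\|v\|_\rho<\lambda$, there is some $\lambda'\in(0,\lambda)$ with $\rho(v/\lambda')\le 1$. We have $1/\lambda<1/\lambda'$, so the monotonicity (v) gives
\[
\rho(v/\lambda)\le\rho(v/\lambda')\le 1 .
\]
Thus $\rho(tv)\le 1$ for every $t\in(0,1)$, since each such $t$ equals $1/\lambda$ for some $\lambda>1$. Now let $t\uparrow 1$. The left-continuity in (iv), applied at the point $1$, yields $\rho(v)=\lim_{t\to1^-}\rho(tv)\le 1$.

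The only delicate step is this final passage to the limit: the infimum need not be attained, so we cannot test $\lambda=1$ directly. This is precisely where the left-continuity axiom is needed; monotonicity alone would not suffice. Note also that conditions (i)–(iii) play no role in the argument.
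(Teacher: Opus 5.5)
Your proof is correct and complete. The forward direction follows by testing $\lambda=1$. The reverse direction correctly uses monotonicity (v) to get $\rho(tv)\le 1$ for all $t\in(0,1)$, and then left-continuity (iv) at $t=1$; your convention $\inf\emptyset=\infty$ also handles $v\notin V_\rho$. The paper gives no proof of its own and simply cites \cite{DG}, and your argument is the standard one for this fact.
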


\subsection{Variable exponent Lebesgue spaces}
Let $(X,\mu)$ be a measure space. Given a measurable function $p: X \to (0,\infty]$ and a measurable set $E\subseteq X$, we shall use notation
\begin{equation*}
	p_E^-:= \essinf_{x\in E} p(x)\quad \text{and}\quad p_E^+:=\esssup_{x\in E} p(x).
\end{equation*}
If $E=X$, then we shall write $p^+:=p_X^+$, $p^-:=p_X^-$. We say that $p$ is a \texttt{variable exponent} on $X$ if $p^->0$. The class of all variable exponents on $X$ shall be denoted by $\mathcal{P}(X)$. For two exponents $p,q\in \mathcal{P}(X)$, we shall write $p \geq q$ if $p(x)\geq q(x)$ for all $x\in X$. Moreover, we define the class of (essentially) bounded exponents as $\mathcal{P}_b(X):=\mathcal{P}(X) \cap L^{\infty}(X,\mu)$. For two exponents $p,q\in \mathcal{P}_b(X)$, we shall write $p \gg q$ if $(p-q)^- >0$.

We now recall the notion of variable exponent Lebesgue spaces $L^{p(\cdot)}(X,\mu)$. Given $p\in \mathcal{P}_b(X)$,  we say that $u\in L^{p(\cdot)}(X,\mu)$ if and only if $u$ is a measurable real-valued function defined on $X$ and the following semi-modular is finite
\begin{equation*}
	\rho_{p(\cdot)}(u):= \int_X \left|u(x)\right|^{p(x)} \mbox{d}\mu(x).
\end{equation*}
Note that functions belonging to $L^{p(\cdot)}(X,\mu)$ are necessarily finite pointwise almost everywhere in $X$. Moreover, the space $L^{p(\cdot)}(X,\mu)$ is a quasi-Banach space when equipped with the Luxemburg quasi-norm
\begin{equation*}
	\left\| u \right\|_{L^{p(\cdot)}(X,\mu)} := \inf\left\{ \lambda\in(0,\infty): \rho_{p(\cdot)}\left(\frac{u}{\lambda}\right) \leq 1\right\},
\end{equation*}
 where  $u\in L^{p(\cdot)}(X,\mu)$.
Throughout this article, we denote by $\kappa_{p(\cdot)}\in[1,\infty)$ the constant appearing in the quasi-triangle inequality satisfied by the above quasi-norm. If $p^- \geq 1$, then $L^{p(\cdot)}(X,\mu)$ is a Banach space. Additionally, if $p$ is constant, then the variable exponent space coincides with the ordinary Lebesgue space $L^{p}(X,\mu)$ consisting of all $p$-integrable functions on $X$. Lastly, we will simply use $L^{p(\cdot)}(X)$ in place of $L^{p(\cdot)}(X,\mu)$ whenever the measure is well understood from the context. 

Below we recall some known and useful properties of the semi-modular $\rho_{p(\cdot)}$ and the corresponding Luxemburg quasi-norm.
\begin{prop}\label{rel}
	Let $(X,\mu)$ be a measure space and let $p\in \mathcal{P}_b(X)$. Then, for every $u\in L^{p(\cdot)}(X,\mu)$,
	\begin{equation*}
		\min \left\{ \rho_{p(\cdot)}(u)^{\frac{1}{p^-}} , \rho_{p(\cdot)}(u)^{\frac{1}{p^+}} \right\} \leq \left\| u \right\|_{L^{p(\cdot)}(X,\mu)} \leq \max \left\{\rho_{p(\cdot)}(u)^{\frac{1}{p^-}} , \rho_{p(\cdot)}(u)^{\frac{1}{p^+}} \right\}.
	\end{equation*}
\end{prop}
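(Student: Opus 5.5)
The plan is to compare the modular of a rescaled function with powers of the scaling factor, and then let Proposition~\ref{ballprop} convert modular bounds into norm bounds. Fix $u\in L^{p(\cdot)}(X,\mu)$ and set $\rho:=\rho_{p(\cdot)}(u)$. If $\rho=0$, then $|u|^{p(x)}=0$ almost everywhere, so $u=0$ a.e. Hence $\rho(u/\lambda)=0$ for every $\lambda>0$, the norm is $0$, and both sides vanish. So assume $0<\rho<\infty$.

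The key elementary estimate is the following. For $\lambda\in(0,\infty)$ and almost every $x$, since $p^-\le p(x)\le p^+$, we have $\lambda^{-p(x)}\le \max\{\lambda^{-p^-},\lambda^{-p^+}\}$ and $\lambda^{-p(x)}\ge \min\{\lambda^{-p^-},\lambda^{-p^+}\}$. Integrating against $|u|^{p(x)}$ gives
\begin{equation*}
\min\{\lambda^{-p^-},\lambda^{-p^+}\}\,\rho\le \rho_{p(\cdot)}(u/\lambda)\le \max\{\lambda^{-p^-},\lambda^{-p^+}\}\,\rho .
\end{equation*}

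For the upper bound, let $\lambda:=\max\{\rho^{1/p^-},\rho^{1/p^+}\}$. If $\rho\ge1$, then $\lambda=\rho^{1/p^-}\ge1$, so the maximum of $\lambda^{-p^\pm}$ is $\lambda^{-p^-}=\rho^{-1}$. If $\rho<1$, then $\lambda=\rho^{1/p^+}<1$, so the maximum is $\lambda^{-p^+}=\rho^{-1}$. In either case $\rho_{p(\cdot)}(u/\lambda)\le1$, and by the definition of the Luxemburg quasi-norm $\|u\|\le\lambda$.

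For the lower bound, take any $\lambda$ with $\rho_{p(\cdot)}(u/\lambda)\le1$; Proposition~\ref{ballprop} is consistent with this, and the infimum in the definition is approached by such $\lambda$. Then $\min\{\lambda^{-p^-},\lambda^{-p^+}\}\rho\le1$.
\begin{itemize}
\item If $\lambda\ge1$, this reads $\lambda^{-p^+}\rho\le1$, i.e.\ $\lambda\ge\rho^{1/p^+}$.
\item If $\lambda<1$, this reads $\lambda^{-p^-}\rho\le1$, i.e.\ $\lambda\ge\rho^{1/p^-}$.
\end{itemize}
In both cases $\lambda\ge\min\{\rho^{1/p^-},\rho^{1/p^+}\}$, and taking the infimum over admissible $\lambda$ finishes the proof.

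The only delicate point is the case bookkeeping: matching which of $p^\pm$ realizes the max or min according to whether $\lambda$ (equivalently $\rho$) is at least $1$. Finiteness of $p^+$, guaranteed by $p\in\mathcal P_b(X)$, is used so that $\lambda^{-p^+}$ is a genuine finite power.
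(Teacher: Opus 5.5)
Your argument is correct and complete. The a.e. bounds $\min\{\lambda^{-p^-},\lambda^{-p^+}\}\le\lambda^{-p(x)}\le\max\{\lambda^{-p^-},\lambda^{-p^+}\}$ and the case split on whether $\rho$ (upper bound) or $\lambda$ (lower bound) is at least $1$ give both inequalities directly from the definition of the Luxemburg quasi-norm as an infimum.

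The paper states this proposition as a standard fact without proof, and yours is the usual argument. Two small remarks:
\begin{itemize}
\item Proposition~\ref{ballprop} is never actually used, so you can drop the references to it.
\item Your reduction to $0<\rho<\infty$ is justified because finiteness of $\rho_{p(\cdot)}(u)$ is built into the paper's definition of $L^{p(\cdot)}(X,\mu)$; say so explicitly.
\end{itemize}
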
	

In the variable exponent Lebesgue space, the following H\"older inequality holds.

\begin{prop}\label{holder}
	Let $(X,\mu)$ be a measure space and let $p \in \mathcal{P}_b(X)$ be such that $p^- > 1$. Suppose that $p'\in \mathcal{P}_b(X)$ is a conjugate variable exponent to $p$, that is, for each $x\in X$,
	\begin{equation*}
	\frac{1}{p(x)}+\frac{1}{p'(x)}=1.
	\end{equation*}
	Then, for every $f\in L^{p(\cdot)}(X,\mu)$ and $g\in L^{p'(\cdot)}(X,\mu)$ we have
	\begin{equation*}
		\int_X \left|f(x)g(x)\right|\mbox{d}\mu(x) \leq 2\left\| f\right\|_{L^{p(\cdot)}(X,\mu)} \left\|g\right\|_{L^{p'(\cdot)}(X,\mu)}.
	\end{equation*}
\end{prop}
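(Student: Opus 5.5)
The plan is to reduce the statement to a pointwise Young inequality combined with the modular–norm relation from Proposition~\ref{ballprop}. If either $\|f\|_{L^{p(\cdot)}(X,\mu)}$ or $\|g\|_{L^{p'(\cdot)}(X,\mu)}$ vanishes, the corresponding function is zero a.e. Indeed, $\|f\|=0$ gives $\rho_{p(\cdot)}(f/\lambda)\le 1$ for all $\lambda>0$, and then $\rho_{p(\cdot)}(f)\le \lambda^{p^-}$ for $\lambda\le 1$. Letting $\lambda\to0$ gives $\rho_{p(\cdot)}(f)=0$, so $f=0$ a.e. and both sides are zero. So we may assume both quasi-norms are positive and finite.

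By homogeneity of the left side and of the Luxemburg functional, we normalize. Fix $\lambda>\|f\|_{L^{p(\cdot)}}$ and $\eta>\|g\|_{L^{p'(\cdot)}}$. By monotonicity (property (v)), $\rho_{p(\cdot)}(f/\lambda)\le 1$ and $\rho_{p'(\cdot)}(g/\eta)\le 1$; this can also be read off Proposition~\ref{ballprop}, since $\|f/\lambda\|\le1$. Next, apply the pointwise Young inequality $ab\le \frac{a^{p(x)}}{p(x)}+\frac{b^{p'(x)}}{p'(x)}\le a^{p(x)}+b^{p'(x)}$ for $a,b\ge0$, valid since $1<p(x)<\infty$ (we need $p^->1$, and $p'$ bounded forces $p$ bounded away from $1$ anyway). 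Use it with $a=|f(x)|/\lambda$ and $b=|g(x)|/\eta$, then integrate:
\[
\frac{1}{\lambda\eta}\int_X|fg|\,d\mu\le \rho_{p(\cdot)}(f/\lambda)+\rho_{p'(\cdot)}(g/\eta)\le 2 .
\]
Letting $\lambda\downarrow\|f\|$ and $\eta\downarrow\|g\|$ gives the constant $2$.

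There is essentially no obstacle. The only delicate point is justifying $\rho(f/\lambda)\le1$ for $\lambda$ strictly above the infimum, which is immediate from the definition plus monotonicity in $\lambda$. One should also note that the exponent $p(x)=\infty$ cannot occur, since $p\in\mathcal{P}_b(X)$; if the paper intends $p$ to be only a.e. bounded, we modify it on a null set, which changes no integral.
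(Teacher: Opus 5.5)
Your proof is correct, and there is no different route to compare it with: the paper states this proposition without proof, treating it as a standard fact from the variable exponent literature. Your argument (normalize by $\lambda>\|f\|_{L^{p(\cdot)}(X,\mu)}$ and $\eta>\|g\|_{L^{p'(\cdot)}(X,\mu)}$, apply Young's inequality pointwise a.e., integrate, and let $\lambda,\eta$ decrease to the norms) is exactly the standard textbook proof.

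Two minor remarks. First, the separate zero-norm case is unnecessary. The bound $\int_X|fg|\,d\mu\le 2\lambda\eta$ for all such $\lambda,\eta$ already gives the claim when one of the norms vanishes. Second, if you keep the factors $1/p(x)$ and $1/p'(x)$ from Young's inequality rather than discarding them, you obtain the slightly sharper constant $1/p^-+1/(p')^-=1+1/p^--1/p^+\le 2$.
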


\begin{lemma}\label{wlozenielp}
Let $(X,\mu)$ be a measure space with finite measure and fix $p,q\in \mathcal{P}_b(X)$ such that $q \gg p$. Then, $L^{q(\cdot)}(X,\mu) \subseteq L^{p(\cdot)}(X,\mu)$ and for every $u\in L^{q(\cdot)}(X,\mu)$, 
\begin{equation*}
\left\| u \right\|_{L^{p(\cdot)}(X,\mu)} \leq 2^{\frac{1}{p^-}}\max\left\{\left\|1 \right\|_{L^{t'(\cdot)}(X,\mu)}^{\frac{1}{p^+}}, \left\|1 \right\|_{L^{t'(\cdot)}(X,\mu)}^{\frac{1}{p^-}}\right\} \left\| u \right\|_{L^{q(\cdot)}(X,\mu)},
\end{equation*}
where $\displaystyle t:=q/p$.
\end{lemma}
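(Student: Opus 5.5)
The plan is to reduce to the H\"older inequality of Proposition~\ref{holder} applied to $|u|^{p(\cdot)}$, and then convert between modulars and norms using Proposition~\ref{rel}. Set $t:=q/p$. Since $q\gg p$ and both exponents are bounded with $p^->0$, we have
\[
t^-\geq 1+\frac{(q-p)^-}{p^+}>1 \qquad\text{and}\qquad t^+\le \frac{q^+}{p^-}<\infty,
\]
so $t\in\mathcal P_b(X)$ with $t^->1$. Its conjugate $t'=t/(t-1)$ satisfies $t'\le (t^-)'<\infty$ and $t'\ge 1$, so $t'\in\mathcal P_b(X)$. Moreover $\|1\|_{L^{t'(\cdot)}(X,\mu)}<\infty$ because $\rho_{t'(\cdot)}(1)=\mu(X)<\infty$.

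By homogeneity of both Luxemburg quasi-norms, it suffices to treat $u\in L^{q(\cdot)}(X,\mu)$ with $\|u\|_{L^{q(\cdot)}(X,\mu)}=1$. The case $\|u\|_{L^{q(\cdot)}}=0$ is trivial, since then $u=0$ a.e. For general $u$ we apply the normalized case to $u/\lambda$ with $\lambda>\|u\|_{L^{q(\cdot)}}$ and let $\lambda\downarrow\|u\|_{L^{q(\cdot)}}$; alternatively we use $\|u/\|u\|\|\le 1$ directly. Under this normalization, Proposition~\ref{ballprop} gives $\rho_{q(\cdot)}(u)\le 1$.

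Put $f:=|u|^{p(\cdot)}$. Then $f^{t(\cdot)}=|u|^{q(\cdot)}$, so $\rho_{t(\cdot)}(f)=\rho_{q(\cdot)}(u)\le 1$, and hence $\|f\|_{L^{t(\cdot)}}\le 1$ by Proposition~\ref{ballprop}. Proposition~\ref{holder} with $g\equiv 1$ then yields
\[
\rho_{p(\cdot)}(u)=\int_X f\,d\mu\le 2\|f\|_{L^{t(\cdot)}}\|1\|_{L^{t'(\cdot)}}\le 2\|1\|_{L^{t'(\cdot)}}=:K.
\]
In particular $u\in L^{p(\cdot)}(X,\mu)$, which proves the inclusion. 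By Proposition~\ref{rel},
\[
\|u\|_{L^{p(\cdot)}}\le\max\{K^{1/p^-},K^{1/p^+}\}.
\]

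The last step is to put this bound into the stated form. We have $K^{1/p^\pm}=2^{1/p^\pm}\|1\|_{L^{t'(\cdot)}}^{1/p^\pm}$ and $2^{1/p^\pm}\le 2^{1/p^-}$. So
\[
\max\{K^{1/p^-},K^{1/p^+}\}\le 2^{1/p^-}\max\bigl\{\|1\|_{L^{t'(\cdot)}}^{1/p^+},\|1\|_{L^{t'(\cdot)}}^{1/p^-}\bigr\},
\]
which is exactly the constant in the statement. Scaling back gives the inequality for general $u$.

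The main points needing care are routine: checking that $t^->1$ and $t'\in\mathcal P_b(X)$, and the measurability of $|u|^{p(\cdot)}$. I expect no real obstacle; the only delicate step is the normalization argument. There one must use the infimum definition, or Proposition~\ref{ballprop}, rather than assume that $\rho_{q(\cdot)}(u)=1$ exactly.
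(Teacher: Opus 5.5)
Your proposal is correct and follows essentially the same route as the paper: normalize $\|u\|_{L^{q(\cdot)}(X,\mu)}=1$, apply H\"older's inequality to $|u|^{p(\cdot)}\cdot 1$ with exponents $t,t'$ to get $\rho_{p(\cdot)}(u)\le 2\|1\|_{L^{t'(\cdot)}(X,\mu)}$, and then pass to the norm via Proposition~\ref{rel}. The differences are minor: you bound $\bigl\||u|^{p(\cdot)}\bigr\|_{L^{t(\cdot)}}\le 1$ via Proposition~\ref{ballprop}, where the paper uses Proposition~\ref{rel}, and you verify $t^->1$ and $t'\in\mathcal{P}_b(X)$ explicitly, which the paper leaves implicit.
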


\begin{proof}
Let $u\in L^{q(\cdot)}(X,\mu)$. We can assume that $\left\|u \right\|_{L^{q(\cdot)}(X,\mu)}=1$. Let $t:=q/p$. Now, by the H\"older inequality (Proposition~\ref{holder}) and Proposition~\ref{rel} we get
\begin{align*}
\rho_{p(\cdot)}(u) &\leq 2\left\| u^{p} \right\|_{L^{t(\cdot)}(X,\mu)} \left\|1\right\|_{L^{t'(\cdot)}(X,\mu)}\leq 2 \max\left\{ \rho_{q(\cdot)}(u)^{\frac{1}{t^-}}, \rho_{q(\cdot)}(u)^{\frac{1}{t^+}}\right\} \left\|1 \right\|_{L^{t'(\cdot)}(X,\mu)} \leq 2\left\|1 \right\|_{L^{t'(\cdot)}(X,\mu)}.
\end{align*}
Hence, $u\in L^{p(\cdot)}(X,\mu)$ and by using  Proposition~\ref{rel} again, we get
\begin{align*}
\left\|u\right\|_{L^{p(\cdot)}(X,\mu)} \leq 2^{\frac{1}{p^-}}\max\left\{\left\|1 \right\|_{L^{t'(\cdot)}(X,\mu)}^{\frac{1}{p^+}}, \left\|1 \right\|_{L^{t'(\cdot)}(X,\mu)}^{\frac{1}{p^-}}\right\},
\end{align*}
which completes the proof.
\end{proof}

\begin{prop}\label{interpolacyjny}
Let $(X,\mu)$ be a measure space and suppose that $E(X,\mu)$ is a quasi-normed subspace of the space $L^0(X,\mu)$ consisting of all (equivalence classes of) measurable functions on $X$ which are finite pointwise almost everywhere. Let $q_0,q,q_1\in \mathcal{P}_b(X)$ be such that $q_0 \ll q \ll q_1$. If the following continuous embeddings hold
\begin{equation*}
	E(X,\mu) \hookrightarrow L^{q_0(\cdot)}(X,\mu), \hspace{15mm} 
	E(X,\mu) \hookrightarrow L^{q_1(\cdot)}(X,\mu),
\end{equation*}
then the continuous embedding
\begin{equation*}
	E(X,\mu) \hookrightarrow L^{q(\cdot)}(X,\mu)
\end{equation*}
also holds.
\end{prop}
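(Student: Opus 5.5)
The idea is pointwise interpolation: split each point according to whether $|u|$ is small or large, and dominate $|u|^{q(x)}$ by $|u|^{q_0(x)}$ or $|u|^{q_1(x)}$ accordingly. Fix $u\in E(X,\mu)$. By homogeneity of all the quasi-norms involved, it suffices to show that if $\|u\|_{E}\le \delta$ for a suitable small constant $\delta>0$, then $\rho_{q(\cdot)}(u)\le 1$. Proposition~\ref{ballprop} then gives $\|u\|_{L^{q(\cdot)}}\le 1$, and scaling yields $\|u\|_{L^{q(\cdot)}}\le \delta^{-1}\|u\|_E$.

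Let $C_0,C_1$ be the embedding constants. For $\|u\|_E\le\delta$ we have $\|u\|_{L^{q_i(\cdot)}}\le C_i\delta$ for $i=0,1$. Choosing $\delta$ so that $C_i\delta\le 1$, Proposition~\ref{rel} gives
\[
\rho_{q_i(\cdot)}(u)\le \max\{(C_i\delta)^{q_i^-},(C_i\delta)^{q_i^+}\}=(C_i\delta)^{q_i^-}.
\]
Since $q_i^->0$ (the exponents lie in $\mathcal P_b(X)$), we can shrink $\delta$ further so that $\rho_{q_0(\cdot)}(u)\le 1/2$ and $\rho_{q_1(\cdot)}(u)\le 1/2$.

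Now split $X$ into $\{|u|\le 1\}$ and $\{|u|>1\}$. On $\{|u|\le1\}$ we have $q(x)>q_0(x)$, so $|u(x)|^{q(x)}\le |u(x)|^{q_0(x)}$. On $\{|u|>1\}$ we have $q(x)<q_1(x)$, so $|u(x)|^{q(x)}\le |u(x)|^{q_1(x)}$. Hence
\[
\rho_{q(\cdot)}(u)\le \rho_{q_0(\cdot)}(u)+\rho_{q_1(\cdot)}(u)\le 1,
\]
which completes the argument. Strict separation $q_0\ll q\ll q_1$ is not actually needed here; the pointwise inequalities $q_0\le q\le q_1$ suffice. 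The null set where $u$ is infinite is harmless because $u\in L^0$.

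The only delicate step is the second one: turning the small norm bounds into small modular bounds using Proposition~\ref{rel}, which requires the norm to be below $1$ so that the correct branch of the max is selected. That is why $\delta$ is taken small enough to make $C_i\delta\le 1$ before shrinking it further. The rest is routine.
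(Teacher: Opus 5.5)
Your argument is correct, but it follows a different route from the paper. The paper takes a sequence $u_n\to 0$ in $E(X,\mu)$ and factors $|u_n|^{q}=|u_n|^{t_1}|u_n|^{t_2}$. It then applies the variable-exponent H\"older inequality (Proposition~\ref{holder}) with the conjugate pair $w=\frac{q_1-q_0}{q_1-q}$, $w'=\frac{q_1-q_0}{q-q_0}$. Using $\rho_{q_0(\cdot)}(u_n)=\rho_{w(\cdot)}(|u_n|^{t_1})$ and $\rho_{q_1(\cdot)}(u_n)=\rho_{w'(\cdot)}(|u_n|^{t_2})$, it concludes $\rho_{q(\cdot)}(u_n)\to 0$. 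The strict separation $q_0\ll q\ll q_1$ is what makes $w,w'$ bounded with $w^-,(w')^->1$, so that H\"older applies.

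Your splitting at the level $|u|=1$, giving $|u|^{q}\le |u|^{q_0}+|u|^{q_1}$ almost everywhere, avoids H\"older and the auxiliary exponents entirely. It has three advantages:
\begin{itemize}
\item it needs only $q_0\le q\le q_1$ almost everywhere;
\item it makes the inclusion $E\subseteq L^{q(\cdot)}(X,\mu)$ explicit, which the paper's sequential argument leaves implicit;
\item it yields the quantitative bound $\|u\|_{L^{q(\cdot)}(X,\mu)}\le \delta^{-1}\|u\|_{E}$, with $\delta$ depending only on the two embedding constants and on $q_0^-,q_1^-$.
\end{itemize}

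In exchange, the paper's route gives the product estimate $\rho_{q(\cdot)}(u)\le 2\bigl\||u|^{t_1}\bigr\|_{L^{w(\cdot)}}\bigl\||u|^{t_2}\bigr\|_{L^{w'(\cdot)}}$. This controls $\rho_{q(\cdot)}(u)$ from smallness at one endpoint plus mere boundedness at the other, which is a genuinely interpolation-type statement. Your sum bound needs both endpoint modulars small, but that is all the proposition requires.
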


\begin{proof}
Let $\left\{u_n\right\}_{n=1}^\infty\subseteq E(X,\mu)$ be any sequence converging to $0$ in $E(X,\mu)$. By the H\"older inequality (Proposition~\ref{holder}), we have
\begin{equation}\label{holderinterpol}
\int_{X} \left|u_n(x)\right|^{q(x)}\mbox{d}\mu(x) \leq 2 \left\| \left|u_n\right|^{t_1} \right\|_{L^{w(\cdot)}(X,\mu)} \left\| \left|u_n\right|^{t_2} \right\|_{L^{w'(\cdot)}(X,\mu)},
\end{equation}
where
\begin{align*}
t_1=\frac{q_0(q_1-q)}{q_1-q_0}, \hspace{10mm} t_2=\frac{q_1(q-q_0)}{q_1-q_0}, \hspace{10mm} w=\frac{q_1-q_0}{q_1-q}, \hspace{10mm} w'=\frac{q_1-q_0}{q-q_0}.
\end{align*}
Given the assumptions, we know that $u_n \to 0$ in $L^{q_0(\cdot)}(X,\mu)$ and $u_n \to 0$ in $L^{q_1(\cdot)}(X,\mu)$ as $n\to\infty$. Hence $\rho_{q_0(\cdot)}(u_n) \longrightarrow 0$ and $\rho_{q_1(\cdot)}(u_n) \longrightarrow 0$ as $n\to\infty$. On the other hand,
\begin{equation*}
\rho_{q_0(\cdot)}(u_n)=\rho_{w(\cdot)}\left(\left|u_n\right|^{t_1}\right)\hspace{5mm} \textnormal{ and }\hspace{5mm}  \rho_{q_1(\cdot)}(u_n)=\rho_{w'(\cdot)}\left(\left|u_n\right|^{t_2}\right).
\end{equation*} 
Therefore, we deduce that $\left|u_n\right|^{t_1} \to 0$ in $L^{w(\cdot)}(X,\mu)$ and $\left|u_n\right|^{t_2} \to 0$ in $L^{w'(\cdot)}(X,\mu)$. Hence, by \eqref{holderinterpol} we obtain that $u_n \to 0$ in $L^{q(\cdot)}(X,\mu)$. 
\end{proof}

\subsection{Variable exponent H\"older spaces}

In this section, we recall the definition of the variable exponent H\"older space. Let $(X,d)$ be a metric space. By $C(X,d)$ we denote the space of continuous functions $u:X \to \mathbb{R}$ such that the norm
\begin{equation*}
	\left\|u\right\|_{C(X,d)}:= \sup_{x\in X}\left|u(x)\right|
\end{equation*}
is finite. Moreover, given a bounded function $\alpha : X \to [0,\infty)$, we denote by $\dot{C}^{0,\alpha(\cdot)}(X,d)$ the \texttt{homogeneous variable exponent H\"older space}, i.e., the space of all continuous functions $u: X \to \mathbb{R}$ such that
\begin{equation*}
	\left[ u \right]_{\alpha(\cdot),X}:= \sup_{\substack{x,y\in X \\ x\neq y}} \frac{\left|u(x)-u(y)\right|}{d(x,y)^{\alpha(x)}} < \infty.
\end{equation*}
Now, we define \texttt{inhomogeneous variable exponent H\"{o}lder space} as
\begin{equation*}
	C^{0,\alpha(\cdot)}(X,d):= C(X,d) \cap \dot{C}^{0,\alpha(\cdot)}(X,d).
\end{equation*} 
The H\"older space $C^{0,\alpha(\cdot)}(X,d)$ is a Banach space when equipped with the norm
\begin{equation*}
	\left\| u  \right\|_{C^{0,\alpha(\cdot)}(X,d)} := \left\| u \right\|_{C(X,d)}  + \left[u\right]_{\alpha(\cdot),X}, 
\end{equation*}
where  $u \in C^{0,\alpha(\cdot)}(X,d)$.

\subsection{Log-H\"older continuity}

\begin{defi}
	Let $(X,d,\mu)$ be a metric measure space and $\Omega\subseteq X.$ We say that $p:\Omega\rightarrow\mathbb{R}$ is \texttt{locally log-H\"{o}lder continuous on $\Omega$} if there exists a constant $C_{\log}(p)>0$ such that
	\begin{equation}\label{logholder}
		\left| p(x) - p(y) \right| \leq \frac{C_{\log}(p)}{\log (e + 1 / d(x,y))} 
	\end{equation}
	for all $x,y\in\Omega.$ 
\end{defi}
We also introduce the following class of variable exponents
\begin{equation*}
	\mathcal{P}^{\log}(\Omega):=\left\{p\in \mathcal{P}(\Omega): 1/p \,\text{ is locally log-H\"older continuous on }\Omega \right\},
\end{equation*}
and  set $\mathcal{P}_b^{\log}(X):=\mathcal{P}^{\log}(X) \cap \mathcal{P}_b(X)$. It is a well known fact  that  $\displaystyle p \in \mathcal{P}_b^{\log}(X)$ if and only if $\displaystyle 1/p \in \mathcal{P}_b^{\log}(X)$.

\begin{lemma}\label{loglemma}
	Let $(X,d,\mu)$ be a metric measure space and $B:=B(z,r)$, where $z \in X$ and $r\in (0,\infty)$. Assume that $t\in \mathcal{P}^{\log}_b(B)$. Then, the following statements are true.
	\begin{enumerate} 			\item[(i)] For any $R\in[2r,\infty)$ and $x \in B$, the following inequality holds
	\begin{equation*}
		e^{-C_{\log}(1/t)}\left(\frac{1}{R}\right)^{\frac{1}{t_B^-}} \leq \left(\frac{1}{R}\right)^{\frac{1}{t(x)}}\leq e^{C_{\log}(1/t)}\left(\frac{1}{R}\right)^{\frac{1}{t_B^+}}.
	\end{equation*}
	\item[(ii)] One has
	\begin{equation*}
		r^{\frac{1}{t_B^+} - \frac{1}{t_B^-}}\leq e^{C_{\log}(1/t)} 2^{\frac{1}{t_B^-}-\frac{1}{t_B^+}}.
	\end{equation*}
	\item[(iii)] There exists a constant $M(r,t)\geq 1$, depending only on $r,t$ and log-H\"older constant of $1/t$, such that for every $x,y\in B$,
	\begin{equation*}
		\frac{1}{M(r,t)}d(x,y)^{\frac{1}{t(y)}} \leq d(x,y)^{\frac{1}{t(x)}} \leq M(r,t)d(x,y)^{\frac{1}{t(y)}}.
	\end{equation*}
	\end{enumerate}
\end{lemma}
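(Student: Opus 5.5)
The plan is to reduce all three statements to one elementary fact: the oscillation of $1/t$ over $B$ is controlled by the log-H\"older condition \eqref{logholder}. Then each power of a small number is compared using $s^{-\delta}=e^{\delta\log(1/s)}$.

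\emph{Preliminary bounds.} Since $1/t$ is log-H\"older continuous on $B$, it is continuous there. Because every ball has positive measure by \eqref{ballsmeas}, the essential infimum and supremum over $B$ coincide with the pointwise ones. Hence for every $x\in B$,
$$1/t_B^+\le 1/t(x)\le 1/t_B^- .$$
For $x,y\in B$ we have $d(x,y)<2r$, and $s\mapsto \log(e+1/s)$ is decreasing. Hence \eqref{logholder} gives
$$\delta_B:=\frac{1}{t_B^-}-\frac{1}{t_B^+}=\sup_{x,y\in B}\Bigl|\frac1{t(x)}-\frac1{t(y)}\Bigr|\le \frac{C_{\log}(1/t)}{\log(e+1/(2r))}.$$
The main (though mild) point to get right is this use of monotonicity of $\log(e+1/s)$ together with the continuity remark identifying $t_B^\pm$ with pointwise extrema.

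\emph{Proof of (i).} Fix $R\ge 2r$ and $x\in B$. If $R\ge 1$, both inequalities are trivial without the factor $e^{C_{\log}(1/t)}$: since $1/R\le1$, a larger exponent gives a smaller power. If $R<1$, write
$$(1/R)^{1/t(x)}=(1/R)^{1/t_B^+}(1/R)^{1/t(x)-1/t_B^+}.$$
The second factor is at most
$$(1/R)^{\delta_B}=\exp\bigl(\delta_B\log(1/R)\bigr)\le \exp\Bigl(C_{\log}(1/t)\,\frac{\log(1/(2r))}{\log(e+1/(2r))}\Bigr)\le e^{C_{\log}(1/t)},$$
using $1/R\le 1/(2r)$. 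Symmetrically,
$$(1/R)^{1/t(x)}=(1/R)^{1/t_B^-}(1/R)^{-(1/t_B^- -1/t(x))}\ge e^{-C_{\log}(1/t)}(1/R)^{1/t_B^-}.$$

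\emph{Proof of (ii).} Write $r^{-\delta_B}=(2r)^{-\delta_B}2^{\delta_B}$. If $2r\ge1$, then $(2r)^{-\delta_B}\le1$. If $2r<1$, then
$$(2r)^{-\delta_B}=\exp\bigl(\delta_B\log(1/(2r))\bigr)\le e^{C_{\log}(1/t)}$$
by the preliminary bound. In either case $r^{1/t_B^+-1/t_B^-}\le e^{C_{\log}(1/t)}2^{1/t_B^- -1/t_B^+}$.

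\emph{Proof of (iii).} Let $x,y\in B$ with $x\neq y$ (the case $x=y$ is trivial), and set $d=d(x,y)\in(0,2r)$. Then
$$d^{1/t(x)}/d^{1/t(y)}=d^{\,1/t(x)-1/t(y)} .$$
If $d\ge1$, this is at most $d^{\delta_B}\le (2r)^{1/t_B^--1/t_B^+}$. If $d<1$, it is at most
$$d^{-|1/t(x)-1/t(y)|}\le \exp\Bigl(C_{\log}(1/t)\,\frac{\log(1/d)}{\log(e+1/d)}\Bigr)\le e^{C_{\log}(1/t)},$$
now applying \eqref{logholder} directly at the pair $x,y$. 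Taking
$$M(r,t):=\max\bigl\{1,\,e^{C_{\log}(1/t)},\,(2r)^{1/t_B^- -1/t_B^+}\bigr\}$$
gives the upper inequality. The lower inequality follows by interchanging $x$ and $y$.
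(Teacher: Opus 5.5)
Your proof is correct and follows essentially the same route as the paper: you bound the oscillation of $1/t$ on $B$ through the log-H\"older condition with $d(x,y)<2r$, exponentiate, and choose $M(r,t)$ the same way (you use $(2r)^{1/t_B^- - 1/t_B^+}$ where the paper uses the cruder $(2r)^{2/t_B^-}$). The remaining differences are cosmetic: you split into cases according to whether the base exceeds $1$, whereas the paper uses $1/R\le e+1/R$; and you explicitly justify, via continuity of $1/t$ and positivity of ball measures, that $t_B^-\le t(x)\le t_B^+$ for every $x\in B$, a fact the paper uses only implicitly when it compares powers with a nonnegative exponent.
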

\begin{proof}
	$(i)$ Let $R\in[2r,\infty)$. For any $x, y \in B$, we have
	\begin{equation*}
		\left| \frac{1}{t(x)}- \frac{1}{t(y)}\right|\leq \frac{C_{\log}(1/t)}{\log(e+1/d(x,y))} \leq \frac{C_{\log}(1/t)}{\log(e+1/R)}.
	\end{equation*}
	Hence, for $x\in B$ we get
	\begin{equation}\label{logestim}
		\left( \frac{1}{t(x)}- \frac{1}{t_B^+}\right)\log(e+1/R) \leq C_{\log}(1/t),
	\end{equation}
	and we obtain
	\begin{equation*}
		\left(1/R\right)^{\left( \frac{1}{t(x)}- \frac{1}{t_B^+}\right)} \leq \left(e+1/R\right)^{\left( \frac{1}{t(x)}- \frac{1}{t_B^+}\right)} \leq e^{C_{\log}(1/t)}.
	\end{equation*}
	Similarily, we have
	\begin{equation*}
		\left( \frac{1}{t_B^-}- \frac{1}{t(x)}\right)\log(e+1/R) \leq C_{\log}(1/t),
	\end{equation*}
	and finally
	\begin{equation*}
		\left(1/R\right)^{\left( \frac{1}{t_B^-}- \frac{1}{t(x)}\right)} \leq \left(e+1/R\right)^{\left( \frac{1}{t_B^-}- \frac{1}{t(x)}\right)} \leq e^{C_{\log}(1/t)}.
	\end{equation*}
	The desired estimates in $(i)$ now follow.
	
	$(ii)$ From \eqref{logestim} for $R:=2r$ we get 
	\begin{equation*}
		\left( \frac{1}{t_B^-}- \frac{1}{t_B^+}\right)\log(e+1/2r) \leq C_{\log}(1/t).
	\end{equation*}
	Hence,
	\begin{equation*}
		\left(1/r\right)^{\left( \frac{1}{t_B^-}- \frac{1}{t_B^+}\right)} \leq e^{C_{\log}(1/t)} 2^{ \frac{1}{t_B^-}- \frac{1}{t_B^+}}.
	\end{equation*}
	
	$(iii)$ By the assumption $t\in \mathcal{P}^{\log}_b(B)$, we know that for all $x,y\in B$ it holds that
	\begin{equation*}
		\left|\frac{1}{t(x)}-\frac{1}{t(y)}\right|\leq \frac{C_{\textnormal{log}}(1/t)}{\log\left(e+1/d(x,y)\right)}.
	\end{equation*}
	Hence, for all $x,y\in B$ we have that
	\begin{equation*}
		\left(\frac{1}{d(x,y)}\right)^{\left|\frac{1}{t(x)}-\frac{1}{t(y)}\right|}\leq\left(e+\frac{1}{d(x,y)}\right)^{\left|\frac{1}{t(x)}-\frac{1}{t(y)}\right|}\leq e^{C_{\textnormal{log}}(1/t)},
	\end{equation*}
	and since $d(x,y)\leq 2r$, it follows that
	\begin{equation*}
		e^{-C_{\textnormal{log}}(1/t)}\leq d(x,y)^{\left|\frac{1}{t(x)}-\frac{1}{t(y)}\right|}\leq \left(2r\right)^{\left|\frac{1}{t(x)}-\frac{1}{t(y)}\right|}\leq \max\left\{1,\left(2r\right)^{\frac{2}{t_B^-}}\right\}.
	\end{equation*}
	Let $M(r,t):=\max\left\{1,\left(2r\right)^{\frac{2}{t_B^-}},e^{C_{\log}(1/t)}\right\}$. Then, for all $x,y\in B$ such that $t(x)\leq t(y)$ we obtain
	\begin{equation*}
		M(r,t)^{-1}d(x,y)^{\frac{1}{t(y)}}\leq d(x,y)^{\frac{1}{t(x)}}\leq M(r,t) d(x,y)^{\frac{1}{t(y)}}. 
	\end{equation*}
	Therefore, having in mind the symmetry of the metric $d$, the proof follows.
\end{proof}

\subsection{Median operator in variable exponent spaces}

\begin{defi}\label{med}
	Let $(X,\mu)$ be a measure space and let $E\subseteq X$ be a measurable set such that $\mu(E)\in (0,\infty)$. Let $u: E \to \overline{\mathbb R}$ be a measurable function. The \texttt{median of $u$ on $E$} is defined as
	\begin{equation*}
		m_u(E):=\sup\left\{t\in \mathbb R: \mu\left(\left\{x\in E: u(x)<t\right\}\right)\leq \frac{\mu(E)}{2}\right\}.
	\end{equation*}
\end{defi}

\begin{prop}\cite{GS}
	Let $(X,\mu)$ be a measure space and let $E\subseteq X$ be a measurable set such that $\mu(E)\in (0,\infty)$. Let $u: E \to \overline{\mathbb{R}}$ be finite $\mu$-almost everywhere. Then,
	\begin{equation*}
		m_u(E)=\max\left\{t\in \mathbb R: \mu\left(\left\{x\in E: u(x)<t\right\}\right)\leq \frac{\mu(E)}{2}\right\}.
	\end{equation*}
\end{prop}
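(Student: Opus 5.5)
The plan is to write $S:=\{t\in\mathbb R:\ \mu(\{x\in E:\ u(x)<t\})\le \mu(E)/2\}$ and $F(t):=\mu(\{x\in E: u(x)<t\})$. By Definition~\ref{med}, $m_u(E)=\sup S$, so we must show three things: $S\neq\emptyset$, $S$ is bounded above (so $m_u(E)$ is a real number), and $\sup S\in S$. Throughout we use that $F$ is non-decreasing in $t$, so $S$ is an interval unbounded below whenever it is nonempty.

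First I show that $S$ is nonempty and bounded above, using that $u$ is finite almost everywhere and $\mu(E)<\infty$. As $t\to-\infty$, the sets $\{u<t\}$ decrease to $\{u=-\infty\}$, which is a null set. Continuity from above (valid because $\mu(E)<\infty$) gives $F(t)\to 0$, so $F(t)\le \mu(E)/2$ for $t$ sufficiently negative. As $t\to\infty$, the sets $\{u<t\}$ increase to $\{u<\infty\}$, which has full measure $\mu(E)$. Continuity from below gives $F(t)\to\mu(E)>\mu(E)/2$, where we use $\mu(E)>0$. Hence $F(t)>\mu(E)/2$ for large $t$, and monotonicity of $F$ shows every element of $S$ lies below such a $t$. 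Therefore $t_0:=\sup S\in\mathbb R$.

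The last step is to show $t_0\in S$, and the key input is left-continuity of $F$. Choose $t_n\in S$ with $t_n\uparrow t_0$; if $t_0\in S$ already there is nothing to do. Then $\{u<t_n\}$ increases to $\bigcup_n\{u<t_n\}=\{u<t_0\}$, because the inequality $u<t_0$ is strict. Continuity from below gives $F(t_0)=\lim_n F(t_n)\le \mu(E)/2$, so $t_0\in S$ and the supremum is attained as a maximum.

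The only delicate point is this final step. Had the definition used $\{u\le t\}$, the limit would fail, since $F$ would then be right-continuous rather than left-continuous. Here the strict inequality makes the union computation exact. Finiteness almost everywhere is needed only in the previous step, to guarantee that the supremum is real.
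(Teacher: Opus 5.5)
Your proof is correct and complete. It combines the three things needed: monotonicity of $t\mapsto\mu(\{x\in E: u(x)<t\})$; continuity of measure from above and below, using $0<\mu(E)<\infty$ and a.e.\ finiteness of $u$, to get $\sup S\in\mathbb R$; and the left-continuity supplied by the strict inequality to show the supremum is attained. The paper gives no argument of its own here beyond the citation to \cite{GS}, and your argument is the standard proof of this fact.
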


\begin{prop}\label{med1}\cite{GS}
	Let $(X,\mu)$ be a measure space and let $E\subseteq X$ be a measurable set such that $\mu(E)\in (0,\infty)$. Let $u:E \to \overline{\mathbb R}$ be measurable which is finite $\mu$-almost everywhere. Then, the following statements are true.
	\begin{enumerate}
		\item[(i)] If $c\in \mathbb R$, then $m_u(E)-c=m_{u-c}(E)$.
		\item[(ii)] If $c\in(0,\infty)$, then $c\cdot m_u(E)=m_{c\cdot u}(E)$.
		\item[(iii)] $\left|m_u(E)\right| \leq m_{\left|u\right|}(E)$.
	\end{enumerate}
\end{prop}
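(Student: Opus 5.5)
The three statements come directly from the definition of the median as a supremum. In each case we compare the admissible sets $T_u:=\{t\in\mathbb R:\ \mu(\{x\in E: u(x)<t\})\le \mu(E)/2\}$ for the functions involved. First we check that $m_u(E)$ is a finite real number whenever $u$ is finite $\mu$-a.e.

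\emph{Finiteness.} Since $\mu(E)<\infty$ and $u>-\infty$ a.e., continuity from above gives $\mu(\{u<t\})\to 0$ as $t\to-\infty$. Hence $T_u\neq\emptyset$. Since $u<\infty$ a.e., continuity from below gives $\mu(\{u<t\})\to\mu(E)>\mu(E)/2$ as $t\to\infty$. Hence $T_u$ is bounded above. The set $T_u$ is also a down-set, because $t\mapsto\mu(\{u<t\})$ is non-decreasing. Moreover, $\{u<t\}=\bigcup_{s<t}\{u<s\}$, so continuity from below shows the supremum is attained, which recovers the quoted proposition that the median is a maximum.

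\emph{(i) and (ii).} For (i) we use the identity $\{u-c<t\}=\{u<t+c\}$. It gives $t\in T_{u-c}$ if and only if $t+c\in T_u$, that is, $T_{u-c}=T_u-c$, and taking suprema yields $m_{u-c}(E)=m_u(E)-c$. For (ii) with $c>0$ we use $\{cu<t\}=\{u<t/c\}$. It gives $T_{cu}=c\,T_u$, hence $m_{cu}(E)=c\,m_u(E)$.

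\emph{(iii).} Write $m:=m_u(E)$. We show separately that $m\in T_{|u|}$ and that $-m\in T_{|u|}$; both then give lower bounds for $m_{|u|}(E)$.

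For the first claim, note that $|u|\ge u$ implies $\{|u|<m\}\subseteq\{u<m\}$. By the maximality statement above, $\mu(\{u<m\})\le\mu(E)/2$, so $m\in T_{|u|}$ and $m\le m_{|u|}(E)$.

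For the second claim, $|u|\ge -u$ implies $\{|u|<-m\}\subseteq\{u>m\}$, so it suffices to show $\mu(\{u>m\})\le\mu(E)/2$. For every $m'>m$ we have $m'\notin T_u$, so $\mu(\{u<m'\})>\mu(E)/2$. Since $\mu(E)<\infty$, this gives $\mu(\{u\ge m'\})<\mu(E)/2$. The sets $\{u\ge m'\}$ increase to $\{u>m\}$ as $m'\downarrow m$ along a sequence, so continuity from below gives $\mu(\{u>m\})\le\mu(E)/2$. Thus $-m\in T_{|u|}$ and $-m\le m_{|u|}(E)$.

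Combining the two claims gives $|m_u(E)|\le m_{|u|}(E)$.

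The only delicate point is this last limiting argument for $-m$. It needs strict inequality at every $m'>m$, which comes from $m$ being the supremum, together with the finiteness of $\mu(E)$ when passing to complements. Everything else is bookkeeping with level sets.
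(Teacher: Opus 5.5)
Your proof is correct and complete. The paper does not prove this proposition: it is quoted from \cite{GS}, as is the preceding statement that the supremum defining $m_u(E)$ is attained. So there is no internal argument to compare with, and your proof supplies both facts self-containedly.

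The level-set translations $T_{u-c}=T_u-c$ and $T_{cu}=c\,T_u$ settle (i) and (ii) at once. For (iii), you use the inclusions $\{|u|<m\}\subseteq\{u<m\}$ and $\{|u|<-m\}\subseteq\{u>m\}$. These combine with $\mu(\{u<m\})\le\mu(E)/2$, which is attainment of the supremum, and with $\mu(\{u>m\})\le\mu(E)/2$. You obtain the latter correctly from $\mu(\{u\ge m'\})<\mu(E)/2$ for all $m'>m$, by continuity from below and $\mu(E)<\infty$. Together these give exactly $\max\{m,-m\}\le m_{|u|}(E)$.

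Your finiteness check for $m_u(E)$ is where the hypotheses $\mu(E)\in(0,\infty)$ and $|u|<\infty$ a.e. genuinely enter, and you handle it properly.
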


\begin{prop}\label{medianlemma} Let $(X,\mu)$ be a measure space and let $E\subseteq X$ be a measurable set such that $\mu(E)\in (0,\infty)$. Let $u:E \to \overline{\mathbb R}$ be a measurable function which is finite $\mu$-almost everywhere. Then, for every $p\in \mathcal{P}_b(X)$ and $c\in \mathbb R$, one has
	\begin{equation*}
		\left|m_u(E)-c\right|\leq \max\left\{2,\left(\frac{2}{\mu(E)}\right)^{\frac{1}{p^-_E}}\right\}\left\|u-c\right\|_{L^{p(\cdot)}(E)}. 
	\end{equation*}
\end{prop}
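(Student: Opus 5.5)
By Proposition~\ref{med1}(i), $m_u(E)-c=m_{u-c}(E)$. Replacing $u$ by $u-c$, which is still measurable and finite $\mu$-almost everywhere, it therefore suffices to treat $c=0$. Proposition~\ref{med1}(iii) then gives $|m_u(E)|\le m_{|u|}(E)$. So the task reduces to a Chebyshev-type bound on the median of the nonnegative function $|u|$ in terms of $\|u\|_{L^{p(\cdot)}(E)}$.

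\textbf{Case $\|u\|_{L^{p(\cdot)}(E)}=\infty$.} There is nothing to prove.

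\textbf{Case $\|u\|_{L^{p(\cdot)}(E)}=0$.} Then $u=0$ almost everywhere on $E$, so the set $\{x\in E:|u(x)|<t\}$ has measure $\mu(E)$ for every $t>0$ and measure $0$ for $t\le 0$. Hence $m_{|u|}(E)=0$ and the inequality holds.

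\textbf{Case $0<\lambda:=\|u\|_{L^{p(\cdot)}(E)}<\infty$.} By Proposition~\ref{med1}(ii) the median is positively homogeneous, so replacing $u$ by $u/\lambda$ we may assume $\|u\|_{L^{p(\cdot)}(E)}=1$. Proposition~\ref{ballprop} then gives $\rho_{p(\cdot)}(u)=\int_E|u|^{p(x)}\,d\mu\le 1$. Set
\[
K:=\max\left\{2,\left(\frac{2}{\mu(E)}\right)^{1/p_E^-}\right\}.
\]
The goal is to show $m_{|u|}(E)\le K$. By the definition of the median as a supremum, it is enough to show that every $t>K$ satisfies
\[
\mu(\{x\in E:|u(x)|<t\})>\mu(E)/2,
\]
or equivalently $\mu(\{|u|\ge t\})<\mu(E)/2$.

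\textbf{Chebyshev step.} Fix $t>K$. Since $t>2>1$ and $p(x)\ge p_E^-$ almost everywhere on $E$, we have $t^{p(x)}\ge t^{p_E^-}$. Therefore
\[
\mu(\{|u|\ge t\})\,t^{p_E^-}\le\int_{\{|u|\ge t\}} t^{p(x)}\,d\mu\le\int_E|u|^{p(x)}\,d\mu\le 1,
\]
so $\mu(\{|u|\ge t\})\le t^{-p_E^-}$. Since $t>(2/\mu(E))^{1/p_E^-}$, this gives $t^{-p_E^-}<\mu(E)/2$, which is what we needed. Taking the supremum over such $t$ shows $m_{|u|}(E)\le K$. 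Undoing the normalization yields the claimed bound.

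The only slightly delicate point is this exponent comparison: $t^{p(x)}\ge t^{p_E^-}$ requires $t\ge 1$, and the factor $2$ in the maximum guarantees this. Everything else follows directly from Propositions~\ref{med1} and~\ref{ballprop}.
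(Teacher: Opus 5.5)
Your proposal is correct and follows the paper's proof: both reduce via Proposition~\ref{med1} to bounding $m_{|u-c|}(E)$, then apply a Chebyshev estimate with the modular to show that the level set above any threshold exceeding $K\|u-c\|_{L^{p(\cdot)}(E)}$ has measure less than $\mu(E)/2$. Your pointwise comparison $t^{p(x)}\ge t^{p_E^-}$ for $t>1$ does the job of the paper's appeal to Proposition~\ref{rel}, and you also treat the degenerate case $\|u-c\|_{L^{p(\cdot)}(E)}=0$ explicitly, which the paper passes over.
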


\begin{proof} We can assume that $u\in L^{p(\cdot)}(E)$, since otherwise the claim is obvious. In view of Proposition~\ref{med1} it suffices to prove 
	\begin{equation*}
		m_{\left|u-c\right|}(E) \leq \max\left\{2,\left(\frac{2}{\mu(E)}\right)^{\frac{1}{p^-_E}}\right\}\left\|u-c\right\|_{L^{p(\cdot)}(E)}.
	\end{equation*}
	Let $\displaystyle \eta>\max\left\{2,\left(\frac{2}{\mu(E)}\right)^{\frac{1}{p^-_E}}\right\}$ and $\displaystyle \delta:=\left\|u-c\right\|_{L^{p(\cdot)}(E)}.$ We may assume that $\delta \in (0,\infty)$. 
	By the virtue of the Chebyshev inequality and Proposition~\ref{rel} we obtain
	\begin{align*}
		\mu\left( \left\{x\in E: \left|u(x)-c\right| \geq \eta \delta \right\}\right) \leq \rho_{p(\cdot)}\left(\frac{\left|u-c\right|}{\eta \delta}\chi_{E}\right) \leq \left\|\frac{u-c}{\eta\delta}\right\|_{L^{p(\cdot)}(E)}^{p^-_E}=\frac{1}{\eta^{p^-_E}}<\frac{\mu(E)}{2}.
	\end{align*} 
	Hence
	\begin{equation*}
		m_{\left|u-c\right|}(E)\leq \eta \left\|u-c\right\|_{L^{p(\cdot)}(E)}.
	\end{equation*}
	Passing to the limit with $\displaystyle \eta \to \max\left\{2,\left(\frac{2}{\mu(E)}\right)^{\frac{1}{p^-_E}}\right\}$ we obtain the claim of Proposition~\ref{medianlemma}.
\end{proof}

\section{Variable Exponent Sobolev, Triebel-Lizorkin, and Besov Spaces}\label{sect:variableexponentspaces}

In this section, we introduce the primary function spaces of interest: the variable exponent Haj\l{}asz--Sobolev, Haj\l{}asz--Triebel--Lizorkin, and Haj\l{}asz--Besov spaces. We first discuss mixed Lebesgue-sequence spaces and then provide the formal definitions for the aforementioned variable exponent spaces.

\subsection{Mixed Lebesgue sequence spaces}
Let $(X,\mu)$ be a measure space and $p\in \mathcal{P}_b(X)$, $q\in \mathcal{P}(X)$. For every sequence $\left\{u_k\right\}_{k\in \mathbb Z}\subseteq L^{p(\cdot)}(X,\mu)$ we define the following semi-modular\footnote{See \cite{AH}.}
\begin{equation*}
	\rho_{\ell^{q(\cdot)}(L^{p(\cdot)}(X,\mu))}\left(\left\{u_k\right\}\right):=\sum_{k\in \mathbb Z} \inf\left\{\lambda_k>0: \rho_{p(\cdot)}\left(\frac{u_k}{\lambda_k^{\frac{1}{q(\cdot)}}}\right) \leq 1\right\},
\end{equation*}
where we apply the convention $\lambda^{1/\infty}=1$.	Then, we define the \texttt{mixed Lebesgue sequence space} $\ell^{q(\cdot)}(L^{p(\cdot)}(X,\mu))$ as follows
\begin{equation*}
	\ell^{q(\cdot)}(L^{p(\cdot)}(X,\mu)):=\left\{\left\{u_k\right\}_{k\in \mathbb Z}\subseteq L^{p(\cdot)}(X,\mu): \exists_{\lambda\in(0,\infty)} \hspace{1mm} \rho_{\ell^{q(\cdot)}(L^{p(\cdot)})}\left(\frac{\left\{u_k\right\}_{k\in \mathbb Z}}{\lambda}\right) <\infty \right\}.
\end{equation*}
This is a quasi-normed space when equipped with the quasi-norm
\begin{equation*}
	\left\| \left\{u_k\right\} \right\|_{\ell^{q(\cdot)}(L^{p(\cdot)}(X,\mu))} := \inf\left\{\lambda\in(0,\infty): \rho_{\ell^{q(\cdot)}(L^{p(\cdot)}(X,\mu))}\left(\frac{\left\{ u_k \right\}_{k\in \mathbb Z}}{\lambda}\right) \leq 1 \right\},
\end{equation*}
where $\left\{u_k\right\}_{k\in \mathbb Z} \in \ell^{q(\cdot)}(L^{p(\cdot)}(X,\mu))$.
On the other hand, we say that the sequence $\left\{u_k\right\}$ of measurable functions belongs to the \texttt{mixed Lebesgue sequence space} $L^{p(\cdot)}(\ell^{q(\cdot)}(X,\mu))$ if 
\begin{equation*}
	\left\| \left\{u_k\right\}_{k\in \mathbb Z} \right\|_{\ell^{q(\cdot)}} \in L^{p(\cdot)}(X,\mu),
\end{equation*}
where, for each $x\in X$, we define
\begin{equation*}
	\left\| \left\{u_k\right\}_{k\in \mathbb Z} \right\|_{\ell^{q(x)}}:= \left\{\begin{array}{ll} \displaystyle\sup_{k\in \mathbb Z} \left|u_k(x)\right|,& \textnormal{ if } q(x)=\infty,\\ \displaystyle \left(\sum_{k\in \mathbb Z} \left|u_k(x)\right|^{q(x)}\right)^{\frac{1}{q(x)}},& \textnormal{ if } q(x)<\infty.\end{array}\right.
\end{equation*}
The space $L^{p(\cdot)}(\ell^{q(\cdot)}(X,\mu))$ is also a quasi-normed space with the following quasi-norm
\begin{equation*}
	\left\| \left\{u_k\right\}_{k\in \mathbb Z} \right\|_{L^{p(\cdot)}(\ell^{q(\cdot)}(X))}:= \left\| \left\| \left\{u_k\right\}_{k\in \mathbb Z} \right\|_{\ell^{q(\cdot)}} \right\|_{L^{p(\cdot)}(X,\mu)},
\end{equation*}
where $\left\{u_k\right\}_{k\in \mathbb Z} \in L^{p(\cdot)}(\ell^{q(\cdot)}(X,\mu))$.

\begin{prop}
Let $(X,\mu)$ be a measure space. For every $p\in \mathcal{P}_b(X)$ and $q\in \mathcal{P}(X)$, the spaces $\ell^{q(\cdot)}(L^{p(\cdot)}(X,\mu))$ and $L^{p(\cdot)}(\ell^{q(\cdot)}(X,\mu))$ are quasi-Banach.
\end{prop}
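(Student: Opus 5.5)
The plan is to verify three things for each of the two spaces: that the functional is a quasi-norm, that the space is a vector space on which this quasi-norm is finite, and that Cauchy sequences converge (the Riesz--Fischer type argument). Completeness is the real content. I treat $L^{p(\cdot)}(\ell^{q(\cdot)})$ first, since it is the simpler case.

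\textbf{Step 1: the space $L^{p(\cdot)}(\ell^{q(\cdot)}(X,\mu))$.} The pointwise functional $\{u_k\}\mapsto\|\{u_k\}\|_{\ell^{q(x)}}$ is a quasi-norm for each fixed $x$. Its quasi-triangle constant is at most $\max\{1,2^{1/q(x)-1}\}\le \max\{1,2^{1/q^--1}\}$, which is uniform because $q^->0$. Composing with the Luxemburg quasi-norm of $L^{p(\cdot)}$, whose quasi-triangle constant $\kappa_{p(\cdot)}$ is available, gives a quasi-norm. Homogeneity and definiteness are immediate.

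For completeness I use the standard criterion for quasi-normed spaces. Choose $r\in(0,1]$ such that $\|\cdot\|^r$ is equivalent to an $r$-norm; this is possible by the Aoki--Rolewicz theorem. Then it suffices to show that $\sum_j \|F_j\|^r<\infty$ implies that $\sum_j F_j$ converges. Since $L^{p(\cdot)}$ is complete, the partial sums of $G:=\sum_j\|F_j\|_{\ell^{q(\cdot)}}^{\,r'}$ (with a suitable small power) are controlled, and so $\sum_j |F_j^{(k)}(x)|<\infty$ for a.e.\ $x$ and every $k$. A pointwise limit $F$ therefore exists. Fatou's lemma, applied first in $\ell^{q(x)}$ and then in the modular $\rho_{p(\cdot)}$ (which is left-continuous and monotone), then gives $\|F-\sum_{j\le N}F_j\|\to0$.

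A cleaner alternative is to take a Cauchy sequence $\{F^n\}$, pass to a subsequence with $\|F^{n_{i+1}}-F^{n_i}\|\le 2^{-i}$, and use the uniform quasi-triangle constants. This shows that $\sum_i |F^{n_{i+1}}-F^{n_i}|$, suitably weighted, lies in $L^{p(\cdot)}(\ell^{q(\cdot)})$, so the subsequence converges a.e.\ coordinatewise. The Fatou lemma for the Luxemburg quasi-norm then yields norm convergence. Finally, a Cauchy sequence with a convergent subsequence converges.

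\textbf{Step 2: the space $\ell^{q(\cdot)}(L^{p(\cdot)}(X,\mu))$.} First I check that $\rho:=\rho_{\ell^{q(\cdot)}(L^{p(\cdot)})}$ is a semi-modular. The main issue is the quasi-triangle property of the induced quasi-norm. I follow Almeida--H\"ast\"o \cite{AH}: for $\lambda>0$ one has $\rho_{p(\cdot)}(u/\lambda^{1/q(\cdot)})\le1$ if and only if $\| |u|^{q(\cdot)}/\lambda\|_{L^{p(\cdot)/q(\cdot)}}\le1$ (via Proposition~\ref{ballprop}). Using this equivalence together with the quasi-triangle inequalities of $L^{p(\cdot)/q(\cdot)}$ and the elementary inequality $|a+b|^{q}\le 2^{q^+}(|a|^q+|b|^q)$, one obtains $\rho(\{u_k+v_k\}/c)\le \rho(\{u_k\})+\rho(\{v_k\})$ for a constant $c$ depending only on $p^\pm,q^-$. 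Combined with Proposition~\ref{ballprop}, this gives a quasi-triangle inequality.

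The points $q=\infty$ are a technical nuisance. I handle them by splitting $X=\{q<\infty\}\cup\{q=\infty\}$, using the convention $\lambda^{1/\infty}=1$, and noting that on $\{q=\infty\}$ each summand is either $0$ or $\infty$ relative to the unit ball.

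For completeness, I take a Cauchy sequence $\{u^n_k\}_k$. Each coordinate is Cauchy in $L^{p(\cdot)}$, because $\|u_k\|_{L^{p(\cdot)}}$ is dominated by a function of $\|\{u_k\}\|$: a single summand of $\rho$ is at most $\rho$. Each coordinate therefore converges to some $u_k$. It remains to show that the modular of the tail $\{u^n_k-u_k\}$ tends to zero. This follows from lower semicontinuity of $\rho$, applied first to each summand via Fatou in $\rho_{p(\cdot)}$ and left-continuity, and then to the sum over $k$ via Fatou for series.

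The main obstacle is the quasi-triangle inequality for $\ell^{q(\cdot)}(L^{p(\cdot)})$ when $q$ is unbounded or takes the value $\infty$. There the summands are not homogeneous and the semi-modular is not convex, so the estimate has to be carried out carefully, summand by summand, as in \cite{AH}.
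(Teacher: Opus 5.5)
The paper does not prove this proposition itself. It cites \cite{GG} for completeness of $\ell^{q(\cdot)}(L^{p(\cdot)})$ on $\mathbb R^n$, says the general case is a slight modification, leaves $L^{p(\cdot)}(\ell^{q(\cdot)})$ as an exercise, and takes quasi-normedness from \cite{AH}. Your completeness arguments are essentially right and supply what the paper omits, subject to three small corrections.
In Step~1, take $r:=\min\{1,p^-,q^-\}$. Then $\|\{F_k\}\|^r=\bigl\|\,\|\{F_k\}\|_{\ell^{q(\cdot)}}^{r}\bigr\|_{L^{p(\cdot)/r}}$ is itself an $r$-norm, so Aoki--Rolewicz is unnecessary. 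The a.e.\ convergence comes from the Fatou property of the Luxemburg functional, not from completeness of $L^{p(\cdot)}$.
In Step~2, you need a diagonal subsequence that converges a.e.\ in all coordinates simultaneously.
Also in Step~2, you must pass $\rho(\{u^n_k-u^m_k\}/\varepsilon)\le 1$ to the limit $m\to\infty$ at every scale $\varepsilon$. Showing that ``the modular of the tail tends to zero'' is not equivalent to norm convergence when $q$ is unbounded. For example, take $X=\mathbb N$ with counting measure, $p\equiv 1$, $q(n)=n$, and the sequence whose only nonzero entry is $\frac12\chi_{\{n\}}$ in coordinate $0$. Its modular is $2^{-n}$, but its quasi-norm is $\frac12$.

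The genuine gap is your derivation of the quasi-triangle inequality for $\ell^{q(\cdot)}(L^{p(\cdot)})$, which Step~2 also needs in order to place the limit in the space. The identity in Lemma~\ref{mieszane}(ii) requires $q^+<\infty$. The factor $2^{q^+}$ depends on $q^+$, and so does the quasi-triangle constant of $L^{p(\cdot)/q(\cdot)}$, which degenerates as $(p/q)^-\to 0$. When $q^+=\infty$, which the statement allows, $(p/q)^-=0$ and $L^{p(\cdot)/q(\cdot)}$ is in general not quasi-normed at all. So this route cannot give a constant depending only on $p^\pm,q^-$.
Splitting $X=\{q<\infty\}\cup\{q=\infty\}$ does not help. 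Each summand $\inf\{\lambda>0:\rho_{p(\cdot)}(u_k/\lambda^{1/q(\cdot)})\le 1\}$ is governed by a single modular constraint over all of $X$, so the two pieces are coupled. The summand is ``$0$ or $\infty$'' only when $q=\infty$ a.e. Deferring to \cite{AH} at exactly this point leaves your proof incomplete.

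A direct summand-by-summand estimate avoids $L^{p(\cdot)/q(\cdot)}$ entirely.
Assume both modulars are finite. Let $a_k,b_k\in(0,\infty)$ be admissible values for the $k$-th summands of $\rho(\{u_k\})$ and $\rho(\{v_k\})$. The admissible sets are upward closed, so you can take them within $\varepsilon 2^{-|k|}$ of the infima and let $\varepsilon\to 0$ at the end. Set $c_k:=(a_k+b_k)/2$. Since $c_k\ge a_k/2$ and $q\ge q^-$ a.e.,
\[
c_k^{1/q(x)}\ge 2^{-1/q(x)}a_k^{1/q(x)}\ge 2^{-1/q^-}a_k^{1/q(x)},
\]
and the same holds for $b_k$. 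Where $q(x)=\infty$ this is trivial by the convention $\lambda^{1/\infty}=1$.
Put $U:=|u_k|/a_k^{1/q}$ and $V:=|v_k|/b_k^{1/q}$, so that $\rho_{p(\cdot)}(U),\rho_{p(\cdot)}(V)\le 1$. Let $K:=2^{\max\{1,1/p^-\}}$ and $C:=2^{1/q^-}K$. Then $|u_k+v_k|/(C\,c_k^{1/q})\le (U+V)/K$, and pointwise $((U+V)/K)^{p(x)}\le\frac12\bigl(U^{p(x)}+V^{p(x)}\bigr)$. This uses convexity where $p(x)\ge 1$, and $K^{-p(x)}\le 2^{-p(x)/p^-}\le\frac12$ where $p(x)<1$. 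Hence
\[
\rho_{p(\cdot)}\Bigl(\frac{u_k+v_k}{C\,c_k^{1/q(\cdot)}}\Bigr)\le 1 .
\]
Summing over $k$ gives $\rho(\{u_k+v_k\}/C)\le\frac12\bigl(\rho(\{u_k\})+\rho(\{v_k\})\bigr)$. Proposition~\ref{ballprop} and scaling then yield $\|\{u_k+v_k\}\|\le C\bigl(\|\{u_k\}\|+\|\{v_k\}\|\bigr)$. Here $C$ depends only on $p^-$ and $q^-$, and the argument works for every $q\in\mathcal{P}(X)$ with no case distinction.
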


The proof of completeness of $\ell^{q(\cdot)}(L^{p(\cdot)}(X,\mu))$ in the setting of $X=\mathbb R^n$ can be found in \cite{GG}. The general case can be proven by slightly modifying the proof for $\mathbb R^n$. We leave the proof in the case of $L^{p(\cdot)}(\ell^{q(\cdot)}(X,\mu))$ as a simple exercise for the reader.

\begin{prop}\label{monotonicity}
	Let $(X,\mu)$ be a measure space and $p\in \mathcal{P}_b(X)$, $q_1,q_2\in \mathcal{P}(X)$ be such that $q_1\leq q_2$. Then, for every sequence $g:=\left\{g_k\right\}_{k\in \mathbb Z}\subseteq L^{p(\cdot)}(X,\mu)$, we have
	\begin{align*}
		&\left\| g\right\|_{\ell^{q_2(\cdot)}\left(L^{p(\cdot)}(X,\mu)\right)}\leq \left\|g\right\|_{\ell^{q_1(\cdot)}\left(L^{p(\cdot)}(X,\mu)\right)},\\
		&\left\|g\right\|_{L^{p(\cdot)}\left(\ell^{q_2(\cdot)}\left(X,\mu\right)\right)}\leq \left\|g\right\|_{L^{p(\cdot)}\left(\ell^{q_1(\cdot)}\left(X,\mu\right)\right)}.
	\end{align*}
	In particular,
	\begin{align*}
	&\ell^{q_1(\cdot)}\left(L^{p(\cdot)}\left(X,\mu\right)\right) \hookrightarrow \ell^{q_2(\cdot)}\left(L^{p(\cdot)}\left(X,\mu\right)\right)\quad\text{and}\quad L^{p(\cdot)}\left(\ell^{q_1(\cdot)}\left(X,\mu\right)\right) \hookrightarrow L^{p(\cdot)}\left(\ell^{q_2(\cdot)}\left(X,\mu\right)\right).	 
	\end{align*} 
\end{prop}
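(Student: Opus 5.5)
The plan is to reduce both inequalities to a pointwise comparison, and then pass that comparison through the relevant semi-modular using the monotonicity property (v).

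\emph{The $L^{p(\cdot)}(\ell^{q(\cdot)})$ inequality.} Here I use the elementary fact that for every fixed $x\in X$ the scalar norms are nested: $\|\{a_k\}\|_{\ell^{q_2(x)}}\le \|\{a_k\}\|_{\ell^{q_1(x)}}$ whenever $q_1(x)\le q_2(x)$. For finite exponents this follows by normalising the right-hand side to $1$. Then every $|a_k|\le 1$, so $|a_k|^{q_2(x)}\le |a_k|^{q_1(x)}$, and summing over $k$ gives $\sum_k|a_k|^{q_2(x)}\le 1$. When $q_2(x)=\infty$ the claim is just that the supremum is dominated by every $\ell^r$ norm. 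Applying this pointwise to $a_k=g_k(x)$ gives $\|\{g_k\}\|_{\ell^{q_2(\cdot)}}\le \|\{g_k\}\|_{\ell^{q_1(\cdot)}}$ almost everywhere. The modular $\rho_{p(\cdot)}$ is monotone in $|u|$, so the Luxemburg quasi-norm is monotone as well, and the second inequality follows at once.

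\emph{The $\ell^{q(\cdot)}(L^{p(\cdot)})$ inequality.} I first reduce to showing the modular inequality
\[
\rho_{\ell^{q_2(\cdot)}(L^{p(\cdot)})}(g/\lambda)\le 1 \quad\text{whenever}\quad \rho_{\ell^{q_1(\cdot)}(L^{p(\cdot)})}(g/\lambda)\le 1 .
\]
Taking the infimum over $\lambda$ then gives the norm inequality. Write $\lambda_k^{(i)}$ for the $k$-th term of the modular with exponent $q_i$, that is, the infimum of those $\mu_k>0$ with $\rho_{p(\cdot)}(g_k/(\lambda\mu_k^{1/q_i(\cdot)}))\le 1$. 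Assume $\sum_k\lambda_k^{(1)}\le 1$; then every $\lambda_k^{(1)}\le 1$.

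Fix $k$ and any $\mu\in(\lambda_k^{(1)},1]$ that is admissible for $q_1$. Since $\mu\le 1$ and $q_1\le q_2$, we have $\mu^{1/q_2(x)}\ge \mu^{1/q_1(x)}$ pointwise, with the convention $\mu^{1/\infty}=1$ also consistent with this. Monotonicity of $\rho_{p(\cdot)}$ then shows $\mu$ is admissible for $q_2$, so $\lambda_k^{(2)}\le\mu$. Letting $\mu\downarrow\lambda_k^{(1)}$ gives $\lambda_k^{(2)}\le \lambda_k^{(1)}$.

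There is a small technical point when $\lambda_k^{(1)}=1$: then no admissible $\mu$ lies strictly between $\lambda_k^{(1)}$ and $1$. I handle it by taking $\mu$ slightly larger than $1$ and using left-continuity of the modular (property (iv)). Alternatively, note that $\sum_k\lambda_k^{(1)}\le 1$ forces at most one index to have $\lambda_k^{(1)}=1$, with all other terms equal to $0$, and treat that case directly. A term with $\lambda_k^{(1)}=0$ means $g_k=0$ almost everywhere, so it contributes nothing to either modular. Summing over $k$ yields the modular inequality, and hence the first norm inequality.

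The embeddings stated in the proposition are then immediate from the two norm inequalities.

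The main obstacle is this boundary bookkeeping, namely the terms with $\lambda_k^{(1)}$ equal to $0$ or $1$ and the convention at $q=\infty$. The core comparison $\mu^{1/q_2}\ge\mu^{1/q_1}$ for $\mu\le1$ is straightforward.
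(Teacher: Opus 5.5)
Your proposal is correct and follows the paper's proof essentially step for step. The $L^{p(\cdot)}(\ell^{q(\cdot)})$ inequality comes from the pointwise nesting of $\ell^{r}$ quasi-norms. The $\ell^{q(\cdot)}(L^{p(\cdot)})$ inequality comes from the inclusion of admissible parameters $\mu\in(0,1]$ via $\mu^{1/q_2}\ge\mu^{1/q_1}$, with left-continuity showing that the endpoint $\mu=1$ is admissible; this is exactly how the paper replaces the infimum over $(0,1]$ by the infimum over $(0,\infty)$.

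One small correction: $\lambda_k^{(1)}=0$ does not force $g_k=0$ a.e.\ when $q_1=\infty$ on a set of positive measure, since there $\mu^{1/\infty}=1$. You do not need that claim, however, because your limiting argument $\mu\downarrow\lambda_k^{(1)}$ already covers the case $\lambda_k^{(1)}=0$.
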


\begin{proof}
	We being by proving the first inequality. If $\left\|g\right\|_{\ell^{q_1(\cdot)}\left(L^{p(\cdot)}\left(X,\mu\right)\right)}=\infty$, then the claim is obvious. Thus, we can assume $\left\|g\right\|_{\ell^{q_1(\cdot)}\left(L^{p(\cdot)}\left(X,\mu\right)\right)}<\infty$. Moreover, without loss of generality we can assume that $\left\|g\right\|_{\ell^{q_1(\cdot)}\left(L^{p(\cdot)}\left(X,\mu\right)\right)}=1$. It suffices to prove that
	\begin{equation*}
		\left\|g\right\|_{\ell^{q_2(\cdot)}\left(L^{p(\cdot)}\left(X,\mu\right)\right)}\leq 1.
	\end{equation*}
	Since $\left\|g\right\|_{\ell^{q_1(\cdot)}\left(L^{p(\cdot)}(X,\mu)\right)}=1$, by Proposition~\ref{ballprop} we know that $\rho_{\ell^{q_1(\cdot)}\left(L^{p(\cdot)}(X,\mu)\right)}\left(g\right)\leq 1$. It suffices to prove
	\begin{equation*}
		\rho_{\ell^{q_2(\cdot)}\left(L^{p(\cdot)}(X,\mu)\right)}\left(g\right) \leq \rho_{\ell^{q_1(\cdot)}\left(L^{p(\cdot)}(X,\mu)\right)}\left(g\right).
	\end{equation*}
	Now, notice that for each fixed $k\in \mathbb Z$, we have
	\begin{equation*}
		\left\{\lambda_k\in (0,1]: \rho_{p(\cdot)}\left(\frac{g_k}{\lambda_k^{\frac{1}{q_1(\cdot)}}}\right)\leq 1 \right\}\subseteq \left\{\lambda_k \in (0,1]: \rho_{p(\cdot)}\left(\frac{g_k}{\lambda_k^{\frac{1}{q_2(\cdot)}}}\right)\leq 1 \right\},
	\end{equation*}
	since the assumption $q_1\leq q_2$ implies $\displaystyle \left(\frac{1}{\lambda_k}\right)^{\frac{1}{q_1}} \geq \left(\frac{1}{\lambda_k}\right)^{\frac{1}{q_2}}$ for every $\lambda_k\in (0,1]$. Hence,
	\begin{align*}
		\rho_{\ell^{q_2(\cdot)}(L^{p(\cdot)}(X,\mu))}\left(g\right)&\leq \sum_{k\in \mathbb Z} \inf\left\{\lambda_k\in (0,1]: \rho_{p(\cdot)}\left(\frac{g_k}{\lambda_k^{\frac{1}{q_2(\cdot)}}}\right)\leq 1 \right\}\\
		&\leq  \sum_{k\in \mathbb Z} \inf\left\{\lambda_k\in (0,1]: \rho_{p(\cdot)}\left(\frac{g_k}{\lambda_k^{\frac{1}{q_1(\cdot)}}}\right)\leq 1 \right\}.
	\end{align*}
	Since $\rho_{\ell^{q_1(\cdot)}\left(L^{p(\cdot)}(X,\mu)\right)}\left(g\right)\leq 1$ and semi-modular $\rho_{p(\cdot)}$ is left-continuous, we can write
	\begin{align*}
		&\sum_{k\in \mathbb Z} \inf\left\{\lambda_k\in (0,1]: \rho_{p(\cdot)}\left(\frac{g_k}{\lambda_k^{\frac{1}{q_1(\cdot)}}}\right)\leq 1 \right\}\\
		&\qquad= \sum_{k\in \mathbb Z} \inf\left\{\lambda_k\in(0,\infty): \rho_{p(\cdot)}\left(\frac{g_k}{\lambda_k^{\frac{1}{q_1(\cdot)}}}\right)\leq 1 \right\}=\rho_{\ell^{q_1(\cdot)}\left(L^{p(\cdot)}(X,\mu)\right)}\left(g\right).
	\end{align*}
	Hence, we have proved that
	\begin{equation*}
		\rho_{\ell^{q_2(\cdot)}\left(L^{p(\cdot)}(X,\mu)\right)}\left(g\right)\leq \rho_{\ell^{q_1(\cdot)}\left(L^{p(\cdot)}(X,\mu)\right)}\left(g\right)\leq 1,
	\end{equation*}
	as wanted.
Proposition~\ref{ballprop} provides that $\left\|g\right\|_{\ell^{q_2(\cdot)}\left(L^{p(\cdot)}\left(X,\mu\right)\right)}\leq 1$ and the proof of the first inequality is complete.
	
	Now, we shall prove the second inequality. Since $q_2 \geq q_1$, then for almost every $x\in X$
	\begin{equation*}
		\left\|g\right\|_{\ell^{q_2(x)}}\leq \left\|g\right\|_{\ell^{q_1(x)}}.
	\end{equation*}
	Taking $L^{p(\cdot)}$-norm we obtain the desired inequality, completing the proof of Proposition~\ref{monotonicity}.
\end{proof}

\begin{lemma}\label{mieszane}\cite{AH}
Let $(X,\mu)$ be a measure space and $p\in \mathcal{P}_b(X)$ and $q\in \mathcal{P}(X)$.
\begin{enumerate}
	\item[(i)] If $q$ is constant, then
	\begin{equation*}
		\left\| \left\{f_k\right\}_{k\in \mathbb Z} \right\|_{\ell^{q}(L^{p(\cdot)}(X,\mu))}= \left\| \left\{ \left\|f_k\right\|_{L^{p(\cdot)}(X,\mu)}\right\}_{k\in \mathbb Z} \right\|_{\ell^q}
	\end{equation*}
	for every $\left\{f_k\right\}_{k\in \mathbb Z} \subseteq L^{p(\cdot)}(X,\mu)$.
	\item[(ii)] If $q^+ <\infty$, then
	\begin{equation*}
		\rho_{\ell^{q(\cdot)}(L^{p(\cdot)}(X,\mu))}\left( \left\{f_k\right\}_{k\in \mathbb Z}\right)= \sum_{k\in \mathbb Z} \left\| \left|f_k\right|^{q(\cdot)} \right\|_{L^{\frac{p(\cdot)}{q(\cdot)}}(X,\mu)}
	\end{equation*}
	for every $\left\{f_k\right\}_{k\in \mathbb Z}\subseteq L^{p(\cdot)}(X,\mu)$. Moreover, $\left\{f_k\right\}_{k\in \mathbb Z}\in \ell^{q(\cdot)}(L^{p(\cdot)}(X,\mu))$ if and only if
	\begin{equation*}
	\sum_{k\in \mathbb Z} \left\| \left|f_k\right|^{q(\cdot)} \right\|_{L^{\frac{p(\cdot)}{q(\cdot)}}(X,\mu)}<\infty.
	\end{equation*}
\end{enumerate}
\end{lemma}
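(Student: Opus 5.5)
Both parts follow from unwinding the definition of the semi-modular $\rho_{\ell^{q(\cdot)}(L^{p(\cdot)})}$ term by term. I will identify each summand $\inf\{\lambda_k>0:\rho_{p(\cdot)}(f_k/\lambda_k^{1/q(\cdot)})\le 1\}$ with a simpler quantity, and then pass from modulars to norms.

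For (ii), assume $q^+<\infty$ and fix $k$. For each $\lambda>0$ I will rewrite
\[
\rho_{p(\cdot)}\bigl(f_k\lambda^{-1/q(\cdot)}\bigr)=\int_X \Bigl(\frac{|f_k(x)|^{q(x)}}{\lambda}\Bigr)^{p(x)/q(x)}\,d\mu(x)=\rho_{p(\cdot)/q(\cdot)}\bigl(|f_k|^{q(\cdot)}/\lambda\bigr).
\]
Here $p/q\in\mathcal P_b(X)$, since $q^+<\infty$ gives $(p/q)^-\ge p^-/q^+>0$ and $p^+<\infty$ bounds $p/q$ above. The summand is therefore exactly the Luxemburg infimum defining $\||f_k|^{q(\cdot)}\|_{L^{p(\cdot)/q(\cdot)}}$, which gives the identity. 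The "moreover" part needs the homogeneity-type fact that membership only requires finiteness at some dilation $\lambda$. I would note that
\[
\rho_{\ell^{q(\cdot)}(L^{p(\cdot)})}(\{f_k\}/\lambda)=\sum_k \bigl\||f_k/\lambda|^{q(\cdot)}\bigr\|_{L^{p/q}},
\]
and that $\lambda^{-q^+}\le \lambda^{-q(x)}\le \lambda^{-q^-}$ pointwise (with the order reversed when $\lambda<1$). So each term is comparable, up to the factors $\lambda^{-q^\pm}$, to $\||f_k|^{q}\|_{L^{p/q}}$, and finiteness for one $\lambda$ is equivalent to finiteness for $\lambda=1$. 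This uses $q^+<\infty$ and $q^->0$.

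For (i), let $q$ be constant. If $q<\infty$, the summand is $\inf\{\lambda:\|f_k\|_{L^{p(\cdot)}}\le\lambda^{1/q}\}=\|f_k\|_{L^{p(\cdot)}}^q$. This uses Proposition~\ref{ballprop} together with the homogeneity of the Luxemburg quasi-norm, $\rho_{p(\cdot)}(f/c)\le1\iff\|f\|_{L^{p(\cdot)}}\le c$. Hence
\[
\rho\bigl(\{f_k\}/\mu\bigr)=\mu^{-q}\sum_k\|f_k\|_{L^{p(\cdot)}}^q,
\]
and the infimum of $\mu$ with $\rho(\{f_k\}/\mu)\le1$ is exactly $\bigl(\sum_k\|f_k\|^q\bigr)^{1/q}$. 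If $q=\infty$, the convention $\lambda^{1/\infty}=1$ makes the summand $0$ when $\rho_{p(\cdot)}(f_k)\le1$, and $+\infty$ otherwise, since the set of admissible $\lambda$ is then empty. Thus $\rho(\{f_k\}/\mu)\le1$ holds iff $\rho_{p(\cdot)}(f_k/\mu)\le1$ for all $k$, iff $\sup_k\|f_k\|\le\mu$, which yields the $\ell^\infty$ norm.

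The main care point is the boundary bookkeeping: the infimum of an empty set, the $q=\infty$ convention, and the equivalence between the modular and norm conditions for Luxemburg quasi-norms. The latter relies on the left-continuity in the semi-modular axioms, as in Proposition~\ref{ballprop}. Everything else is algebraic.
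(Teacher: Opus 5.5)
The paper gives no proof of this lemma and simply cites \cite{AH}, where both identities are recorded as immediate consequences of the definition of $\rho_{\ell^{q(\cdot)}(L^{p(\cdot)})}$. Your argument is correct and is exactly that direct verification: the pointwise identity $|f_k|^{p}\lambda^{-p/q}=(|f_k|^{q}/\lambda)^{p/q}$ turns each summand into the Luxemburg quasi-norm of $|f_k|^{q(\cdot)}$ in $L^{p(\cdot)/q(\cdot)}$; the factor bounds $\lambda^{-q^{\pm}}$ handle the ``moreover'' part; and in (i), Proposition~\ref{ballprop} with homogeneity reduces each summand to $\|f_k\|_{L^{p(\cdot)}}^q$, or to $0$ or $\infty$ when $q=\infty$.

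Two cosmetic remarks: the upper bound $p/q\le p^+/q^-$ also uses $q^->0$, and you should rename your scaling parameter in (i), since $\mu$ already denotes the measure.
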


\subsection{Variable exponent Sobolev, Besov, and Triebel-Lizorkin spaces}

\indent Let $(X,d)$ be a metric space equipped with a non-negative Borel measure $\mu$ and let $u: X \to \overline{\mathbb{R}}$ be a measurable function which is finite $\mu$-almost everywhere. Let $s\in \mathcal{P}_b(X)$. We say that a non-negative function $g: X \to \mathbb \overline{\mathbb{R}}$ is a \texttt{scalar $s(\cdot)$-gradient of $u$} if there is a measure zero set $N_u\subseteq X$ such that
	\begin{equation}\label{pointwise}
		\vert u(x)-u(y)\vert\leq d(x,y)^{s(x)}g(x)+d(x,y)^{s(y)}g(y),
	\end{equation}
	for all $x,y\in X\setminus N_u$. The collection of all scalar $s(\cdot)$-gradients of $u$ shall be denoted by $\mathcal{D}^{s(\cdot)}(u)$.
	Moreover, we say that a sequence of non-negative measurable functions $\left\{g_k\right\}_{k\in \mathbb Z}$ defined on $X$ is a \texttt{vector $s(\cdot)$-gradient of $u$} if there exists a measure zero set $N_u \subseteq X$ such that
	\begin{equation*}
		\left|u(x)-u(y)\right|\leq d(x,y)^{s(x)}g_k(x)+d(x,y)^{s(y)}g_k(y),
	\end{equation*}
	for all $x,y\in X\setminus N_u$ satisfying $2^{-k-1}\leq d(x,y)<2^{-k}$. The collection of all vector $s(\cdot)$-gradients of $u$ shall be denoted by $\mathbb D^{s(\cdot)}(u)$.
		
	For $s,p\in \mathcal{P}_b(X)$ we define the \texttt{homogeneous Haj{\l}asz-Sobolev space} $\dot{M}^{s(\cdot),p(\cdot)}(X,d,\mu)$ as the space of all measurable functions $u:X \to \overline{\mathbb{R}}$ such that
	\begin{equation*}
		\left\|u\right\|_{\dot{M}^{s(\cdot),p(\cdot)}(X,d,\mu)}:=\inf_{g\in \mathcal{D}^{s(\cdot)}(u)}\left\|g\right\|_{L^{p(\cdot)}(X,\mu)}<\infty,
	\end{equation*}
	and we define the \texttt{inhomogeneous Haj{\l}asz-Sobolev space} $M^{s(\cdot),p(\cdot)}(X,d,\mu)$ as
	\begin{equation*}
		M^{s(\cdot),p(\cdot)}(X,d,\mu):=\dot{M}^{s(\cdot),p(\cdot)}(X,d,\mu) \cap L^{p(\cdot)}(X,\mu).
	\end{equation*}
Here and thereafter, we make the agreement that  $\inf\emptyset:=\infty$.		
		
The space $M^{s(\cdot),p(\cdot)}(X,d,\mu)$ is a quasi-Banach space with the following quasi-norm
\begin{equation*}
	\left\|u\right\|_{M^{s(\cdot),p(\cdot)}(X,d,\mu)}:=\left\|u\right\|_{L^{p(\cdot)}(X,\mu)}+\left\|u\right\|_{\dot{M}^{s(\cdot),p(\cdot)}(X,d,\mu)},
\end{equation*}
where $u\in M^{s(\cdot),p(\cdot)}(X,d,\mu)$.

Now we shall present the definition of variable exponent Haj{\l}asz-Triebel-Lizorkin and Haj{\l}asz-Besov spaces. 
For $s,p\in \mathcal{P}_b(X)$ and $q\in \mathcal{P}(X)$, we define
	\begin{enumerate}
		\item[$(i)$] the \texttt{homogeneous Haj{\l}asz-Triebel-Lizorkin space} $\dot{M}^{s(\cdot)}_{p(\cdot),q(\cdot)}(X,d,\mu)$ as the space of all measurable functions $u:X \to \overline{\mathbb{R}}$ such that
		\begin{equation*}
			\left\|u\right\|_{\dot{M}^{s(\cdot)}_{p(\cdot),q(\cdot)}\left(X,d,\mu\right)}:=\inf_{g\in \mathbb D^{s(\cdot)}(u)} \left\| g \right\|_{L^{p(\cdot)}\left(\ell^{q(\cdot)}\left(X,\mu\right)\right)}<\infty.
		\end{equation*}
		\item[(ii)] the \texttt{homogeneous Haj{\l}asz-Besov space} $\dot{N}^{s(\cdot)}_{p(\cdot),q(\cdot)}(X,d,\mu)$ as the space of all measurable functions $u: X\to \overline{\mathbb{R}}$ such that
		\begin{equation*}
			\left\|u\right\|_{\dot{N}^{s(\cdot)}_{p(\cdot),q(\cdot)}(X,d,\mu)}:=\inf_{g\in \mathbb D^{s(\cdot)}(u)} \left\| g \right\|_{\ell^{q(\cdot)}\left(L^{p(\cdot)}(X,\mu)\right)}<\infty.
		\end{equation*}
		
	\end{enumerate} 
		
For $s,p\in \mathcal{P}_b(X)$ and $q\in \mathcal{P}(X)$, we define \texttt{inhomogeneous Haj{\l}asz-Triebel-Lizorkin space} $M^{s(\cdot)}_{p(\cdot),q(\cdot)}(X,d,\mu)$ and \texttt{inhomogeneous Haj{\l}asz-Besov space} $N^{s(\cdot)}_{p(\cdot),q(\cdot)}(X,d,\mu)$ as
	\begin{align*}
		M^{s(\cdot)}_{p(\cdot),q(\cdot)}(X,d,\mu)&:= \dot{M}^{s(\cdot)}_{p(\cdot),q(\cdot)}(X,d,\mu) \cap L^{p(\cdot)}(X,\mu),\\
		N^{s(\cdot)}_{p(\cdot),q(\cdot)}(X,d,\mu)&:= \dot{N}^{s(\cdot)}_{p(\cdot),q(\cdot)}(X,d,\mu) \cap L^{p(\cdot)}(X,\mu).
	\end{align*}
		
We endow the spaces $M^{s(\cdot)}_{p(\cdot),q(\cdot)}(X,d,\mu)$ and $N^{s(\cdot)}_{p(\cdot),q(\cdot)}(X,d,\mu)$ with the following quasi-norms
\begin{align*}
	\left\|u\right\|_{M^{s(\cdot)}_{p(\cdot),q(\cdot)}(X,d,\mu)}&:=\left\|u\right\|_{L^{p(\cdot)}(X,\mu)}+\left\|u\right\|_{\dot{M}^{s(\cdot)}_{p(\cdot),q(\cdot)}(X,d,\mu)}, \textnormal{ where } u\in M^{s(\cdot)}_{p(\cdot),q(\cdot)}(X,d,\mu)\\
	\left\|u\right\|_{N^{s(\cdot)}_{p(\cdot),q(\cdot)}(X,d,\mu)}&:=\left\|u\right\|_{L^{p(\cdot)}(X,\mu)}+\left\|u\right\|_{\dot{N}^{s(\cdot)}_{p(\cdot),q(\cdot)}(X,d,\mu)}, \textnormal{ where } u\in N^{s(\cdot)}_{p(\cdot),q(\cdot)}(X,d,\mu).
\end{align*}
Both Haj{\l}asz-Triebel-Lizorkin and Haj{\l}asz-Besov spaces are quasi-Banach spaces when equipped with these quasi-norms.

For $A=M, N$, we will simply use $\dot{M}^{s(\cdot),p(\cdot)}(X)$, ${M}^{s(\cdot),p(\cdot)}(X)$, $\dot{A}^{s(\cdot)}_{p(\cdot),q(\cdot)}(X)$,  and $A^{s(\cdot)}_{p(\cdot),q(\cdot)}(X)$
respectively, in place of $\dot{M}^{s(\cdot),p(\cdot)}(X,d,\mu)$, ${M}^{s(\cdot),p(\cdot)}(X,d,\mu)$, ${A}^{s(\cdot)}_{p(\cdot),q(\cdot)}(X,d,\mu)$, 
 and $A^{s(\cdot)}_{p(\cdot),q(\cdot)}(X,d,\mu)$,  whenever the metric and the measure are well understood from the context. 

\begin{remark}
	The spaces $M^{s(\cdot)}_{p(\cdot),q(\cdot)}$ and $N^{s(\cdot)}_{p(\cdot),q(\cdot)}$ were defined in \cite{Rn} using  slightly different quasi-norms. Let us recall that if $u:X \to \mathbb R$ is a measurable function, then a sequence $\left\{g_k\right\}_{k\in\mathbb Z}\in \mathbb{D}^0(u)$ if the functions $g_k: X\to[0,\infty]$ are measurable and there is a null set $N_u\subseteq X$ such that
	\begin{equation*}
		\left|u(x)-u(y)\right| \leq g_k(x)+g_k(y),
	\end{equation*}
	for every $x\in X \setminus N_u$ satisfying $2^{-k-1} \leq d(x,y) <2^{-k}$.
	
	In  \cite{Rn}, the authors defined the space $ \tilde{A}^{s(\cdot)}_{p(\cdot),q(\cdot)}(X,d,\mu)$ as the space of all measurable functions $u: X \to \mathbb R$ such that
	\begin{equation}\label{equivalent}
		\left\|u\right\|_{\tilde{A}^{s(\cdot)}_{p(\cdot),q(\cdot)}(X,d,\mu)}:=\left\|u\right\|_{L^{p(\cdot)}(X,\mu)}+\inf_{\left\{g_k\right\}_{k\in\mathbb Z}\in \mathbb{D}^0(u)} \left\|\left\{2^{ks}g_k\right\}_{k\in\mathbb Z} \right\| <\infty,
	\end{equation} 
	where $\left\|\cdot \right\|$ is the $L^{p(\cdot)}(\ell^{q(\cdot)})$-quasi-norm if $A=M$ and the $\ell^{q(\cdot)}(L^{p(\cdot)})$-quasi-norm if $A=N$.
	
	We shall prove that $A^{s(\cdot)}_{p(\cdot),q(\cdot)}(X,d,\mu)=\tilde{A}^{s(\cdot)}_{p(\cdot),q(\cdot)}(X,d,\mu)$ with equivalent quasi-norms.
\end{remark}

\begin{proof} Let $u\in A^{s(\cdot)}_{p(\cdot),q(\cdot)}(X,d,\mu)$ and $\left\{g_k\right\}_{k\in\mathbb Z}\in \mathbb{D}^{s(\cdot)}(u)$ with $\left\| \left\{g_k\right\} \right\| < \infty$. Let $N_u \subseteq X$ be the null set such that
	\begin{equation}\label{n1}
		\left|u(x)-u(y)\right| \leq d(x,y)^{s(x)}g_k(x)+d(x,y)^{s(y)}g_k(y),
	\end{equation}
	whenever $x,y\in X \setminus N_u$ satisfy $2^{-k-1} \leq d(x,y) <2^{-k}$. Then, \eqref{n1} yields
	\begin{equation*}
		\left|u(x)-u(y)\right|\leq 2^{-ks(x)}g_k(x)+2^{-ks(y)}g_k(y)=\tilde{g}(x)+\tilde{g}(y),
	\end{equation*}
	where $\tilde{g}_k(x):=2^{-ks(x)}g_k(x)$. Then, $\left\{ \tilde{g}_k \right\}_{k\in\mathbb Z}\in \mathbb{D}^0(u)$ and
	\begin{equation*}
		\left\| \left\{2^{ks}\tilde{g}_k\right\}_{k\in\mathbb Z} \right\| = \left\| \left\{g_k\right\}_{k\in\mathbb Z} \right\| <\infty.
	\end{equation*}
	Therefore $u\in \tilde{A}^{s(\cdot)}_{p(\cdot),q(\cdot)}(X,d,\mu)$ and $\left\| u \right\|_{\tilde{A}^{s(\cdot)}_{p(\cdot),q(\cdot)}(X,d,\mu)} \leq \left\| u \right\|_{A^{s(\cdot)}_{p(\cdot),q(\cdot)}(X,d,\mu)}.$
	
	Conversely, assume that $u\in \tilde{A}^{s(\cdot)}_{p(\cdot),q(\cdot)}(X,d,\mu)$ and let $\left\{g_k\right\}_{k\in\mathbb Z}\in \mathbb{D}^0(u)$ with $\left\|2^{ks} \left\{ g_k \right\}_{k\in\mathbb Z}\right\| <\infty$. Then, there exists a null set $N_u \subseteq X$ such that, for all $x,y\in X \setminus N_u$ with $2^{-k-1}\leq d(x,y) <2^{-k}$, we have
	\begin{align*}
		\left|u(x)-u(y)\right|& \leq g_k(x)+g_k(y)= 2^{\left(-k-1\right)s(x)}\tilde{g}_k(x)+2^{-\left(-k-1\right)s(y)}\tilde{g}_k(y) \\ & \leq d(x,y)^{s(x)}\tilde{g}_k(x)+d(x,y)^{s(y)}\tilde{g}_k(y),
	\end{align*}
	where $\tilde{g}_k(x):=2^{\left(k+1\right)s(x)}g_k(x)$. Hence $\left\{ \tilde{g}_k \right\}_{k\in\mathbb Z}\in \mathbb{D}^{s(\cdot)}(u)$ and
	\begin{equation*}
		\left\| \left\{\tilde{g}_k\right\}_{k\in\mathbb Z} \right\|= \left\|\left\{ 2^{\left(k+1\right)s}g_k \right\}_{k\in\mathbb Z} \right\| \leq 2^{s^+} \left\| \left\{ 2^{ks} g_k\right\}_{k\in\mathbb Z} \right\| <\infty.
	\end{equation*}
	Therefore $u\in A^{s(\cdot)}_{p(\cdot),q(\cdot)}(X,d,\mu)$ and $\left\| u \right\|_{A^{s(\cdot)}_{p(\cdot),q(\cdot)}(X,d,\mu)} \leq 2^{s^+} \left\| u \right\|_{\tilde{A}^{s(\cdot)}_{p(\cdot),q(\cdot)}(X,d,\mu)}.$ This finishes the proof of our claim.
\end{proof}

\begin{lemma}\label{gradientzero}
Let $(X,d)$ be a metric space equipped with a non-negative Borel measure $\mu$  and let $p,s\in \mathcal{P}_b(X)$. Suppose that the function $u\in \dot{M}^{s(\cdot),p(\cdot)}(X,d,\mu)$ is such that $\left\|u\right\|_{\dot{M}^{s(\cdot),p(\cdot)}(X,d,\mu)}=0$. Then, $u$ is constant pointwise almost everywhere.	
\end{lemma}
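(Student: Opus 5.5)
Since $\|u\|_{\dot{M}^{s(\cdot),p(\cdot)}}=0$ is an infimum over $\mathcal{D}^{s(\cdot)}(u)$, I cannot expect a gradient with zero norm. Instead I will use a sequence $g_n\in\mathcal{D}^{s(\cdot)}(u)$ with $\|g_n\|_{L^{p(\cdot)}(X)}\le 2^{-n}$, extract a pointwise limit equal to zero, and pass to the limit in the pointwise inequality \eqref{pointwise}.

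First, for each $n$ let $N_n$ be the exceptional null set attached to $g_n$ in \eqref{pointwise}. By Proposition~\ref{rel}, for $n$ large (so that the norm is below $1$) we have $\rho_{p(\cdot)}(g_n)\le \|g_n\|_{L^{p(\cdot)}}^{p^-}\le 2^{-np^-}$. Hence $\sum_n \int_X g_n^{p(x)}\,d\mu<\infty$, and by monotone convergence $\sum_n g_n(x)^{p(x)}<\infty$ for $\mu$-a.e.\ $x$. Thus $g_n(x)\to 0$ outside a null set $N_0$; note that $p(x)\in(0,p^+]$ with $p(x)>0$, so $g_n(x)^{p(x)}\to0$ indeed forces $g_n(x)\to 0$. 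This is cleaner than using Chebyshev with a Borel–Cantelli argument, although that route would work equally well.

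Next, let $N:=N_0\cup\bigcup_n N_n\cup\{|u|=\infty\}$, which is a null set ($u$ is finite a.e., being measurable with a gradient; if this finiteness needs justification, it is part of the standing hypothesis that $u$ is finite a.e.). For $x,y\in X\setminus N$, inequality \eqref{pointwise} gives
\[
|u(x)-u(y)|\le d(x,y)^{s(x)}g_n(x)+d(x,y)^{s(y)}g_n(y)\quad\text{for every } n .
\]
Since $d(x,y)$ is a fixed finite number and $s$ is bounded, the coefficients $d(x,y)^{s(x)}$ and $d(x,y)^{s(y)}$ are fixed finite numbers. Letting $n\to\infty$ yields $u(x)=u(y)$. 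Hence $u$ is constant on $X\setminus N$, which is the claim.

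The main (mild) obstacle is the first step: passing from norm convergence to a.e.\ convergence for a quasi-norm with variable exponent. I would handle it via Proposition~\ref{rel} and summability of the modulars, choosing the subsequence so that $\|g_n\|<1$ and using the $p^-$ branch of the estimate.
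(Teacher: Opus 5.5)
Your proposal is correct and follows the paper's proof. As there, you take gradients $g_n\in\mathcal{D}^{s(\cdot)}(u)$ with $\|g_n\|_{L^{p(\cdot)}(X)}\to 0$, obtain $g_n\to 0$ almost everywhere, remove the countable union of exceptional null sets, and let $n\to\infty$ in \eqref{pointwise}. The only difference is that you justify the a.e.\ convergence explicitly through the summability of the modulars $\rho_{p(\cdot)}(g_n)\le 2^{-np^-}$ (via Propositions~\ref{ballprop} and~\ref{rel}), whereas the paper simply passes to a subsequence without comment.
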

\begin{proof}
By the assumption, there exists sequence $\left\{g_n\right\}_{n\in\mathbb Z}\subseteq \mathcal{D}^{s(\cdot)}(u)\cap L^{p(\cdot)}(X,\mu)$ such that $g_n \to 0$ in $L^{p(\cdot)}(X,\mu)$ as $n\to\infty$. Passing to subsequence, we can assume that $g_n \to 0$ pointwise almost everywhere as $n\to\infty$. Let $N\subseteq X$ be the null set such that $\left\{g_n\right\}_{n\in\mathbb Z}$ converges pointwise to zero on $X\setminus N$. By the very definition of a scalar $s(\cdot)$-gradient, for each $n\in \mathbb Z$ we can find the null set $E_n\subseteq X$ such that for all $x,y\in X \setminus E_n$,
\begin{equation*}
	\left|u(x)-u(y)\right| \leq d(x,y)^{s(x)}g_n(x)+d(x,y)^{s(y)}g_n(y).
\end{equation*}
We define
\begin{equation*}
	E:=\bigcup_{n=1}^{\infty} E_n \cup N.
\end{equation*}
Then, $\mu(E)=0$ and for every $n\in \mathbb N$ and $x,y\in X \setminus E$ we have
\begin{equation*}
	\left|u(x)-u(y)\right|\leq d(x,y)^{s(x)}g_n(x)+d(x,y)^{s(y)}g_n(y).
\end{equation*}
Passing with $n\to \infty$ we obtain $u(x)=u(y)$. Hence, the function $u$ is constant almost everywhere and the proof is complete.
\end{proof}

\section{Embeddings of Variable Exponent Sobolev, Triebel-Lizorkin, and Besov Spaces}\label{sect:mainresults}

In this section, we present the core results of the paper regarding functional inequalities. We begin by investigating the structural properties and internal embeddings within the variable Haj\l{}asz--type scales. We then establish local Sobolev, Morrey, and Moser--Trudinger type inequalities on balls under lower Ahlfors growth conditions on the measure. These results are subsequently extended to their global counterparts under additional geometric hypotheses, such as geometric doubling and uniform bounds on the measure of balls of a fixed radius.

Throughout this section, we will use the following observation.
\begin{remark}\label{measurethreshold}
Given a metric measure space $(X,d,\mu)$, if $Q: X \rightarrow (0,\infty)$ is a bounded measurable function then the upper bound 1 for the radii in the lower Ahlfors $Q(\cdot)$-regularity condition is not essential and can be replaced by any strictly positive and finite threshold. 
Indeed, if there exist $\delta,b\in(0,\infty)$ such that $\mu(B(x,r)) \geq br^{Q(x)}$ holds for every $x\in X$ and $r\in (0,\delta]$, then for every $\delta' \in [\delta,\infty)$, $x\in X$, and $r\in (0,\delta']$, we have $\mu(B(x,r)) \geq b(\delta/\delta')^{Q^+} r^{Q(x)}.$ Similar considerations apply to the upper Ahlfors $Q(\cdot)$-regularity condition, provided one imposes additional assumptions on $(X,d,\mu)$, such as the measure $\mu$ being doubling.
\end{remark}

\subsection{Embedding properties of variable exponent Haj{\l}asz-type spaces}\label{htlhb}
		
\begin{prop}\label{embeddingsbetween}
	Let $(X,d)$ be a metric space equipped with a non-negative Borel measure $\mu$. The following statements hold.
	\begin{enumerate}
		\item[(i)] If $p,s\in \mathcal{P}_b(X)$ and $q_1,q_2\in \mathcal{P}(X)$ are such that $q_1\leq q_2$, then
		\begin{align*}
			\dot{M}^{s(\cdot)}_{p(\cdot),q_1(\cdot)}(X,d,\mu) \hookrightarrow \dot{M}^{s(\cdot)}_{p(\cdot),q_2(\cdot)}(X,d,\mu),\hspace{8mm}\dot{N}^{s(\cdot)}_{p(\cdot),q_1(\cdot)}(X,d,\mu) \hookrightarrow     \dot{N}^{s(\cdot)}_{p(\cdot),q_2(\cdot)}(X,d,\mu),
		\end{align*}
		and the operator semi-norms of the above embeddings are less than or equal to one.
		\item[(ii)] If $p,s\in \mathcal{P}_b(X)$, then
		\begin{align*}
			\dot{M}^{s(\cdot)}_{p(\cdot),\infty}(X,d,\mu)=\dot{M}^{s(\cdot),p(\cdot)}(X,d,\mu)
		\end{align*}
		with equal quasi-semi-norms.
		\item[(iii)] If $p,s\in \mathcal{P}_b(X)$, it holds that
		\begin{equation*}
			\dot{M}^{s(\cdot)}_{p(\cdot),p(\cdot)}(X,d,\mu)=\dot{N}^{s(\cdot)}_{p(\cdot),p(\cdot)}(X,d,\mu)
		\end{equation*}
		with equal quasi-semi-norms.
		\item[(iv)] If $s,t,p\in \mathcal{P}_b(X)$ and $q_1,q_2\in \mathcal{P}(X)$ are such that $ t \gg s$, then,
		\begin{align*}
			N^{t(\cdot)}_{p(\cdot),q_1(\cdot)}(X,d,\mu)\hookrightarrow N^{s(\cdot)}_{p(\cdot),q_2(\cdot)}(X,d,\mu)\quad\text{and}\quad
			M^{t(\cdot)}_{p(\cdot),q_1(\cdot)}(X,d,\mu)\hookrightarrow M^{s(\cdot)}_{p(\cdot),q_2(\cdot)}(X,d,\mu).
		\end{align*}
		\item[(v)] If $p,s\in \mathcal{P}_b(X)$, $q\in \mathcal{P}(X)$, then it holds that
		\begin{align*}
			\dot{M}^{s(\cdot)}_{p(\cdot),q(\cdot)}(X,d,\mu) \hookrightarrow \dot{M}^{s(\cdot),p(\cdot)}(X,d,\mu),
		\end{align*}
		where the operator semi-norm is less than or equal to one.
		\item[(vi)] If $p,s\in \mathcal{P}_b(X)$ and $q\in \mathcal{P}(X)$ are such that $p\geq q$, then
		\begin{equation*}
			\dot{N}^{s(\cdot)}_{p(\cdot),q(\cdot)}(X,d,\mu) \hookrightarrow \dot{M}^{s(\cdot),p(\cdot)}(X,d,\mu),
		\end{equation*}
		where the operator semi-norm is less than or equal to one.
		\item[(vii)] If $p,s\in \mathcal{P}_b(X)$ and $q\in \mathcal{P}(X)$, then for every $t \in \mathcal{P}_b(X)$ such that $s \gg t$, it holds that
		\begin{equation*}
			N^{s(\cdot)}_{p(\cdot),q(\cdot)}(X,d,\mu) \hookrightarrow M^{t(\cdot),p(\cdot)}(X,d,\mu).
		\end{equation*}
		\item[(viii)] If $p,s\in \mathcal{P}_b(X)$ and $q\in \mathcal{P}(X)$, then
		\begin{equation*}
			\dot{M}^{s(\cdot)}_{p(\cdot),q(\cdot)}(X,d,\mu) \hookrightarrow \dot{N}^{s(\cdot)}_{p(\cdot),\infty}(X,d,\mu).
		\end{equation*}
		\item[(ix)] If $p,s\in \mathcal{P}_b(X)$ and $\delta \in (0,\infty)$, then for every $t\in \mathcal{P}_b(X)$ such that $s \gg t$ there exists constant $\zeta(p,s,t,\delta)\in (0,\infty)$  such that for every ball $B:=B(x_0,r_0)\subseteq X$ with $r_0\in (0,\delta]$ and every $q\in \mathcal{P}(X)$, it holds that
		\begin{equation*}
			\dot{N}^{s(\cdot)}_{p(\cdot),q(\cdot)}(B,d,\mu) \hookrightarrow \dot{M}^{t(\cdot),p(\cdot)}(B,d,\mu)
		\end{equation*}
		and
		\begin{equation*}
			\left\| u \right\|_{\dot{M}^{t(\cdot),p(\cdot)}(B,d,\mu)} \leq \zeta(p,s,t,\delta)\left\|u \right\|_{\dot{N}^{s(\cdot)}_{p(\cdot),q(\cdot)}(B,d,\mu)},
		\end{equation*}
		for all $u\in \dot{N}^{s(\cdot)}_{p(\cdot),q(\cdot)}(B,d,\mu).$
	\end{enumerate}
\end{prop}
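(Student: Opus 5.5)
Most items follow from the definitions together with Proposition~\ref{monotonicity} and Lemma~\ref{mieszane}. Items (iv) and (ix) carry the real content: a smoothness-gain argument with geometric summation.

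\textbf{Items (i)--(iii), (v), (viii).} Item (i) is immediate from Proposition~\ref{monotonicity}, applied to any vector gradient $g\in\mathbb D^{s(\cdot)}(u)$, followed by taking the infimum.

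For (ii), observe two things. If $g\in\mathcal D^{s(\cdot)}(u)$, then the constant sequence $g_k:=g$ lies in $\mathbb D^{s(\cdot)}(u)$ and $\|\{g_k\}\|_{\ell^\infty}=g$. Conversely, if $\{g_k\}\in\mathbb D^{s(\cdot)}(u)$, then $g:=\sup_k g_k\in\mathcal D^{s(\cdot)}(u)$. Indeed, every pair $x\neq y$ satisfies $2^{-k-1}\le d(x,y)<2^{-k}$ for exactly one $k$, and pairs with $x=y$ are trivial. Hence the two infima coincide.

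The same observation gives (v), since pointwise $\sup_k g_k\le\|\{g_k\}\|_{\ell^{q(x)}}$.

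For (iii), Lemma~\ref{mieszane}(ii) with $q=p$ (so $p/q\equiv1$) gives
\[
\rho_{\ell^{p(\cdot)}(L^{p(\cdot)})}(\{g_k\})=\sum_k\int_X|g_k|^{p}\,d\mu=\rho_{p(\cdot)}\bigl(\|\{g_k\}\|_{\ell^{p(\cdot)}}\bigr).
\]
Since the two modulars are equal, the two Luxemburg quasi-norms are equal.

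For (viii), the convention $\lambda^{1/\infty}=1$ shows that $\rho_{\ell^\infty(L^{p(\cdot)})}(\{g_k\})$ equals $0$ if $\rho_{p(\cdot)}(g_k)\le1$ for all $k$, and $\infty$ otherwise. Therefore $\|\{g_k\}\|_{\ell^\infty(L^{p(\cdot)})}=\sup_k\|g_k\|_{L^{p(\cdot)}}\le\|\sup_kg_k\|_{L^{p(\cdot)}}\le\|\{g_k\}\|_{L^{p(\cdot)}(\ell^{q(\cdot)})}$.

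\textbf{Items (vi) and (vii).} For (vi), chain the previous items. By (i) for Besov spaces, $\dot N^{s(\cdot)}_{p,q}\hookrightarrow\dot N^{s(\cdot)}_{p,p}$ because $q\le p$. By (iii) this space equals $\dot M^{s(\cdot)}_{p,p}$, and by (v) it embeds into $\dot M^{s(\cdot),p(\cdot)}$. Every step has semi-norm at most one. Item (vii) will follow from (iv) with $q_2=\infty$ (for the Triebel--Lizorkin scale), combined with (ii) and (viii)/(v) as needed.

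\textbf{Item (iv).} Take $\{g_k\}\in\mathbb D^{t(\cdot)}(u)$ and build a new sequence $\{h_k\}\in\mathbb D^{s(\cdot)}(u)$ as follows.
\begin{itemize}
\item For $k\ge0$, set $h_k:=2^{-k(t-s)}g_k$. This works because $d(x,y)^{t(x)}=d(x,y)^{s(x)}d(x,y)^{t(x)-s(x)}\le d(x,y)^{s(x)}2^{-k(t(x)-s(x))}$ whenever $d(x,y)<2^{-k}$.
\item For $k<0$, set $h_k:=2^{(k+1)s}|u|$. This works because $|u(x)-u(y)|\le|u(x)|+|u(y)|$ and $1\le d(x,y)^{s(x)}2^{(k+1)s(x)}$ when $d(x,y)\ge2^{-k-1}$.
\end{itemize}
Both families decay geometrically in $|k|$, at the rates $2^{-k(t-s)^-}$ and $2^{(k+1)s^-}$.

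Fix $r:=\min\{1,p^-,q_2^-\}$. In the Besov case, note that $\|\{h_k\}\|_{\ell^{q_2(\cdot)}(L^{p(\cdot)})}\le\|\{h_k\}\|_{\ell^{r}(L^{p(\cdot)})}$ by Proposition~\ref{monotonicity}. By Lemma~\ref{mieszane}(i), the right-hand side is the $\ell^r$-norm of the sequence $\{\|h_k\|_{L^{p(\cdot)}}\}$. Each of these norms is bounded by a geometric factor times either $\sup_k\|g_k\|_{L^{p(\cdot)}}$ or $\|u\|_{L^{p(\cdot)}}$. The first quantity is at most $\|g\|_{\ell^{q_1(\cdot)}(L^{p(\cdot)})}$, by (i) with $q_2=\infty$ and (viii). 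In the Triebel--Lizorkin case, use the pointwise bound $\|\{h_k\}\|_{\ell^{q_2(x)}}\le\|\{h_k\}\|_{\ell^{r}}$. Then sum geometric series pointwise against $\sup_kg_k\le\|g\|_{\ell^{q_1(x)}}$ and against $|u|$.

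\textbf{Item (ix).} On a ball $B$ with $r_0\le\delta$, all distances satisfy $d(x,y)<2\delta$. Hence only indices $k\ge k_0:=-\lceil\log_2(2\delta)\rceil$ occur, and no lower-order term in $u$ is needed. Set $h:=\sup_{k\ge k_0}2^{-k(s-t)}g_k$. By the same computation as in (iv), $h\in\mathcal D^{t(\cdot)}(u)$ on $B$.

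To bound the norm, use that $|f|^{r}$ with $r=\min\{1,p^-\}$ lives in the normed space $L^{p/r}$. This gives $\|\sum_k f_k\|_{L^{p(\cdot)}}^r\le\sum_k\|f_k\|_{L^{p(\cdot)}}^r$, and hence
\[
\|h\|_{L^{p(\cdot)}}^r\le\sum_{k\ge k_0}\|2^{-k(s-t)}g_k\|_{L^{p(\cdot)}}^r\le\Bigl(\sup_k\|g_k\|_{L^{p(\cdot)}}\Bigr)^r\sum_{k\ge k_0}2^{-kr(s-t)^\pm}.
\]
Here the exponent is $(s-t)^-$ for $k\ge0$ and $(s-t)^+$ for the finitely many $k_0\le k<0$. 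The resulting sum depends only on $p,s,t,\delta$; call the resulting constant $\zeta(p,s,t,\delta)$. Finally, $\sup_k\|g_k\|_{L^{p(\cdot)}}\le\|g\|_{\ell^{q(\cdot)}(L^{p(\cdot)})}$ by (i) and (viii), uniformly in $q$.

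I expect the main obstacle to be this summation step in (iv) and (ix), because the $L^{p(\cdot)}$ quasi-norm is not subadditive. The plan above handles it through the $r$-power trick with $r=\min\{1,p^-\}$ and by working with the constant exponent $r$ in the $\ell^{q}$ component.
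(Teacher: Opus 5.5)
Everything except item (vii) is sound, but the chain you give for (vii) does not close. Item (iv) in the Triebel--Lizorkin scale has source space $M^{s(\cdot)}_{p(\cdot),q_1(\cdot)}$, whereas (vii) starts from $N^{s(\cdot)}_{p(\cdot),q(\cdot)}$. None of the items you add moves from the Besov scale to the $M$ side. Items (ii) and (v) live entirely on the $M$ side, and (viii) goes the other way, $\dot M^{s(\cdot)}_{p(\cdot),q(\cdot)}\hookrightarrow\dot N^{s(\cdot)}_{p(\cdot),\infty}$. The global passages from $\dot N$ to $\dot M$ in the proposition are (iii) and (vi), and both need the fine index to be at most $p$. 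Since $q$ in (vii) is arbitrary, the proposition offers no way into an $M$-type space at the same smoothness $s$ when $q>p$. The smoothness loss $s\to t$ has to be spent on the Besov side, precisely so that you are allowed to lower the fine index.

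The repair uses only what you already have. By (iv) in the Besov scale (with the roles of $s,t$ as in (vii)), $N^{s(\cdot)}_{p(\cdot),q(\cdot)}\hookrightarrow N^{t(\cdot)}_{p(\cdot),q_2(\cdot)}$ for any $q_2\le p$; the paper takes $q_2=p/2$, and $q_2=p$ works too. Then (vi) gives $\dot N^{t(\cdot)}_{p(\cdot),q_2(\cdot)}\hookrightarrow\dot M^{t(\cdot),p(\cdot)}$ with semi-norm at most one, and the $L^{p(\cdot)}$ part of the norm is carried along.

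Alternatively, your argument for (ix) globalizes. Set $h_k:=2^{-k(s-t)}g_k$ for $k\ge0$ and $h_k:=2^{(k+1)t}|u|$ for $k<0$. Then $\sup_k h_k$ is a scalar $t(\cdot)$-gradient of $u$. Your $r$-power bound with $r=\min\{1,p^-\}$ controls its $L^{p(\cdot)}$ norm by geometric series in $\|u\|_{L^{p(\cdot)}}$ and $\sup_k\|g_k\|_{L^{p(\cdot)}}\le\|\{g_k\}\|_{\ell^{q(\cdot)}(L^{p(\cdot)})}$.

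Your remaining items follow essentially the paper's proof. In (ix) you use the sup-gradient with $k_0$ depending only on $\delta$, plus the convexification through $L^{p(\cdot)/r}$. The paper instead uses the larger gradient $\bigl(\sum_{k\ge k_0}2^{-k(s-t)p}g_k^{p}\bigr)^{1/p}$, whose modular splits exactly as $\sum_k\rho_{p(\cdot)}(2^{-k(s-t)}g_k)$, so no quasi-triangle inequality is needed there.
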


\begin{proof}
	$(i)$ This is straightforward due to Proposition~\ref{monotonicity}.
	
	$(ii)$ Let $u\in \dot{M}^{s(\cdot)}_{p(\cdot),\infty}(X,d,\mu)$, and take any $\left\{h_k\right\}_{k\in\mathbb Z}\in \mathbb D^{s(\cdot)}(u)$ such that
	\begin{equation*}
		\left\|\left\{h_k\right\}_{k\in\mathbb Z}\right\|_{L^{p(\cdot)}\left(\ell^{\infty}\left(X,\mu\right)\right)}=\left\| \sup_{k\in \mathbb Z} h_k \right\|_{L^{p(\cdot)}(X,\mu)}<\infty.
	\end{equation*}
	Define $h(x):=\displaystyle \sup_{k\in \mathbb Z} h_k(x)$. We have $h\in \mathcal{D}^{s(\cdot)}(u)\cap L^{p(\cdot)}(X,\mu)$, which implies that $u\in \dot{M}^{s(\cdot),p(\cdot)}\left(X,d,\mu\right)$. Moreover,
	\begin{equation*}
		\left\|u\right\|_{\dot{M}^{s(\cdot),p(\cdot)}(X,d,\mu)}=\inf_{g\in \mathcal{D}^{s(\cdot)}(u)}\left\|g\right\|_{L^{p(\cdot)}(X,\mu)}\leq \left\| h \right\|_{L^{p(\cdot)}(X,\mu)} = \left\|\left\{h_k\right\}_{k\in\mathbb Z}\right\|_{L^{p(\cdot)}\left(\ell^{\infty}\left(X,\mu\right)\right)}.
	\end{equation*}
	Taking the infimum over all $\left\{h_k\right\}_{k\in\mathbb Z}\in \mathbb{D}^{s(\cdot)}(u)$ we obtain
	\begin{equation*}
		\left\|u\right\|_{\dot{M}^{s(\cdot),p(\cdot)}(X,d,\mu)}\leq \left\|u\right\|_{\dot{M}^{s(\cdot)}_{p(\cdot),\infty}(X,d,\mu)}.
	\end{equation*}
	Now, assume that $u\in \dot{M}^{s(\cdot),p(\cdot)}(X,d,\mu)$. Let us take any $h\in \mathcal{D}^{s(\cdot)}(u)\cap L^{p(\cdot)}(X,\mu)$ and define $h_k:=h$ for all $k\in \mathbb Z$. Then $\left\{h_k\right\}_{k\in\mathbb Z}\in \mathbb{D}^{s(\cdot)}(u)$ and
	\begin{equation*}
		\left\|u\right\|_{\dot{M}^{s(\cdot)}_{p(\cdot),\infty}(X,d,\mu)}=\inf_{\left\{g_k\right\}_{k\in\mathbb Z}\in \mathbb{D}^{s(\cdot)}(u)}\left\| \left\{g_k \right\}_{k\in\mathbb Z} \right\|_{L^{p(\cdot)}\left(\ell^{\infty}\left(X,\mu\right)\right)} \leq \left\| \left\{h_k\right\}_{k\in\mathbb Z} \right\|_{L^{p(\cdot)}(\ell^{\infty}(X,\mu))}= \left\|h\right\|_{L^{p(\cdot)}(X,\mu)},
	\end{equation*}
	and thus, $u\in \dot{M}^{s(\cdot)}_{p(\cdot),\infty}(X,d,\mu)$. Moreover, by taking the infimum over all $h\in \mathcal{D}^{s(\cdot)}(u)$, we obtain
	\begin{equation*}
		\left\|u \right\|_{\dot{M}^{s(\cdot)}_{p(\cdot),q(\cdot)}(X,d,\mu)} \leq \left\|u\right\|_{\dot{M}^{s(\cdot),p(\cdot)}(X,d,\mu)}
	\end{equation*}
	
	$(iii)$ It suffices to prove that
	\begin{equation*}
		\rho_{\ell^{p(\cdot)}(L^{p(\cdot)}(X,\mu))}\left(g\right)=\rho_{L^{p(\cdot)}(X,\mu)}\left( \left(\sum_{k\in \mathbb Z} \left|g_k\right|^{p}\right)^{\frac{1}{p}} \right)
	\end{equation*}
	for every sequence of measurable functions $g:=\left\{g_k\right\}_{k\in\mathbb Z} \subseteq L^{p(\cdot)}(X,\mu)$. By Lemma~\ref{mieszane} $(ii)$ and the monotone convergence theorem, we get
	\begin{align*}
		\rho_{\ell^{p(\cdot)}(L^{p(\cdot)}(X,\mu))}\left(g\right)&=\sum_{k\in \mathbb Z} \left\| \left|g_k\right|^{p} \right\|_{L^1(X)}=\sum_{k\in \mathbb Z} \int_{X} \left|g_k(x)\right|^{p(x)}\mbox{d}\mu(x) \\ &=\rho_{L^{p(\cdot)}(X,\mu)}\left( \left(\sum_{k\in \mathbb Z} \left|g_k\right|^{p}\right)^{\frac{1}{p}} \right).
	\end{align*}
	and the proof of $(iii)$ is complete.    
	
	$(iv)$ 
	By the recently proved Proposition~\ref{embeddingsbetween} $(i)$, it suffices to prove that 
	\begin{align*}
		N^{t(\cdot)}_{p(\cdot),\infty}(X,d,\mu)&\hookrightarrow N^{s(\cdot)}_{p(\cdot),q_2^-}(X,d,\mu),\\
		M^{t(\cdot)}_{p(\cdot),\infty}(X,d,\mu)&\hookrightarrow M^{s(\cdot)}_{p(\cdot),q_2^-}(X,d,\mu).
	\end{align*}
	Let $u\in N^{t(\cdot)}_{p(\cdot),\infty}(X,d,\mu)$ and $\left\{g_k\right\}_{k\in \mathbb Z} \in \mathbb{D}^{t(\cdot)}(u) \cap \ell^{\infty}\left(L^{p(\cdot)}(X,\mu)\right).$ Denote $\varepsilon:= t-s$. Since $t \gg s$, thus $\varepsilon^-\in(0,\infty)$. Now define
	\begin{equation*}
		h_k(x):=\left\{\begin{array}{ll} 2^{(k+1)s^-}\left|u(x)\right|& \textnormal{ for } k<0,\\ 2^{-k\varepsilon^-}g_k(x)& \textnormal{ for } k\geq 0.\end{array}\right.
	\end{equation*}
	We we will show that $\left\{h_k\right\}_{k\in \mathbb Z}\in \mathbb{D}^{s(\cdot)}(u)$. Let $N_u\subseteq X$ be the null set from the definition of $\left\{g_k\right\}_{k\in\mathbb Z}$ and fix $k\in\mathbb Z$, $x,y\in X\setminus N_u$, and assume $2^{-k-1}\leq d(x,y)<2^{-k}$. Then, if $k\geq 0$,  we have that
	\begin{align*}
		\left|u(x)-u(y)\right| &\leq d(x,y)^{t(x)} g_k(x)+d(x,y)^{t(y)} g_k(y)\\
		&\leq d(x,y)^{s(x)} 2^{-k\varepsilon^-}g_k(x)+d(x,y)^{s(y)} 2^{-k\varepsilon^-}g_k(y)\\ &=d(x,y)^{s(x)}h_k(x)+d(x,y)^{s(y)}h_k(y).
	\end{align*}
	On the other hand, if $k<0$, then we have that
	\begin{align*}
		\left|u(x)-u(y)\right|&\leq 2^{(-k-1)s(x)}2^{(k+1)s(x)}\left|u(x)\right|+2^{(-k-1)s(y)}2^{(k+1)s(y)}\left|u(y)\right| \\ &\leq d(x,y)^{s(x)} h_k(x)+d(x,y)^{s(y)}h_k(y).
	\end{align*}
	Therefore $\left\{h_k\right\}_{k\in\mathbb Z}\in \mathbb{D}^{s(\cdot)}(u).$ Now, since $\left\{h_k\right\}_{k\in\mathbb Z}\subseteq L^{p(\cdot)}(X,\mu)$, by Lemma~\ref{mieszane} $(i)$ we have
	\begin{align*}
		\left\| \left\{h_k\right\}_{k\in\mathbb Z} \right\|_{\ell^{q_2^-}\left(L^{p(\cdot)}(X,\mu)\right)}^{q_2^-}&=\left\|u\right\|_{L^{p(\cdot)}(X,\mu)}^{q_2^-}\sum_{k=-\infty}^{-1} 2^{(k+1)s^-q_2^-}+\sum_{k=0}^{\infty} 2^{-kq_2^-\varepsilon^-}\left\|g_k\right\|_{L^{p(\cdot)}(X,\mu)}^{q_2^-}\\ & \leq \frac{1}{1-2^{-s^-q_2^-}}\left\|u\right\|_{L^{p(\cdot)}(X,\mu)}^{q_2^-}+ \frac{1}{1-2^{-q_2^-\varepsilon^-}}\left\| \left\{g_k\right\}_{k\in\mathbb Z} \right\|_{\ell^{\infty}\left(L^{p(\cdot)}(X,\mu)\right)}^{q_2^-}.
	\end{align*}
	Therefore $u\in N^{s(\cdot)}_{p(\cdot),q_2^-}(X,d,\mu)$ and the embedding is continuous.
	
	The proof of the second inclusion is similar. From $(ii)$, we know that $M^{t(\cdot)}_{p(\cdot),\infty}(X,d,\mu)=M^{t(\cdot),p(\cdot)}(X,d,\mu)$ with equal quasi-norms. Let $g\in \mathcal{D}^{t(\cdot)}(u) \cap L^{p(\cdot)}(X)$ and define
	\begin{align*}
		f_k(x):=\left\{\begin{array}{ll} 2^{(k+1)s^-}\left|u(x)\right|& \textnormal{ for } k<0,\\ 2^{-k\varepsilon^-}g(x)& \textnormal{ for } k\geq 0,\end{array}\right. 
	\end{align*}
	where $\varepsilon:= t-s$. Now, let $N_u\subseteq X$ be the null set from the definition of $g$, and fix $k\in\mathbb Z$, $x,y\in X$, and assume that $2^{-k-1}\leq d(x,y)<2^{-k}$. Then if $k\geq 0$, we have that
	\begin{align*}
		\left|u(x)-u(y)\right|&\leq d(x,y)^{t(x)}g(x)+d(x,y)^{t(y)}g(y)\\
		&\leq d(x,y)^{s(x)}f_k(x)+d(x,y)^{s(y)}f_k(y).
	\end{align*}
	On the other hand, if $k<0$ then we have
	\begin{equation*}
		\left|u(x)-u(y)\right|\leq d(x,y)^{s(x)}f_k(x)+d(x,y)^{s(y)}f_k(y),
	\end{equation*}
	which proves that $\left\{f_k\right\}_{k\in\mathbb Z}\in \mathbb{D}^{s(\cdot)}(u)$. 
	Clearly, $\left\{f_k\right\}_{k\in\mathbb Z}\subseteq L^{p(\cdot)}(X,\mu)$. Moreover, if $q_2^- <\infty$, then
	\begin{align*}
		\left\| \left\{f_k\right\}_{k\in\mathbb Z} \right\|_{L^{p(\cdot)}\left(\ell^{q_2^-}\left(X,\mu\right)\right)}&\leq 2^{\frac{1}{q_2^-}}\left\| \left|u(x)\right|\left(\sum_{k=-\infty}^{-1} 2^{(k+1)s^-q_2^-}\right)^{\frac{1}{q_2^-}} + g(x)\left(\sum_{k=0}^{\infty} 2^{-k\varepsilon^-q_2^-}\right)^{\frac{1}{q_2^-}} \right\|_{L^{p(\cdot)}(X,\mu)} \\ 
		& \leq \kappa_{p(\cdot)} 2^{\frac{1}{q_2^-}} \left(\frac{1}{\left(1-2^{-s^-q_2^-}\right)^{\frac{1}{q_2^-}}}\left\|u\right\|_{L^{p(\cdot)}(X,\mu)}+\frac{1}{\left(1-2^{-\varepsilon^-q_2^-}\right)^{\frac{1}{q_2^-}}}\left\|g\right\|_{L^{p(\cdot)}(X,\mu)}\right),
	\end{align*}
	where $\kappa_{p(\cdot)}\in[1,\infty)$ is the constant appearing in the quasi-triangle inequality satisfied by the quasi-norm $\left\| \cdot\right\|_{L^{p(\cdot)}(X,\mu)}$.
	Furthermore, if $q_2^- = \infty$, then $q_2\equiv \infty$ and we have
	\begin{equation*}
		\left\| \left\{f_k\right\}_{k\in\mathbb Z} \right\|_{L^{p(\cdot)}(\ell^{\infty}(X,\mu))} \leq \kappa_{p(\cdot)}\left(\left\| u \right\|_{L^{p(\cdot)}(X,\mu)} + \left\|g\right\|_{L^{p(\cdot)}(X,\mu)}\right).
	\end{equation*}Hence, the claim  in $(iv)$ is proven.
	
	$(v)$ Using $(i)$ and $(ii)$ of the current proposition, we get
	\begin{equation*}
		\dot{M}^{s(\cdot)}_{p(\cdot),q(\cdot)}(X,d,\mu) \hookrightarrow \dot{M}^{s(\cdot)}_{p(\cdot),\infty}(X,d,\mu)=\dot{M}^{s(\cdot),p(\cdot)}(X,d,\mu),
	\end{equation*}
	and hence $(v)$ is proven.
	
	$(vi)$ Using $(i)$, $(iii)$, and $(v)$ of the current proposition, we get
	\begin{equation*}
		\dot{N}^{s(\cdot)}_{p(\cdot),q(\cdot)}(X,d,\mu) \hookrightarrow \dot{N}^{s(\cdot)}_{p(\cdot),p(\cdot)}(X,d,\mu)=\dot{M}^{s(\cdot)}_{p(\cdot),p(\cdot)}(X,d,\mu) \hookrightarrow \dot{M}^{s(\cdot),p(\cdot)}(X,d,\mu),
	\end{equation*}
	which proves $(vi)$. 
	
	$(vii)$ Applying $(iv)$ and $(vi)$ we have
	\begin{equation*}
		N^{s(\cdot)}_{p(\cdot),q(\cdot)}(X,d,\mu) \hookrightarrow N^{t(\cdot)}_{p(\cdot),p(\cdot)/2}(X,d,\mu) \hookrightarrow M^{t(\cdot),p(\cdot)}(X,d,\mu)
	\end{equation*}
	and the proof of $(vii)$ is done.
	
	$(viii)$ By $(v)$ of the current proposition, it suffices to prove that
	\begin{equation*}
		\dot{M}^{s(\cdot),p(\cdot)}(X,d,\mu) \hookrightarrow \dot{N}^{s(\cdot)}_{p(\cdot),\infty}(X,d,\mu).
	\end{equation*}
	Let $u\in \dot{M}^{s(\cdot),p(\cdot)}(X,d,\mu)$ and $g\in \mathcal{D}^{s(\cdot)}(u)\cap L^{p(\cdot)}(X,\mu)$. For every $k\in \mathbb Z$ we define $g_k:=g$. Then, $\left\{g_k\right\}_{k\in \mathbb Z}$ is fractional $s(\cdot)$-gradient of $u$ and
	\begin{equation*}
		\left\| u \right\|_{\dot{N}^{s(\cdot)}_{p(\cdot),\infty}(X,d,\mu)}\leq \left\| \left\{g_k\right\}_{k\in \mathbb Z} \right\|_{\ell^{\infty}\left(L^{p(\cdot)}(X,\mu)\right)} = \sup_{k\in \mathbb Z} \left\| g_k \right\|_{L^{p(\cdot)}(X,\mu)} = \left\|g \right\|_{L^{p(\cdot)}(X,\mu)}.
	\end{equation*}
	Since $g$ was an arbitrary scalar $s(\cdot)$-gradient, we get
	\begin{equation*}
		\left\| u \right\|_{\dot{N}^{s(\cdot)}_{p(\cdot),\infty}(X,d,\mu)} \leq \left\| u \right\|_{\dot{M}^{s(\cdot),p(\cdot)}(X,d,\mu)}
	\end{equation*}
	and $(viii)$ is proven.
	
	$(ix)$ Since by $(i)$ of the current proposition, we have
	\begin{equation*}
		\dot{N}^{s(\cdot)}_{p(\cdot),q(\cdot)}(B,d,\mu) \hookrightarrow \dot{N}^{s(\cdot)}_{p(\cdot),\infty}(B,d,\mu)
	\end{equation*}
	with appropriate control of quasi-norms, it suffices to prove that there  exists constant $\zeta(p,s,t,\delta)\in (0,\infty)$, independent of $B$, such that
	\begin{equation*}
		\left\|u\right\|_{\dot{M}^{t(\cdot),p(\cdot)}(B,d,\mu)} \leq \zeta(p,s,t,\delta) \left\|u\right\|_{\dot{N}^{s(\cdot)}_{p(\cdot),\infty}(B,d,\mu)}
	\end{equation*}
	holds for every $u\in \dot{N}^{s(\cdot)}_{p(\cdot),\infty}(B,d,\mu).$ Let $u\in \dot{N}^{s(\cdot)}_{p(\cdot),\infty}(B,d,\mu)$ be fixed. Without loss of generality, we can assume that $\left\|u\right\|_{\dot{N}^{s(\cdot)}_{p(\cdot),\infty}(B,d,\mu)}<1.$ Thus, it suffices to prove that
	\begin{equation*}
		\left\|u \right\|_{\dot{M}^{t(\cdot),p(\cdot)}(B,d,\mu)} \leq \zeta(p,s,t,\delta).
	\end{equation*}
	We take $\left\{g_k\right\}_{k\in \mathbb Z} \in \mathbb{D}^{s(\cdot)}(u)$ such that $\left\| \left\{g_k\right\}_{k\in\mathbb Z} \right\|_{\ell^{\infty}(L^{p(\cdot)}(B))}\leq 1.$ Let $k_0\in \mathbb Z$ be such that $2^{-k_0-1} \leq 2r_0 <2^{-k_0}.$ Define
	\begin{equation*}
		g(x):=\left[ \sum_{k=k_0}^{\infty} 2^{-k\left(s(x)-t(x)\right)p(x)} g_k(x)^{p(x)} \right]^{\frac{1}{p(x)}}.
	\end{equation*}
	We claim that $g\in \mathcal{D}^{t(\cdot)}(u).$ Indeed, for $y,z\in B$ we have $d(y,z)<2r_0<2^{-k_0}.$ Hence, for every $y,z\in B$ there exists $j\geq k_0$ such that $2^{-j-1} \leq d(y,z) <2^{-j}.$ Since $\left\{g_k\right\}_{k\in\mathbb Z}\in \mathbb{D}^{s(\cdot)}(u)$, there exists a null set $N_u\subseteq B$ such that
	\begin{align*}
	\left|u(y)-u(z)\right| & \leq d(y,z)^{s(y)}g_j(y)+d(y,z)^{s(z)}g_j(z) \leq d(y,z)^{t(y)}g(y)+d(y,z)^{t(z)}g(z),
	\end{align*}
	whenever $y,z\in B \setminus N_u.$ This means that $g\in \mathcal{D}^{t(\cdot)}(u).$ On the other hand, since $k_0 \geq -2-\log_2 r_0$, we get
	\begin{align*}
	\rho_{p(\cdot)}(g)&=\int_B \sum_{k=k_0}^{\infty} 2^{-k\left(s(x)-t(x)\right)p(x)}g_k(x)^{p(x)} \mbox{d}\mu(x)\\ & = \int_B 2^{\left(2+\log_2 r_0\right)\left(s(x)-t(x)\right)p(x)}\sum_{k=k_0}^{\infty} 2^{-\left(k+2+\log_2 r_0\right)\left(s(x)-t(x)\right)p(x)}g_k(x)^{p(x)}\mbox{d}\mu(x) \\ & \leq \int_B (4 \delta)^{\left(s(x)-t(x)\right)p(x)} \sum_{k=k_0}^{\infty} 2^{-\left(k+2+\log_2 r_0\right)\left(s-t\right)_B^- p_B^-}g_k(x)^{p(x)}\mbox{d}\mu(x) \\ & \leq \max\left\{(4\delta)^{\left(s-t\right)^+ p^+}, (4\delta)^{\left(s-t\right)^- p^-}\right\}\frac{2^{-\left(k_0+2+\log_2 r_0\right)p_B^-\left(s-t\right)_B^-}}{1-2^{-p_B^-\left(s-t\right)_B^-}} \leq \frac{\max\left\{(4\delta)^{\left(s-t\right)^+ p^+}, (4\delta)^{\left(s-t\right)^- p^-}\right\}}{1-2^{-p^- \left(s-t\right)^-}}.
	\end{align*}
	Thus,
	\begin{equation*}
	\left\| g\right\|_{L^{p(\cdot)}(B,\mu)} \leq \max\left\{\rho_{p(\cdot)}(g)^{\frac{1}{p_B^-}}, \rho_{p(\cdot)}(g)^{\frac{1}{p_B^+}}\right\} \leq \zeta(p,s,t,\delta),
	\end{equation*}
	which completes the proof of $(ix)$ and hence, the proof of Proposition~\ref{embeddingsbetween}.
\end{proof}

\subsection{Local Sobolev inequalities on balls}\label{elbm}

\begin{tw}\label{sobolevpoincare}
	Let $(X,d,\mu)$ be a metric measure space and assume that there exist $Q\in \mathcal{P}_b^{\log}(X)$, $\delta \in (0,\infty)$, and $b\in (0,1]$\footnote{If $b> 1$, then we can take $\tilde{b}:=\min\left\{1,b\right\}$.} such that for every $x\in X$ and $r\in (0,\delta]$,
	\begin{equation*}
		\mu\left(B(x,r)\right)\geq br^{Q(x)}.
	\end{equation*}
	Let $p,s\in \mathcal{P}_b^{\log}(X)$ and $\sigma \in (1,\infty)$. Then, the following statements are valid.
	\begin{enumerate}
		\item[(i)] If $sp \ll Q$, then, there exists a positive constant $C_{S}$, depending only on $b$, $\sigma$, $p$, $s$, $\delta$ and $Q$, such that, for every ball $B_0:=B(x_0,r_0)\subseteq X$ with $r_0\leq 
		\delta/\sigma$, and every pair of functions $u\in \dot{M}^{s(\cdot),p(\cdot)}(\sigma B_0)$ and $g\in \mathcal{D}^{s(\cdot)}(u)\cap L^{p(\cdot)}(\sigma B_0)$, there holds
		\begin{equation}\label{sobolevloc}
			\inf_{c\in \mathbb R}\left\|u-c\right\|_{L^{\gamma(\cdot)}(B_0)} \leq C_{S}\left(\frac{\mu(B_0)}{r_0^{Q(x_0)}}\right)^{\frac{1}{\gamma^-_{B_0}}} \left\|g\right\|_{L^{p(\cdot)}(\sigma B_0)},
		\end{equation}
		where $\displaystyle \gamma:=\frac{Qp}{Q-sp}$.
		\item[(ii)] If $sp=Q$, then there exist positive constants $C_{MT1}, C_{MT2}$, depending only on $b$, $\sigma$, $p$, $s$, $\delta$ and $Q$, such that for every ball $B_0:=B(x_0,r_0)\subseteq X$ with $r_0\leq \delta/\sigma$, and every pair of functions $u\in \dot{M}^{s(\cdot),p(\cdot)}(\sigma B_0)$ and $g\in \mathcal{D}^{s(\cdot)}(u) \cap L^{p(\cdot)}(\sigma B_0)$ with $ \left\|g\right\|_{L^{p(\cdot)}(\sigma B_0)}> 0$, there holds
		\begin{equation}\label{moser2}
			\fint_{B_0} \exp\left(C_{MT1}\frac{\left|u(x)-u_{B_0}\right|}{\left\|g\right\|_{L^{p(\cdot)}(\sigma B_0)}}\right)\mbox{d}\mu(x) \leq C_{MT2}.
		\end{equation}
		\item[(iii)] If $sp \gg Q$, then there exists a positive constant $C_H$, depending only on $b$, $\sigma$, $p$, $s$, $\delta$ and $Q$, such that for every ball $B_0:=B(x_0,r_0)\subseteq X$ with $r_0 \leq \delta/\sigma$, and every pair of functions $u\in \dot{M}^{s(\cdot),p(\cdot)}(\sigma B_0)$ and $g\in \mathcal{D}^{s(\cdot)}(u)\cap L^{p(\cdot)}(\sigma B_0)$, there holds
		\begin{equation}\label{morrey}
			\left\|u-u_{B_0}\right\|_{L^{\infty}(B_0)}\leq C_Hr_0^{\alpha(x_0)}\left\|g\right\|_{L^{p(\cdot)}(\sigma B_0)},
		\end{equation}
		where $\displaystyle \alpha:=s-\frac{Q}{p}$.
		Moreover, the function $u$ has a continuous representative $\tilde{u}$ on $B_0$ satisfying that, for all $x,y\in B_0$,
		\begin{equation}\label{holderembedding}
			\left|\tilde{u}(x)-\tilde{u}(y)\right| \leq D_H(r_0) \|g\|_{L^{p(\cdot)}(\sigma B_0)}d(x,y)^{\alpha(x)},
		\end{equation}
		where
		\begin{equation*}
				D_H(r_0):=2^{\alpha^++1}C_H \left(\frac{\sigma \delta}{(\sigma -1)r_0}\right)^{\alpha^+}.
			\end{equation*}
	\end{enumerate}
\end{tw}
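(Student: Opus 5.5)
The plan is to adapt the Hajłasz-type telescoping and truncation argument (as in \cite{Haj03,AGH20,HHL06}) to variable exponents. The basic tool is a pointwise estimate comparing $u$ with medians on shrinking balls. Fix $x\in B_0$ and set $B_k:=B(x,2^{-k}(\sigma-1)r_0)$, so that $B_k\subseteq\sigma B_0$ and the radii stay below $\delta$. Medians will be used instead of averages, because a priori $u$ need not be integrable. For $y,z\in B_k\setminus N_u$ the gradient inequality gives $|u(y)-u(z)|\le (2^{1-k}r_0\sigma)^{s(y)}g(y)+(\cdots)^{s(z)}g(z)$. Taking medians and using Proposition~\ref{medianlemma} with a small constant exponent, together with Lemma~\ref{loglemma} to freeze $s$ at $x$ up to constants, we expect
\[
|m_u(B_{k+1})-m_u(B_k)|\lesssim r_k^{s(x)}\Big(\fint_{B_k}g^{t}\,d\mu\Big)^{1/t},\qquad 0<t<p^-.
\]
Summing over $k$ and using Lebesgue differentiation for medians at a.e.\ $x$ yields
\[
|u(x)-m_u(B_0')|\lesssim \sum_k r_k^{s(x)}\Big(\fint_{B_k}g^t\Big)^{1/t}.
\]

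\textbf{Part (i).} Split the sum at a scale $\rho$. For $r_k\le\rho$ the terms are bounded by $\rho^{s(x)}\mathcal{M}g(x)$, where $\mathcal{M}$ is a local fractional maximal function of order $t$. For $r_k>\rho$ we apply Hölder's inequality (Proposition~\ref{holder}) together with $\mu(B_k)\ge b r_k^{Q(x)}$ and log-Hölder continuity, which gives terms of size $r_k^{s(x)-Q(x)/p(x)}\|g\|_{L^{p(\cdot)}(\sigma B_0)}$. Since $s-Q/p<0$ uniformly, this geometric sum is controlled by $\rho^{s(x)-Q(x)/p(x)}\|g\|$. Optimizing in $\rho$ gives the Hedberg-type bound
\[
|u(x)-c|\lesssim \mathcal{M}g(x)^{p(x)/\gamma(x)}\|g\|^{1-p(x)/\gamma(x)}.
\]
To finish we need $L^{p(\cdot)}$-boundedness of $\mathcal{M}$ in a space that is not doubling. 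Instead, I would follow the truncation method of \cite{Haj03}: first prove a weak-type estimate using a $5r$-covering for level sets of $g$ (only the lower Ahlfors bound is needed), then upgrade to strong type via the truncations $u_j=\min(\max(u-c,2^j),2^{j+1})-2^j$, whose gradients are $g\chi_{\{2^j<|u-c|\le 2^{j+1}\}}$, and sum the modulars. The factor $(\mu(B_0)/r_0^{Q(x_0)})^{1/\gamma^-_{B_0}}$ arises from normalizing the weak-type estimate on $B_0$. I expect this step to be the main obstacle: summing modulars with variable $\gamma$ and keeping the constants uniform in $B_0$ requires repeated use of Lemma~\ref{loglemma}(ii).

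\textbf{Part (ii).} When $sp=Q$, the same chain, with every dyadic term bounded by $\|g\|$ up to log-Hölder constants, gives the weak-type bound $\mu(\{|u-c|>\lambda\})\lesssim \mu(B_0)e^{-c\lambda/\|g\|}$, again via truncation. An alternative is to apply part (i) with $s$ replaced by $s-\epsilon$, tracking the growth of the constants as $\epsilon\to0$ (of order $\gamma^{1-1/p}$) and summing the exponential series. Replacing the median constant by $u_{B_0}$ is standard once exponential integrability is known.

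\textbf{Part (iii).} When $\alpha^->0$, the sum over $k$ is dominated by its first term, giving $|u(x)-m_u|\le C r_0^{\alpha(x)}\|g\|$ for a.e.\ $x$; replacing $\alpha(x)$ by $\alpha(x_0)$ uses Lemma~\ref{loglemma}. This yields \eqref{morrey}, with Lemma~\ref{zero} used to pass from $L^\infty$ bounds to pointwise differences. For \eqref{holderembedding}, take $x,y\in B_0$ off a null set. If $d(x,y)<(\sigma-1)r_0/(2\sigma)$, apply \eqref{morrey} on the ball $B(x,2d(x,y))$, which lies inside $\sigma B_0$ with dilation $\sigma$ after rescaling, and combine with Lemma~\ref{zero}. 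Otherwise use the bound $2C_Hr_0^{\alpha(x_0)}\|g\|$ together with $d(x,y)^{\alpha(x)}\gtrsim(\frac{(\sigma-1)r_0}{\sigma\delta})^{\alpha^+}$, which produces $D_H(r_0)$. Uniform continuity off the null set then gives the continuous representative $\tilde u$.
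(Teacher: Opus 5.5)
Your part (iii) is essentially sound, but parts (i) and (ii) have a genuine gap. Both rest on the Hedberg split. That means controlling the small-scale terms by a local maximal function $\mathcal M g$, and relying on a.e.\ convergence of medians $m_u(B(x,r))\to u(x)$. You propose to get the needed weak-type bound from a $5r$-covering using only the lower mass bound, and that step does not go through. The Vitali lemma only yields $\mu(\{\mathcal Mg>\lambda\}\cap B_0)\le\sum_i\mu(5B_i)$. To compare $\mu(5B_i)$ with $\mu(B_i)$, or with $r_i^{Q}$, you need doubling or an \emph{upper} mass bound. The hypothesis $\mu(B(x,r))\ge br^{Q(x)}$ points the wrong way, and the constants must depend only on $b,\sigma,p,s,\delta,Q$. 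For the same reason, a.e.\ differentiation of medians is not available in general in case (i).

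The truncation upgrade also fails: for Haj{\l}asz gradients, $g\chi_{\{2^j<|u-c|\le 2^{j+1}\}}$ need not belong to $\mathcal D^{s(\cdot)}(u_j)$. For instance, take $X=\{x_1,x_2\}$ with $d(x_1,x_2)=\tfrac12$, counting measure, $s\equiv1$, $u=\chi_{\{x_1\}}$ and $g=2\chi_{\{x_2\}}\in\mathcal D^{1}(u)$. With $c=0$ and $j=-1$ you get $u_{-1}=\tfrac12\chi_{\{x_1\}}$, while $g\chi_{\{1/2<|u|\le1\}}\equiv0$. The paper's proof avoids all three tools.

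The missing idea is Haj{\l}asz's level-set chaining, which uses the lower bound in the only direction available.
\begin{itemize}
\item After normalizing $\|g\|_{L^{p(\cdot)}(\sigma B_0)}\le1$, set $E_k=\{g^{p(\cdot)}\le2^k\}$. Chebyshev gives $\mu(\sigma B_0\setminus E_{j-1})\le2^{1-j}$.
\item Hence any ball $B(y,t)\subseteq\sigma B_0$ with $b\,t^{Q(y)}>\mu(\sigma B_0\setminus E_{j-1})$ must meet $E_{j-1}$.
\item Starting from $y_k\in E_k\cap B_0$, this produces a chain $y_{k-1}\in E_{k-1},\dots,y_{k_0}\in E_{k_0}$ with $d(y_j,y_{j-1})<t_j\approx2^{-j/Q}$.
\item Since $g\le2^{j/p}$ at $y_j$, the pointwise inequality gives $\sup_{E_k\cap B_0}|u|\lesssim2^{k/\gamma}+\sup_{E_{k_0}}|u|$. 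Log-H\"older continuity of $Q,p,s$ absorbs the variable exponents along the chain.
\item The strong estimate then follows directly from $\int_{B_0}|u|^{\gamma}\le\sum_k\sup_{E_k\cap B_0}|u|^{\gamma}\,\mu(E_k\setminus E_{k-1})\lesssim\sum_k2^k\mu(E_k\setminus E_{k-1})+(\text{the }E_{k_0}\text{ term})$. The $E_{k_0}$ term produces the factor $\mu(B_0)/r_0^{Q(x_0)}$.
\item For $sp=Q$ the chain gives $\sup_{E_k\cap B_0}|u|\le C_3(k-k_0)+C$. Choosing $e^{C_{MT1}C_3}=2$ collapses the exponential integral to $\sum_k2^{k-k_0}\mu(E_k\setminus E_{k-1})$.
\end{itemize}
In your median language the same mechanism reads: once $b\rho^{Q(x)}>2\mu(\{g>\lambda\})$, the set $\{g\le\lambda\}$ fills more than half of $B(x,\rho)$, so $m_g(B(x,\rho))\le\lambda$. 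This, not $\mathcal M$, is what must replace the maximal function.

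In (iii) your chain works because every scale is handled by the lower bound alone. Each median increment is $\lesssim r_k^{\alpha}\|g\|$ with $\alpha^->0$, and $r^{s}m_g(B(x,r))\lesssim r^{\alpha}\to0$ gives convergence of medians for free. Your derivation of \eqref{holderembedding} with threshold $(\sigma-1)r_0/(2\sigma)$ matches the paper's.
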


\begin{proof} 
Let $\beta_p:=4+2^{\frac{1}{p^-}}.$ We begin by making a series of observations. First of all, by a standard scaling argument we can assume that $\delta=1$. Moreover, let us observe that to prove \eqref{sobolevloc}, \eqref{moser2}, and \eqref{morrey}, it is enough to prove that 
	\begin{equation}\label{one1}
		\inf_{c\in \mathbb R}\left\|u-c\right\|_{L^{\gamma(\cdot)}(B_0)}\leq \frac{C_{S}}{\beta_p}\left(\frac{\mu(B_0)}{r_0^{Q(x_0)}}\right)^{\frac{1}{\gamma_{B_0}^-}}, 
	\end{equation}
	\begin{equation}\label{one1moser}
		\fint_{B_0} \exp\left(\beta_p C_{MT1}\left|u(x)-u_{B_0}\right|\right)\mbox{d}\mu(x) \leq C_{MT2},
	\end{equation}
	and
	\begin{equation}\label{twored1}
		\left\|u-u_{B_0}\right\|_{L^{\infty}(B_0)}\leq \frac{C_H}{\beta_p}r_0^{\alpha(x_0)}
	\end{equation}
	hold for all $u\in M^{s(\cdot),p(\cdot)}(\sigma B_0)$ and $g \in \mathcal{D}^{s(\cdot)}(u)$ satisfying
	\begin{equation}\label{1/6}
		\frac{1}{\beta_p} \leq \left\|g\right\|_{L^{p(\cdot)}(\sigma B_0)}\leq 1
	\end{equation}
	and
	\begin{equation}\label{lowerbound}
		g(x)^{p(x)}\geq \frac{1}{2}\fint_{\sigma B_0}g(y)^{p(y)}\mbox{d}\mu(y) >0,
	\end{equation}
	for every $x \in \sigma B_0$.
	Indeed, let $u\in M^{s(\cdot),p(\cdot)}(\sigma B_0)$ and set
	\begin{equation*}
		A:=\inf_{g\in \mathcal{D}^{s(\cdot)}(u)} \left\|g\right\|_{L^{p(\cdot)}(\sigma B_0)}.    
	\end{equation*}
	If $A=0$, then $u$ is constant by Lemma~\ref{gradientzero}, and hence \eqref{sobolevloc}, \eqref{moser2}, and \eqref{morrey} are satisfied. Therefore, we can assume that $A>0$. Let $g\in \mathcal{D}^{s(\cdot)}(u)\cap L^{p(\cdot)}(\sigma B_0)$ and let us define 
	\begin{equation*}
		\tilde{u}:=\frac{u}{4A+2^{\frac{1}{p^-}}\left\|g\right\|_{L^{p(\cdot)}(\sigma B_0)}}.
	\end{equation*}
	Then $\displaystyle \tilde{g}:=\frac{g}{4A+2^{\frac{1}{p^-}}\|g\|_{L^{p(\cdot)}(\sigma B_0)}} \in \mathcal{D}^{s(\cdot)}(\tilde{u})$ and 
	\begin{equation*}
		\frac{1}{\beta_p} \leq \|\tilde{g}\|_{L^{p(\cdot)}(\sigma B_0)} \leq \frac{1}{2^{\frac{1}{p^-}}} < 1.
	\end{equation*}
	Next, let us define $\hat{g}$ as follows
	\begin{equation*}
		\hat{g}(x)^{p(x)}:=\tilde{g}(x)^{p(x)}+\fint_{\sigma B_0}\tilde{g}(y)^{p(y)}\mbox{d}\mu(y).
	\end{equation*}
	Then, $\hat{g} \in \mathcal{D}^{s(\cdot)}(\tilde{u})$,  $\hat{g}(x)\geq \tilde{g}(x)$ for all $x\in \sigma B_0$, and since 
	\begin{equation*}
		\int_{\sigma B_0}\tilde{g}(y)^{p(y)}\mbox{d}\mu(y) \leq    \left\|\tilde{g}\right\|_{L^{p(\cdot)}(\sigma B_0)}^{p^-} \leq \frac{1}{2},
	\end{equation*}
	which is a consequence of Proposition~\ref{ballprop} and the fact that $\left\|\tilde{g}\right\|_{L^{p(\cdot)}(\sigma B_0)} \leq 1$,
	we have  that $\displaystyle\int_{\sigma B_0}\hat{g}(x)^{p(x)}\mbox{d}\mu(x)\leq 1.$ Therefore, 
	\begin{equation*}
		\frac{1}{\beta_p} \leq \left\|\hat{g}\right\|_{L^{p(\cdot)}(\sigma B_0)}\leq 1 \, \, \, \, \textnormal{ and } \, \, \, \,
		\hat{g}(x)^{p(x)}\geq \frac{1}{2}\fint_{\sigma B_0}\hat{g}(y)^{p(y)}\mbox{d}\mu(y) >0,
	\end{equation*}
	for every $x \in \sigma B_0$.
	Hence, replacing $u$ with $\tilde{u}$ in \eqref{one1}, \eqref{one1moser}, and \eqref{twored1}, respectively, we get 
	\begin{align*}
		\inf_{c\in \mathbb R}\left\|u-c\right\|_{L^{\gamma(\cdot)}(B_0)}&\leq \frac{C_{S}}{\beta_p}\left(\frac{\mu(B_0)}{r_0^{Q(x_0)}}\right)^{\frac{1}{\gamma_{B_0}^-}} \left(4A+2^{\frac{1}{p^-}} \left\| g \right\|_{L^{p(\cdot)}(\sigma B_0)}\right)\\
		&\leq \frac{C_{S}}{\beta_p}\left(\frac{\mu(B_0)}{r_0^{Q(x_0)}}\right)^{\frac{1}{\gamma_{B_0}^-}} \beta_p \left\| g \right\|_{L^{p(\cdot)}(\sigma B_0)}=C_{S}\left(\frac{\mu(B_0)}{r_0^{Q(x_0)}}\right)^{\frac{1}{\gamma_{B_0}^-}}\left\|g\right\|_{L^{p(\cdot)}\left(\sigma B_0 \right)},
	\end{align*}
	\begin{align*}
		\fint_{B_0}\left(C_{MT1}\frac{\left|u(x)-u_{B_0}\right|}{\left\|g\right\|_{L^{p(\cdot)}(\sigma B_0)}}\right) \mbox{d}\mu(x) &\leq \fint_{B_0} \exp\left(\beta_pC_{MT1}\frac{\left|u(x)-u_{B_0}\right|}{4A+2^{\frac{1}{p^-}}\left\|g\right\|_{L^{p(\cdot)}(\sigma B_0)}}\right)\mbox{d}\mu(x) \\&=\fint_{B_0} \exp\left(\beta_p C_{MT1} \left|\tilde{u}(x)-u_{B_0}\right|\right)\mbox{d}\mu(x)  \leq C_{MT2},
	\end{align*}
	and
	\begin{equation*}
		\left\|u - u_{B_0} \right\|_{L^{\infty}(B_0)} \leq \frac{C_H}{\beta_p}r_0^{\alpha(x_0)}\beta_p \left\|g\right\|_{L^{p(\cdot)}(\sigma B_0)}=C_H r_0^{\alpha(x_0)} \left\|g\right\|_{L^{p(\cdot)}\left(\sigma B_0\right)},
	\end{equation*}
	and our claim follows.
	
	Next, by Proposition~\ref{rel} for any $c\in \mathbb R$ we have
	\begin{equation*}
		\left\| u-c \right\|_{L^{\gamma(\cdot)}(B_0)}\leq 1+\left(\int_{B_0}\left| u(x)-c \right|^{\gamma(x)} \mbox{d}\mu(x)\right)^{\frac{1}{\gamma_{B_0}^-}}.
	\end{equation*}
	Therefore, to prove \eqref{one1} it is enough to prove the inequality
	\begin{equation}\label{redukcja1}
		\inf_{c\in \mathbb R}\int_{B_0}\left| u(x)-c \right|^{\gamma(x)}\mbox{d}\mu(x) \leq C\frac{\mu(B_0)}{r_0^{Q(x_0)}} 
	\end{equation}
	for some constant $C>0$.
	
	\indent If $E\subseteq \sigma B_0$ is a set of positive measure, we can always find $v_0\in\mathbb{R}$ such that
	\begin{equation*}
		\essinf_{x\in E} \left|u(x)-v_0\right|=0.
	\end{equation*} 
	We may replace $u$ by $u-v_0$ (since subtracting a constant from $u$ will not affect the inequalities \eqref{one1moser}, \eqref{twored1} and \eqref{redukcja1}) and assume that 
	\begin{equation*}
		\essinf_{x\in E} \left|u(x)\right|=0.
	\end{equation*} 
	With a correct choice of $E$ we will prove \eqref{one1moser}, \eqref{twored1} and \eqref{redukcja1}. Moreover, to prove \eqref{redukcja1} it suffices to prove that
	\begin{equation}\label{redukcja2}
		\int_{B_0} \left|u(x)\right|^{\gamma(x)} \mbox{d}\mu(x) \leq C\frac{\mu(B_0)}{r_0^{Q(x_0)}}.
	\end{equation}

	Let $N\subseteq X$ be the set of measure zero for which the pointwise estimate \eqref{pointwise} holds for $x,y\in X\setminus N.$ Consider the sets, for every $k\in\mathbb{Z},$
	\begin{equation*}
		E_k:=\left\{x\in \sigma B_0: g(x)^{p(x)}\leq 2^k\right\}\setminus N.
	\end{equation*}
	Clearly $E_k\subseteq E_{k+1}$ and the measure of the complement of $E_k$ can be estimated by Chebyshev's inequality as follows:
	\begin{equation}\label{Chebyschev1}
		\mu\left(\sigma B_0\setminus E_k\right)=\mu\left(\left\{x\in \sigma B_0: g(x)^{p(x)}>2^k\right\}\right)\leq 2^{-k}\int_{\sigma B_0}g(x)^{p(x)} \mbox{d}\mu(x).
	\end{equation}
	Note that
	\begin{equation}\label{g1}
		\frac{1}{2} \sum_{k\in\mathbb{Z}}2^k\mu(E_k\setminus E_{k-1}) \leq \int_{\sigma B_0}g(x)^{p(x)} \mbox{d}\mu(x) \leq \sum_{k\in\mathbb{Z}}2^k\mu(E_k\setminus E_{k-1}),
	\end{equation}
	and
	\begin{equation}\label{g1-X}
	\mu\left(\sigma B_0\setminus\bigcup_{k\in\mathbb{Z}}(E_k\setminus E_{k-1})\right)=0.
		\end{equation}
	Moreover, as $k\rightarrow\infty,$ $\mu(E_k)\rightarrow \mu(\sigma B_0)$ and \eqref{lowerbound} ensures that $E_k=\emptyset$ for all sufficiently small $k\in\mathbb Z$. Hence, there exists $\widetilde{k}_0\in\mathbb{Z}$ such that
	\begin{equation}\label{conv1}
		\mu\left(E_{\widetilde{k}_0-1}\right)<\frac{\mu\left(\sigma B_0\right)}{2}\leq\mu\left(E_{\widetilde{k}_0}\right).
	\end{equation}
	The first inequality of \eqref{conv1}, together with \eqref{Chebyschev1}, imply that
	\begin{equation}\label{second1}
		\frac{\mu\left(\sigma B_0\right)}{2}<\mu\left(\sigma B_0\setminus E_{\widetilde{k}_0-1}\right)\leq 2^{-\left(\widetilde{k}_0-1\right)}\int_{\sigma B_0}g(x)^{p(x)} \mbox{d}\mu(x).
	\end{equation}
	Since $\mu\left(E_{\widetilde{k}_0}\right) >0$, there exists $y\in E_{\widetilde{k}_0}$ and from \eqref{lowerbound} we obtain
	\begin{equation*}
		\frac{1}{2}\fint_{\sigma B_0}g(x)^{p(x)} \mbox{d}\mu(x) \leq g(y)^{p(y)}\leq 2^{\widetilde{k}_0}.
	\end{equation*}
	Combining this and \eqref{second1} gives
	\begin{equation}\label{tildekzero1}
		\frac{1}{2}\fint_{\sigma B_0}g(x)^{p(x)} \mbox{d}\mu(x) \leq 2^{\widetilde{k}_0}\leq 4\fint_{\sigma B_0}g(x)^{p(x)} \mbox{d}\mu(x).
	\end{equation}
	Choose the least integer $\ell\in\mathbb{Z}$ such that
	\begin{equation}\label{ell1}
		2^{\ell}>M(b,\sigma,Q,p,r_0)\frac{\mu(\sigma B_0)}{br_0^{Q_{\sigma B_0}^+}},
	\end{equation}
	where
	\begin{equation*}
		M(b,\sigma,Q,p,r_0):=\max\left\{ \frac{b^{\frac{Q_{\sigma B_0}^- - Q_{\sigma B_0}^+}{Q_{\sigma B_0}^-}}2^{2Q_{\sigma B_0}^+ +1}}{(\sigma -1)^{Q_{\sigma B_0}^+}\left(1-2^{-\frac{1}{Q_{\sigma B_0}^+}}\right)^{Q_{\sigma B_0}^+}},\, \frac{2^{2Q_{\sigma B_0}^- +1}r_0^{Q_{\sigma B_0}^+ - Q_{\sigma B_0}^-}\beta_p^{p_{\sigma Br_0}^+ \left(1-\frac{Q_{\sigma B_0}^-}{Q_{\sigma B_0}^+}\right)}}{\left(\sigma -1\right)^{Q_{\sigma B_0}^-} \left(1-2^{-\frac{1}{Q_{\sigma B_0}^+}}\right)^{Q_{\sigma B_0}^-}},\, 1\right\},
	\end{equation*}
	and set $k_0:=\widetilde{k}_0+\ell.$ Note that $\ell>0$ by the lower bound of measure and hence $k_0>\widetilde{k}_0$, which means that $\mu(E_{k_0})>0,$ by \eqref{conv1}. With the above choice of $k_0,$ the inequalities in \eqref{tildekzero1} become
	\begin{equation}\label{kzero1}
		L(b,\sigma,Q,p,r_0) \frac{1}{br_0^{Q_{\sigma B_0}^+}}\int_{\sigma B_0}g(x)^{p(x)} \mbox{d}\mu(x) \leq 2^{k_0}\leq U(b,\sigma,Q,p,r_0)\frac{1}{br_0^{Q_{\sigma B_0}^+}}\int_{\sigma B_0}g(x)^{p(x)} \mbox{d}\mu(x)
	\end{equation}
	where
	\begin{align*}
		L(b,\sigma,Q,p,r_0)&:=\frac{1}{2}M(b,\sigma,Q,p,r_0),\,\, \\ U(b,\sigma,Q,p,r_0)&:=8M(b,\sigma,Q,p,r_0).
	\end{align*}
	Let us suppose that $\mu(B_0\setminus E_{k_0})>0$ (in the other case the proof is even simpler and it will be handled later). Let us also suppose that $k\in\mathbb Z$ satisfies $k\geq k_0+1$ and $\mu((E_k\setminus E_{k-1})\cap B_0)>0$ (if such a $k\in\mathbb Z$ did not exist, then we would have $\mu(B_0\setminus E_{k_0})=0,$ contradicting our assumption). For $k_0+1 \leq j\leq k$ and $y\in \sigma B_0$ set
	\begin{equation}\label{radii1}
		t_j(y):=2b^{-\frac{1}{Q(y)}}\mu\left(\sigma B_0\setminus E_{j-1}\right)^{\frac{1}{Q(y)}}.
	\end{equation}
	Then in particular $t_j(y)>0.$ Moreover, we claim that for every $z_k, z_{k-1},\dots,z_{k_0+1} \in \sigma B_0$
	\begin{equation}\label{suma}
		t_k(z_k)+t_{k-1}(z_{k-1})+\dots+t_{k_0+1}(z_{k_0+1}) \leq (\sigma -1)r_0.
	\end{equation}
	Indeed, for $j=k_0,\dots,k-1$ we have
	\begin{align}\label{estimate1}
		\sum_{n=j+1}^k t_n(z_n)& \leq 2 b^{-\frac{1}{Q_{\sigma B_0}^-}} \left(\int_{\sigma B_0} g(x)^{p(x)} \mbox{d}\mu(x)\right)^{\frac{1}{Q_{\sigma B_0}^+}} \sum_{n=j}^{k-1} 2^{-\frac{n}{Q(z_{n+1})}} \nonumber \\ & \leq 4b^{-\frac{1}{Q_{\sigma B_0}^-}} \left( \int_{\sigma B_0} g(x)^{p(x)} \mbox{d}\mu(x) \right)^{\frac{1}{Q_{\sigma B_0}^+}} \frac{1}{1-2^{-\frac{1}{Q_{\sigma B_0}^+}}} \max\left\{2^{-\frac{j}{Q_{\sigma B_0}^+}}, 2^{-\frac{j}{Q_{\sigma B_0}^-}}\right\}.
	\end{align}
	In particular, in \eqref{estimate1} we can take $j=k_0$. If $k_0\geq 0$, then
	\begin{align}\label{tyt-35}	
	\sum_{n=k_0+1}^k t_n(z_n)& \leq \frac{4}{1-2^{-\frac{1}{Q_{\sigma B_0}^+}}} b^{-\frac{1}{Q_{\sigma B_0}^-}} 2^{-\frac{k_0}{Q_{\sigma B_0}^+}} \left( \int_{\sigma B_0} g(x)^{p(x)} \mbox{d}\mu(x) \right)^{\frac{1}{Q_{\sigma B_0}^+}}.
	\end{align}
	Moreover,
	\begin{align*}
		2^{k_0}=2^{\widetilde{k}_0+\ell} & \geq 2^{\ell}\cdot \frac{1}{2}\fint_{\sigma B_0} g(x)^{p(x)} \mbox{d}\mu(x) \\ & \geq \frac{\mu(\sigma B_0)}{br_0^{Q_{\sigma B_0}^+}} \frac{b^{\frac{Q_{\sigma B_0}^- - Q_{\sigma B_0}^+}{Q_{\sigma B_0}^-}} 2^{2Q_{\sigma B_0}^+ +1} }{(\sigma -1)^{Q_{\sigma B_0}^+} \left(1-2^{-\frac{1}{Q_{\sigma B_0}^+}}\right)^{Q_{\sigma B_0}^+}}\cdot \frac{1}{2} \fint_{\sigma B_0} g(x)^{p(x)} \mbox{d}\mu(x) \\ & = \frac{4^{Q_{\sigma B_0}^+} b^{-\frac{Q_{\sigma B_0}^+}{Q_{\sigma B_0}^-}}}{(\sigma -1)^{Q_{\sigma B_0}^+}r_0^{Q_{\sigma B_0}^+}\left(1-2^{-\frac{1}{Q_{\sigma B_0}^+}}\right)^{Q_{\sigma B_0}^+}}\int_{\sigma B_0} g(x)^{p(x)} \mbox{d}\mu(x).
	\end{align*}
	Hence,
	\begin{equation*}
		2^{-\frac{k_0}{Q_{\sigma B_0}^+}} \leq \frac{(\sigma-1)r_0}{4b^{-\frac{1}{Q_{\sigma B_0}^-}}}\left(1-2^{-\frac{1}{Q_{\sigma B_0}^+}}\right) \left( \int_{\sigma B_0} g(x)^{p(x)} \mbox{d}\mu(x) \right)^{-\frac{1}{Q_{\sigma B_0}^+}},
	\end{equation*}
	which, together with \eqref{tyt-35}, yields \eqref{suma}.
	
	If $k_0 < 0$, then
	\begin{equation*}
			\sum_{n=k_0+1}^k t_n(z_n) \leq \frac{4}{1-2^{-\frac{1}{Q_{\sigma B_0}^+}}} b^{-\frac{1}{Q_{\sigma B_0}^-}} 2^{-\frac{k_0}{Q_{\sigma B_0}^-}} \left( \int_{\sigma B_0} g(x)^{p(x)} \mbox{d}\mu(x) \right)^{\frac{1}{Q_{\sigma B_0}^+}}.
	\end{equation*}
	Moreover,
	\begin{align*}
	2^{-\frac{k_0}{Q_{\sigma B_0}^-}} & = \frac{2^{-\frac{\widetilde{k}_0}{Q_{\sigma B_0}^-}}}{2^{\frac{\ell}{Q_{\sigma B_0}^-}}} \leq \frac{\mu(\sigma B_0)^{\frac{1}{Q_{\sigma B_0}^-}}}{2^{\frac{\ell}{Q_{\sigma B_0}^-}} }\left(\frac{1}{2}\int_{\sigma B_0} g(x)^{p(x)} \mbox{d}\mu(x) \right)^{-\frac{1}{Q_{\sigma B_0}^-}} \\ & \leq 2^{-\frac{1}{Q_{\sigma B_0}^-}}\left(1-2^{-\frac{1}{Q_{\sigma B_0}^+}}\right)  \frac{b^{\frac{1}{Q_{\sigma B_0}^-}}(\sigma -1)r_0}{4 \beta_p^{p_{\sigma B_0}^+\left(\frac{1}{Q_{\sigma B_0}^-} - \frac{1}{Q_{\sigma B_0}^+}\right) }} \left(\frac{1}{2} \int_{\sigma B_0} g(x)^{p(x)} \mbox{d}\mu(x) \right)^{-\frac{1}{Q_{\sigma B_0}^-}} \\ & = \frac{1-2^{-\frac{1}{Q_{\sigma B_0}^+}}}{4} b^{\frac{1}{Q_{\sigma B_0}^-}} \left( \int_{\sigma B_0} g(x)^{p(x)} \mbox{d}\mu(x) \right)^{-\frac{1}{Q_{\sigma B_0}^+}}(\sigma -1)r_0.
	\end{align*}
	Hence, we once again obtain \eqref{suma}.
	
	In particular, since $\sigma r_0\leq1$, for every $y\in \sigma B_0$, we have $t_j(y) \leq 1$. Hence, notice that for every $k_0+1\leq j \leq k$ and $y\in \sigma B_0$ such that $B(y,t_j(y))\subseteq \sigma B_0$ we have $B(y,t_j(y)) \cap E_{j-1} \neq \emptyset$. Indeed, assume that $B(y,t_j(y)) \cap E_{j-1}= \emptyset$. Then, by the lower Ahlfors-regularity of the measure and the definition of $t_j(y)$, we have
	\begin{equation*}
		\mu(\sigma B_0 \setminus E_{j-1})\geq \mu(B(y,t_j(y))\setminus E_{j-1})=\mu(B(y,t_j(y)))>\mu(\sigma B_0 \setminus E_{j-1}),
	\end{equation*}
	which is a clear contradiction. Now, suppose there exists a point $y_k \in E_k \cap B_0$. Then, in particular, we have $B(y_k,t_k(y_k)) \subseteq \sigma B_0$ since, for each $\xi \in B(y_k,t_k(y_k))$, we can use \eqref{suma} to estimate
	\begin{equation*}
		d(\xi,x_0)\leq d(\xi,y_k)+d(y_k,x_0)<t_k(y_k)+r_0\leq (\sigma-1)r_0+r_0=\sigma r_0.
	\end{equation*}
	Hence, $B(y_k,t_k(y_k))\cap E_{k-1}\neq\emptyset$ and thus we can choose a point $y_{k-1}\in E_{k-1}$ such that
	\begin{equation*}
		d(y_k,y_{k-1})<t_k(y_k).
	\end{equation*}
By induction, we obtain a sequence of points $\left\{y_j\right\}_{j={k_0}}^{k}$
	\begin{gather*}
		y_k \in E_k\cap B_0,\\
		y_{k-1} \in E_{k-1}\cap B(y_k,t_k(y_k)),\\
		y_{k-2} \in E_{k-2}\cap B(y_{k-1},t_{k-1}(y_{k-1})),\\
		\vdots  \\
		%x_{k-i} \in  E_{k-i}\cap B(x_{k-(i-1)},r_{k-(i-1)}),\\
		%\vdots  \\
		y_{k_0}\in  E_{k_0}\cap B(y_{k_0+1},t_{k_0+1}(y_{k_0+1})).
	\end{gather*}
	Recall that we are currently assuming $k\geq k_0+1$ is such that $\mu\left(\left(E_k\setminus E_{k-1}\right) \cap B_0\right)>0.$ Now, since  $\displaystyle \int_{\sigma B_0}g(x)^{p(x)} \mbox{d}\mu(x) \leq 1$, by \eqref{pointwise} and \eqref{Chebyschev1}, we have
	\begin{align*}
		\vert &u(y_k) \vert \leq \sum_{i=0}^{k-k_0-1}\left| u(y_{k-i})-u(y_{k-i-1})\right| + \left| u(y_{k_0})\right| \nonumber\\
		&\leq  \sum_{i=0}^{k-k_0-1}\left[d(y_{k-i}, y_{k-i-1})^{s(y_{k-i})}g(y_{k-i})+d(y_{k-i}, y_{k-i-1})^{s(y_{k-i-1})}g(y_{k-i-1})\right]+ \left| u(y_{k_0})\right| \nonumber\\
		& \leq				\sum_{i=0}^{k-k_0-1} \left[t_{k-i}(y_{k-i})^{s(y_{k-i})}2^{\frac{k-i}{p(y_{k-i})}} + t_{k-i}(y_{k-i})^{s(y_{k-i-1})}2^{\frac{k-i-1}{p(y_{k-i-1})}}\right] + \left|u(y_{k_0})\right| \nonumber\\
		& \leq				 C_1\sum_{i=0}^{k-k_0}\left[2^{(k-i)\left(\frac{1}{p(y_{k-i})}-\frac{s(y_{k-i})}{Q(y_{k-i})}\right)}\right.\nonumber\\
		&\left.\qquad\qquad\quad+2^{(k-i-1)\left(\frac{1}{p(y_{k-i-1})}-\frac{s(y_{k-i-1})}{Q(y_{k-i-1})}\right)}\cdot 2^{-(k-i)\frac{s(y_{k-i-1})}{Q(y_{k-i})}+(k-i-1)\frac{s(y_{k-i-1})}{Q(y_{k-i-1})}}\right] + \left| u(y_{k_0})\right|,
	\end{align*}
	where $C_1=C_1(b,s^+,s^-,Q^-,Q^+)$ and we have used  the facts that $y_{k-i}\in E_{k-i}$ and $ y_{k-i-1}\in E_{k-i-1}$ for all $i=0,1,\ldots,k-k_0-1.$ 
	To estimate the above sum, we first find an upper bound for the term
	\begin{equation}\label{term}
		2^{-(k-i)\left( \frac{1}{Q(y_{k-i})}-\frac{1}{Q(y_{k-i-1})}\right)}.
	\end{equation}
	If $k-i\geq 0$, then the log-H\"older continuity of $1/Q$ implies that
	\begin{align*}
		\left|k-i\right|\left|\frac{1}{Q(y_{k-i})}-\frac{1}{Q(y_{k-i-1})}\right| &\leq (k-i)\frac{C_{\log}(1/Q)}{\log\left(e+1/t_{k-i}(y_{k-i})\right)}\\ &\leq (k-i) \frac{C_{\log}(1/Q)}{\log\left(e+2^{-1-\frac{1}{Q^-}}b^{\frac{1}{Q^-}}2^{\frac{k-i}{Q^+}}\right)}\leq C_2(b,Q^-,Q^+,C_{\log}(1/Q)),
	\end{align*}
	since the function
	\begin{equation*}
		[0,\infty) \ni x \longmapsto x\frac{C_{\log}(1/Q)}{\log\left(e+2^{-1-\frac{1}{Q^-}}b^{\frac{1}{Q^-}}2^{\frac{x}{Q^+}}\right)}
	\end{equation*}
	is bounded.
	
	If $k-i <0$, then
	\begin{align*}
		\left|k-i\right| \left|\frac{1}{Q(y_{k-i})} - \frac{1}{Q(y_{k-i-1})} \right| \leq \left|k_0+1\right| \frac{2}{Q^-}=\left(-\widetilde{k}_0-\ell-1\right)\frac{2}{Q^-}.
	\end{align*}
	Hence, by \eqref{1/6}, \eqref{tildekzero1} and \eqref{ell1} we get
	\begin{align*}
		2^{\left|k-i\right| \left|\frac{1}{Q(y_{k-i})} - \frac{1}{Q(y_{k-i-1})} \right|} \leq \left(2 \frac{\mu(\sigma B_0)}{\displaystyle\int_{\sigma B_0} g(x)^{p(x)} \mbox{d}\mu(x)} \frac{br_0^{Q_{\sigma B_0}^+}}{\mu(\sigma B_0)} \frac{1}{2} \right)^{\frac{2}{Q^-}} \leq b^{\frac{2}{Q^-}}\beta_p^{\frac{2p^+}{Q^-}}.
	\end{align*}
	Therefore, having in mind that $s$ is bounded, we know \eqref{term} is bounded and
	\begin{equation}\label{firstestimate1}
	\left|u(y_k)\right| \leq C_3\sum_{i=0}^{k-k_0} 2^{(k-i)\left(\frac{1}{p(y_{k-i})}-\frac{s(y_{k-i})}{Q(y_{k-i})}\right)}+\left|u(y_{k_0})\right|,
	\end{equation}
	where $C_3=C_3(b,s^+,s^-,Q^-,Q^+,C_{\log}(Q))$.
	
	Now we shall prove $(i)$. First of all, we assume for the moment that $\mu(B_0 \setminus E_{k_0})>0$ and $k\in\mathbb Z$ satisfies $k\geq k_0+1$ and $\mu((E_k\setminus E_{k-1})\cap B_0)>0$. Let us recall that it suffices to prove \eqref{redukcja2}. For every $i=1,\ldots,k-k_0,$ we have 
	\begin{equation*}
	y_{k-i}\in B(y_k, t_k(y_k)+t_{k-1}(y_{k-1})+\cdots+t_{k-i+1}(y_{k-i+1}))=:B(y_k,r_{k-i+1}).
\end{equation*} From \eqref{estimate1} for $j=k-i$ we have 
	\begin{equation*}
		r_{k-i+1}\leq C_4\max\left\{2^{-\frac{k-i}{Q_{\sigma B_0}^+}},2^{-\frac{k-i}{Q_{\sigma B_0}^-}}\right\},
	\end{equation*}
	where $C_4=C_4(b,Q^+,Q^-)$. Therefore, using Lemma~\ref{loglemma} $(i)$ with $R:=2 C_4\max\left\{2^{-\frac{k-i}{Q_{\sigma B_0}^+}},2^{-\frac{k-i}{Q_{\sigma B_0}^-}}\right\}$, $x:=y_{k-i}$, $z:=y_k$, and $r:=r_{k-i+1}$, we get
	\begin{equation*}
		2^{\frac{k-i}{\gamma(y_{k-i})}}\leq C_5 2^{\frac{k-i}{\gamma_{B(y_k, r_{k-i+1})}^+}},
	\end{equation*}
	where $C_5=C_5(b,Q^-,Q^+,\gamma^-,\gamma^+,C_{\log}\left(1/\gamma\right))$.
	Since for every $i,$ $\gamma(y_k)\leq \gamma_{B(y_k, r_{k-i+1})}^+\leq \gamma_{\sigma B_0}^+,$ we write
	\begin{equation*}
		2^{\frac{k-i}{\gamma_{B(y_k, r_{k-i+1})}^+}}\leq \max\left\{2^{\frac{k-i}{\gamma(y_k)}}, 2^{\frac{k-i}{\gamma_{\sigma B_0}^+}}\right\}.
	\end{equation*}
	Use these estimates in \eqref{firstestimate1}, to obtain
	\begin{align}\label{allterm1}
		\left| u(y_k)\right| &\leq  C_5\sum_{j=k_0}^{k}\max\{2^{\frac{j}{\gamma(y_k)}}, 2^{\frac{j}{\gamma_{\sigma B_0}^+}}\}+\left| u(y_{k_0})\right|.
	\end{align}

	Note that if $k<0,$ then
	\begin{align*}
		\sum_{j=k_0}^{k}\max\{2^{\frac{j}{\gamma(y_k)}}, 2^{\frac{j}{\gamma_{\sigma B_0}^+}}\}\leq C_6 2^{\frac{k}{\gamma_{\sigma B_0}^+}}, 
	\end{align*}
	where $C_6=C_6(\gamma^-,\gamma^+)$, 	and if $k\geq 0,$ then
	\begin{align*}
		\sum_{j=k_0}^{k} \max\left\{2^{\frac{j}{\gamma(y_k)}}, 2^{\frac{j}{\gamma_{\sigma B_0}^+}}\right\} \leq  
		\sum_{j=-\infty}^{0} 2^{\frac{j}{\gamma_{\sigma B_0}^+}} + \sum_{j=1}^{k} 2^{\frac{j}{\gamma(y_k)}} \leq
		\sum_{j=-\infty}^{0} 2^{\frac{j}{\gamma_{\sigma B_0}^+}} + \sum_{j=-\infty}^{k} 2^{\frac{j}{\gamma(y_k)}} \leq 2C_6 2^{\frac{k}{\gamma(y_k)}}
	\end{align*}
	Therefore from \eqref{allterm1} 
	\begin{equation}\label{allterms1}
		\left| u(y_k)\right|\leq  C_7\max\left\{2^{\frac{k}{\gamma(y_k)}}, 2^{\frac{k}{\gamma_{\sigma B_0}^+}}\right\}+ \left| u(y_{k_0})\right|, 
	\end{equation}
	where $C_7:=2 C_5 C_6.$
	
	To proceed, for each $k\in \mathbb{Z},$ we define
	\begin{equation*}
		a_k:= \left\{ \begin{array}{ll} \displaystyle\sup_{x\in B_0\cap E_k}\left| u(x)\right|,& \textnormal{ if } B_0 \cap E_k\neq \emptyset,\\ 0, & \textnormal{ if } B_0 \cap E_k = \emptyset. \end{array}\right.    
	\end{equation*}
	We will now estimate
	\begin{equation*}
	c_{k_0}:=\sup_{x\in E_{k_0}}\left| u(x)\right|.    
	\end{equation*}	
	We can assume that $\displaystyle\essinf_{E_{k_0}}\vert u\vert=0,$ by the discussion in the beginning of the proof and the fact that $\mu(E_{k_0})>0.$ This means that there exists a sequence $\left\{z_i\right\}_{i=1}^\infty\subseteq E_{k_0}$ such that $u(z_i)\rightarrow 0$ as $i\rightarrow\infty.$ Therefore, for all $x\in E_{k_0},$ we have
	\begin{align}\label{lastterm1}
		\left| u(x)\right|&=\lim_{i\rightarrow\infty}\left| u(x)-u(z_i)\right|\leq \limsup_{i\rightarrow\infty}\left[d(x,z_i)^{s(x)}g(x)+d(x,z_i)^{s(z_i)}g(z_i)\right]\nonumber \\ &\leq\limsup_{i\rightarrow\infty} \left[(2\sigma r_0)^{s(x)}+(2\sigma r_0)^{s(z_i)}\right]\cdot\left[2^{\frac{k_0}{p(x)}}+2^{\frac{k_0}{p(z_i)}}\right].
	\end{align} 
	On the one hand, for $x\in \sigma B_0$, by Lemma~\ref{loglemma} $(i)$ with $R:=2\sigma r_0$, $z:=x_0$, $r:=\sigma r_0$, and $t:=p$, we get
	\begin{equation}\label{nierown1}
		\left(\frac{1}{2\sigma r_0}\right)^{\frac{1}{p(x)}}\leq e^{C_{\log}(1/p)}\left(\frac{1}{2\sigma r_0}\right)^{\frac{1}{p_{\sigma B_0}^+}}.
	\end{equation} 
	Thus, from \eqref{kzero1}, \eqref{nierown1}, the assumption that $\displaystyle \int_{\sigma B_0}g(x)^{p(x)}\mbox{d}\mu(x)\leq 1$, and  the definition of the constants $L(b,\sigma,Q,p,r_0)$ and $U(b,\sigma,Q,p,r_0)$, we have 
	\begin{equation}\label{Celeven1}
		2^{\frac{k_0}{p(x)}}\leq C_8 2^{\frac{k_0}{p_{\sigma B_0}^+}},
	\end{equation} 
	where $C_8=C_8(b,\sigma,C_{\log}(1/p),p^+,p^-,Q^+,Q^-).$
	Moreover, by the virtue of Lemma~\ref{loglemma} $(i)$ with $R:=2\sigma r_0$, $z:=x_0$, $r:=\sigma r_0$, and $t:=1/s$, we get
	\begin{equation}\label{nierown2}
		\left(2\sigma r_0\right)^{s(y)}\leq e^{C_{\log}(s)} \left(2\sigma r_0\right)^{s_{\sigma B_0}^+} 
	\end{equation}
	for any $y\in \sigma B_0$.
	Therefore, from \eqref{lastterm1}, \eqref{Celeven1}, and \eqref{nierown2} we obtain 
	\begin{equation}\label{lastterm3}
		a_{k_0}\leq c_{k_0}\leq  C_9 r_0^{s_{\sigma B_0}^+}2^{\frac{k_0}{p_{\sigma B_0}^+}},
	\end{equation}
	where $C_9=C_9(b,\sigma,C_{\log}(1/p),p^+,p^-,Q^+,Q^-,C_{\log}(s),s^+)$. 
	
	We now establish estimates for $|u(x)|$ and $|u(x)|^{\gamma(x)}$ when $x\in (E_k\setminus E_{k-1})\cap B_0$.
	
	\noindent\textbf{Case I: $c_{k_0}\geq 1.$} 
	
	\textbf{Subcase Ia}: $k\geq 0$, $k> k_0$ and $\mu((E_k\setminus E_{k-1})\cap B_0)>0$.\\
	In this sub-case, from \eqref{allterms1} for every $y_k \in E_k \cap B_0$ we have
	\begin{equation*}
		\left| u(y_k)\right| \leq C_7 2^{\frac{k}{\gamma(y_k)}}+ c_{k_0}.
	\end{equation*}
	Raise both sides of the above inequality to the power $\gamma(x_k)$ and use the fact that $c_{k_0}\geq 1$ to obtain
	\begin{align*}
		\left| u(y_k)\right|^{\gamma(y_k)} \leq (2C_7)^{\gamma(y_k)}2^k+ 2^{\gamma(y_k)}c_{k_0}^{\gamma(y_k)}\leq C_{10}\left(2^k+c_{k_0}^{\gamma_{\sigma B_0}^+}\right), 
	\end{align*}
	where $C_{10}=C_{10}(\gamma^+,C_7)$. 
	We define
	\begin{equation*}
		b_k:=\sup_{x\in B_0\cap E_k}\left| u(x)\right|^{\gamma(x)}.    
	\end{equation*}
	After taking supremum over $y_k\in E_{k}\cap B_0,$ the above inequality becomes
	\begin{equation*}
		b_{k}\leq C_{10}\left(2^k+ c_{k_0}^{\gamma_{\sigma B_0}^+}\right),
	\end{equation*} 
	and using \eqref{lastterm3} it takes the form
	\begin{equation}\label{positive1}
		b_{k}\leq C_{10} 2^k+ C_{10}C_9^{\gamma_{\sigma B_0}^+} r_0^{s_{\sigma B_0}^+\gamma_{\sigma B_0}^+}2^{\frac{k_0\gamma_{\sigma B_0}^+}{p_{\sigma B_0}^+}} \leq C_{11}\left(2^k+r_0^{s_{\sigma B_0}^+ \gamma_{\sigma B_0}^+} 2^{\frac{k_0 \gamma_{\sigma B_0}^+}{p_{\sigma B_0}^+}}\right), 
	\end{equation}
	where $C_{11}=C_{11}(C_9,C_{10},\gamma^+).$
	
	\textbf{Subcase Ib}: $k_0< k<0$ and $\mu((E_k\setminus E_{k-1})\cap B_0)>0$.\\
	In this subcase from \eqref{allterms1} for every $y_k \in E_k \cap B_0$ we get
	\begin{equation}
		\left| u(y_k)\right| \leq C_7 2^{\frac{k}{\gamma_{\sigma B_0}^+}}+ c_{k_0}
	\end{equation}
	which gives, upon taking supremum over $y_k\in E_k\cap B_0$ and using \eqref{lastterm3} 
	\begin{equation*}
		a_k\leq C_7 2^{\frac{k}{\gamma_{\sigma B_0}^+}}+ c_{k_0}\leq C_7 2^{\frac{k}{\gamma_{\sigma B_0}^+}}+C_9 r_0^{s_{\sigma B_0}^+}2^{\frac{k_0}{p_{\sigma B_0}^+}}.
	\end{equation*}
	Since $c_{k_0}\geq 1,$ we obtain, for any $x\in B_0,$
	\begin{align}\label{negative1}
		a_k^{\gamma(x)}&\leq\left(C_7 2^{\frac{k}{\gamma_{\sigma B_0}^+}}+ c_{k_0}\right)^{\gamma(x)}
		\leq\left(C_7 2^{\frac{k}{\gamma_{\sigma B_0}^+}}+ c_{k_0}\right)^{\gamma_{\sigma B_0}^+}\nonumber\\
		&\leq C_{12}\left(2^k+r_0^{s_{\sigma B_0}^+\gamma_{\sigma B_0}^+}2^{\frac{k_0\gamma_{\sigma B_0}^+}{p_{\sigma B_0}^+}}\right),
	\end{align}
	where $C_{12}=C_{12}(C_7,C_9,\gamma^+).$
	
	\textbf{Subcase Ic}: $k\leq k_0$ and $\mu\left(\left(E_k \setminus E_{k-1}\right) \cap B_0 \right)>0.$\\
	In this sub-case,  we use \eqref{lastterm3} and the facts that $a_k\leq a_{k_0}\leq c_{k_0}$ and $c_{k_0}\geq 1$, to conclude that, for any $x\in B_0,$ 
	\begin{align}\label{row1}
		a_k^{\gamma(x)}\leq c_{k_0}^{\gamma(x)}\leq c_{k_0}^{\gamma_{\sigma B_0}^+}\leq C_9^{\gamma_{\sigma B_0}^+}r_0^{s_{\sigma B_0}^+\gamma_{\sigma B_0}^+}2^{\frac{k_0\gamma_{\sigma B_0}^+}{p_{\sigma B_0}^+}}\leq C_{13} r_0^{s_{\sigma B_0}^+\gamma_{\sigma B_0}^+}2^{\frac{k_0\gamma_{\sigma B_0}^+}{p_{\sigma B_0}^+}}, 
	\end{align}
	where  $C_{13}=C_{13}(C_9,\gamma^+).$
	
	Now, using \eqref{g1}, \eqref{g1-X}, \eqref{kzero1},  \eqref{positive1}, \eqref{negative1}, \eqref{row1}, $\displaystyle \int_{\sigma B_0}g(x)^{p(x)}\mbox{d}\mu(x)\leq 1$, and the lower  Ahlfors-regularity of the measure, we finally obtain
	\begin{align*}
		\int_{B_0}&\left| u(x)\right| ^{\gamma(x)} \mbox{d}\mu(x) = \sum_{k\in\mathbb{Z}}\int_{B_0\cap (E_k\setminus E_{k-1})}\left| u(x)\right|^{\gamma(x)} \mbox{d}\mu(x)\\
		&= \left[\sum_{\substack{k>k_0, k\geq 0,{}\\  \mu(B_0\cap (E_k\setminus E_{k-1}))>0}}+\sum_{\substack{k_0< k<0,{}\\  \mu(B_0\cap (E_k\setminus E_{k-1}))>0}} +\sum_{\substack{k\leq k_0,{} \\ \mu(B_0 \cap (E_k \setminus E_{k-1}))>0}}\right]\int_{B_0\cap (E_k\setminus E_{k-1})}\left| u(x)\right| ^{\gamma(x)} \mbox{d}\mu(x)\\
		&\leq \sum_{\substack{k>k_0, k\geq 0,{}\\  \mu(B_0\cap (E_k\setminus E_{k-1}))>0}}b_k\mu(B_0\cap(E_k\setminus E_{k-1}))+\sum_{\substack{k_0< k<0  \textnormal{ or } k \, \leq k_0,{}\\  \mu(B_0\cap (E_k\setminus E_{k-1}))>0}}\int_{B_0\cap (E_k\setminus E_{k-1})}a_k^{\gamma(x)} \mbox{d}\mu(x)\\
		&\leq  \max\left\{C_{11}, C_{12}\right\}\sum_{k \in \mathbb{Z}}2^k\mu(E_k\setminus E_{k-1})+\max\left\{C_{11}, C_{12},C_{13}\right\} r_0^{s_{\sigma B_0}^+\gamma_{\sigma B_0}^+}2^{\frac{k_0\gamma_{\sigma B_0}^+}{p_{\sigma B_0}^+}}\mu(B_0)\\
		&\leq 2\max\left\{C_{11}, C_{12}\right\}\frac{\mu(B_0)}{br_0^{Q(x_0)}}+\max\left\{C_{11}, C_{12},C_{13}\right\}r_0^{s_{\sigma B_0}^+\gamma_{\sigma B_0}^+}\left(\frac{U(b,\sigma,Q,p,r_0)}{br_0^{Q_{\sigma B_0}^+}}\right)^{\frac{\gamma_{\sigma B_0}^+}{p_{\sigma B_0}^+}}\mu(B_0).
	\end{align*}
	Since $r_0 \leq 1$ we have
	\begin{equation}\label{koniec1}
		r_0^{s_{\sigma B_0}^+ \gamma_{\sigma B_0}^+}  \leq r_0^{s(x_0)\gamma(x_0)} \hspace{5mm} \textnormal{and} \hspace{5mm}
		\left(\frac{1}{r_0}\right)^{\frac{1}{p_{\sigma B_0}^+}} \leq \left(\frac{1}{r_0} \right)^{\frac{1}{p(x_0)}}.
	\end{equation}
	Moreover, by Lemma~\ref{loglemma} $(i)$ applied for $R:=2\sigma r_0$, $r:=\sigma r_0$, $x:=x_0$ and respectively $t:=1/Q$ and $t:=1/\gamma$, we have
	\begin{align}
	\left(\frac{1}{r_0}\right)^{Q_{\sigma B_0}^+}& \leq e^{C_{\log}(Q)} \left(\frac{1}{2\sigma} \right)^{Q(x_0)-Q_{\sigma B_0}^+} \left(\frac{1}{r_0}\right)^{Q(x_0)},\label{koniec2}\\
		\left(\frac{1}{r_0}\right)^{\gamma_{\sigma B_0}^+}& \leq e^{C_{\log}(\gamma)} \left(\frac{1}{2\sigma} \right)^{\gamma(x_0)-\gamma_{\sigma B_0}^+} \left(\frac{1}{r_0}\right)^{\gamma(x_0)}\label{koniec3}
	\end{align}
	Then, by \eqref{koniec1}, \eqref{koniec2}, \eqref{koniec3}, the choice $\displaystyle \gamma:=\frac{Qp}{Q-sp}$, and the definition of $U(b,\sigma,Q,p,r_0)$, we get
	\begin{equation*}
		\int_{B_0} \left|u(x)\right|^{\gamma(x)}\mbox{d}\mu(x)\leq C_{14}\frac{\mu(B_0)}{r_0^{Q(x_0)}}, 
	\end{equation*}
	where $C_{14}=C_{14}(C_{11},C_{12},C_{13},b,\sigma,Q^-,Q^+,C_{\log}(Q),\gamma^-,\gamma^+,p^+,p^-,C_{\log}(\gamma)).$
	
	\textbf{Case II: $c_{k_0}<1.$} 
	
	By arguing as in Case I, for $k\geq 0$, $k> k_0$, and  $\mu(B_0\cap (E_k\setminus E_{k-1}))>0$, we have 
	\begin{equation*}
		b_k\leq C_{10}\left(2^k+1\right).
	\end{equation*}
	Moreover, when $k_0< k<0$ and  $\mu(B_0\cap (E_k\setminus E_{k-1}))>0$ or when $k\leq k_0$ and  $\mu(B_0\cap (E_k\setminus E_{k-1}))>0$, we have
	\begin{equation*}
		a_k\leq C_7+1.
	\end{equation*}
	Hence, proceeding as in Case I (keeping in mind $r_0\leq 1$), we obtain
	\begin{align*}
		\int_{B_0}&\left| u(x)\right| ^{\gamma(x)}\mbox{d}\mu(x) = \sum_{k\in\mathbb{Z}}\int_{B_0\cap (E_k\setminus E_{k-1})}\left| u(x)\right|^{\gamma(x)} \mbox{d}\mu(x)\\
		&= \left[\sum_{\substack{k>k_0, k\geq 0,{}\\  \mu(B_0\cap (E_k\setminus E_{k-1}))>0}}+\sum_{\substack{k_0< k<0,{}\\  \mu(B_0\cap (E_k\setminus E_{k-1}))>0}} +\sum_{\substack{k\leq k_0,{} \\ \mu(B_0 \cap (E_k \setminus E_{k-1}))>0}}\right]\int_{B_0\cap (E_k\setminus E_{k-1})}\left| u(x)\right| ^{\gamma(x)} \mbox{d}\mu(x)\\ & \leq \sum_{\substack{k>k_0, k\geq 0,{}\\  \mu(B_0\cap (E_k\setminus E_{k-1}))>0}} b_k\mu(B_0\cap(E_k\setminus E_{k-1}))+\sum_{\substack{k_0< k<0 \textnormal{ or } k \leq k_0,{}\\  \mu(B_0\cap (E_k\setminus E_{k-1}))>0}}\int_{B_0\cap (E_k\setminus E_{k-1})}a_k^{\gamma(x)} \mbox{d}\mu(x)\\
		&\leq  2C_{10}+\left[C_{10}+\left(C_7+1\right)^{\gamma^+}\right]\mu(B_0) \\ & = C_{15}\frac{\mu(B_0)}{r_0^{Q(x_0)}},
	\end{align*}
	where
	\begin{equation*}
		C_{15}:=2\frac{C_{10}}{b}+\left[C_{10}+\left(C_7+1\right)^{\gamma^+}\right].
	\end{equation*}
	This proves part  $(i)$ in the case when $\mu(B_0\setminus E_{k_0})>0.$
	
	Suppose now that $\mu(B_0\setminus E_{k_0})=0.$ Then we have 
	\begin{equation}\label{xxi-48}
		\int_{B_0}\left|u(x)\right|^{\gamma(x)} \mbox{d}\mu(x)=\int_{E_{k_0}\cap B_0}\left|u(x)\right|^{\gamma(x)} \mbox{d}\mu(x) \leq \int_{E_{k_0}\cap B_0}a_{k_0}^{\gamma(x)} \mbox{d}\mu(x).
	\end{equation}
	Therefore, if $a_{k_0} >1$, then by \eqref{xxi-48} and \eqref{lastterm3}, we have 
	\begin{align*}
		\int_{B_0}\left|u(x)\right|^{\gamma(x)} \mbox{d}\mu(x) &\leq a_{k_0}^{\gamma_{\sigma B_0}^+} \mu(B_0) \leq C_9^{\gamma_{\sigma B_0}^+} r_0^{s_{\sigma B_0}^+ \gamma_{\sigma B_0}^+} 2^{\frac{k_0 \gamma_{\sigma B_0}^+}{p_{\sigma B_0}^+}}\mu(B_0) \\ & \leq C_9^{\gamma_{\sigma B_0}^+} \left(\frac{U(b,\sigma,Q,p,r_0)}{b}\right)^{\frac{\gamma_{\sigma B_0}^+}{\gamma_{\sigma B_0}^-}}r_0^{s_{\sigma B_0}^+ \gamma_{\sigma B_0}^+} r_0^{-\frac{Q_{\sigma B_0}^+ \gamma_{\sigma B_0}^+}{p_{\sigma B_0}^+}} \mu(B_0).
		\end{align*}
		Hence, using \eqref{xxi-48}, \eqref{koniec1}, \eqref{koniec2}, \eqref{koniec3}, and the choice $\displaystyle \gamma:=\frac{Qp}{Q-sp}$,  we obtain
		\begin{equation*}
			\int_{B_0} \left|u(x)\right|^{\gamma(x)} \mbox{d}\mu(x) \leq C_{16} \frac{\mu(B_0)}{r_0^{Q(x_0)}},
		\end{equation*}
		where $C_{16}=C_{16}(C_9,b,\sigma,Q^-,Q^+,C_{\log}(Q),\gamma^-,\gamma^+,C_{\log}(\gamma),p^+,p^-).$
		
	If $a_{k_0} \leq 1$, then  since $r_0\leq1$, we have
	\begin{equation*}
		\int_{B_0}|u(x)|^{\gamma(x)}\mbox{d}\mu(x) \leq \mu(B_0)\leq \frac{\mu(B_0)}{r_0^{Q(x_0)}}.
	\end{equation*}
	Therefore,
	\begin{equation*}
		\int_{B_0}\left|u(x)\right|^{\gamma(x)} \mbox{d}\mu(x) \leq C_{17}\frac{\mu(B_0)}{r_0^{Q(x_0)}},
	\end{equation*}
	where $C_{17}:= \max\left\{1, C_{16}\right\}$.
	Finally, we proved \eqref{redukcja1} with $C:=\max\left\{C_{14}, C_{15},C_{17}\right\}$, and we get \eqref{one1} with
	\begin{equation*}
		C_{S}:=\beta_p\left(\frac{1}{b^{\frac{1}{\gamma^-}}}+C^{\frac{1}{\gamma^-}}\right).
	\end{equation*}
	
	Before we start proving $(ii)$ and $(iii)$, let us notice that inequality $sp\geq Q$ implies that quantity $u_{B_0}$ is well defined. Indeed, for $q:=Q/(Q+s)$, we have $\displaystyle Q-sq=\frac{Q^2}{Q+s}\geq \frac{\left(Q^-\right)^2}{Q^+ + s^+}>0$ and 
	\begin{equation*}
	\frac{Q\cdot \frac{Q}{Q+s}}{Q-s\frac{Q}{Q+s}}=\frac{Q^2}{Q^2}=1.
	\end{equation*}
	Moreover, $q\ll p$. Therefore, by Lemma~\ref{wlozenielp}, and the recently proved statement $(i)$ (with $q$ in place of $p$), we have
	\begin{equation*}
	\dot{M}^{s(\cdot),p(\cdot)}(\sigma B_0) \subseteq \dot{M}^{s(\cdot),q(\cdot)}(\sigma B_0)
	\subseteq L^{\frac{Q(\cdot)q(\cdot)}{Q(\cdot)-s(\cdot)q(\cdot)}}( B_0)=L^{1}( B_0).
	\end{equation*}
	Hence, every function $u\in \dot{M}^{s(\cdot),p(\cdot)}(\sigma B_0)$ is integrable over $B_0$ and therefore, $u_{B_0}$ is well defined.
	
	Now, we pass to the proof of $(ii)$. For any $a>0$ we have
	\begin{align*}
		\fint_{B_0} \exp\left(a\left|u(x)-u_{B_0}\right|\right) \mbox{d} \mu(x) &\leq \fint_{B_0} \exp\left(\fint_{B_0 }a\left|u(x)-u(y)\right|\mbox{d}\mu(y)\right)\mbox{d}\mu(x) \\ & \leq \fint_{B_0} \fint_{B_0} \exp\left(a\left|u(x)-u(y)\right|\right)\mbox{d}\mu(y)\mbox{d}\mu(x) \\ &\leq \left(\fint_{B_0} \exp\left(a\left|u(x)\right|\right) \mbox{d}\mu(x)\right)^2.
	\end{align*}
	where we applied Jensen inequality. Therefore it suffices to prove that
	\begin{equation}\label{pomocmoser}
		\fint_{B_0} \exp\left(\beta_pC_{MT1}\left|u(x)\right|\right) \mbox{d} \mu(x) \leq \sqrt{C_{MT2}}.
	\end{equation}
	Let $C_{MT1}\in(0,\infty)$ be such that $\exp\left(\beta_pC_{MT1}C_3\right)=2.$ We have
	\begin{align*}
		\fint_{B_0} \exp\left(\beta_pC_{MT1}\left|u(x)\right|\right) \mbox{d} \mu(x) \leq I_1+I_2,
	\end{align*}
	where
	\begin{align*}
	I_1&:=\frac{1}{\mu(B_0)}\int_{B_0 \cap E_{k_0}} \exp\left(\beta_p C_{MT1}\left|u(x)\right|\right) \mbox{d} \mu(x),\\
	I_2&:=\frac{1}{\mu(B_0)}\int_{B_0 \setminus E_{k_0}} \exp\left(\beta_p C_{MT1}\left|u(x)\right|\right) \mbox{d} \mu(x).
	\end{align*}
	Using \eqref{lastterm3}, \eqref{kzero1}, the definition of $U(b,\sigma,Q,p,r_0)$, and the facts that $\displaystyle \int_{\sigma B_0}g(x)^{p(x)}\mbox{d}\mu(x)\leq 1$ and $r_0 \leq 1$, we obtain
	\begin{equation*}
		c_{k_0}\leq C_9 r_0^{s_{\sigma B_0}^+}2^{\frac{k_0}{p_{\sigma B_0}^+}}\leq C_{18}r_0^{s_{\sigma B_0}^+}\left(\frac{1}{r_0}\right)^{\frac{Q_{\sigma B_0}^+}{p_{\sigma B_0}^+}} = C_{18} r_0^{s_{\sigma B_0}^+} \left(\frac{1}{r_0}\right)^{\frac{\left(sp\right)_{\sigma B_0}^+}{p_{\sigma B_0}^+}}\leq C_{18}r_0^{s_{\sigma B_0}^+}\left(\frac{1}{r_0}\right)^{s_{\sigma B_0}^+}=C_{18},
	\end{equation*}
	where
	\begin{equation*}
		C_{18}=C_{18}(C_9,b,\sigma,p^-,p^+,Q^+,Q^-).
	\end{equation*}
	Therefore
		\begin{equation*}
			I_1\leq \frac{\mu\left(B_0 \cap E_{k_0}\right)}{\mu(B_0)}\exp\left(\beta_pC_{MT1}C_{18}\right)\leq \exp\left(\beta_pC_{MT1}C_{18}\right).
		\end{equation*}
		
To estimate $I_2$, suppose that $\mu(B_0 \setminus E_{k_0})>0.$ The case $\mu(B_0 \setminus E_{k_0})=0$ will be considered later. From \eqref{firstestimate1}, if $k\in\mathbb Z$ satisfies $k\geq k_0+1$ and $\mu((E_k\setminus E_{k-1})\cap B_0)>0$, then we have
	\begin{equation}\label{szacowanie1}
		a_k \leq C_3(k-k_0+1)+c_{k_0}\leq C_3(k-k_0)+C_3+C_{18}.
	\end{equation}
	Using this, the definition of $C_{MT1}$, the fact that $\displaystyle \left(\frac{1}{\beta_p}\right)^{p_{\sigma B_0}^+} \leq \int_{\sigma B_0} g(x)^{p(x)}\mbox{d}\mu(x) \leq 1$, \eqref{g1}, the definition of the constant $L(b,\sigma,Q,p,r_0)$, and \eqref{kzero1}, we obtain
	\begin{align*}
		I_2& \leq \frac{\exp\left(\beta_pC_{MT1}\left(C_{18}+C_3\right)\right)}{\mu(B_0)} \sum_{k=k_0+1}^{\infty} \exp\left(\beta_p C_{MT1}C_3(k-k_0)\right)\mu\left(B_0 \cap \left(E_k \setminus E_{k-1}\right) \right) \\ &\leq \exp\left(\beta_p C_{MT1}\left(C_{18}+C_3\right)\right)\frac{2^{-k_0}}{\mu(B_0)}\sum_{k=-\infty}^{\infty} 2^k \mu\left(E_k \setminus E_{k-1}\right) \\ &\leq \exp\left(\beta_p C_{MT1}\left(C_{18}+C_3\right)\right)\cdot 2\beta_p^{p^+}\frac{br_0^{Q_{\sigma B_0}^+}}{\mu(B_0)} \\ &\leq 2\beta_p^{p^+}\exp\left(\beta_pC_{MT1}\left(C_{18}+C_3\right)\right),
	\end{align*}
	where in the last inequality we used the lower Ahlfors-regularity of the measure.
	
	Suppose that $\mu(B_0 \setminus E_{k_0})=0.$ Then,
	\begin{align*}
		\fint_{B_0}\exp\left(\beta_p C_{MT1} \left|u(x)\right|\right) \mbox{d}\mu(x) & = \fint_{B_0 \cap E_{k_0}} \exp\left( \beta_p C_{MT1} \left|u(x)\right|\right) \mbox{d}\mu(x) \\ & \leq \fint_{E_{k_0}} \exp\left(\beta_p C_{MT1} c_{k_0} \right) \mbox{d}\mu(x) \\ & \leq \exp\left(\beta_p C_{MT1}C_{18}\right) \leq C_{19},
	\end{align*}
	where
	\begin{equation*}
		C_{19}:=2\beta_p \exp\left(\beta_p C_{MT1} \left(C_{18}+C_3\right)\right).
	\end{equation*}
	Therefore, \eqref{pomocmoser} was proven with
	\begin{equation*}
	C_{MT2}:= C_{19}^2,
	\end{equation*}
	and the proof of $(ii)$ is finished.
	
	Now we shall prove $(iii).$ Suppose that $\mu(B_0 \setminus E_{k_0})>0$, and let $k\in\mathbb Z$ be such that $k \geq k_0 +1$ and $\mu(B_0 \cap (E_k \setminus E_{k-1}))>0$. Then, inequality \eqref{firstestimate1}, the choice $\displaystyle \alpha:=s-\frac{Q}{p}$, and the fact that $sp \gg Q$, yield 
	\begin{align*}
		|u(y_k)| &\leq C_3 \sum_{i=0}^{k-k_0}\max\left\{2^{-(k-i)\alpha_{\sigma B_0}^+/Q_{\sigma B_0}^-},2^{-(k-i)\alpha_{\sigma B_0}^- / Q_{\sigma B_0}^+} \right\}+|u(y_{k_0})| \nonumber\\
		& \leq  C_3 \sum_{j=k_0}^{k}\max\left\{2^{-j\alpha_{\sigma B_0}^+/Q_{\sigma B_0}^-},2^{-j\alpha_{\sigma B_0}^-/Q_{\sigma B_0}^+}\right\}+|u(y_{k_0})|.
	\end{align*}
	If $k_0 +1 \leq 0$, then 
	\begin{align*}
		\sum_{j=k_0}^{k}\max\left\{2^{-j\alpha_{\sigma B_0}^+/Q_{\sigma B_0}^-},2^{-j\alpha_{\sigma B_0}^-/Q_{\sigma B_0}^+}\right\} &\leq \sum_{j=k_0}^{\infty}2^{-j\alpha_{\sigma B_0}^+/Q_{\sigma B_0}^-} + \sum_{j=1}^{\infty}2^{-j\alpha_{\sigma B_0}^-/Q_{\sigma B_0}^+}\\
		&\leq \frac{2^{-k_0\alpha_{\sigma B_0}^+/Q_{\sigma B_0}^-}}{1- 2^{-\alpha_{\sigma B_0}^+/Q_{\sigma B_0}^-}} + \frac{2^{-\alpha_{\sigma B_0}^-/Q_{\sigma B_0}^+}}{1- 2^{-\alpha_{\sigma B_0}^-/Q_{\sigma B_0}^+}}2^{-k_0\alpha_{\sigma B_0}^-/Q_{\sigma B_0}^+}.
	\end{align*}
	If $k_0 +1 > 0$, then 
	\begin{align*}
		\sum_{j=k_0}^{k}\max\left\{2^{-j\alpha_{\sigma B_0}^+/Q_{\sigma B_0}^-},2^{-j\alpha_{\sigma B_0}^-/Q_{\sigma B_0}^+}\right\} &\leq \sum_{j=k_0}^{\infty}2^{-j\alpha_{\sigma B_0}^-/Q_{\sigma B_0}^+}= \frac{2^{-k_0\alpha_{\sigma B_0}^-/Q_{\sigma B_0}^+}}{1- 2^{-\alpha_{\sigma B_0}^-/Q_{\sigma B_0}^+}}.
	\end{align*}
	Hence,
	\begin{align}\label{firstestimate for p>Q 1}
		|u(y_k)| &\leq C_{20}\max\left\{2^{-k_0\alpha_{\sigma B_0}^+/Q_{\sigma B_0}^-},2^{-k_0\alpha_{\sigma B_0}^-/Q_{\sigma B_0}^+}\right\}  +|u(y_{k_0})|,
	\end{align}
	where 
	\begin{equation*}
		C_{20}=C_{20}(C_3,\alpha^-,Q^+).
	\end{equation*}
	Upon taking supremum over $y_k\in{E_k\cap B_0}$, \eqref{firstestimate for p>Q 1} and \eqref{lastterm3} give,
	\begin{equation}\label{allterms for p>Q 1}
		a_k\leq C_{20}\max\left\{2^{-k_0\alpha_{\sigma B_0}^+/Q_{\sigma B_0}^-},2^{-k_0\alpha_{\sigma B_0}^-/Q_{\sigma B_0}^+}\right\} +c_{k_0},
	\end{equation}
	where, by \eqref{lastterm3},
	\begin{equation*}
 c_{k_0}  \leq C_9 r_0^{s_{\sigma B_0}^+} 2^{\frac{k_0}{p_{\sigma B_0}^+}}.
	\end{equation*}
	Since $\displaystyle \left(\frac{1}{\beta_p}\right)^{p_{\sigma B_0}^+} \leq \int_{\sigma B_0}g(x)^{p(x)}\mbox{d}\mu(x)\leq 1$, from \eqref{kzero1} we get
	\begin{equation*}
		\frac{L(b,\sigma,Q,p,r_0)}{\beta_p^{p_{\sigma B_0}^+}br_0^{Q_{\sigma B_0}^+}}\leq 2^{k_0}\leq \frac{U(b,\sigma, Q, p,r_0)}{b{r_0}^{Q_{\sigma B_0}^+}}.
	\end{equation*}
	Using this in \eqref{allterms for p>Q 1}, we obtain, for all $k\geq k_0+1$ satisfying $\mu(B_0 \cap (E_k \setminus E_{k-1}))>0$,
	\begin{align*}
		a_k &\leq C_{21}\max\left\{r_0^{\alpha_{\sigma B_0}^+},r_0^{\alpha_{\sigma B_0}^-}\right\}+ C_{22}r_0^{s_{\sigma B_0}^+-\frac{Q_{\sigma B_0}^+}{p_{\sigma B_0}^+}}\nonumber\\
		&\leq C_{21} \max\left\{r_0^{\alpha(x_0)},r_0^{\alpha_{\sigma B_0}^-}\right\}+C_{22} r_0^{s(x_0)-\frac{Q_{\sigma B_0}^+}{p(x_0)}},
	\end{align*}
	where we used the fact that $r_0 \leq 1$, and where
	\begin{equation*}
		C_{21}=C_{21}(C_{20},p^+,p^-,\alpha^+,Q^-)\quad\text{and}\quad C_{22}=C_{22}(C_9,b,p^-,p^+,Q^-,Q^+,\sigma).
	\end{equation*}
	Moreover, by Lemma~\ref{loglemma} $(i)$ for $r:=\sigma r_0$, $R:=2\sigma r_0$, $z:=x_0$,  $x:=x_0$, and respectively $t:=1/\alpha$ and $t:=1/Q$, we get
	\begin{align*}
		r_0^{\alpha_{\sigma B_0}^-}&\leq e^{C_{\log}(\alpha)}(2\sigma)^{\alpha^+} r^{\alpha(x_0)}, \\
		\left(\frac{1}{r_0}\right)^{Q_{\sigma B_0}^+}& \leq e^{C_{\log}(Q)} \left(2\sigma\right)^{Q^+} \left(\frac{1}{r_0}\right)^{Q(x_0)}.
	\end{align*} 
	Therefore
	\begin{equation}\label{gwiazdka}
		a_k \leq C_{23} r_0^{\alpha(x_0)},
	\end{equation}
	where $C_{23}:=C_{23}(C_{21},C_{22},C_{\log}(\alpha),C_{\log}(Q),\sigma,\alpha^+,Q^+,p^-)$.

	If $k \leq k_0$, then by using \eqref{lastterm3} and \eqref{kzero1}, together with an argument similar to the one used in the proof of \eqref{gwiazdka}, we easily see that
	\begin{equation*}
		a_k \leq a_{k_0}\leq C_9 r_0^{s_{\sigma B_0}^+} 2^{\frac{k_0}{p_{\sigma B_0}^+}}\leq C_{23} r_0^{\alpha(x_0)}.
	\end{equation*}
	Therefore, 
	\begin{equation}\label{supremum norm 1}
		\left\|u-u_{B_0}\right\|_{L^{\infty}(B_0)}\leq 2\left\|u\right\|_{L^{\infty}(B_0)}\leq 2C_{23}r_0^{\alpha(x_0)}.
	\end{equation}
	Now, in the case when $\mu(B_0\setminus E_{k_0})=0,$ we have
	\begin{equation*}
		\left\|u-u_{B_0}\right\|_{L^{\infty}(B_0)}\leq 2\left\|u\right\|_{L^{\infty}(B_0)}=2\left\|u\right\|_{L^{\infty}(B_0\cap E_{k_0})} \leq 2a_{k_0} \leq 2C_{23} r_0^{\alpha(x_0)}.
	\end{equation*}
	Therefore we get
	\begin{equation}\label{hold}
		\left\|u-u_{B_0}\right\|_{L^{\infty}(B_0)}\leq C_H  r_0^{\alpha(x_0)}\left\|g\right\|_{L^{p(\cdot)}(\sigma B_0)},
	\end{equation}
	where $C_H:= 2C_{23}\beta_p$ and \eqref{morrey} is proven.
	
	Now, we shall prove the last part of $(iii)$. Let $x \in B_0$ and $\tilde{B}:=B(x,\tilde{r})$ with $\tilde{r} \leq r_0$ such that $\sigma \tilde{B} \subseteq \sigma B_0$. Then by \eqref{morrey}, applied with $\tilde{B}$ in place of $B_0$, we have
	\begin{equation}\label{holderpomoc}
		\left\|u-u_{\tilde{B}}\right\|_{L^{\infty}(\tilde{B})}\leq C_H \tilde{r}^{\alpha(x)} \left\|g\right\|_{L^{p(\cdot)}(\sigma \tilde{B})}\leq C_H  \tilde{r}^{\alpha(x)} \left\|g\right\|_{L^{p(\cdot)}(\sigma B_0)}.
	\end{equation}			
	By Lemma~\ref{zero} there exists a null set $N\subseteq B_0$ such that, for all $x, y \in B_0 \setminus N$,
	\begin{equation}\label{propozycja}
		|u(x)-u(y)|\leq 2\left\|\hat{u}-\hat{u}_{B(x,2d(x,y))}\right\|_{L^{\infty}(B(x,2d(x,y)))},
	\end{equation}
	where $\hat{u}=u$ on $\sigma B_0$ and is equal to zero outside of $\sigma B_0$. Moreover, we can assume, for all $x, y \in B_0 \setminus N$,
	\begin{equation}\label{holderloc}
		|u(x)-u(y)|\leq 2\left\|u-u_{B_0}\right\|_{L^{\infty}(B_0)}.
	\end{equation}
	Let $x,y\in B_0\setminus N$. Suppose first that $d(x,y)\leq \hat{r}_0 := \frac{1}{2}\left(1-\frac{1}{\sigma}\right)r_0$ and define $\tilde{B}:= B(x,2 d(x,y))$. Therefore, since $\sigma \tilde{B}\subseteq \sigma B_0$ and $\hat{r}_0\leq r_0$, by \eqref{propozycja} and \eqref{holderpomoc}, we have 
	\begin{align}\label{holder continuity 2}
		|u(x)-u(y)|&\leq  2C_H  \left[2d(x,y)\right]^{\alpha(x)}\left\|g\right\|_{L^{p(\cdot)}(\sigma B_0)} \nonumber \\ & \leq  2^{\alpha^++1}C_H  d(x,y)^{\alpha(x)}\left\|g\right\|_{L^{p(\cdot)}(\sigma B_0)}.
	\end{align}
	On the other hand, if $d(x,y)> \hat{r}_0$, then by \eqref{holderloc} and \eqref{morrey},  we get
	\begin{align}\label{r1 1}
		|u(x)-u(y)| &\leq 2\|u-u_{B_0}\|_{L^{\infty}(B_0)}\nonumber \\
		&\leq 2C_H   \frac{r_0^{\alpha(x_0)}}{\hat{r_0}^{\alpha(x)}}d(x,y)^{\alpha(x)} \left\|g\right\|_{L^{p(\cdot)}(\sigma B_0)} \nonumber \\ & \leq 2C_H   \frac{1}{\hat{r_0}^{\alpha^+}}d(x,y)^{\alpha(x)} \left\|g\right\|_{L^{p(\cdot)}(\sigma B_0)}
	\end{align}
	Therefore, for every $x,y\in B_0\setminus N$, we have
	\begin{align}\label{fin 1}
		|u(x)-u(y)| \leq& D_H(r_0)d(x,y)^{\alpha(x)} \left\|g\right\|_{L^{p(\cdot)}(\sigma B_0)},
	\end{align}
	where
	\begin{equation*}
		D_H(r_0):=\max \left\{2^{\alpha^++1}C_H, 2C_H  \left(\frac{2\sigma}{(\sigma -1)r_0}\right)^{\alpha^+} \right\}=2^{\alpha^++1}C_H \left(\frac{\sigma}{(\sigma -1)r_0}\right)^{\alpha^+}.
	\end{equation*}
	Since $B_0 \setminus N$ is dense in $B_0$, for every $x\in B_0$, there exists a sequence $\{x_k\}_{k=1}^\infty\subseteq B_0 \setminus N$ such that $d(x_k,x)\to0$ as $k\to\infty$. Moreover, since $\alpha^->0$ from \eqref{fin 1}, we get that $\{u(x_k)\}_{k=1}^\infty$ is a Cauchy sequence in $\mathbb{R}$. For each  $x\in B_0$, let
	\begin{equation*}
	\tilde{u}(x):=\lim_{k \rightarrow \infty}u(x_k).
	\end{equation*}
	Then, from \eqref{fin 1} we get that $\tilde{u}(x)$ is well defined and, 
	for every $x, y \in B_0$,
	\begin{equation*}\label{finfin 1}
		|\tilde{u}(x)-\tilde{u}(y)| \leq D_H(r_0)d(x,y)^{\alpha(x)}\left\|g\right\|_{L^{p(\cdot)}(\sigma B_0)}.
	\end{equation*}
	Therefore, we have
	\begin{equation*}
		\sup_{\substack{x, y\in B_0\\ x \neq y}} \frac{|\tilde{u}(x)-\tilde{u}(y)|}{d(x,y)^{\alpha(x)}} \leq D_H(r_0) \left\|g\right\|_{L^{p(\cdot)}(\sigma B_0)}.
	\end{equation*}
	This completes the proof of $(iii)$ and hence, the proof of Theorem~\ref{sobolevpoincare}.
\end{proof}

As a corollary, we can obtain following local Sobolev embedding.

\begin{prop}\label{localemb}
	Let $(X,d,\mu)$ be a metric measure space and assume that there exist $Q\in \mathcal{P}^{\log}_b(X)$, $\delta\in (0,\infty)$, and $b\in (0,1]$ such that for every $x\in X$ and $r \in (0,\delta]$, 
	\begin{equation*}
		\mu\left(B(x,r)\right)\geq br^{Q(x)}.
	\end{equation*}
	Fix $\sigma \in (1,\infty)$ and let $p,s\in \mathcal{P}^{\log}_b(X)$ satisfy $sp \ll Q$. Then, for every ball $B_0=B(x_0,r_0)\subseteq X$ with $r_0\leq \delta/\sigma$, and every pair of functions $u\in M^{s(\cdot),p(\cdot)}(\sigma B_0)$ and $g\in \mathcal{D}^{s(\cdot)}(u)\cap L^{p(\cdot)}(\sigma B_0)$, there holds 
	\begin{equation}\label{localembedding}
		\left\|u\right\|_{L^{\gamma(\cdot)}(B_0)} \leq \left(1+\Lambda(B_0)\right)C_S \left(\frac{\mu(B_0)}{r_0^{Q(x_0)}}\right)^{\frac{1}{\gamma_{B_0}^-}}\left\|g \right\|_{L^{p(\cdot)}(\sigma B_0)}+\Lambda(B_0)\left\|u\right\|_{L^{p(\cdot)}(B_0)},
	\end{equation}
	where $\displaystyle \gamma:=\frac{Qp}{Q-sp}$ is the Sobolev conjugate exponent, $C_S$ is a constant from Theorem~\ref{sobolevpoincare}, and
	\begin{equation*}
		\Lambda(B_0):=\kappa_{\gamma(\cdot)}^2\max\left\{2,\left(\frac{2}{\mu(B_0)}\right)^{\frac{1}{\gamma_{B_0}^-}}\right\}\left\|1\right\|_{L^{\gamma(\cdot)}(B_0)}.
	\end{equation*}
\end{prop}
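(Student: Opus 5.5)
The plan is to combine Theorem~\ref{sobolevpoincare}~(i) with the median estimate of Proposition~\ref{medianlemma}, taking the median $m_u(B_0)$ as the constant to subtract. Fix $u$ and $g$ as in the statement and write
\begin{equation*}
u = (u - c) + (c - m_u(B_0)) + m_u(B_0)
\end{equation*}
for an arbitrary $c\in\mathbb R$. Applying the quasi-triangle inequality in $L^{\gamma(\cdot)}(B_0)$ twice gives
\begin{equation*}
\|u\|_{L^{\gamma(\cdot)}(B_0)} \leq \kappa_{\gamma(\cdot)}\|u-c\|_{L^{\gamma(\cdot)}(B_0)} + \kappa_{\gamma(\cdot)}^2\left(|c-m_u(B_0)|+|m_u(B_0)|\right)\|1\|_{L^{\gamma(\cdot)}(B_0)}.
\end{equation*}
This accounts for the $\kappa_{\gamma(\cdot)}^2$ and the $\|1\|_{L^{\gamma(\cdot)}(B_0)}$ factors in $\Lambda(B_0)$; since $\kappa_{\gamma(\cdot)}\geq1$ and $\Lambda(B_0)\geq 1$ (which we verify, or else absorb into the leading factor $1+\Lambda(B_0)$), the first term can be bounded by $(1+\Lambda(B_0))\|u-c\|_{L^{\gamma(\cdot)}(B_0)}$.

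Next I would bound the two scalar terms with Proposition~\ref{medianlemma}, used with two different exponents.
\begin{enumerate}
\item[(a)] With exponent $\gamma$ and constant $c$: $|m_u(B_0)-c|\leq \max\{2,(2/\mu(B_0))^{1/\gamma^-_{B_0}}\}\|u-c\|_{L^{\gamma(\cdot)}(B_0)}$.
\item[(b)] With exponent $p$ and constant $0$: $|m_u(B_0)| \leq \max\{2,(2/\mu(B_0))^{1/p^-_{B_0}}\}\|u\|_{L^{p(\cdot)}(B_0)}$.
\end{enumerate}
Here $u$ is finite a.e. because $u\in L^{p(\cdot)}$. Term (a) is already of the form $\Lambda(B_0)\|u-c\|_{L^{\gamma(\cdot)}(B_0)}$. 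Combining, we get
\begin{equation*}
\|u\|_{L^{\gamma(\cdot)}(B_0)}\leq (1+\Lambda(B_0))\|u-c\|_{L^{\gamma(\cdot)}(B_0)} + \Lambda(B_0)\|u\|_{L^{p(\cdot)}(B_0)}.
\end{equation*}
Taking the infimum over $c$ and inserting \eqref{sobolevloc} then produces exactly \eqref{localembedding}.

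The main obstacle is the exponent mismatch in (b). Proposition~\ref{medianlemma} applied with $p$ yields the factor $(2/\mu(B_0))^{1/p^-_{B_0}}$, whereas $\Lambda(B_0)$ contains $(2/\mu(B_0))^{1/\gamma^-_{B_0}}$. Since $\gamma\geq p$, we have $1/\gamma^-\leq 1/p^-$, so the exponent $1/p^-$ gives the larger bound when $\mu(B_0)<2$; hence (b) is not directly dominated by $\Lambda(B_0)$ in that case. I would first try to avoid this by applying Proposition~\ref{medianlemma} with exponent $\gamma$ in (b) too, noting $\gamma\geq p$, but that bounds $|m_u(B_0)|$ by $\|u\|_{L^{\gamma(\cdot)}}$, which is circular. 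The fallback is to go back to the Chebyshev argument in the proof of Proposition~\ref{medianlemma}: the set $\{|u|\geq \eta\|u\|_{L^{p(\cdot)}(B_0)}\}$ has measure at most $\eta^{-p^-_{B_0}}$, and this is $<\mu(B_0)/2$ once $\eta> (2/\mu(B_0))^{1/p^-}$. If this still cannot be converted to the $\gamma$-exponent, I would accept the constant with $\max\{2,(2/\mu(B_0))^{1/p^-_{B_0}}\}$ in the last term and treat it as the intended reading of $\Lambda(B_0)$ there. Apart from this point the argument is bookkeeping of the quasi-triangle constants.
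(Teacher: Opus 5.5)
Your route is the paper's own: Theorem~\ref{sobolevpoincare}~(i) combined with Proposition~\ref{medianlemma} through the splitting $u=(u-c)+(c-m_u(B_0))+m_u(B_0)$. The obstacle you isolate in (b) is genuine. The paper's proof passes over it, bounding $\kappa_{\gamma(\cdot)}^2|m_u(B_0)|\,\|1\|_{L^{\gamma(\cdot)}(B_0)}$ by $\Lambda(B_0)\|u\|_{L^{p(\cdot)}(B_0)}$ without justification.

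It cannot be converted to the $\gamma$-exponent, because \eqref{localembedding} is false as printed. Take $X=\mathbb R^n$ with Lebesgue measure, $Q\equiv n$, $s\equiv 1$, a constant $p<n$, and $u\equiv 1$, $g\equiv 0$ on $\sigma B_0$. Then \eqref{localembedding} reads $\mu(B_0)^{1/\gamma}\le \Lambda(B_0)\,\mu(B_0)^{1/p}$. As soon as $\mu(B_0)<2^{1-\gamma}$ one has $\Lambda(B_0)=\kappa_{\gamma}^2\,2^{1/\gamma}$, so one would need $\mu(B_0)^{1/\gamma-1/p}\le \kappa_{\gamma}^2\,2^{1/\gamma}$. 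This fails as $r_0\to 0$ because $1/\gamma<1/p$. The same $u$ also refutes the intermediate bound $|m_u(B_0)|\le\max\{2,(2/\mu(B_0))^{1/\gamma^-_{B_0}}\}\|u\|_{L^{p(\cdot)}(B_0)}$ directly, since $m_u(B_0)=1$.

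So your fallback is not an optional reading but the necessary correction. The factor in front of $\|u\|_{L^{p(\cdot)}(B_0)}$ has to be $\kappa_{\gamma(\cdot)}^2\max\{2,(2/\mu(B_0))^{1/p^-_{B_0}}\}\|1\|_{L^{\gamma(\cdot)}(B_0)}$. This change is harmless for the later use in Theorem~\ref{doublingembedding}~(i), where the corrected factor is still bounded uniformly over the balls $B_i$ thanks to $\mu(B_i)\ge b r^{Q^+}$.

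There is a second, smaller gap in your bookkeeping. Your displayed estimate together with (a) gives the coefficient $\kappa_{\gamma(\cdot)}+\Lambda(B_0)$ in front of $\|u-c\|_{L^{\gamma(\cdot)}(B_0)}$, not $1+\Lambda(B_0)$. The remark about ``absorbing'' does not bridge this when $\kappa_{\gamma(\cdot)}>1$; the paper's proof has the same slip and actually ends with $\kappa_{\gamma(\cdot)}^2+\Lambda(B_0)$.

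To get exactly $1+\Lambda(B_0)$, write $\kappa:=\kappa_{\gamma(\cdot)}$ and use the quasi-triangle inequality only once:
\[
\|u\|_{L^{\gamma(\cdot)}(B_0)}\le\kappa\|u-c\|_{L^{\gamma(\cdot)}(B_0)}+\kappa|c|\,\|1\|_{L^{\gamma(\cdot)}(B_0)}.
\]
Then split the scalar $|c|\le|c-m_u(B_0)|+|m_u(B_0)|$ by the ordinary triangle inequality. With (a), the coefficient of $\|u-c\|_{L^{\gamma(\cdot)}(B_0)}$ becomes $\kappa+\Lambda(B_0)/\kappa$. Since $(1+\Lambda)-(\kappa+\Lambda/\kappa)=(\kappa-1)(\Lambda/\kappa-1)$, this is at most $1+\Lambda(B_0)$ provided $\Lambda(B_0)\ge\kappa$.

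In fact $\Lambda(B_0)\ge\kappa^2$. Proposition~\ref{rel} on $B_0$ gives $\|1\|_{L^{\gamma(\cdot)}(B_0)}\ge\min\{\mu(B_0)^{1/\gamma^-_{B_0}},\mu(B_0)^{1/\gamma^+_{B_0}}\}$. Considering $\mu(B_0)\le 1$ and $\mu(B_0)>1$ separately, this forces $\max\{2,(2/\mu(B_0))^{1/\gamma^-_{B_0}}\}\|1\|_{L^{\gamma(\cdot)}(B_0)}\ge 1$.

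Finally, (b) then contributes the coefficient $\kappa\max\{2,(2/\mu(B_0))^{1/p^-_{B_0}}\}\|1\|_{L^{\gamma(\cdot)}(B_0)}$ on $\|u\|_{L^{p(\cdot)}(B_0)}$, which is at most the corrected factor above. Taking the infimum over $c$ and inserting \eqref{sobolevloc} then proves the corrected inequality.
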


\begin{proof}
	Let $u\in M^{s(\cdot),p(\cdot)}(\sigma B_0)$ and let $g\in \mathcal{D}^{s(\cdot)}(u) \cap L^{p(\cdot)}(\sigma B_0)$. Then, by Theorem~\ref{sobolevpoincare} $(i)$ we know that $u\in L^{\gamma(\cdot)}(B_0)$ and 
	\begin{equation*}
		\inf_{c\in \mathbb R} \left\|u - c\right\|_{L^{\gamma(\cdot)}(B_0)} \leq C_S\left(\frac{\mu(B_0)}{r^{Q(x_0)}}\right)^{\frac{1}{\gamma_{B_0}^-}}\left\| g \right\|_{L^{p(\cdot)}(\sigma B_0)},
	\end{equation*}
	where $C_S$ is a constant from Theorem~\ref{sobolevpoincare}.
Now, by Proposition~\ref{medianlemma}, for every $c\in \mathbb R$, we have
	\begin{align*}
		\left\|u\right\|_{L^{\gamma(\cdot)}(B_0)} &\leq \kappa_{\gamma(\cdot)}^2\left(\left\|u-c\right\|_{L^{\gamma(\cdot)}(B_0)}+\left\|c-m_u(B_0)\right\|_{L^{\gamma(\cdot)}(B_0)}+\left\|m_u(B_0)\right\|_{L^{\gamma(\cdot)}(B_0)}\right) \\ & = \kappa_{\gamma(\cdot)}^2\left(\left\|u - c \right\|_{L^{\gamma(\cdot)}(B_0)}+\left|c-m_u(B_0)\right| \left\|1\right\|_{L^{\gamma(\cdot)}(B_0)}+\left|m_u(B_0)\right|\left\|1\right\|_{L^{\gamma(\cdot)}(B_0)}\right) \\ & \leq \kappa_{\gamma(\cdot)}^2\left\|u-c \right\|_{L^{\gamma(\cdot)}(B_0)}+ \Lambda(B_0) \left( \left\| u-c \right\|_{L^{\gamma(\cdot)}(B_0)} +\left\|u\right\|_{L^{p(\cdot)}(B_0)}\right).
	\end{align*}
	Hence
	\begin{align*}
		\left\|u \right\|_{L^{\gamma(\cdot)}(B_0)} &\leq \left(\kappa_{\gamma(\cdot)}^2+\Lambda(B_0)\right) \inf_{c\in \mathbb R} \left\|u- c\right\|_{L^{\gamma(\cdot)}(B_0)}+\Lambda(B_0)\left\|u\right\|_{L^{p(\cdot)}(B_0)} \\ & \leq \left(\kappa_{\gamma(\cdot)}^2+\Lambda(B_0)\right)C_S \left(\frac{\mu(B_0)}{r_0^{Q(x_0)}}\right)^{\frac{1}{\gamma_{B_0}^-}}\left\|g \right\|_{L^{p(\cdot)}(\sigma B_0)}+\Lambda(B_0)\left\|u\right\|_{L^{p(\cdot)}(B_0)},
	\end{align*}
	which completes the proof.
\end{proof}

\subsection{Global Sobolev inequalities}
%Let us observe that if $\mu(B(x,r)) \geq br^{Q(x)}$ holds for every $x\in X$, $r\in (0,1]$, then for every $\delta \in [1,\infty)$ and $r\in (0,\delta]$, $x\in X$ we have $\mu(B(x,r)) \geq b\delta^{-Q^+} r^{Q(x)}.$ Hence, in the case of bounded metric space $(X,d)$ by taking in Theorem \ref{sobolevpoincare} and Proposition \ref{localemb} $\delta:=4\diam X$, $\sigma:=2$ and $r_0:=2\diam X$ we obtain the following result.

Invoking Remark~\ref{measurethreshold} with $\delta':=4\diam X$ and specializing Theorem~\ref{sobolevpoincare} and Proposition~\ref{localemb} to the setting of a bounded metric space $(X,d)$ with $\sigma:=2$ and $r_0:=\delta'/\sigma=2\diam X$, we arrive at the following result.

\begin{tw}\label{boundedembedding}
Let $(X,d,\mu)$ be a bounded metric measure space and assume that there exist $Q\in \mathcal{P}_b^{\log}(X)$, $\delta\in(0,\infty)$, and $b\in (0,1]$ such that for every $x\in X$ and $r\in (0,\delta]$,
\begin{equation*}
	\mu\left(B(x,r)\right)\geq br^{Q(x)}.
\end{equation*}
Let $p,s\in \mathcal{P}_b^{\log}(X)$. Then, the following statements are valid.
\begin{enumerate}
	\item[(i)] If $sp \ll Q$, then, there exists a positive constant $\hat{C}_{S}$, depending only on $b$, $\delta$, $\diam X$, $\mu (X)$, $p$, $s$, $Q$, such that, for every pair of functions $u\in \dot{M}^{s(\cdot),p(\cdot)}(X,d,\mu)$ and $g\in \mathcal{D}^{s(\cdot)}(u)\cap L^{p(\cdot)}(X,\mu)$, there holds
	\begin{equation*}
		\inf_{c\in \mathbb R}\left\|u-c\right\|_{L^{\gamma(\cdot)}(X,\mu)} \leq \hat{C}_{S} \left\|g\right\|_{L^{p(\cdot)}(X,\mu)},
	\end{equation*}
	where $\displaystyle \gamma:=\frac{Qp}{Q-sp}$. Moreover, for every $u\in M^{s(\cdot),p(\cdot)}(X,d,\mu)$ and $g\in \mathcal{D}^{s(\cdot)}(u) \cap L^{p(\cdot)}(X,\mu)$
	\begin{equation*}
		\left\| u\right\|_{L^{\gamma(\cdot)}(X,\mu)} \leq \hat{C}_S\left( \left\|u \right\|_{L^{p(\cdot)}(X,\mu)} + \left\| g\right\|_{L^{p(\cdot)}(X,\mu)}\right).
	\end{equation*}
	\item[(ii)] If $sp=Q$, then there exist positive constants $\hat{C}_{MT1}, \hat{C}_{MT2}$, depending only on $b$, $\delta$, $\diam X$, $p$, $s$, $Q$, such that for every pair of functions $u\in \dot{M}^{s(\cdot),p(\cdot)}(X,d,\mu)$ and $g\in \mathcal{D}^{s(\cdot)}(u) \cap L^{p(\cdot)}(X,\mu)$ with $ \left\|g\right\|_{L^{p(\cdot)}(X,\mu)}> 0$, there holds
	\begin{equation*}
		\fint_{X} \exp\left(\hat{C}_{MT1}\frac{\left|u(x)-u_{X}\right|}{\left\|g\right\|_{L^{p(\cdot)}(X,\mu)}}\right)\mbox{d}\mu(x) \leq \hat{C}_{MT2}.
	\end{equation*}
	\item[(iii)] If $sp \gg Q$, then there exists a positive constant $\hat{C}_H$, depending only on $b$, $\delta$, $\diam X$, $p$, $s$, $Q$, such that for every pair of functions $u\in \dot{M}^{s(\cdot),p(\cdot)}(X,d,\mu)$ and $g\in \mathcal{D}^{s(\cdot)}(u)\cap L^{p(\cdot)}(X,\mu)$, there holds
	\begin{equation*}
		\left\|u-u_{X}\right\|_{L^{\infty}(X,\mu)}\leq \hat{C}_H \left\|g\right\|_{L^{p(\cdot)}(X,\mu)}.
	\end{equation*}
	Moreover, the function $u$ has a continuous representative $\tilde{u}$ on $X$ such that, for all $x,y\in X$,
	\begin{equation*}
		\left|\tilde{u}(x)-\tilde{u}(y)\right| \leq \hat{D}_H \|g\|_{L^{p(\cdot)}(X,\mu)}d(x,y)^{\alpha(x)},
	\end{equation*}
	where $\hat{D}_H$ is a positive constant and $\displaystyle \alpha:=s-\frac{Q}{p}$.
\end{enumerate}
\end{tw}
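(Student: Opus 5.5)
The plan is to reduce everything to Theorem~\ref{sobolevpoincare} and Proposition~\ref{localemb} by viewing all of $X$ as a single ball. Set $\delta':=\max\{\delta,4\diam X\}$ (if $\diam X=0$ the space is a point and all claims are trivial). By Remark~\ref{measurethreshold}, $\mu(B(x,r))\ge b'r^{Q(x)}$ for all $x\in X$ and $r\in(0,\delta']$, with $b':=b(\delta/\delta')^{Q^+}\le 1$. Fix any $x_0\in X$ and take $\sigma:=2$, $r_0:=\delta'/2\ge 2\diam X$. Then $B_0=B(x_0,r_0)=X=\sigma B_0$, since every point lies at distance at most $\diam X<r_0$ from $x_0$. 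The constraint $r_0\le\delta'/\sigma$ holds with equality. Consequently $\dot M^{s(\cdot),p(\cdot)}(\sigma B_0)=\dot M^{s(\cdot),p(\cdot)}(X)$, and the $L^{p(\cdot)}(\sigma B_0)$ norms are norms on $X$.

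\textbf{Part (i).} Apply Theorem~\ref{sobolevpoincare}(i) with the data $(b',\delta',\sigma)$. This gives the bound with the factor
$$\left(\frac{\mu(X)}{r_0^{Q(x_0)}}\right)^{1/\gamma^-_X}.$$
This factor is bounded by a constant depending on $\mu(X)$, $\diam X$, $\delta$, $Q^\pm$ and $\gamma^-$, since $r_0\ge\min$ of fixed positive numbers. Note $\mu(X)<\infty$, because $X$ is a ball. Set $\hat C_S$ accordingly.

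For the inhomogeneous estimate, use Proposition~\ref{localemb} with the same choices. The quantity $\Lambda(X)$ involves $\kappa_{\gamma(\cdot)}$, $\mu(X)$ and $\|1\|_{L^{\gamma(\cdot)}(X)}$. The last is at most $\max\{\mu(X)^{1/\gamma^-},\mu(X)^{1/\gamma^+}\}$ by Proposition~\ref{rel}. So $\Lambda(X)$ is a constant depending only on the listed data, and enlarging $\hat C_S$ gives the second inequality.

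\textbf{Parts (ii) and (iii).} These follow verbatim from Theorem~\ref{sobolevpoincare}(ii) and (iii), because $B_0=\sigma B_0=X$. In (ii), $\fint_{B_0}$ becomes $\fint_X$ and $u_{B_0}=u_X$. In (iii), we get $r_0^{\alpha(x_0)}\le\max\{r_0^{\alpha^+},r_0^{\alpha^-}\}$, a constant. The continuous representative and the Hölder bound come from \eqref{holderembedding} with $\hat D_H:=D_H(r_0)$, which is a fixed number since $r_0$ is fixed.

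\textbf{Main obstacle.} The only delicate point is bookkeeping: the constants must depend only on the stated data and not on the choice of $x_0$. For $r_0$ and $\mu(X)$ this is immediate. The $Q(x_0)$ exponent enters only through $r_0^{\pm Q(x_0)}$, which is controlled by $Q^\pm$. The constants of Theorem~\ref{sobolevpoincare} depend on $b',\sigma,\delta',p,s,Q$, all of which are determined by $b,\delta,\diam X$ and the exponents, so the claimed dependence follows.
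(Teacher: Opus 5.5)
Your proposal is correct and is exactly the paper's argument: enlarge the threshold via Remark~\ref{measurethreshold}, take $\sigma=2$ and $r_0=\delta'/2\geq 2\diam X$ so that $B_0=\sigma B_0=X$, and read off Theorem~\ref{sobolevpoincare} and Proposition~\ref{localemb}, with $r_0^{\pm Q(x_0)}$, $r_0^{\alpha(x_0)}$, $D_H(r_0)$ and $\Lambda(X)$ becoming constants depending only on the stated data. Your choice $\delta'=\max\{\delta,4\diam X\}$ is a slightly more careful version of the paper's $\delta'=4\diam X$: the remark only enlarges thresholds, and your choice also keeps $r_0>0$ when $\diam X=0$.
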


In general case, we have the following theorem.

\begin{tw}\label{doublingembedding}
	Let $(X,d,\mu)$ be a metric measure space and assume that there exist $Q\in \mathcal{P}_b^{\log}(X)$, $\delta\in(0,\infty)$, and $b\in (0,1]$ such that for every $x\in X$ and $r\in (0,\delta]$, 
	\begin{equation*}
		\mu(B(x,r))\geq br^{Q(x)}.
	\end{equation*}
	\begin{enumerate}
		\item[(i)] If $(X,d)$ is geometrically doubling and $\displaystyle \sup\left\{\mu(B(x,\delta)): x \in X\right\}<\infty,$ then for every $p,s\in \mathcal{P}^{\log}_b(X)$ satisfying $sp \ll Q$, the following embedding holds,
		\begin{equation*}
			M^{s(\cdot),p(\cdot)}(X,d,\mu) \hookrightarrow L^{\gamma(\cdot)}(X,\mu),
		\end{equation*}
		where $\displaystyle \gamma:=\frac{Qp}{Q-sp}$.
		\item[(ii)] If $(X,d)$ is geometrically doubling and $\displaystyle \sup\left\{\mu(B(x,\delta)): x \in X\right\}<\infty,$ then for every $p,s\in \mathcal{P}^{\log}_b(X)$ satisfying $sp=Q$, the following embedding holds,
		\begin{equation*}
			M^{s(\cdot),p(\cdot)}(X,d,\mu) \hookrightarrow L^{\beta(\cdot)}(X,\mu),
		\end{equation*}
		for every $\beta \in \mathcal{P}_b(X)$ such that $\beta \gg p.$
		\item[(iii)] For every $p,s \in \mathcal{P}^{\log}_b(X)$ satisfying $sp \gg Q$, the following embedding holds,
		\begin{equation*}
			M^{s(\cdot),p(\cdot)}(X,d,\mu) \hookrightarrow C^{0,\alpha(\cdot)}(X,d),
		\end{equation*}
		where $\displaystyle \alpha:=s-\frac{Q}{p}$. 
		
	\end{enumerate}
\end{tw}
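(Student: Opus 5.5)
The three parts are handled separately. Parts (i) and (ii) globalize Proposition~\ref{localemb} through the covering of Lemma~\ref{covering}. Part (iii) needs no covering: it follows from Theorem~\ref{sobolevpoincare}~(iii) applied on balls of one fixed radius.

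\emph{Part (i).} Fix $\sigma:=2$ and a radius $r_0\in(0,\delta/2]$, to be shrunk once below. By Remark~\ref{measurethreshold} the lower bound $\mu(B(x,r))\geq br^{Q(x)}$ still holds for $r\leq r_0$. Lemma~\ref{covering} applied with $r:=r_0$ gives balls $B_i:=B(x_i,r_0)$ covering $X$, and with $R:=2r_0$ every point lies in at most $N:=A2^B$ of the balls $2B_i$. On each $B_i$, Proposition~\ref{localemb} gives
\begin{equation*}
\left\|u\right\|_{L^{\gamma(\cdot)}(B_i)}\leq K\left(\left\|g\right\|_{L^{p(\cdot)}(2B_i)}+\left\|u\right\|_{L^{p(\cdot)}(B_i)}\right),
\end{equation*}
with $K$ independent of $i$. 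To see this, note that $\mu(B_i)/r_0^{Q(x_i)}\leq \sup_x\mu(B(x,\delta))\, r_0^{-Q^+}$ when $r_0\leq1$. Also $\Lambda(B_i)$ is bounded: $\mu(B_i)\geq br_0^{Q^+}$ controls the factor $(2/\mu(B_i))^{1/\gamma^-}$, and $\|1\|_{L^{\gamma(\cdot)}(B_i)}\leq\max\{\mu(B_i)^{1/\gamma^-},\mu(B_i)^{1/\gamma^+}\}$ by Proposition~\ref{rel}.

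The main obstacle is summing these local norm inequalities into a global one. Norms are not additive, so I pass to modulars. Since $\gamma-p=sp^2/(Q-sp)\geq c>0$ and $p$ is log-H\"older continuous, I shrink $r_0$ (uniformly in the centre) until $p^+_{2B_i}\leq\gamma^-_{B_i}$ for every $i$.

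Next I normalize. Take $\|u\|_{L^{p(\cdot)}}+\|g\|_{L^{p(\cdot)}}\leq\varepsilon$ with $\varepsilon$ so small that every local right-hand side is at most $1$. Proposition~\ref{rel} then gives
\begin{equation*}
\rho_{\gamma(\cdot)}(u\chi_{B_i})\leq \|u\|_{L^{\gamma(\cdot)}(B_i)}^{\gamma^-_{B_i}}\leq (2K)^{\gamma^+}\left(\rho_{p(\cdot)}(g\chi_{2B_i})+\rho_{p(\cdot)}(u\chi_{B_i})\right)^{\gamma^-_{B_i}/p^+_{2B_i}},
\end{equation*}
and the exponent $\gamma^-_{B_i}/p^+_{2B_i}$ is at least $1$. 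Summing over $i$ with overlap at most $N$ yields $\rho_{\gamma(\cdot)}(u)\leq C$. Proposition~\ref{rel} then bounds $\|u\|_{L^{\gamma(\cdot)}}$, and rescaling $u$ and $g$ together gives the continuous embedding.

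\emph{Part (ii).} Set $t:=s-\eta$ for a small constant $\eta>0$, so that $t\in\mathcal P^{\log}_b$ and $tp\ll Q$. If $g\in\mathcal D^{s(\cdot)}(u)$, then $g+|u|$ is a $t(\cdot)$-gradient up to a constant factor. For $d(x,y)\leq1$ this holds because $d^{s}\leq d^{t}$. For $d(x,y)>1$ it holds because $|u(x)-u(y)|\leq|u(x)|+|u(y)|\leq d(x,y)^{t(x)}|u(x)|+d(x,y)^{t(y)}|u(y)|$.

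Hence $M^{s(\cdot),p(\cdot)}\hookrightarrow M^{t(\cdot),p(\cdot)}\hookrightarrow L^{\gamma_t(\cdot)}$ by part (i), where $\gamma_t=Qp/(Q-tp)=Q/\eta\geq Q^-/\eta$. Choosing $\eta$ small gives $\gamma_t\gg\beta$. Since also $M^{s(\cdot),p(\cdot)}\hookrightarrow L^{p(\cdot)}$ and $p\ll\beta\ll\gamma_t$, Proposition~\ref{interpolacyjny} gives the embedding into $L^{\beta(\cdot)}$.

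\emph{Part (iii).} Fix $r_0:=\delta/2$ and $\sigma:=2$. For any ball $B=B(z,r_0)$, Theorem~\ref{sobolevpoincare}~(iii) bounds $\|u-u_B\|_{L^\infty(B)}$ by $C\|g\|_{L^{p(\cdot)}}$. Proposition~\ref{medianlemma}, combined with $\mu(B)\geq b r_0^{Q^+}$, bounds $|m_u(B)|$ by $C\|u\|_{L^{p(\cdot)}}$. Using the median avoids needing $p^-\geq1$. Because $|m_u(B)-u_B|\leq\|u-u_B\|_{L^\infty(B)}$, this gives $\|u\|_{L^\infty}\leq C\|u\|_{M^{s(\cdot),p(\cdot)}}$ uniformly in the ball.

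The continuous representatives from Theorem~\ref{sobolevpoincare} on overlapping balls agree a.e., hence everywhere by continuity, so they glue to a continuous $\tilde u$ on $X$. For the H\"older seminorm I split into two cases. If $d(x,y)<r_0/2$, apply \eqref{holderembedding} on $B(x,r_0)$, where $D_H(r_0)$ is a fixed constant. If $d(x,y)\geq r_0/2$, use
\begin{equation*}
|\tilde u(x)-\tilde u(y)|\leq 2\|u\|_{L^\infty}\leq 2\|u\|_{L^\infty}(2/r_0)^{\alpha^+}d(x,y)^{\alpha(x)},
\end{equation*}
which is valid in the regime $d(x,y)\leq1$ (e.g.\ with $r_0\leq1$). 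For $d(x,y)>1$ the bound is trivial, since $d(x,y)^{\alpha(x)}\geq1$.
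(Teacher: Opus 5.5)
Your proposal is correct and follows the paper's proof essentially step for step: (i) is the paper's bounded-overlap globalization of Proposition~\ref{localemb} via Lemma~\ref{covering}, with modular summation under $p^+_{2B_i}\le\gamma^-_{B_i}$ (Proposition~\ref{techniczne}); (ii) lowers the smoothness and interpolates with Proposition~\ref{interpolacyjny}; and (iii) combines Theorem~\ref{sobolevpoincare}~(iii) on balls of radius $\delta/2$ with the median bound for $\|u\|_{L^\infty}$ and a near/far split of $d(x,y)$. The differences are cosmetic: you take $t=s-\eta$ with constant $\eta$ (so $\gamma_t=Q/\eta$) where the paper takes $t=s-Q/k$ to get a constant target exponent $k$, and you glue the local continuous representatives using that $\mu$ charges every ball, instead of using the countable pair-cover of Lemma~\ref{pokrycieprodukt}.
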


\begin{proof}
We shall start proof of Theorem~\ref{doublingembedding} with the following proposition.

\begin{prop}\label{techniczne}
Let $(X,d,\mu)$ be a metric measure space. Assume that there exist $r\in (0,\infty)$, $M\in (0,\infty)$, and a sequence $\left\{x_i\right\}_{i=1}^{\infty} \subseteq X$ satisfying
\begin{equation*}
	X=\bigcup_{i=1}^{\infty} B(x_i,r)
\end{equation*}
and for every $x\in X$,
\begin{equation*}
	\# \left\{i\in \mathbb N: x\in B(x_i,2r)\right\} \leq M.
\end{equation*}
Moreover, let $p,s,q \in \mathcal{P}_b(X)$ satisfy $q \gg p$ and $q_{B(x_i,r)}^- \geq p_{B(x_i,2r)}^+$ for all $i \in \mathbb N.$ Assume that there exists a constant $C(r)>0$ such that for every $i\in \mathbb N$, $u\in M^{s(\cdot),p(\cdot)}(X,d,\mu)$, and $g\in \mathcal{D}^{s(\cdot)}(u) \cap L^{p(\cdot)}(X,\mu)$, one has $u\in L^{q(\cdot)}(B(x_i,r))$ and
\begin{equation}\label{local1}
\left\| u \right\|_{L^{q(\cdot)}(B(x_i,r))} \leq C(r)\left( \left\| u \right\|_{L^{p(\cdot)}(B(x_i,2r))} + \left\|g \right\|_{L^{p(\cdot)}(B(x_i,2r))}\right).
\end{equation}
Then, the following global continuous embedding holds
\begin{equation*}
	M^{s(\cdot),p(\cdot)}(X,d,\mu) \hookrightarrow L^{q(\cdot)}(X,\mu).
\end{equation*}
\end{prop}

\begin{proof}
We shall denote $B_i:=B(x_i,r)$. It suffices to prove that the set
	\begin{equation*}
		\mathcal{A}:=\left\{u\in M^{s(\cdot),p(\cdot)}(X,d,\mu): \left\|u\right\|_{M^{s(\cdot),p(\cdot)}(X)}< \min\left\{1,\frac{1}{2C(r)}\right\}\right\}
	\end{equation*}
	is bounded in $L^{q(\cdot)}(X,\mu)$. Let $u\in \mathcal{A}$ and let $g\in \mathcal{D}^{s(\cdot)}(u)\cap L^{p(\cdot)}(X,\mu)$ be such that
	\begin{equation}\label{normbound1}
		\left\|u\right\|_{L^{p(\cdot)}(X,\mu)}+\left\|g\right\|_{L^{p(\cdot)}(X,\mu)}<\min\left\{1,\frac{1}{2C(r)}\right\}.
	\end{equation}
	From \eqref{local1} we get that $\left\|u\right\|_{L^{q(\cdot)}\left(B_i\right)}<1$ for every $i\in \mathbb N$. Now, using Proposition~\ref{rel} and the above facts, we have can estimate
	\begin{align*}
		\int_X \left|u(x)\right|^{q(x)}\mbox{d}\mu(x)&\leq \sum_{i=1}^{\infty} \int_{B_i} \left|u(x)\right|^{q(x)}\mbox{d}\mu(x)\leq \sum_{i=1}^{\infty} \left\|u\right\|_{L^{q(\cdot)}(B_i)}^{q_{B_i}^-}\\ &\leq \max\left\{1,C(r)\right\}^{q^+}\max\left\{1,2^{q^+-1}\right\}\sum_{i=1}^{\infty}\left(\left\|u\right\|_{L^{p(\cdot)}\left(2B_i\right)}^{q_{B_i}^-}+\left\|g\right\|_{L^{p(\cdot)}\left(2B_i\right)}^{q_{B_i}^-}\right) \\ & \leq \tilde{C}(r)\sum_{i=1}^{\infty}\left[\left(\int_{2B_i} \left|u(x)\right|^{p(x)}\mbox{d}\mu(x)\right)^{\frac{q_{B_i}^-}{p_{2B_i}^+}}+\left(\int_{2B_i} g(x)^{p(x)}\mbox{d}\mu(x)\right)^{\frac{q_{B_i}^-}{p_{2B_i}^+}}\right],
	\end{align*}
	where $\tilde{C}(r):=\max\left\{1,C(r)\right\}^{q^+}\max\left\{1,2^{q^+ - 1}\right\}.$
	
	Now, since $\left\|u\right\|_{L^{p(\cdot)}\left(2B_i\right)}+\left\|g\right\|_{L^{p(\cdot)}\left(2B_i\right)}<1$ and $q_{B_i}^- \geq p_{2B_i}^+$, we have that
	\begin{align*}
		\int_X \left|u(x)\right|^{q(x)}\mbox{d}\mu(x)&\leq \tilde{C}(r)\sum_{i=1}^{\infty}\left[\left(\int_{2B_i} \left|u(x)\right|^{p(x)}\mbox{d}\mu(x)\right)^{\frac{q_{B_i}^-}{p_{2B_i}^+}}+\left(\int_{2B_i} g(x)^{p(x)}\mbox{d}\mu(x)\right)^{\frac{q_{B_i}^-}{p_{2B_i}^+}}\right] \\ & \leq \tilde{C}(r)\sum_{i=1}^{\infty} \left(\int_{2B_i} \left|u(x)\right|^{p(x)}\mbox{d}\mu(x)+\int_{2B_i} g(x)^{p(x)}\mbox{d}\mu(x)\right)\\ & \leq \tilde{C}(r)M\left(\int_X \left|u(x)\right|^{p(x)}\mbox{d}\mu(x)+\int_X g(x)^{p(x)}\mbox{d}\mu(x)\right),
	\end{align*}
	where the last term is bounded due to inequality \eqref{normbound1}. Therefore $\mathcal{A}$ is bounded in $L^{q(\cdot)}(X,\mu)$ and the continuous embedding is proven.
\end{proof}
Let us return to the proof of Theorem~\ref{doublingembedding}. 
We shall start with proving $(i)$. Firstly, let us notice that local log-H\"older continuity of $p$ implies that $p$ is uniformly continuous. Moreover, since $p \ll \gamma$, uniform continuity of $p$ implies that there exists $\eta\in (0,\infty)$, such that for every ball $B:=B(x,r)\subseteq X$ with radius satisfying $r\in (0,\eta)$ we have inequality $p_B^+\leq \gamma_B^-$.
	
	Let us fix $r\in \left(0,\min\left\{\delta/2,\eta/2,1/2\right\}\right).$ Since the metric space $(X,d)$ is geometrically doubling, by Lemma~\ref{covering}, applied  with $R:=2r$, we can find constants $A,B>0$ and a sequence $\left\{x_i\right\}_{i=1}^{\infty} \subseteq X$ such that the family of balls $\left\{B\left(x_i,r\right)\right\}_{i=1}^{\infty}$ cover $X$ and have the property that, for every $x\in X$,
	\begin{equation*}
		\# \left\{i\in \mathbb N: x\in B\left(x_i,2r\right)\right\}\leq A\cdot 2^B.
	\end{equation*}
	Now, from Proposition~\ref{localemb} with $\sigma:=2$, it follows that for every $i\in \mathbb N$, $u\in M^{s(\cdot),p(\cdot)}(X,d,\mu)$, and $g\in \mathcal{D}^{s(\cdot)}(u)\cap L^{p(\cdot)}(X,\mu)$, there holds 
	\begin{equation*}
		\left\|u\right\|_{L^{\gamma(\cdot)}\left(B_i\right)}\leq \left(1+\Lambda(B_i)\right)C_S \left(\frac{\mu(B_i)}{r^{Q(x_i)}}\right)^{\frac{1}{\gamma_{B_i}^-}}\left\|g \right\|_{L^{p(\cdot)}(2 B_i)}+\Lambda(B_i)\left\|u\right\|_{L^{p(\cdot)}(B_i)},
	\end{equation*}
	where $B_i:=B\left(x_i,r\right)$ and $\Lambda(B_0)$ and $C_S$ are constants from Proposition~\ref{localemb}. Let
	\begin{equation*}
		C:= \max\left\{1, \sup_{x\in X} \mu(B(x,\delta))\right\}.
	\end{equation*}
	Then, we know that $C\in [1,\infty)$ and by the lower Ahlfors-regularity of the measure, we have
	\begin{align*}
		\Lambda(B_i)\leq \kappa_{\gamma(\cdot)}^2\max\left\{2,\left(\frac{2}{br^{Q^+}}\right)^{\frac{1}{\gamma^-}}\right\}C^{\frac{1}{\gamma^-}}
	\end{align*}
	and
	\begin{equation*}
		\left( \frac{\mu(B_i)}{r^{Q(x_i)}} \right)^{\frac{1}{\gamma_{B_i}^-}} \leq \left(\frac{C}{r^{Q^+}}\right)^{\frac{1}{\gamma^-}}.
	\end{equation*}
	Hence,
	\begin{equation}\label{local}
		\left\|u\right\|_{L^{\gamma(\cdot)}\left(B_i\right)}\leq C(r)\left(\left\|u\right\|_{L^{p(\cdot)}\left(2B_i\right)}+\left\|g\right\|_{L^{p(\cdot)}\left(2B_i\right)}\right),
	\end{equation}
	where $C(r)$ is a constant which does not depend on $x_i$. Moreover, because $r\in (0,\eta/2)$, we know from the first part of the proof that $p_{2B_i}^+ \leq \gamma_{2B_i}^-$ for all $i \in \mathbb N$. As a consequence, $p_{2B_i}^+\leq \gamma_{B_i}^-.$ Hence, Proposition~\ref{techniczne} applied for $q:=\gamma$, $M:=A\cdot 2^B$ and balls $\left\{B_i\right\}_{i=1}^{\infty}$ yields
	\begin{equation*}
		M^{s(\cdot),p(\cdot)}(X,d,\mu) \hookrightarrow L^{\gamma(\cdot)}(X,\mu)
	\end{equation*}
	and the proof of $(i)$ is complete.
	
	Now, we pass to the proof of $(ii)$. Let us fix number $k\in (\beta^+,\infty)$ and define an exponent $t$ by setting
	\begin{equation*}
		t:=\frac{Q\left(k-p\right)}{pk}.
	\end{equation*}
	Then, clearly $t\in \mathcal{P}_b^{\log}(X)$ and $t\ll s$. Hence, $tp \ll sp =Q$. Moreover, $k=Qp/(Q-tp)$. Hence, applying respectively Proposition~\ref{embeddingsbetween} and recently proven $(i)$ we get
	\begin{equation*}
		M^{s(\cdot),p(\cdot)}(X,d,\mu) \hookrightarrow M^{t(\cdot),p(\cdot)}(X,d,\mu) \hookrightarrow L^{k}(X,\mu).
	\end{equation*}
	On the other hand, obviously
	\begin{equation*}
		M^{s(\cdot),p(\cdot)}(X,d,\mu) \hookrightarrow L^{p(\cdot)}(X,\mu).
	\end{equation*}
	Hence, by Proposition~\ref{interpolacyjny} applied for $q_0:=p$, $q:=\beta$, $q_1=k$ we get
	\begin{equation*}
		M^{s(\cdot),p(\cdot)}(X,d,\mu) \hookrightarrow L^{\beta(\cdot)}(X,\mu)
	\end{equation*}
	and $(ii)$ is proven.

	There remains to prove $(iii)$. Fix $u\in M^{s(\cdot),p(\cdot)}(X,d,\mu)$ and let $N_u\subseteq X$ be the null set such that $\left|u(x)\right| \leq \left\|u\right\|_{L^{\infty}(X,\mu)}$ for every $x\in X\setminus N_u$. Next, using Lemma~\ref{pokrycieprodukt} we can find\footnote{Recall that every metric measure space is necessarily separable.} a sequence $\left\{z_i\right\}_{i=1}^\infty\subseteq X$ such that
	\begin{equation}\label{zawieranie}
		\left\{ (x,y)\in X \times X: 0<d(x,y)<\delta/4 \right\} \subseteq \bigcup_{i=1}^{\infty} \left\{ (x,y)\in X \times X: x,y\in B\left(z_i,\delta/2\right) \textnormal{ and } x \neq y\right\}.
	\end{equation}
	From Theorem~\ref{sobolevpoincare} $(iii)$ with $\sigma:=2$ and $r_0:=\delta/2$, we have  that for every $i\in \mathbb N$, there is a continuous function $\tilde{u}_i: B(z_i,\delta/2)\to \mathbb{R}$ and the null set $A_i\subseteq B(z_i,\delta/2)$ such that $u=\tilde{u}_i$ on $B(z_i,\delta/2)\setminus A_i$. Now, define
	\begin{equation*}
		E:=N_u \cup \bigcup_{i=1}^{\infty} A_i.
	\end{equation*}
	Obviously $\mu(E)=0$. Moreover, we notice that
	\begin{align}\label{pierwsza}
		\sup_{\substack{x, y\in X\setminus E\\ x \neq y}} \frac{\left|u(x)-u(y)\right|}{d(x,y)^{\alpha(x)}} & \leq \sup_{\substack{x, y\in X\setminus E\\ 0<d(x,y)<\delta/4}} \frac{\left|u(x)-u(y)\right|}{d(x,y)^{\alpha(x)}} + \sup_{\substack{x, y\in X\setminus E\\ d(x,y)\geq \delta/4}} \frac{\left|u(x)-u(y)\right|}{d(x,y)^{\alpha(x)}} \nonumber \\ & \leq \sup_{\substack{x, y\in X\setminus E\\ 0<d(x,y)<\delta/4}} \frac{\left|u(x)-u(y)\right|}{d(x,y)^{\alpha(x)}} + G(\delta,\alpha) \left\|u\right\|_{L^{\infty}(X,\mu)}, 
	\end{align}
	where 
	\begin{equation*}
	G(\delta,\alpha):=2 \max\left\{\left(\frac{4}{\delta}\right)^{\alpha^+}, \left(\frac{4}{\delta}\right)^{\alpha^-}\right\}.
	\end{equation*}
	Let $g\in \mathcal{D}^{s(\cdot)}(u) \cap L^{p(\cdot)}(X,\mu)$. By the virtue of Theorem~\ref{sobolevpoincare} $(iii)$, we conclude that for every $i\in \mathbb N$, we have $\displaystyle u\in L^{\infty}\left(B\left(z_i,\delta/2\right)\right)$ and
	\begin{equation}\label{ineq}
		\left\|u-u_{B\left(z_i,\delta/2\right)}\right\|_{L^{\infty}\left(B\left(z_i,\delta/2\right)\right)}\leq C_H\left\|g\right\|_{L^{p(\cdot)}\left(B\left(z_i,\delta\right)\right)}\leq C_H \left\|g\right\|_{L^{p(\cdot)}(X,\mu)}.
	\end{equation}
	Thus, by Proposition~\ref{med1}, Proposition~\ref{medianlemma}, inequality \eqref{ineq}, and the very definition of the median, we get
	\begin{align}\label{druga}
		&\left\|u\right\|_{L^{\infty}(X,\mu)} = \sup_{i\in \mathbb N} \left\|u\right\|_{L^{\infty}\left(B\left(z_i,\delta/2\right)\right)} \leq \sup_{i\in \mathbb N} \left\|u-m_u(B\left(z_i,\delta/2\right))\right\|_{L^{\infty}(B\left(z_i,\delta/2\right))}+\sup_{i\in \mathbb N} m_{\left|u\right|}(B\left(z_i,\delta/2\right))\nonumber \\ & \leq \sup_{i\in\mathbb N} \left\|u-u_{B\left(z_i,\delta/2\right)}\right\|_{L^{\infty}(B\left(z_i,\delta/2\right))} + \sup_{i \in \mathbb N} \left|u_{B\left(z_i,\delta/2\right)}-m_u(B\left(z_i,\delta/2\right))\right|+\sup_{i\in \mathbb N} m_{\left|u\right|}(B\left(z_i,\delta/2\right))\nonumber \\ &\leq  \sup_{i\in\mathbb N} \left\|u-u_{B\left(z_i,\delta/2\right)}\right\|_{L^{\infty}(B\left(z_i,\delta/2\right))}+\sup_{i\in \mathbb N} m_{\left|u-u_{B\left(z_i,\delta/2\right)}\right|}(B\left(z_i,\delta/2\right))+\sup_{i\in \mathbb N} m_{\left|u\right|}\left(B\left(z_i,\delta/2\right)\right) \nonumber \\ & \leq 2\sup_{i\in\mathbb N} \left\|u-u_{B\left(z_i,\delta/2\right)}\right\|_{L^{\infty}(B\left(z_i,\delta/2\right))}+\sup_{i\in \mathbb N}\max\left\{2,\left(\frac{2}{\mu(B\left(z_i,\delta/2\right))}\right)^{\frac{1}{p^-_{B\left(z_i,\delta/2\right)}}}\right\}\left\|u\right\|_{L^{p(\cdot)}(B\left(z_i,\delta/2\right))} \nonumber \\ & \leq 2\sup_{i\in\mathbb N} \left\|u-u_{B\left(z_i,\delta/2\right)}\right\|_{L^{\infty}(B\left(z_i,\delta/2\right))}+\sup_{i\in \mathbb N}\max\left\{2,\left(\frac{2\cdot 2^{Q(z_i)}}{b\cdot \delta^{Q(z_i)}}\right)^{\frac{1}{p^-_{B\left(z_i,\delta/2\right)}}}\right\}\left\|u\right\|_{L^{p(\cdot)}(B\left(z_i,\delta/2\right))} \nonumber \\ & \leq 2C_H \left\|g\right\|_{L^{p(\cdot)}(X,\mu)}+ F(b,\delta,Q,p) \left\|u\right\|_{L^{p(\cdot)}(X,\mu)}, 
	\end{align}
	where $F(b,\delta,Q,p)$ is a constant depending on $b$, $\delta$, $Q$ and $p$. On the other hand, using again Theorem~\ref{sobolevpoincare} $(iii)$ and the inclusion \eqref{zawieranie} we obtain
	\begin{align}\label{trzecia}
		\sup_{\substack{x, y\in X\setminus E\\ 0<d(x,y)<\delta/4}} \frac{\left|u(x)-u(y)\right|}{d(x,y)^{\alpha(x)}} &\leq \sup_{i\in \mathbb N} \sup_{\substack{x,y \in B(z_i,\delta/2) \setminus E\\ x\neq y}} \frac{\left|\tilde{u}_i(x)-\tilde{u}_i(y)\right|}{d(x,y)^{\alpha(x)}} \leq D_H\left(\delta/2\right) \sup_{i\in \mathbb N}\left\|g\right\|_{L^{p(\cdot)}(B(z_i,\delta/2))} \nonumber \\ & \leq D_H\left(\delta/2\right) \left\|g\right\|_{L^{p(\cdot)}(X,\mu)}.
	\end{align}
	Combining this with \eqref{pierwsza} and \eqref{druga}, yields
	\begin{equation*}
		\sup_{\substack{x,y\in X\setminus E \\ x \neq y}} \frac{\left|u(x)-u(y)\right|}{d(x,y)^{\alpha(x)}} \leq \left(2C_HG(\delta,\alpha)+D_H\left(\delta/2\right)\right)\left\|g\right\|_{L^{p(\cdot)}(X,\mu)}+ G(\delta,\alpha)F(b,\delta,Q,p)\left\|u\right\|_{L^{p(\cdot)}(X,\mu)},
	\end{equation*}
	and hence $u$ has an $\alpha(\cdot)$-H\"older continuous representative on $X$ (denoted by $u^*$).
	Finally, using \eqref{pierwsza}, \eqref{druga}, and \eqref{trzecia}, we obtain
	\begin{align*}
		\left\|u^*\right\|_{C^{0,\alpha(\cdot)}(X,d)} &=  \left\|u^*\right\|_{C(X)}+\left[u^*\right]_{\alpha(\cdot),X} =  \left\|u^*\right\|_{L^{\infty}(X,\mu)}+\left[u^*\right]_{\alpha(\cdot),X} \\ & \leq \left[D_H(\delta/2)+2C_H(G(\delta,\alpha)+1)\right]\left\|g\right\|_{L^{p(\cdot)}(X,\mu)}+\left(G(\delta,\alpha)+1\right)F(b,\delta,Q,p)\left\|u\right\|_{L^{p(\cdot)}(X,\mu)},
	\end{align*}
	and the proof is complete.
\end{proof}	

The following example illustrates that the global embeddings in Theorem~\ref{doublingembedding} may fail if $Q\not\in \mathcal{P}_b^{\log}(X)$.
\begin{example}
Let $X:=B(0,1)\subseteq \mathbb R^n$ for some $n\in\mathbb N.$ Moreover, let $\left| \cdot -\cdot\right|$ be Euclidean distance on $X$ and let $\mu:=\lambda_n+\delta_0$ be defined on Borel subsets of $X$, where $\lambda_n$ is the $n$-dimensional Lebesgue measure and $\delta_0$ is Dirac delta at $0$. For a fixed $\beta \in (0,n)$, define a function $Q: X \to \mathbb R$ by setting
\begin{equation*}
Q(x):=\left\{\begin{array}{ll}\beta ,& \textnormal{ for } x=0,\\ n,& \textnormal{ for } x\in X \setminus \left\{0\right\}.\end{array} \right.
\end{equation*}
Then, measure $\mu$ is lower Ahlfors $Q(\cdot)$-regular, but for every number $p\in (Q^+,\infty)$,
\begin{equation*}
M^{1,p}(X,\left|\cdot-\cdot\right|,\mu) \nsubseteq C^{0,\alpha(\cdot)}(X,\left|\cdot -\cdot\right|),
\end{equation*}
where $\displaystyle \alpha:=1-\frac{Q}{p}.$
\end{example}

\begin{proof}
First, note that the Lebesgue measure, $\lambda_n$, satisfies the measure density condition on $B(0,1)$, that is, there exists a constant $A>0$ such that, for every $x\in B(0,1)$ and $r\in (0,1]$, there holds
\begin{equation*}
	\lambda_n\left(B(0,1) \cap B(x,r)\right) \geq Ar^n.
\end{equation*}
Hence, for every $x\in X \setminus \left\{0\right\}$,
\begin{equation*}
	\mu(B(x,r))\geq \lambda_n(B(0,1) \cap B(x,r))\geq A r^n.
\end{equation*}
Moreover,
\begin{equation*}
	\mu(B(0,r)) = 1+\lambda_n(B(0,r))\geq r^{\beta}+Ar^n \geq r^{\beta}.
\end{equation*}
Thus, $\mu$ is lower Ahlfors $Q(\cdot)$-regular. 

Fix, $\theta\in \left(1-n/p,1-\beta/p\right)$ and consider the function $u: X \to \mathbb R$ defined by $u(x):=\left|x\right|^{\theta}$ for $x\in X.$
We show that $u\in M^{1,p}(X,\left|\cdot - \cdot\right|,\mu).$ Using polar coordinates, we have
\begin{equation*}
	\int_{X} \left|u(x)\right|^{p} \mbox{d}\mu(x)=n \omega_n\int_0^1 r^{\theta p+n-1}\mbox{d}r,
\end{equation*}
where $\omega_n$ is the Lebesgue measure of unit ball in $\mathbb R^n$. Since $\theta p +n -1 >-1$ the last integral is finite. Hence, $u\in L^p(X,\mu).$ Now, using the mean value theorem, for every $x,y\in X\setminus \left\{0\right\}$, we get
\begin{equation*}
	\left| \left|x\right|^{\theta} - \left|y\right|^{\theta}\right| \leq \left|x-y\right| \left(\theta \left|x\right|^{\theta -1} +\theta \left|y\right|^{\theta -1}\right) \leq \left|x-y\right| \left(\left|x\right|^{\theta -1} +\left|y\right|^{\theta -1}\right).
\end{equation*}
As a consequence, the function $g: X \to [0,\infty)$, defined by
\begin{equation*}
g(x):=\left\{\begin{array}{ll} \left|x\right|^{\theta -1},& \textnormal{ for } x\in X\setminus \left\{0 \right\}, \\ 0,& \textnormal{ for } x=0,\end{array}\right.		\end{equation*}
is a scalar 1-gradient of $u.$ Moreover, using polar coordinates once again gives
\begin{equation*}
	\int_X g(x)^{p} \mbox{d}\mu(x)=n \omega_n \int_0^1 r^{\theta p-p +n-1}\mbox{d}r ,
\end{equation*}
where the last integral is finite, since $\theta p-p+n-1>-1.$ Therefore, $g\in L^p(X,\mu)$ and hence we conclude that $u\in M^{1,p}(X,\left|\cdot - \cdot \right|,\mu).$

On the other hand, for all $x\in X \setminus \left\{0\right\}$ we have
\begin{equation*}
	\frac{\left|u(x)-u(0)\right|}{\left|x-0\right|^{\alpha(0)}}=\left|x\right|^{\theta-1+\beta/p} \stackrel{x\to 0}{\longrightarrow} \infty,
\end{equation*}
since $\theta-1+\beta/p<0.$ Then, $u\notin C^{0,\alpha(\cdot)}(X,\left|\cdot - \cdot  \right|).$
\end{proof}

\subsection{Remarks for Besov and Triebel-Lizorkin spaces}
		
\begin{tw}\label{localtriebellizorkin}
	Let $(X,d,\mu)$ be a metric measure space and assume that there exist $Q\in \mathcal{P}_b^{\log}(X)$, $\delta\in (0,\infty)$, and $b\in (0,1]$ such that for every $x\in X$ and $r\in (0,\delta]$,
	\begin{equation*}
		\mu\left(B(x,r)\right)\geq br^{Q(x)}.
	\end{equation*}
	Let $p,s\in \mathcal{P}_b^{\log}(X)$ and $\sigma \in (1,\infty)$. Then, the following statements are valid.
	\begin{enumerate}
		\item[(i)] If $sp \ll Q$, then, there exists a  positive constant $C_{S}$, depending only on $b$, $\delta$, $\sigma$, $p$, $s$, and $Q$, such that for every ball $B_0:=B(x_0,r_0)\subseteq X$ with $r_0\leq \delta/\sigma$, $q\in \mathcal{P}(\sigma B_0)$, $u\in \dot{M}^{s(\cdot)}_{p(\cdot),q(\cdot)}(\sigma B_0)$,  and $g\in \mathbb{D}^{s(\cdot)}(u)\cap L^{p(\cdot)}(\ell^{q(\cdot)}(\sigma B_0))$, there holds 
		\begin{equation*}
			\inf_{c\in \mathbb R}\left\|u-c\right\|_{L^{\gamma(\cdot)}(B_0)} \leq C_{S}\left(\frac{\mu(B_0)}{r_0^{Q(x_0)}}\right)^{\frac{1}{\gamma^-_{B_0}}} \left\|g\right\|_{L^{p(\cdot)}(\ell^{q(\cdot)}(\sigma B_0))},
		\end{equation*}
		where $\displaystyle \gamma:=\frac{Qp}{Q-sp}$.
		\item[(ii)] If $sp=Q$, then there exist positive constants $C_{MT1}, C_{MT2}$,  depending only on $b$, $\delta$, $\sigma$, $p$, $s$, and $Q$, such that for every ball $B_0:=B(x_0,r_0)\subseteq X$ with $r_0\leq \delta/\sigma$, $q\in \mathcal{P}(\sigma B_0)$, $u\in \dot{M}^{s(\cdot)}_{p(\cdot),q(\cdot)}(\sigma B_0)$, and $g\in \mathbb{D}^{s(\cdot)}(u)\cap L^{p(\cdot)}(\ell^{q(\cdot)}(\sigma B_0))$ with $ \left\|g\right\|_{L^{p(\cdot)}(\ell^{q(\cdot)}(\sigma B_0))}> 0$, there holds
		\begin{equation*}
			\fint_{B_0} \exp\left(C_{MT1}\frac{\left|u(x)-u_{B_0}\right|}{\left\|g\right\|_{L^{p(\cdot)}(\ell^{q(\cdot)}(\sigma B_0))}}\right)\mbox{d}\mu(x) \leq C_{MT2}.
		\end{equation*}
		\item[(iii)] If $sp \gg Q$, then there exists a positive constant $C_H$, depending only on $b$, $\delta$, $\sigma$, $p$, $s$, and $Q$, such that for every ball $B_0:=B(x_0,r_0)\subseteq X$ with $r_0 \leq \delta/\sigma$, $q\in \mathcal{P}(\sigma B_0)$, $u\in \dot{M}^{s(\cdot)}_{p(\cdot),q(\cdot)}(\sigma B_0)$, and $g\in \mathbb{D}^{s(\cdot)}(u)\cap L^{p(\cdot)}(\ell^{q(\cdot)}(\sigma B_0))$, there holds
		\begin{equation*}
			\left\|u-u_{B_0}\right\|_{L^{\infty}(B_0)}\leq C_Hr_0^{\alpha(x_0)}\left\|g\right\|_{L^{p(\cdot)}(\ell^{q(\cdot)}(\sigma B_0))},
		\end{equation*}
		where $\displaystyle \alpha:=s-\frac{Q}{p}$.
		Moreover, function $u$ has a continuous representative $\tilde{u}$ on $B_0$ satisfying that, for all $x,y\in B_0$,
		\begin{equation*}
			\left|\tilde{u}(x)-\tilde{u}(y)\right| \leq D_H(r_0) \left\|g\right\|_{L^{p(\cdot)}(\ell^{q(\cdot)}(\sigma B_0))}d(x,y)^{\alpha(x)},
		\end{equation*}
		where
		\begin{equation*}
			D_H(r_0):=2^{\alpha^++1}C_H \left(\frac{\sigma \delta}{(\sigma -1)r_0}\right)^{\alpha^+}.
		\end{equation*}
	\end{enumerate}
In addition, if $q\leq p$, then all of the statements above continue to be remain valid after replacing $\dot{M}^{s(\cdot)}_{p(\cdot),q(\cdot)}$ by $\dot{N}^{s(\cdot)}_{p(\cdot),q(\cdot)}$ and $L^{p(\cdot)}(\ell^{q(\cdot)})$ by $\ell^{q(\cdot)}(L^{p(\cdot)}).$
\end{tw}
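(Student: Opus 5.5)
The plan is to reduce everything to Theorem~\ref{sobolevpoincare} by turning a vector $s(\cdot)$-gradient into a scalar one, using Proposition~\ref{embeddingsbetween}. Fix a ball $B_0$ with $r_0\le\delta/\sigma$, an exponent $q\in\mathcal{P}(\sigma B_0)$, a function $u\in\dot M^{s(\cdot)}_{p(\cdot),q(\cdot)}(\sigma B_0)$, and $g=\{g_k\}\in\mathbb{D}^{s(\cdot)}(u)\cap L^{p(\cdot)}(\ell^{q(\cdot)}(\sigma B_0))$.

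First, define $h(x):=\sup_{k\in\mathbb Z} g_k(x)$ on $\sigma B_0$. This is exactly the construction in the proof of Proposition~\ref{embeddingsbetween}~(ii), carried out on the metric measure space $(\sigma B_0,d,\mu)$. For $x,y\in\sigma B_0\setminus N_u$ with $x\neq y$, there is a unique $k$ with $2^{-k-1}\le d(x,y)<2^{-k}$. Hence
\begin{equation*}
|u(x)-u(y)|\le d(x,y)^{s(x)}g_k(x)+d(x,y)^{s(y)}g_k(y)\le d(x,y)^{s(x)}h(x)+d(x,y)^{s(y)}h(y),
\end{equation*}
so $h\in\mathcal{D}^{s(\cdot)}(u)$ on $\sigma B_0$. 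Pointwise we have $h(x)=\|\{g_k(x)\}\|_{\ell^\infty}\le\|\{g_k(x)\}\|_{\ell^{q(x)}}$, the inequality used in Proposition~\ref{monotonicity}. By monotonicity of the Luxemburg quasi-norm,
\begin{equation*}
\|h\|_{L^{p(\cdot)}(\sigma B_0)}\le\|g\|_{L^{p(\cdot)}(\ell^{q(\cdot)}(\sigma B_0))}<\infty .
\end{equation*}
In particular $u\in\dot M^{s(\cdot),p(\cdot)}(\sigma B_0)$.

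Next, apply Theorem~\ref{sobolevpoincare} (i), (ii) and (iii) to the pair $(u,h)$. Since the restrictions of $Q,p,s$ to $\sigma B_0$ keep their log-H\"older constants and bounds, the constants $C_S,C_{MT1},C_{MT2},C_H$ and $D_H(r_0)$ are the same as there. Replacing $\|h\|_{L^{p(\cdot)}(\sigma B_0)}$ by the larger quantity $\|g\|_{L^{p(\cdot)}(\ell^{q(\cdot)})}$ then gives each inequality directly.

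\begin{itemize}
\item[(i)] and (iii) (the Morrey bound and the H\"older bound for the continuous representative) follow immediately from the monotonicity just noted.
\item[(ii)] needs one extra remark: the integrand of \eqref{moser2} decreases as the norm in the denominator increases. This requires $\|h\|>0$. If $\|h\|_{L^{p(\cdot)}(\sigma B_0)}=0$, then $u$ is a.e.\ constant by Lemma~\ref{gradientzero}, so the left-hand side equals $1$, and we may take $C_{MT2}\ge 1$.
\end{itemize}

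For the Besov addendum with $q\le p$, I would repeat the argument with the same $h=\sup_k g_k$. The estimate needed is
\begin{equation*}
\|h\|_{L^{p(\cdot)}(\sigma B_0)}\le\|\{g_k\}\|_{L^{p(\cdot)}(\ell^{p(\cdot)})}=\|\{g_k\}\|_{\ell^{p(\cdot)}(L^{p(\cdot)})}\le\|\{g_k\}\|_{\ell^{q(\cdot)}(L^{p(\cdot)})}.
\end{equation*}
The first inequality is the pointwise $\ell^\infty\le\ell^{p(x)}$ bound. The middle equality is the modular identity from Proposition~\ref{embeddingsbetween}~(iii), which gives equal norms. The last inequality is Proposition~\ref{monotonicity}, which applies because $q\le p$. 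This is precisely the chain used in Proposition~\ref{embeddingsbetween}~(vi), but kept at the level of the individual gradient $g$ rather than the infimum.

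The only delicate point is bookkeeping: Theorem~\ref{sobolevpoincare} and the definitions are stated with the ambient space $X$, and here they are applied on the subspace $\sigma B_0$. I would check that the lower Ahlfors bound on $X$ is all the proof of Theorem~\ref{sobolevpoincare} uses, since it only works with balls contained in $\sigma B_0$. I would also check that the null set and the quasi-norm comparisons are taken relative to $\sigma B_0$. With that verified, there is no genuine obstacle; the theorem is a corollary of Theorem~\ref{sobolevpoincare}.
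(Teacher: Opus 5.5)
Your proposal is correct and follows the paper's route. The paper also reduces everything to Theorem~\ref{sobolevpoincare} via Proposition~\ref{embeddingsbetween}~(v), using (vi) for the Besov case $q\le p$ and Lemma~\ref{gradientzero} for (ii); your $h=\sup_k g_k$ is exactly the pointwise content of those embeddings, applied to the given gradient rather than to the infimum. Two side remarks: $\|h\|_{L^{p(\cdot)}(\sigma B_0)}=0$ forces every $g_k=0$ a.e., so the degenerate case in (ii) is already excluded by the hypothesis $\|g\|>0$; and no subspace bookkeeping is needed, since Theorem~\ref{sobolevpoincare} is already stated for functions and gradients on $\sigma B_0$.
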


\begin{proof}
The proofs of $(i)$ and $(iii)$ follow immediately from Proposition~\ref{embeddingsbetween} $(v)$, together with Theorem~\ref{sobolevpoincare}  $(i)$ and $(iii)$, respectively. The statement $(ii)$ follows from Proposition~\ref{embeddingsbetween} $(v)$, Lemma~\ref{gradientzero}, and Theorem~\ref{sobolevpoincare} $(ii)$. Moreover, by Proposition~\ref{embeddingsbetween} $(vi)$ the statements $(i)$, $(ii)$, $(iii)$ continue to be true if we replace $\dot{M}^{s(\cdot)}_{p(\cdot),q(\cdot)}$ by $\dot{N}^{s(\cdot)}_{p(\cdot),q(\cdot)}$ and $L^{p(\cdot)}(\ell^{q(\cdot)})$ by $\ell^{q(\cdot)}(L^{p(\cdot)}).$
\end{proof}

The embeddings for Haj\l{}asz--Besov spaces $\dot{N}^{s(\cdot)}_{p(\cdot),q(\cdot)}$ in Theorem~\ref{localtriebellizorkin} are restricted to the case when $q\leq p$; however, an upper bound on the exponent $q$ is to be expected (see, e.g, \cite[Remark~4.17]{AYY24}). In the following theorem, we prove that one can relax the restriction on $q$ and still obtain Sobolev-type embeddings for  $\dot{N}^{s(\cdot)}_{p(\cdot),q(\cdot)}$ with the critical exponent $Q/s$ replaced by $Q/\varepsilon$, where $\varepsilon \in \mathcal{P}_b^{\log}(X)$ is any function satisfying $\varepsilon \ll s$.

\begin{tw}\label{localbesov}
	Let $(X,d,\mu)$ be a metric measure space and assume that there exist $Q\in \mathcal{P}_b^{\log}(X)$, $\delta \in (0,\infty)$, and $b\in (0,1]$ such that for every $x\in X$ and $r\in (0,\delta]$,
	\begin{equation*}
		\mu\left(B(x,r)\right)\geq br^{Q(x)}.
	\end{equation*}
	Let $p\in \mathcal{P}_b^{\log}(X)$, $s\in \mathcal{P}_b(X)$, and $\sigma \in (1,\infty)$. Moreover, let $\varepsilon \in \mathcal{P}_b^{\log}(X)$ be such that $\varepsilon \ll s$. Then, the following statements are valid.
	\begin{enumerate}
		\item[(i)] If $\varepsilon p \ll Q$, then, there exists a positive constant $C_{S}'$, depending only on $b$, $\delta$, $\sigma$, $p$, $\varepsilon$, $s$, and $Q$, such that for every ball $B_0:=B(x_0,r_0)\subseteq X$ with $r_0\leq \delta/\sigma$, $q\in \mathcal{P}(\sigma B_0)$,  $u\in \dot{N}^{s(\cdot)}_{p(\cdot),q(\cdot)}(\sigma B_0)$, and $g\in \mathbb{D}^{s(\cdot)}(u)\cap \ell^{q(\cdot)}(L^{p(\cdot)}(\sigma B_0))$, there holds 
		\begin{equation*}
			\inf_{c\in \mathbb R}\left\|u-c\right\|_{L^{\gamma_{\varepsilon}(\cdot)}(B_0)} \leq C_{S}' \left(\frac{\mu(B_0)}{r_0^{Q(x_0)}}\right)^{\frac{1}{{\gamma_{\varepsilon}}^-_{B_0}}} \left\|g\right\|_{\ell^{q(\cdot)}(L^{p(\cdot)}(\sigma B_0))},
		\end{equation*}
		where $\displaystyle \gamma_{\varepsilon}:=\frac{Qp}{Q-\varepsilon p}$.
		\item[(ii)] If $\varepsilon p=Q$, then there exist positive constants $C_{MT1}', C_{MT2}$, depending only on $b$, $\delta$, $\sigma$, $p$, $s$, $\varepsilon$, and $Q$, such that for every ball $B_0:=B(x_0,r_0)\subseteq X$ with $r_0\leq \delta/\sigma$, $q\in \mathcal{P}(\sigma B_0)$, $u\in \dot{N}^{s(\cdot)}_{p(\cdot),q(\cdot)}(\sigma B_0)$, and $g\in \mathbb{D}^{s(\cdot)}(u)\cap \ell^{q(\cdot)}(L^{q(\cdot)}(\sigma B_0))$ with $ \left\|g\right\|_{\ell^{q(\cdot)}(L^{p(\cdot)}(\sigma B_0))}> 0$, there holds
		\begin{equation*}
			\fint_{B_0} \exp\left(C_{MT1}' \frac{\left|u(x)-u_{B_0}\right|}{\left\|g\right\|_{\ell^{q(\cdot)}(L^{p(\cdot)}(\sigma B_0))}}\right)\mbox{d}\mu(x) \leq C_{MT2}'.
		\end{equation*}
		\item[(iii)] If $\varepsilon p \gg Q$, then there exists a positive constant $C_H'$,  depending only on $b$, $\delta$, $\sigma$, $p$, $s$, $\varepsilon$, and $Q$, such that for every ball $B_0:=B(x_0,r_0)\subseteq X$ with $r_0 \leq \delta/\sigma$, $q\in \mathcal{P}(\sigma B_0)$, $u\in \dot{N}^{s(\cdot)}_{p(\cdot),q(\cdot)}(\sigma B_0)$,  and  $g\in \mathbb{D}^{s(\cdot)}(u)\cap \ell^{q(\cdot)}(L^{p(\cdot)}(\sigma B_0))$, there holds 
		\begin{equation*}
			\left\|u-u_{B_0}\right\|_{L^{\infty}(B_0)}\leq C_H' r_0^{\alpha_{\varepsilon}(x_0)} \left\|g\right\|_{\ell^{q(\cdot)}(L^{p(\cdot)}(\sigma B_0))},
		\end{equation*}
		where $\displaystyle \alpha_{\varepsilon}:=\varepsilon -\frac{Q}{p}$.
		Moreover, function $u$ has a continuous representative $\tilde{u}$ on $B_0$ such that for all $x,y\in B_0$,
		\begin{equation*}
		\left|\tilde{u}(x)-\tilde{u}(y)\right| \leq D_H'(r_0) \left\|g\right\|_{\ell^{q(\cdot)}(L^{p(\cdot)}(\sigma B_0))}d(x,y)^{\alpha_{\varepsilon}(x)},
		\end{equation*}
		where $D_H'(r_0)=D_H(r_0)\cdot \zeta(p,s,\varepsilon,\delta)$. Here, $D_H(r_0)$ is a constant from Theorem~\ref{sobolevpoincare} and $\zeta(p,s,\varepsilon,\delta)$ is a constant from Proposition~\ref{embeddingsbetween} $(ix)$.
	\end{enumerate}			
\end{tw}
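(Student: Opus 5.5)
The plan is to reduce everything to Theorem~\ref{sobolevpoincare} applied with smoothness $\varepsilon$ in place of $s$. The bridge is Proposition~\ref{embeddingsbetween}~$(ix)$, used on the ball $\sigma B_0$ (whose radius $\sigma r_0\le\delta$) with $t:=\varepsilon\ll s$. It gives a constant $\zeta(p,s,\varepsilon,\delta)$, independent of the ball and of $q$, such that
\[
\|u\|_{\dot M^{\varepsilon(\cdot),p(\cdot)}(\sigma B_0)}\le \zeta(p,s,\varepsilon,\delta)\,\|u\|_{\dot N^{s(\cdot)}_{p(\cdot),q(\cdot)}(\sigma B_0)}.
\]
Note that $s$ itself is only measurable and bounded. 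This is harmless because Theorem~\ref{sobolevpoincare} will only be invoked with the log-H\"older exponents $p,\varepsilon,Q$.

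Theorem~\ref{sobolevpoincare} is stated for a particular scalar gradient, whereas Proposition~\ref{embeddingsbetween}~$(ix)$ controls only the infimum. I will therefore work with an explicit gradient rather than the norms. Given $g=\{g_k\}\in\mathbb D^{s(\cdot)}(u)\cap\ell^{q(\cdot)}(L^{p(\cdot)}(\sigma B_0))$, I first use Proposition~\ref{monotonicity} to pass to $\ell^\infty(L^{p(\cdot)})$, which does not increase the norm. I then repeat the construction in the proof of $(ix)$ on $\sigma B_0$: set
\[
h(x):=\Bigl[\sum_{k\ge k_0}2^{-k(s(x)-\varepsilon(x))p(x)}g_k(x)^{p(x)}\Bigr]^{1/p(x)}.
\]
This $h$ belongs to $\mathcal D^{\varepsilon(\cdot)}(u)$ on $\sigma B_0$. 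By homogeneity, that is, by normalising $g$ so that $\|g\|_{\ell^\infty(L^{p(\cdot)})}=1$ and rescaling $u$ and $g$ together, the modular computation in $(ix)$ yields $\|h\|_{L^{p(\cdot)}(\sigma B_0)}\le\zeta\,\|g\|_{\ell^{q(\cdot)}(L^{p(\cdot)}(\sigma B_0))}$.

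With $h$ in hand, each item follows by applying Theorem~\ref{sobolevpoincare} to the pair $(u,h)$ with $s$ replaced by $\varepsilon$.

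In case $(i)$, where $\varepsilon p\ll Q$, part $(i)$ of Theorem~\ref{sobolevpoincare} gives the inequality with $\gamma_\varepsilon$ and $C_S':=\zeta C_S$, where $C_S$ is now the constant for the exponents $(p,\varepsilon,Q)$.

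In case $(iii)$, part $(iii)$ gives the $L^\infty$ estimate with $C_H':=\zeta C_H$. It also gives the continuous representative with H\"older exponent $\alpha_\varepsilon$ and constant $D_H(r_0)\zeta$, matching the stated $D_H'(r_0)$.

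In case $(ii)$, if $\|h\|_{L^{p(\cdot)}(\sigma B_0)}=0$, then $u$ is a.e.\ constant by Lemma~\ref{gradientzero}, and the claim is trivial. Otherwise, \eqref{moser2} gives
\[
\fint_{B_0}\exp\bigl(C_{MT1}|u-u_{B_0}|/\|h\|\bigr)\,d\mu\le C_{MT2}.
\]
Since $\|h\|\le\zeta\|g\|$ and $\exp$ is increasing, choosing $C_{MT1}':=C_{MT1}/\zeta$ and $C_{MT2}':=C_{MT2}$ completes the argument.

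The main point to check carefully is the passage from the norm inequality of $(ix)$ to a concrete gradient $h$ with a uniform constant. The constant must not depend on $B_0$, $r_0$ or $q$, and this holds because $\zeta$ depends only on $p,s,\varepsilon,\delta$. The other point is to confirm that the homogeneity normalisation is legitimate, which it is because $\mathbb D^{s(\cdot)}$ and $\mathcal D^{\varepsilon(\cdot)}$ are invariant under scaling $u$ and $g$ simultaneously.
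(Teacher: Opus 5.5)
Your proposal is correct and follows the paper's own route. You apply Proposition~\ref{embeddingsbetween}~$(ix)$ on $\sigma B_0$ with $t:=\varepsilon$, then Theorem~\ref{sobolevpoincare} with $\varepsilon$ in place of $s$, and Lemma~\ref{gradientzero} for the degenerate case of $(ii)$.

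The only difference is that you extract the explicit $\varepsilon(\cdot)$-gradient $h$ from the proof of $(ix)$, rather than passing through the infimum norm $\|u\|_{\dot M^{\varepsilon(\cdot),p(\cdot)}(\sigma B_0)}$. This is a harmless, slightly cleaner bookkeeping choice that avoids an approximation step, particularly in the Moser--Trudinger case.
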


\begin{proof}
The proof of $(i)$ and $(iii)$ follows immediately from Proposition~\ref{embeddingsbetween} $(ix)$, together with Theorem~\ref{sobolevpoincare}  $(i)$ and $(iii)$, respectively. Statement $(ii)$ follows from Proposition~\ref{embeddingsbetween} $(ix)$, Lemma~\ref{gradientzero}, and Theorem~\ref{sobolevpoincare} $(ii)$.
\end{proof}

Using Remark~\ref{measurethreshold} with $\delta':=4\diam X$, Theorems~\ref{localtriebellizorkin} and \ref{localbesov} with $\sigma:=2$ and $r_0:=\delta'/\sigma=2\diam X$, and Propositions~\ref{embeddingsbetween}, \ref{localemb}, we obtain the following embedding theorems in the setting of bounded metric space.

\begin{tw}\label{boundedtriebel}
	Let $(X,d,\mu)$ be a bounded metric measure space and assume that there exist $Q\in \mathcal{P}_b^{\log}(X)$, $\delta \in (0,\infty)$, and $b\in (0,1]$ such that for every $x\in X$ and $r\in (0,\delta]$,
	\begin{equation*}
		\mu(B(x,r)) \geq br^{Q(x)}.
	\end{equation*}
	Let $p,s\in \mathcal{P}_b^{\log}(X)$ and $q\in \mathcal{P}(X)$. Then, the following statements are valid.
	\begin{enumerate}
		\item[(i)] If $s p \ll Q$, then, there exists a positive constant $\tilde{C}_S$, depending only on $b$, $\delta$, $\diam X$, $\mu(X)$, $p$, $s$, $Q$, such that, for every pair of functions $u\in \dot{M}^{s(\cdot)}_{p(\cdot),q(\cdot)}(X,d,\mu)$ and $g\in \mathbb{D}^{s(\cdot)}(u)\cap L^{p(\cdot)}(\ell^{q(\cdot)}(X,\mu))$, there holds
		\begin{equation*}
			\inf_{c\in \mathbb R} \left\| u-c\right\|_{L^{\gamma(\cdot)}(X,\mu)} \leq \tilde{C}_S \left\| g \right\|_{L^{p(\cdot)}(\ell^{q(\cdot)}(X,\mu))},
		\end{equation*}
		where $\displaystyle\gamma:=\frac{Qp}{Q-s p}$. 
		
		\noindent Moreover, for every $u\in M^{s(\cdot)}_{p(\cdot),q(\cdot)}(X,d,\mu)$ and $g\in \mathbb{D}^{s(\cdot)}(u) \cap L^{p(\cdot)}(\ell^{q(\cdot)}(X,\mu))$
		\begin{equation*}
			\left\| u \right\|_{L^{\gamma(\cdot)}(X,\mu)} \leq \tilde{C}_S\left( \left\| u \right\|_{L^{p(\cdot)}(X,\mu)} + \left\|g \right\|_{L^{p(\cdot)}(\ell^{q(\cdot)}(X,\mu))}\right).
		\end{equation*}
		\item[(ii)] If $s p=Q$, then there exist positive constants $\tilde{C}_{MT1}$, $\tilde{C}_{MT2}$, depending only on $b$, $\delta$, $\diam X$, $p$, $s$, $Q$, such that for every pair of functions $u\in \dot{M}^{s(\cdot)}_{p(\cdot),q(\cdot)}(X,d,\mu)$ and $g\in \mathbb{D}^{s(\cdot)}(u) \cap L^{p(\cdot)}(\ell^{q(\cdot)}(X,\mu))$ with $\left\| g\right\|_{L^{p(\cdot)}(\ell^{q(\cdot)}(X,\mu))}>0$, there holds
		\begin{equation*}
			\fint_X \exp\left(\tilde{C}_{MT1} \frac{\left|u(x)-u_X\right|}{\left\| g \right\|_{L^{p(\cdot)}(\ell^{q(\cdot)}(X,\mu))}}\right) \mbox{d}\mu(x) \leq \tilde{C}_{MT2}.
		\end{equation*}
		\item[(iii)] If $s p\gg Q$, then there exists a positive constant $\tilde{C}_H$, depending only on $b$, $\delta$, $\diam X$, $p$, $s$, $Q$, such that for every pair of functions $u\in \dot{M}^{s(\cdot)}_{p(\cdot),q(\cdot)}(X,d,\mu)$ and $g\in \mathbb{D}^{s(\cdot)}(u)\cap L^{p(\cdot)}(\ell^{q(\cdot)}(X,\mu))$, there holds
		\begin{equation*}
			\left\| u-u_X \right\|_{L^{\infty}(X,\mu)} \leq \tilde{C}_H \left\|g\right\|_{L^{p(\cdot)}(\ell^{q(\cdot)}(X,\mu))}.
		\end{equation*}
		Moreover, the function $u$ has a continuous representative $\tilde{u}$ on $X$ such that, for all $x,y\in X$,
		\begin{equation*}
			\left| \tilde{u}(x)-\tilde{u}(y)\right| \leq \tilde{D}_H \left\| g\right\|_{L^{p(\cdot)}(\ell^{q(\cdot)}(X,\mu))} d(x,y)^{\alpha(x)},
		\end{equation*}
		there $\tilde{D}_H$ is a positive constant and $\displaystyle \alpha:=s-\frac{Q}{p}$.
	\end{enumerate}
	In addition, if $q\leq p$, then all of the statements above continue to be remain valid after replacing $\dot{M}^{s(\cdot)}_{p(\cdot),q(\cdot)}$ by $\dot{N}^{s(\cdot)}_{p(\cdot),q(\cdot)}$ and $L^{p(\cdot)}(\ell^{q(\cdot)})$ by $\ell^{q(\cdot)}(L^{p(\cdot)})$.
\end{tw}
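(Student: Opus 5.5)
The plan is to reduce Theorem~\ref{boundedtriebel} to the bounded Haj{\l}asz--Sobolev case. In the spirit of Theorem~\ref{boundedembedding}, this means combining Theorem~\ref{localtriebellizorkin} with a single ball containing all of $X$. First, by Remark~\ref{measurethreshold} with $\delta':=\max\{\delta,4\diam X\}$, the lower bound $\mu(B(x,r))\geq b' r^{Q(x)}$ holds for all $r\in(0,\delta']$, where $b':=b(\delta/\delta')^{Q^+}$. Fix any $x_0\in X$, $\sigma:=2$ and $r_0:=\delta'/2\geq 2\diam X$. Then $B_0=B(x_0,r_0)=X=\sigma B_0$, so every function defined on $X$ belongs to the local spaces on $\sigma B_0$. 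Moreover, every $g\in\mathbb{D}^{s(\cdot)}(u)\cap L^{p(\cdot)}(\ell^{q(\cdot)}(X,\mu))$ is admissible in Theorem~\ref{localtriebellizorkin}, with $q$ restricted to $\sigma B_0=X$.

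For (i), I apply Theorem~\ref{localtriebellizorkin}~(i). The factor $(\mu(B_0)/r_0^{Q(x_0)})^{1/\gamma^-_{B_0}}$ is at most $\max\{1,\mu(X)\}^{1/\gamma^-}\max\{1,r_0^{-Q^+}\}^{1/\gamma^-}$, which depends only on $\mu(X)$, $\diam X$, $\delta$, $Q$, $p$ and $s$. This gives the first inequality. For the inhomogeneous inequality I repeat the median argument of Proposition~\ref{localemb} with $B_0=X$. By Proposition~\ref{medianlemma}, $|m_u(X)|\leq\max\{2,(2/\mu(X))^{1/p^-}\}\|u\|_{L^{p(\cdot)}(X)}$, and combining this with the quasi-triangle inequality gives
\[
\|u\|_{L^{\gamma(\cdot)}(X)}\leq \kappa_{\gamma(\cdot)}^2\bigl(\|u-c\|_{L^{\gamma(\cdot)}(X)}+|c-m_u(X)|\,\|1\|_{L^{\gamma(\cdot)}(X)}+|m_u(X)|\,\|1\|_{L^{\gamma(\cdot)}(X)}\bigr).
\]
I bound $|c-m_u(X)|$ again by Proposition~\ref{medianlemma}, now applied with exponent $\gamma$, by a multiple of $\|u-c\|_{L^{\gamma(\cdot)}(X)}$. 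The quantity $\|1\|_{L^{\gamma(\cdot)}(X)}\leq\max\{\mu(X)^{1/\gamma^-},\mu(X)^{1/\gamma^+}\}$ by Proposition~\ref{rel}. Hence $\Lambda(X)$ is controlled by $\mu(X)$ alone, which yields the stated bound with $\|g\|_{L^{p(\cdot)}(\ell^{q(\cdot)})}$ in place of the scalar gradient.

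For (ii) and (iii), I quote Theorem~\ref{localtriebellizorkin}~(ii) and (iii) on the same ball, using $u_{B_0}=u_X$. In (iii) the factor $r_0^{\alpha(x_0)}\leq\max\{r_0^{\alpha^+},r_0^{\alpha^-}\}$ is a constant. The H\"older representative comes from the second part of (iii), with $\tilde D_H:=D_H(r_0)$, which is a fixed number because $r_0$ is fixed. For the final claim with $q\leq p$, I use the Besov version stated at the end of Theorem~\ref{localtriebellizorkin}, and the argument is otherwise identical.

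The main point needing care is the radius restriction $r_0\leq\delta'/\sigma$: I must apply Theorem~\ref{localtriebellizorkin} with $\delta'$ and $b'$ in place of $\delta$ and $b$, so the constants depend on $\diam X$ as the statement says. A secondary point is that the constants in (ii) and (iii) must not depend on $\mu(X)$. There $\mu(X)$ does not enter, because the bound $c_{k_0}\le C_{18}$ in the proof of Theorem~\ref{sobolevpoincare} uses only the normalization of $g$ and the lower Ahlfors bound.
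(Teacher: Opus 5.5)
Your proposal is correct and follows the paper's route. The paper likewise enlarges the threshold via Remark~\ref{measurethreshold}, applies Theorem~\ref{localtriebellizorkin} with $\sigma=2$ on the single ball $B_0=\sigma B_0=X$, and uses the median argument of Proposition~\ref{localemb} for the inhomogeneous bound. Two of your choices are slightly more careful than the paper's one-line justification: taking $\delta'=\max\{\delta,4\diam X\}$, and using $p^-$ rather than $\gamma^-$ when you bound $|m_u(X)|$ by $\|u\|_{L^{p(\cdot)}(X)}$.
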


\begin{tw}\label{boundedbesov}
Let $(X,d,\mu)$ be a bounded metric measure space and assume that there exist $Q\in \mathcal{P}_b^{\log}(X)$, $\delta \in (0,\infty)$,  and $b\in (0,1]$ such that for every $x\in X$ and $r\in (0,\delta]$,
\begin{equation*}
	\mu(B(x,r)) \geq br^{Q(x)}.
\end{equation*}
Let $p,s\in \mathcal{P}_b^{\log}(X)$ and $q\in \mathcal{P}(X)$. Moreover, let $\varepsilon \in \mathcal{P}_b^{\log}(X)$ be such that $\varepsilon \ll s$. Then, the following statements are valid.
\begin{enumerate}
	\item[(i)] If $\varepsilon p \ll Q$, then, there exists a positive constant $\check{C}_S$, depending only on $b$, $\delta$,  $\diam X$, $\mu(X)$, $p$, $s$, $Q$, $\varepsilon$, such that, for every pair of functions $u\in \dot{N}^{s(\cdot)}_{p(\cdot),q(\cdot)}(X,d,\mu)$ and $g\in \mathbb{D}^{s(\cdot)}(u)\cap \ell^{q(\cdot)}(L^{p(\cdot)}(X,\mu))$, there holds
	\begin{equation*}
		\inf_{c\in \mathbb R} \left\| u-c\right\|_{L^{\gamma_{\varepsilon}(\cdot)}(X,\mu)} \leq \check{C}_S \left\| g \right\|_{\ell^{q(\cdot)}(L^{p(\cdot)}(X,\mu))},
	\end{equation*}
	where $\displaystyle\gamma_{\varepsilon}:=\frac{Qp}{Q-\varepsilon p}$.
	
	\noindent Moreover, for every $u\in N^{s(\cdot)}_{p(\cdot),q(\cdot)}(X,d,\mu)$ and $g\in \mathbb{D}^{s(\cdot)}(u) \cap \ell^{q(\cdot)}(L^{p(\cdot)}(X,\mu))$
	\begin{equation*}
		\left\| u \right\|_{L^{\gamma_{\varepsilon}(\cdot)}(X,\mu)} \leq \check{C}_S\left( \left\| u \right\|_{L^{p(\cdot)}(X,\mu)} + \left\|g \right\|_{\ell^{q(\cdot)}(L^{p(\cdot)}(X,\mu))}\right).
	\end{equation*}
	\item[(ii)] If $\varepsilon p=Q$, then there exist positive constants $\check{C}_{MT1}$, $\check{C}_{MT2}$, depending only on $b$, $\delta$,  $\diam X$, $p$, $s$, $Q$, $\varepsilon$, such that for every pair of functions $u\in \dot{N}^{s(\cdot)}_{p(\cdot),q(\cdot)}(X,d,\mu)$ and $g\in \mathbb{D}^{s(\cdot)}(u) \cap \ell^{q(\cdot)}(L^{p(\cdot)}(X,\mu))$ with $\left\| g\right\|_{\ell^{q(\cdot)}(L^{p(\cdot)}(X,\mu))}>0$, there holds
	\begin{equation*}
		\fint_X \exp\left(\check{C}_{MT1} \frac{\left|u(x)-u_X\right|}{\left\| g \right\|_{\ell^{q(\cdot)}(L^{p(\cdot)}(X,\mu))}}\right) \mbox{d}\mu(x) \leq \check{C}_{MT2}.
	\end{equation*}
	\item[(iii)] If $\varepsilon p\gg Q$, then there exists a positive constant $\check{C}_H$, depending only on $b$, $\delta$, $\diam X$, $p$, $s$, $Q$, $\varepsilon$, such that for every pair of functions $u\in \dot{N}^{s(\cdot)}_{p(\cdot),q(\cdot)}(X,d,\mu)$ and $g\in \mathbb{D}^{s(\cdot)}(u)\cap \ell^{q(\cdot)}(L^{p(\cdot)}(X,\mu))$, there holds
	\begin{equation*}
		\left\| u-u_X \right\|_{L^{\infty}(X,\mu)} \leq \check{C}_H \left\|g\right\|_{\ell^{q(\cdot)}(L^{p(\cdot)}(X,\mu))}.
	\end{equation*}
	Moreover, the function $u$ has a continuous representative $\tilde{u}$ on $X$ such that, for all $x,y\in X$,
	\begin{equation*}
	\left| \tilde{u}(x)-\tilde{u}(y)\right| \leq \check{D}_H \left\| g\right\|_{\ell^{q(\cdot)}(L^{p(\cdot)}(X,\mu))} d(x,y)^{\alpha_{\varepsilon}(x)},
	\end{equation*}
	there $\check{D}_H$ is a positive constant and $\displaystyle \alpha_{\varepsilon}:=\varepsilon-\frac{Q}{p}$.
\end{enumerate}
\end{tw}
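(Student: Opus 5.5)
The plan is to deduce Theorem~\ref{boundedbesov} from its local counterpart, Theorem~\ref{localbesov}, in the same way Theorem~\ref{boundedtriebel} is obtained from Theorem~\ref{localtriebellizorkin}. The key observation is that, since $X$ is bounded, the whole space is a single ball. Fix any $x_0\in X$, set $\delta':=4\diam X$ (assuming $\diam X>0$; the case of a one-point space is trivial), $\sigma:=2$ and $r_0:=\delta'/\sigma=2\diam X$. Then $B_0:=B(x_0,r_0)=X=\sigma B_0$. By Remark~\ref{measurethreshold}, the lower Ahlfors bound $\mu(B(x,r))\geq b'r^{Q(x)}$ holds for all $r\in(0,\delta']$, with $b':=\min\{1,b\}\min\{1,\delta/\delta'\}^{Q^+}$. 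Hence the hypotheses of Theorem~\ref{localbesov} are satisfied with $\delta'$ in place of $\delta$ and $b'$ in place of $b$, and $r_0\leq\delta'/\sigma$.

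\emph{Parts (i)--(iii), homogeneous estimates.} Apply Theorem~\ref{localbesov} with $B_0=X$. In (i), the factor $(\mu(B_0)/r_0^{Q(x_0)})^{1/{\gamma_\varepsilon}^-_{B_0}}$ equals $(\mu(X)/(2\diam X)^{Q(x_0)})^{1/\gamma_\varepsilon^-}$. This is bounded by a constant depending only on $\mu(X)$, $\diam X$, $Q^\pm$ and $\gamma_\varepsilon^-$, because $(2\diam X)^{-Q(x_0)}\leq\max\{(2\diam X)^{-Q^+},(2\diam X)^{-Q^-}\}$. We absorb it into $\check C_S$. In (ii), $B_0=X$ gives the Moser--Trudinger estimate directly, with $\fint_X$ and $u_X$. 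In (iii), we similarly bound $r_0^{\alpha_\varepsilon(x_0)}\leq\max\{(2\diam X)^{\alpha_\varepsilon^+},(2\diam X)^{\alpha_\varepsilon^-}\}$. The H\"older representative is obtained with $\check D_H:=D_H'(2\diam X)$, which by Theorem~\ref{localbesov}(iii) equals $D_H(2\diam X)\,\zeta(p,s,\varepsilon,\delta')$. Note that $\alpha_\varepsilon^->0$ because $\varepsilon p\gg Q$. The space $\ell^{q(\cdot)}(L^{p(\cdot)}(\sigma B_0))$ is exactly $\ell^{q(\cdot)}(L^{p(\cdot)}(X,\mu))$, so no restriction of gradients is needed.

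\emph{Inhomogeneous estimate in (i).} This is the one step needing a small argument. We replace Proposition~\ref{localemb} by its analogue for $\gamma_\varepsilon$, and we argue via the median exactly as in the proof of Proposition~\ref{localemb}. For $c\in\mathbb R$, the quasi-triangle inequality and Proposition~\ref{medianlemma} give
\[
\|u\|_{L^{\gamma_\varepsilon(\cdot)}(X)}\leq\kappa^2_{\gamma_\varepsilon(\cdot)}\|u-c\|_{L^{\gamma_\varepsilon(\cdot)}(X)}+\Lambda\big(\|u-c\|_{L^{\gamma_\varepsilon(\cdot)}(X)}+\|u\|_{L^{p(\cdot)}(X)}\big),
\]
where $\Lambda=\kappa_{\gamma_\varepsilon(\cdot)}^2\max\{2,(2/\mu(X))^{1/\gamma_\varepsilon^-}\}\|1\|_{L^{\gamma_\varepsilon(\cdot)}(X)}$. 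By Proposition~\ref{rel}, $\Lambda$ is finite and depends only on $\mu(X)$ and $\gamma_\varepsilon^\pm$. Taking the infimum over $c$ and inserting the homogeneous bound yields the claim. The main (mild) obstacle is therefore only bookkeeping: checking that every constant coming from the local theorems is bounded in terms of $b,\delta,\diam X,\mu(X),p,s,Q,\varepsilon$. This holds because $x_0$ enters only through $Q(x_0)$ and $\alpha_\varepsilon(x_0)$, which lie between the global extremes.
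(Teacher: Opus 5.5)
Your proposal is correct and follows the paper's own route. You apply Remark~\ref{measurethreshold} with $\delta'=4\diam X$, then Theorem~\ref{localbesov} with $\sigma=2$ and $r_0=2\diam X$ (so that $B_0=\sigma B_0=X$), and finally the median argument of Proposition~\ref{localemb} for the inhomogeneous bound in (i).

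One minor bookkeeping correction, inherited from the definition of $\Lambda(B_0)$ in Proposition~\ref{localemb}, concerns the term $|m_u(X)|$. It is controlled via Proposition~\ref{medianlemma} with exponent $p$ and $c=0$, so its factor is $\max\{2,(2/\mu(X))^{1/p^-}\}$ rather than $\max\{2,(2/\mu(X))^{1/\gamma_\varepsilon^-}\}$. This is harmless, since the factor still depends only on $\mu(X)$ and $p$.
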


We now record global embeddings for the spaces $M^{s(\cdot)}_{p(\cdot),q(\cdot)}$ and
$N^{s(\cdot)}_{p(\cdot),q(\cdot)}$ in the cases $sp \ll Q$, $sp=Q$, and $sp \gg Q$; see Theorems~\ref{gsob}, \ref{gmoser}, and \ref{ghold}. Since all of the statements follow immediately from Theorem~\ref{doublingembedding} combined with Proposition~\ref{embeddingsbetween} $(v)$, $(vi)$, $(vii)$, respectively, we omit the details.

\begin{center}
	\textbf{A global embedding theorem in the case $sp \ll Q$}
\end{center}

\begin{tw}\label{gsob}
	Let $(X,d,\mu)$ be a geometrically doubling metric measure space such that there exists $\delta \in (0,\infty)$ satisfying
	\begin{equation*}
	\sup_{x\in X} \mu(B(x,\delta)) <\infty.
	\end{equation*} 
	Suppose that there exist $Q\in \mathcal{P}^{\log}_b(X)$ such that $\mu$ is lower Ahlfors $Q(\cdot)$-regular.
	Moreover, let $p,s\in \mathcal{P}^{\log}_b(X)$ be such that $sp \ll Q.$ Then,
	\begin{enumerate}
		\item[(i)] for every $q\in \mathcal{P}(X)$
		\begin{equation*}
			M^{s(\cdot)}_{p(\cdot),q(\cdot)}(X,d,\mu) \hookrightarrow L^{\gamma(\cdot)}(X,\mu),
		\end{equation*}
		where $\displaystyle \gamma:=\frac{Qp}{Q-sp}$,
		\item[(ii)] for every $q\in \mathcal{P}(X)$ such that $q \leq p$
		\begin{equation*}
			N^{s(\cdot)}_{p(\cdot),q(\cdot)}(X,d,\mu) \hookrightarrow L^{\gamma(\cdot)}(X,\mu),
		\end{equation*}
		where $\displaystyle \gamma:=\frac{Qp}{Q-sp}$,
		\item[(iii)] for every $q\in \mathcal{P}(X)$ and $t\in \mathcal{P}^{\log}_b(X)$ such that $t \ll s$
		\begin{equation*}
			N^{s(\cdot)}_{p(\cdot),q(\cdot)}(X,d,\mu) \hookrightarrow L^{\sigma(\cdot)}(X,\mu),
		\end{equation*}
		where $\displaystyle \sigma:=\frac{Qp}{Q-tp}$.
	\end{enumerate}
\end{tw}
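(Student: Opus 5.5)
Theorem~\ref{gsob} reduces to Theorem~\ref{doublingembedding}~(i) composed with the inter-scale embeddings of Proposition~\ref{embeddingsbetween}. The hypotheses of Theorem~\ref{gsob} match those of Theorem~\ref{doublingembedding}~(i). The space is geometrically doubling and $\sup_x\mu(B(x,\delta))<\infty$. Lower Ahlfors $Q(\cdot)$-regularity holds for radii in $(0,1]$, and by Remark~\ref{measurethreshold} it holds with a modified constant $b$ on $(0,\delta]$, which we may take to be at most $1$. So for any $s',p\in\mathcal{P}^{\log}_b(X)$ with $s'p\ll Q$ we have $M^{s'(\cdot),p(\cdot)}(X)\hookrightarrow L^{Qp/(Q-s'p)}(X)$. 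The $L^{p(\cdot)}$ parts of all inhomogeneous norms coincide, so it suffices to control the homogeneous parts, and continuity of the resulting composition is immediate.

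For (i), Proposition~\ref{embeddingsbetween}~(v) gives $\|u\|_{\dot M^{s(\cdot),p(\cdot)}}\le\|u\|_{\dot M^{s(\cdot)}_{p(\cdot),q(\cdot)}}$ for every $q\in\mathcal P(X)$. Hence $M^{s(\cdot)}_{p(\cdot),q(\cdot)}(X)\hookrightarrow M^{s(\cdot),p(\cdot)}(X)$ with norm at most one. Composing with Theorem~\ref{doublingembedding}~(i) yields the embedding into $L^{\gamma(\cdot)}$.

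For (ii), when $q\le p$, Proposition~\ref{embeddingsbetween}~(vi) gives $\dot N^{s(\cdot)}_{p(\cdot),q(\cdot)}\hookrightarrow\dot M^{s(\cdot),p(\cdot)}$ with norm at most one. Thus $N^{s(\cdot)}_{p(\cdot),q(\cdot)}(X)\hookrightarrow M^{s(\cdot),p(\cdot)}(X)$, and Theorem~\ref{doublingembedding}~(i) again finishes.

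For (iii), fix $t\in\mathcal P^{\log}_b(X)$ with $t\ll s$. Proposition~\ref{embeddingsbetween}~(vii) gives $N^{s(\cdot)}_{p(\cdot),q(\cdot)}(X)\hookrightarrow M^{t(\cdot),p(\cdot)}(X)$ for arbitrary $q$; its proof passes through (iv) and (vi) using $q$ replaced by $p/2\le p$. Since $t\ll s$ and $p\ge p^->0$, we get $tp\le sp-(s-t)^-p^-$, hence $tp\ll sp\ll Q$. This membership of $t$ in $\mathcal P^{\log}_b$ is exactly why log-H\"older continuity of $t$ is assumed. Theorem~\ref{doublingembedding}~(i) applied to the pair $(t,p)$ gives $M^{t(\cdot),p(\cdot)}(X)\hookrightarrow L^{\sigma(\cdot)}(X)$ with $\sigma=Qp/(Q-tp)$.

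The only real care point is part (iii). Theorem~\ref{doublingembedding} needs $t$, not $s$, to be log-H\"older, and the condition $t p\ll Q$ must be verified as above. Everything else is formal composition of continuous embeddings.
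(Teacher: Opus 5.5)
Your proposal is correct and takes essentially the same route as the paper, which obtains (i), (ii), (iii) by composing Proposition~\ref{embeddingsbetween}~(v), (vi), (vii), respectively, with Theorem~\ref{doublingembedding}~(i) (applied to the pair $(t,p)$ in (iii)). Your two added checks are exactly the details the paper leaves implicit: using Remark~\ref{measurethreshold} so that the lower bound holds on $(0,\delta]$ with $b\le 1$, and verifying $(Q-tp)^-\ge (Q-sp)^-+(s-t)^-p^->0$ so that $tp\ll Q$.
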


\begin{center}
	\textbf{A global embedding theorem in the case $sp=Q$}
\end{center}

\begin{tw}\label{gmoser}
	Let $(X,d,\mu)$ be a geometrically doubling metric measure space such that there exists $\delta \in (0,\infty)$ satisfying
\begin{equation*}
	\sup_{x\in X} \mu(B(x,\delta)) <\infty.
\end{equation*} 
Suppose that there exist $Q\in \mathcal{P}^{\log}_b(X)$ such that $\mu$ is lower Ahlfors $Q(\cdot)$-regular.
Moreover, let $p,s\in \mathcal{P}^{\log}_b(X)$ be such that $sp = Q.$ Then,
\begin{enumerate}
	\item[(i)] for every $q\in \mathcal{P}(X)$
	\begin{equation*}
		M^{s(\cdot)}_{p(\cdot),q(\cdot)}(X,d,\mu) \hookrightarrow L^{\beta(\cdot)}(X,\mu),
	\end{equation*}
	for every $\beta \in \mathcal{P}_b(X)$ such that $\beta \gg p$,
	\item[(ii)] for every $q\in \mathcal{P}(X)$ such that $q \leq p$
	\begin{equation*}
		N^{s(\cdot)}_{p(\cdot),q(\cdot)}(X,d,\mu) \hookrightarrow L^{\beta(\cdot)}(X,\mu),
	\end{equation*}
		for every $\beta \in \mathcal{P}_b(X)$ such that $\beta \gg p$,
	\item[(iii)] for every $q\in \mathcal{P}(X)$ and $t\in \mathcal{P}^{\log}_b(X)$ such that $t \ll s$
	\begin{equation*}
		N^{s(\cdot)}_{p(\cdot),q(\cdot)}(X,d,\mu) \hookrightarrow L^{\sigma(\cdot)}(X,\mu),
	\end{equation*}
	where $\displaystyle \sigma:=\frac{Qp}{Q-tp}$.
\end{enumerate}
\end{tw}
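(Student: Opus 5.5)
The plan is to reduce each of the three statements in Theorem~\ref{gmoser} to the Sobolev-space result Theorem~\ref{doublingembedding}~(ii), or, for (iii), to Theorem~\ref{doublingembedding}~(i). The reductions use the structural embeddings of Proposition~\ref{embeddingsbetween}. The hypotheses of Theorem~\ref{doublingembedding} (geometric doubling, $\sup_x\mu(B(x,\delta))<\infty$, lower Ahlfors $Q(\cdot)$-regularity with $Q\in\mathcal{P}^{\log}_b(X)$) are exactly those assumed here. Lower Ahlfors regularity is stated with threshold $1$; by Remark~\ref{measurethreshold} it can be transported to radii in $(0,\delta]$, with the constant $b$ possibly shrunk so that $b\le 1$.

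For (i), fix $q\in\mathcal{P}(X)$ and $\beta\gg p$. Proposition~\ref{embeddingsbetween}~(v) gives $\dot M^{s(\cdot)}_{p(\cdot),q(\cdot)}\hookrightarrow\dot M^{s(\cdot),p(\cdot)}$ with semi-norm at most one. Intersecting with $L^{p(\cdot)}$ yields the continuous embedding $M^{s(\cdot)}_{p(\cdot),q(\cdot)}(X)\hookrightarrow M^{s(\cdot),p(\cdot)}(X)$ with norm at most one. Composing with Theorem~\ref{doublingembedding}~(ii), which applies since $sp=Q$, gives the embedding into $L^{\beta(\cdot)}(X)$.

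For (ii), assume $q\le p$. Proposition~\ref{embeddingsbetween}~(vi) gives $\dot N^{s(\cdot)}_{p(\cdot),q(\cdot)}\hookrightarrow\dot M^{s(\cdot),p(\cdot)}$ with norm at most one, hence $N^{s(\cdot)}_{p(\cdot),q(\cdot)}\hookrightarrow M^{s(\cdot),p(\cdot)}$. We then conclude again by Theorem~\ref{doublingembedding}~(ii).

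For (iii), take $t\in\mathcal{P}^{\log}_b(X)$ with $t\ll s$. Proposition~\ref{embeddingsbetween}~(vii) gives $N^{s(\cdot)}_{p(\cdot),q(\cdot)}\hookrightarrow M^{t(\cdot),p(\cdot)}$ for any $q$. The key observation is that $tp\ll sp=Q$: because $p^->0$ and $(s-t)^->0$, we have $(sp-tp)^-\ge p^-(s-t)^->0$. So Theorem~\ref{doublingembedding}~(i) applies with $t$ in place of $s$, and both $t$ and $p$ are log-Hölder as that theorem requires. It gives $M^{t(\cdot),p(\cdot)}\hookrightarrow L^{\sigma(\cdot)}$ with $\sigma=Qp/(Q-tp)$, and composing finishes the proof.

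There is no real obstacle here. The only point needing care is checking the $\ll$ relation and the log-Hölder membership of the exponents fed into Theorem~\ref{doublingembedding}: $t$ is assumed log-Hölder, and $s$ need not be checked in (iii) because it is not used there. Also, every embedding must be understood for the inhomogeneous spaces, so the $L^{p(\cdot)}$ parts of the norms are carried along unchanged.
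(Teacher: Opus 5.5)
Your proposal is correct and follows the same route the paper indicates. Statements (i), (ii), and (iii) come from Proposition~\ref{embeddingsbetween} (v), (vi), and (vii), respectively, composed with Theorem~\ref{doublingembedding}: part (ii) of that theorem for the first two, and part (i) with $t$ in place of $s$ for the third, using $tp\ll sp=Q$. Your remarks on moving the lower Ahlfors bound to radii in $(0,\delta]$ via Remark~\ref{measurethreshold}, and on carrying the $L^{p(\cdot)}$ part of the inhomogeneous norms through each embedding, fill in details the paper leaves implicit.
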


\begin{center}
\textbf{A global embedding theorem in the case $sp \gg Q$}
\end{center}

\begin{tw}\label{ghold}
	Let $(X,d,\mu)$ be a metric measure space. Suppose that there exist $Q\in \mathcal{P}^{\log}_b(X)$ such that $\mu$ is lower Ahlfors $Q(\cdot)$-regular.
	Moreover, let $p,s\in \mathcal{P}^{\log}_b(X)$ be such that $sp \gg Q$. Then,
	\begin{enumerate}
		\item[(i)] for every $q\in \mathcal{P}(X)$
		\begin{equation*}
			M^{s(\cdot)}_{p(\cdot),q(\cdot)}(X,d,\mu) \hookrightarrow C^{0,\alpha(\cdot)}(X,d),
		\end{equation*}
		where $\displaystyle \alpha:=s-\frac{Q}{p}$,
		\item[(ii)] for every $q\in \mathcal{P}(X)$ such that $q \leq p$
		\begin{equation*}
			N^{s(\cdot)}_{p(\cdot),q(\cdot)}(X,d,\mu) \hookrightarrow C^{0,\alpha(\cdot)}(X,d),
		\end{equation*}
		where $\displaystyle \alpha:=s-\frac{Q}{p}$,
		\item[(iii)] for every $q\in \mathcal{P}(X)$ and $t\in \mathcal{P}^{\log}_b(X)$ such that $Q \ll tp \ll sp$
		\begin{equation*}
			N^{s(\cdot)}_{p(\cdot),q(\cdot)}(X,d,\mu) \hookrightarrow C^{0,\lambda(\cdot)}(X,d),
		\end{equation*}
		where $\displaystyle \lambda:=t-\frac{Q}{p}$.
	\end{enumerate}
\end{tw}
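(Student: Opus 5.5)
Theorem~\ref{ghold} reduces to Theorem~\ref{doublingembedding}~$(iii)$ composed with the structural embeddings of Proposition~\ref{embeddingsbetween}. Part $(iii)$ of Theorem~\ref{doublingembedding} needs neither geometric doubling nor a uniform bound on $\mu(B(x,\delta))$; it uses only lower Ahlfors $Q(\cdot)$-regularity (with threshold $\delta=1$ in the present hypotheses), $Q\in\mathcal{P}^{\log}_b(X)$ and $p,s\in\mathcal{P}^{\log}_b(X)$ with $sp\gg Q$. This is consistent with the hypotheses of Theorem~\ref{ghold}, and it gives
\[
M^{s(\cdot),p(\cdot)}(X,d,\mu)\hookrightarrow C^{0,\alpha(\cdot)}(X,d),\qquad \alpha=s-\frac{Q}{p}.
\]
Each of $(i)$--$(iii)$ is then obtained by first embedding the relevant Triebel--Lizorkin or Besov space continuously into a suitable inhomogeneous Haj{\l}asz--Sobolev space.

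For $(i)$, Proposition~\ref{embeddingsbetween}~$(v)$ gives $\|u\|_{\dot M^{s(\cdot),p(\cdot)}}\le\|u\|_{\dot M^{s(\cdot)}_{p(\cdot),q(\cdot)}}$. Adding $\|u\|_{L^{p(\cdot)}}$ to both sides yields $M^{s(\cdot)}_{p(\cdot),q(\cdot)}\hookrightarrow M^{s(\cdot),p(\cdot)}$ with norm at most one. For $(ii)$, with $q\le p$, Proposition~\ref{embeddingsbetween}~$(vi)$ gives the same conclusion for $N^{s(\cdot)}_{p(\cdot),q(\cdot)}$. In both cases, composing with Theorem~\ref{doublingembedding}~$(iii)$ finishes the argument.

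For $(iii)$, assume $t\in\mathcal{P}^{\log}_b(X)$ with $Q\ll tp\ll sp$. Since $p$ is bounded, $tp\ll sp$ forces $t\ll s$: indeed $(s-t)p\ge (sp-tp)^->0$ gives $(s-t)^-\ge (sp-tp)^-/p^+>0$. Proposition~\ref{embeddingsbetween}~$(vii)$ therefore yields
\[
N^{s(\cdot)}_{p(\cdot),q(\cdot)}\hookrightarrow M^{t(\cdot),p(\cdot)}
\]
for arbitrary $q\in\mathcal{P}(X)$. Now apply Theorem~\ref{doublingembedding}~$(iii)$ with $t$ in place of $s$. Its hypotheses hold because $t,p\in\mathcal{P}^{\log}_b(X)$ and $tp\gg Q$, so we obtain $M^{t(\cdot),p(\cdot)}\hookrightarrow C^{0,\lambda(\cdot)}$ with $\lambda=t-Q/p$.

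The only step needing care is this application in $(iii)$: one must check that $t$ belongs to $\mathcal{P}_b^{\log}(X)$ and not merely to $\mathcal{P}_b(X)$, which holds by assumption, and that $\lambda^->0$, which follows from $tp\gg Q$. Apart from this, the argument is a direct composition of continuous embeddings, and their operator norms multiply to give the final constants.
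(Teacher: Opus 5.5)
Your proposal is correct and follows the paper's own route. The paper obtains Theorem~\ref{ghold} by composing Theorem~\ref{doublingembedding}~$(iii)$, which indeed needs no doubling or uniform ball-measure hypothesis, with Proposition~\ref{embeddingsbetween}~$(v)$, $(vi)$, $(vii)$ respectively, and omits the details; your checks that $tp\ll sp$ forces $t\ll s$ (using $p^+<\infty$) and that $\lambda^->0$ supply exactly those details.
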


\section{Necessity of Geometric Conditions}\label{sect:necessity}

This final section examines the geometric conditions that are not only sufficient but necessary for the validity of the embeddings in Section~\ref{sect:mainresults}. We first introduce the concept of uniform perfectness for metric spaces and establish technical norm estimates for Lipschitz cut-off functions, which are vital in proving our necessity results. Finally, we demonstrate that the existence of Sobolev-type embeddings in this work forces a quantitative lower Ahlfors-regularity growth condition on the measure of order $r^{Q(x)}$, identifying the fundamental link between these functional inequalities and the geometry of the space.

\subsection{Uniform perfectness}
\begin{defi}\label{uniformly perfect}
	A metric space $(X,d)$ is said to be \texttt{uniformly perfect} if there exists a constant $\lambda\in (0,1)$ with the property that for each $x\in X$ one has
	\begin{equation}\label{uniformly}
	B(x,r)\setminus B(x,\lambda r)\neq\emptyset\quad\text{whenever}\quad X\setminus B(x, r)\neq\emptyset.
	\end{equation}
\end{defi}
It follows immediately from \eqref{uniformly} that, if \eqref{uniformly} holds for some $\lambda\in(0,1)$ then it also holds for any other $\lambda'\in(0,\lambda]$. As such, without loss of generality, we can always assume $\lambda\in(0,1/5)$.

\begin{prop}\label{fi}\cite{AGH20}
	Let $\left(X,d,\mu\right)$ be a metric measure space. For $x\in X$ and $r\geq 0$ we define
	\begin{equation*}
		\varphi_x(r):=\sup\left\{s\in[0,r]: \mu\left(B(x,s)\right)\leq \frac{1}{2}\mu\left(B(x,r)\right) \right\}.
	\end{equation*}
	Then, the following statements hold.
	\begin{enumerate}
		\item[(a)] The function $\varphi_x(\cdot)$ is non-decreasing, i.e., $\varphi_x(s)\leq \varphi_x(t)$ for $0\leq s\leq t<\infty$.
		\item[(b)] One has that
		\begin{equation*}
			\mu\left(B(x,\varphi_x(r))\right)\leq \frac{1}{2}\mu\left(B(x,r)\right)\leq \mu\left(\overline{B}(x,\varphi_x(r))\right).
		\end{equation*}
		\item[(c)] It holds that $\varphi_x(r)\in [0,r]$, where $\varphi_x(r)=r$ if and only if $r=0$.
		\item[(d)] If $\mu\left(\left\{x\right\}\right)=0$ and $r>0$, then $\varphi_x^j(r)>0$ for every $j\in \mathbb N_0$, where
		\begin{equation*}
			\varphi_x^0(r):=r \textnormal{ and } \varphi_x^j(r):=\varphi_x\left(\varphi_x^{j-1}(r)\right), j \in \mathbb N.
		\end{equation*}
		Moreover, the sequence $\left\{\varphi_x^{j}(r)\right\}_{j=0}^\infty$ is strictly decreasing, i.e.,
		\begin{equation*}
			r>\varphi_x(r)>\varphi_x^2(r)>\dots>\varphi_x^j(r)>\varphi_x^{j+1}(r)>\dots>0
		\end{equation*}
		and $\mu\left(B(x,\varphi_x^j(r))\right)\leq 2^{-j}\mu\left(B(x,r)\right)$. Consequently, $\displaystyle \lim_{j\to \infty}\mu\left(B(x,\varphi_x^j(r))\right)=0$.
	\end{enumerate}
\end{prop}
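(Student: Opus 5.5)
We prove the four items directly from the definition of $\varphi_x$. Throughout, fix $x$ and set $S(r):=\{s\in[0,r]:\mu(B(x,s))\le \tfrac12\mu(B(x,r))\}$, so that $\varphi_x(r)=\sup S(r)$. Note that $0\in S(r)$ because $B(x,0)=\emptyset$; hence $S(r)\neq\emptyset$ and $\varphi_x(r)\in[0,r]$.

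\emph{Item (a).} If $0\le s\le t$, then $\mu(B(x,s))\le\mu(B(x,t))$. Consequently every element of $S(s)$ lies in $[0,t]$ and satisfies the threshold $\tfrac12\mu(B(x,s))\le \tfrac12\mu(B(x,t))$, so $S(s)\subseteq S(t)$. Taking suprema gives $\varphi_x(s)\le\varphi_x(t)$.

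\emph{Item (b).} The left inequality uses inner continuity of measure. Write $B(x,\varphi_x(r))=\bigcup_n B(x,s_n)$ for an increasing sequence $s_n\in S(r)$ with $s_n\uparrow\varphi_x(r)$. If $\varphi_x(r)\in S(r)$ we may take $s_n=\varphi_x(r)$; otherwise $\varphi_x(r)$ is a limit from below of elements of $S(r)$, and since $\mu(B(x,\cdot))$ is monotone, any $s<\varphi_x(r)$ lies below some element of $S(r)$ and hence itself belongs to $S(r)$. It follows that $\mu(B(x,\varphi_x(r)))=\lim_n\mu(B(x,s_n))\le\tfrac12\mu(B(x,r))$.

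For the right inequality, first suppose $\varphi_x(r)<r$. Take $t_n\downarrow\varphi_x(r)$ with $t_n\in(\varphi_x(r),r]$. Each $t_n\notin S(r)$, so $\mu(B(x,t_n))>\tfrac12\mu(B(x,r))$. Since $\overline{B}(x,\varphi_x(r))=\bigcap_n B(x,t_n)$ and all these balls have finite measure by \eqref{ballsmeas}, outer continuity gives $\mu(\overline{B}(x,\varphi_x(r)))\ge\tfrac12\mu(B(x,r))$. If instead $\varphi_x(r)=r$, then $\overline{B}(x,r)\supseteq B(x,r)$ and the inequality is trivial.

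\emph{Item (c).} If $r>0$, suppose $\varphi_x(r)=r$. The left inequality in (b) then gives $\mu(B(x,r))\le\tfrac12\mu(B(x,r))$. Since $\mu(B(x,r))\in(0,\infty)$ by \eqref{ballsmeas}, this is a contradiction. If $r=0$, then $\varphi_x(0)=0$ because $S(0)=\{0\}$.

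\emph{Item (d).} We argue by induction on $j$. Assume $\rho:=\varphi_x^{j-1}(r)>0$. If $\varphi_x(\rho)=0$, the right inequality in (b) gives $\mu(\{x\})=\mu(\overline{B}(x,0))\ge\tfrac12\mu(B(x,\rho))>0$, contradicting $\mu(\{x\})=0$. Hence $\varphi_x(\rho)>0$, and by (c) $\varphi_x(\rho)<\rho$, which gives strict decrease. The left inequality in (b), iterated, yields
\[
\mu(B(x,\varphi_x^j(r)))\le\tfrac12\mu(B(x,\varphi_x^{j-1}(r)))\le\dots\le2^{-j}\mu(B(x,r)).
\]
This tends to $0$ because $\mu(B(x,r))<\infty$.

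The only step that is not immediate is the continuity bookkeeping in (b): $S(r)$ need not be closed, and one must distinguish open from closed balls. The argument in (b) handles this by showing that $S(r)$ is an initial segment of $[0,r]$ and then applying inner continuity for the open ball and outer continuity (finite measure) for the closed ball.
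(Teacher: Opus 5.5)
Your argument is correct and complete. The paper gives no proof of its own and simply cites \cite{AGH20}; your argument is the standard one: (a) comes from monotonicity of $s\mapsto\mu(B(x,s))$, (b) from continuity of $\mu$ from below along $B(x,s_n)$ with $s_n\uparrow\varphi_x(r)$ and from above along $B(x,t_n)$ with $t_n\downarrow\varphi_x(r)$ (using finiteness of ball measures), and (c), (d) follow from (b), with your observation that $S(r)$ is an initial segment of $[0,r]$ handling the case where the supremum is not attained.
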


The following Lemma was proved in \cite{AGH20} the setting of constant dimension. Below, we shall sketch a proof for variable dimension.
\begin{lemma}\label{pomocniczy}
	Let $\left(X,d,\mu\right)$ be uniformly perfect metric measure space and let $\lambda\in \left(0,1/5\right)$ be as in \eqref{uniformly}. Let $Q\in \mathcal{P}_b^{\log}(X)$ and assume that there is a constant $C\in (0,\infty)$ such that $\mu\left(B(x,r)\right)\geq Cr^{Q(x)}$ whenever $x\in X$ and $r\in (0,\min\left\{1,\diam X\right\}]$ satisfy $\displaystyle r\leq 3\varphi_x(r)/\lambda^2$. Then $$\mu\left(B(x,r)\right)\geq C'r^{Q(x)}$$ for all $x \in X$ and every $r\in (0,1]$, where $C':=\min\{\mu\left(X\right), C\lambda^{Q^+} e^{-C_{\log}(Q)} 2^{- Q^+}\}$.
\end{lemma}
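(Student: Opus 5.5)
Fix $x\in X$ and $r\in(0,1]$. The plan is to reduce the general radius $r$ to a radius $r'$ comparable to $r$ at which the hypothesis $r'\le 3\varphi_x(r')/\lambda^2$ holds, then transfer the bound back from $r'$ to $r$ using log-H\"older continuity of $Q$ via Lemma~\ref{loglemma}. The first step is the easy case $r>\min\{1,\diam X\}$, which forces $\diam X<1$ and $r>\diam X$. Then $B(x,r)=X$ and $\mu(B(x,r))=\mu(X)\ge \mu(X) r^{Q(x)}$ since $r\le1$, which gives the constant $\mu(X)$ in $C'$. So we may assume $r\le\min\{1,\diam X\}$.

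Next, for such $r$, suppose $r>3\varphi_x(r)/\lambda^2$, i.e.\ $\varphi_x(r)<\lambda^2 r/3$; otherwise the hypothesis applies directly. Following \cite{AGH20}, we use uniform perfectness to locate a point $y$ with $\lambda r\le d(x,y)<r$ for some suitable scale. Since $r\le\diam X$, we have $X\setminus B(x,r/2)\neq\emptyset$ (or we argue directly at scale $r$), so \eqref{uniformly} gives $y\in B(x,\rho)\setminus B(x,\lambda\rho)$ with $\rho$ comparable to $r$. We then consider the ball $B(y,\lambda\rho/2)$ (or a similar radius $r'\approx\lambda r$). It sits inside $B(x,r)$ and is disjoint from $B(x,\varphi_x(r))$ once $\varphi_x(r)<\lambda^2r/3$. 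The idea is that either $\mu(B(y,r'))\ge \mu(B(x,r))/2$-type balancing fails, or the hypothesis $r'\le3\varphi_y(r')/\lambda^2$ holds at $y$. In the latter case we get $\mu(B(x,r))\ge\mu(B(y,r'))\ge C r'^{Q(y)}$. In the former case we iterate by shrinking around $y$, and the choice of $\lambda<1/5$ and the factor $3$ guarantee the nesting. I expect this case analysis, namely showing that at some radius comparable to $\lambda r$ around some point inside $B(x,r)$ the hypothesis condition is met, to be the main obstacle. My first attempt would be to mimic \cite{AGH20} verbatim, since that argument is purely about the measure and $\varphi$, and the dimension enters only at the final step.

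Finally, we convert $C r'^{Q(y)}$ with $r'\ge\lambda r/2$ (or $\lambda r$) and $d(x,y)<r\le1$ into $C\lambda^{Q^+}2^{-Q^+}r^{Q(x)}$. We bound $r'^{Q(y)}\ge(\lambda/2)^{Q^+}r^{Q(y)}$ using $\lambda/2<1$. Then we bound $r^{Q(y)}\ge e^{-C_{\log}(Q)}r^{Q(x)}$, since $|Q(x)-Q(y)|\log(1/r)\le C_{\log}(Q)\log(1/r)/\log(e+1/d(x,y))\le C_{\log}(Q)$ because $d(x,y)<r$. This is exactly the estimate in Lemma~\ref{loglemma}(iii)/(i), adapted to $Q$ (noting $Q\in\mathcal P_b^{\log}$ gives log-H\"older continuity of $Q$ itself as well, with the constant denoted $C_{\log}(Q)$). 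This produces the constant $C\lambda^{Q^+}e^{-C_{\log}(Q)}2^{-Q^+}$, and taking the minimum with $\mu(X)$ finishes the proof.
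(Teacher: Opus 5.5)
Your overall architecture matches the paper's. You treat the trivial case $\diam X<r\le 1$, then the direct case $r\le 3\varphi_x(r)/\lambda^2$, then find a comparable good ball inside $B(x,r)$ in the remaining case, and finally transfer $Q(\tilde x)$ to $Q(x)$ by log-H\"older continuity. Your final transfer is correct: $d(x,y)<r\le 1$ gives $r^{Q(y)}\ge e^{-C_{\log}(Q)}r^{Q(x)}$, and with $r'\ge\lambda r/2$ you land exactly on $C\lambda^{Q^+}2^{-Q^+}e^{-C_{\log}(Q)}$.

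The gap is the step you flag as the main obstacle. The paper does not prove it either; it imports it as Lemma~\ref{pomocniczydrugi} from \cite{AGH20}. That lemma says: if $r\le\diam X$ and $r>3\varphi_x(r)/\lambda^2$, there is $B(\tilde x,\tilde r)\subseteq B(x,r)$ with $\lambda r<\tilde r\le\min\{r,3\varphi_{\tilde x}(\tilde r)/\lambda^2\}$. Quoting exactly that would close your proof. Your own sketch of this step, however, would not produce it. A ball $B(y,r')$ with $r'\approx\lambda r$, centred away from $x$, does not contain $B(x,\lambda^2 r/3)$. That ball is the only place where you have information: $\varphi_x(r)<\lambda^2r/3$ forces it to carry more than half of $\mu(B(x,r))$. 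So nothing controls $\varphi_y(r')$; the measure may concentrate near $y$ just as it does near $x$. ``Iterating by shrinking around $y$'' then costs a factor comparable to $\lambda$ in the radius at each round, with no bound on the number of rounds. You would therefore lose the bound $r'\ge\lambda r/2$ on which your last step rests.

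The missing idea is to use a \emph{large} ball around the new centre that swallows the concentrated mass near $x$, so that one step suffices. For instance, take $\rho=r/3$ rather than $r/2$; note that $X\setminus B(x,r/2)$ can be empty when $r=\diam X$. If $X\subseteq B(x,r/3)$ then $\diam X\le 2r/3<r$, a contradiction, so uniform perfectness gives $\tilde x$ with $\lambda r/3\le d(x,\tilde x)<r/3$. Set $\tilde r:=2r/3$. Then:
\begin{itemize}
\item $B(\tilde x,\tilde r)\subseteq B(x,r)$ and $B(x,\lambda^2r/3)\subseteq B(\tilde x,\tilde r)$;
\item since $\lambda\le 3/5$, we have $d(x,\tilde x)\ge 5\lambda^2r/9$, so $B(\tilde x,\lambda^2\tilde r/3)$ is disjoint from $B(x,\lambda^2 r/3)$.
\end{itemize}
Because $\mu(B(x,\lambda^2r/3))>\frac12\mu(B(x,r))$, you get $\mu(B(\tilde x,\lambda^2\tilde r/3))\le\mu(B(x,r))-\mu(B(x,\lambda^2r/3))<\mu(B(x,\lambda^2r/3))$. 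Hence $\mu(B(\tilde x,\tilde r))\ge\mu(B(x,\lambda^2r/3))+\mu(B(\tilde x,\lambda^2\tilde r/3))>2\mu(B(\tilde x,\lambda^2\tilde r/3))$, which means $\varphi_{\tilde x}(\tilde r)\ge\lambda^2\tilde r/3$. Since $\tilde r\le r\le\min\{1,\diam X\}$, the hypothesis applies at $(\tilde x,\tilde r)$. Your exponent transfer then finishes the proof, with $(2/3)^{Q^+}$ in place of $(\lambda/2)^{Q^+}$, which is even better than $C'$.
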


\begin{proof}
Let $x \in X$ and $r\in (0,1]$. 
Observe that if $\diam X<r\leq 1$ then  
$$
\infty>\mu\left(B(x,r)\right)=\mu\left(X\right)\geq \mu\left(X\right)r^{Q(x)}.
$$ 
For the case $r\leq \diam X$, we shall use the following lemma from \cite{AGH20}.

\begin{lemma}\label{pomocniczydrugi}
	Let $(X,d,\mu)$ be uniformly perfect metric measure space and let $\lambda \in (0,1/5)$ be as in \eqref{uniformly}. If $x\in X$ and $r\in \left(0,\diam X\right]$ is finite and $r>3\varphi_x(r)/\lambda^2,$ then there is a ball $B(\tilde{x},\tilde{r}) \subseteq B(x,r)$ such that $\lambda r < \tilde{r} \leq  \min\left\{r,3\varphi_{\tilde{x}}(\tilde{r})/\lambda^2\right\}.$
\end{lemma}

We fix a point $x\in X$ and radius $r\in \left(0,\min\left\{1,\diam X\right\}\right].$ If $r\leq 3\varphi_x(r)/\lambda^2$, then $\mu(B(x,r))\geq Cr^{Q(x)}\geq C'r^{Q(x)}$ by assumption. Thus, we will assume that $r>3\varphi_x(r)/\lambda^2.$ By the virtue of Lemma~\ref{pomocniczydrugi}, there is a ball $B(\tilde{x},\tilde{r}) \subseteq B(x,r)$ such that $\lambda r< \tilde{r}\leq \min\left\{r,3\varphi_{\tilde{x}}(\tilde{r})/\lambda^2\right\}.$ Then, since $\tilde{r}\leq r\leq 1$ we have
	\begin{equation*}
		\mu(B(x,r)) \geq \mu(B(\tilde{x},\tilde{r})) \geq C\tilde{r}^{Q(\tilde{x})}\geq C\lambda^{Q(\tilde{x})} r^{Q(\tilde{x})}\geq C\lambda^{Q^+} r^{Q_{B(x,r)}^+}.
	\end{equation*}
	Applying Lemma~\ref{loglemma} $(i)$ with $t:=1/Q$, $B:=B(x,r)$, $z:=x$, and $R:=2r$, we have
	\begin{equation*}
		e^{-C_{\log}(Q)} \left(\frac{1}{2r}\right)^{Q_{B(x,r)}^+} \leq \left(\frac{1}{2r}\right)^{Q(x)}.
	\end{equation*}
	Hence,
	\begin{equation*}
		\mu(B(x,r))\geq C\lambda^{Q^+} e^{-C_{\log}(Q)} 2^{- Q^+} r^{Q(x)}=C'r^{Q(x)}.
	\end{equation*}
\end{proof}

\subsection{Norm estimates for Lipschitz cut-off functions}
\begin{remark}
Given a metric measure space $(X,d,\mu)$ and numbers $\alpha\in (0,\infty)$ and $t\in (0,\infty]$, we shall write $\alpha \preccurlyeq_{t} 1$ if $\alpha \leq 1$ and that the value $\alpha=1$ is permissible if and only if $t=\infty$.
\end{remark}

\begin{tw}\label{lipschitz}
	Let $\left(X,d,\mu \right)$ be a metric measure space and $s, p\in \mathcal{P}_b(X)$, $q\in \mathcal{P}(X)$ with $s^+ \preccurlyeq_{q^-} 1$. Let $B\subseteq X$ be non-empty bounded set and suppose that $u: X \to \mathbb [0,1]$ is an $L$-Lipschitz function which is zero outside of $B$, where $L\in(0,\infty)$. Then, the sequence $\left\{g_k\right\}_{k\in\mathbb Z}$ of functions, which are defined by
	\begin{equation*}
		g_k(x):=\left\{\begin{array}{lll} L2^{k\left(s(x)-1\right)}\chi_{B}(x),& \textnormal{ for } k\geq k_L,\\ 2^{\left(k+1\right)s(x)+1}\chi_{B}(x),&\textnormal{ for } k< k_L, \end{array} \right. 
	\end{equation*}
	where $k_L\in \mathbb Z$ is such that $2^{k_L-1}\leq L <2^{k_L}$, is a vector $s(\cdot)$-gradient of the function $u$. Moreover, there exists a constant $C_{\rm lip}>0$, depending only on $q^-$, $s^-$, $s^+$, such that
	\begin{align*}
		\left\| \left\{g_k\right\} \right\|_{L^{p(\cdot)}\left(\ell^{q(\cdot)} \left(X,\mu\right) \right)} &\leq C_{\rm lip} \max\left\{L^{s_B^-},L^{s_B^+}\right\}\left\|\chi_{B}\right\|_{L^{p(\cdot)}\left(X,\mu\right)},\\ \left\| \left\{g_k\right\}\right\|_{\ell^{q(\cdot)}\left(L^{p(\cdot)}\left(X,\mu\right)\right)} &\leq C_{\rm lip} \max\left\{L^{s_B^-},L^{s_B^+}\right\}\left\|\chi_{B}\right\|_{L^{p(\cdot)}\left(X,\mu\right)}.
	\end{align*}
	In particular, $u$ belongs to $\dot{M}^{s(\cdot)}_{p(\cdot),q(\cdot)}(X,d,\mu)$ and $\dot{N}^{s(\cdot)}_{p(\cdot),q(\cdot)}(X,d,\mu)$.
\end{tw}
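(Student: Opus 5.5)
We first verify the pointwise inequality defining a vector $s(\cdot)$-gradient, and then estimate the two mixed norms by splitting the index set at $k_L$.

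\textbf{Gradient property.} Fix $k\in\mathbb Z$ and $x,y\in X$ with $2^{-k-1}\le d(x,y)<2^{-k}$. If $x,y\notin B$, then $u(x)=u(y)=0$ and there is nothing to prove. Otherwise, by symmetry, assume at least one of the points lies in $B$.

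\emph{Case $k\ge k_L$.} We use the Lipschitz bound. If both points lie in $B$, then
\[
|u(x)-u(y)|\le L\,d(x,y)=L\,d(x,y)^{s(x)}d(x,y)^{1-s(x)}\le L\,d(x,y)^{s(x)}2^{-k(1-s(x))}=d(x,y)^{s(x)}g_k(x),
\]
since $s\le1$. If only $x\in B$, the same bound holds with $g_k(x)$ alone, and this is enough because $g_k(y)\ge0$.

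\emph{Case $k<k_L$.} Here we use $0\le u\le1$, so $|u(x)-u(y)|\le 1$. For a point in $B$, say $x$, we have $d(x,y)^{s(x)}g_k(x)\ge 2^{-(k+1)s(x)}2^{(k+1)s(x)+1}=2\ge 1$. This covers every configuration, so $\{g_k\}_{k\in\mathbb Z}\in\mathbb D^{s(\cdot)}(u)$ with the exceptional set empty.

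\textbf{Norm estimates.} For each $x\in B$ set
\[
S_1(x):=\sum_{k\ge k_L}\bigl(L2^{-k(1-s(x))}\bigr)^{q}, \qquad S_2(x):=\sum_{k<k_L}2^{((k+1)s(x)+1)q}, \qquad q=q(x)\ (\text{or } q^-).
\]
\emph{The sum over $k\ge k_L$.} It is geometric with ratio $2^{-(1-s(x))q}\le 2^{-(1-s^+)q^-}<1$; this is exactly where $s^+\preccurlyeq_{q^-}1$ is needed. When $q=\infty$ and $s^+=1$, we take a supremum instead of summing. Using $2^{k_L}\sim L$, the sum is at most $C\,L^{q}2^{-k_L(1-s(x))q}\le C' L^{s(x)q}$.

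\emph{The sum over $k<k_L$.} It is geometric in $2^{s(x)q}$, ratio $\ge 2^{s^-q^-}$, and is dominated by its top term. This gives at most $C\,2^{k_Ls(x)q}\le C'' L^{s(x)q}$.

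Combining the two pieces,
\[
\|\{g_k(x)\}\|_{\ell^{q(x)}}\le C\, L^{s(x)}\chi_B(x)\le C\max\{L^{s_B^-},L^{s_B^+}\}\chi_B(x).
\]
The constants depend only on $q^-,s^\pm$. To see this, we first use Proposition~\ref{monotonicity} to reduce to the constant exponent $q^-$ (the case $q^-=\infty$ being the supremum case), and then bound $(\sum_k a_k^{q^-})^{1/q^-}$ by means of the geometric series. Taking the $L^{p(\cdot)}$ norm yields the Triebel–Lizorkin estimate.

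For the Besov norm we again reduce to $q^-$ by Proposition~\ref{monotonicity} and apply Lemma~\ref{mieszane}(i). This gives $\|\{\|g_k\|_{L^{p(\cdot)}}\}_k\|_{\ell^{q^-}}$. For $k\ge k_L$ we have $g_k\le L\,2^{-k(1-s^+)}\chi_B$, and for $k<k_L$ we have $g_k\le 2^{(k+1)s^-+1}\chi_B$ when $k+1\le0$. For $k+1>0$ with $k<k_L$ we use $2^{(k+1)s^+}$. Summing the resulting geometric series again gives $C\max\{L^{s_B^-},L^{s_B^+}\}\|\chi_B\|_{L^{p(\cdot)}}$; here the bound must be expressed in terms of $s_B^\pm$, because $g_k$ vanishes off $B$.

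\textbf{Main obstacle.} The delicate step is the bookkeeping that replaces $2^{k_L}$ by $L$ with uniform constants. Since $2^{k_L-1}\le L<2^{k_L}$, one must check whether exponents of the form $L^{s}$ are taken with $s_B^-$ or $s_B^+$; this depends on whether $L\ge1$ or $L<1$, which is why $\max\{L^{s_B^-},L^{s_B^+}\}$ appears. The borderline case $s^+=1$ with $q^-=\infty$ must also be handled separately. Finally, membership of $u$ in both homogeneous spaces follows because $\|\chi_B\|_{L^{p(\cdot)}}<\infty$, which holds since $B$ is bounded and balls have finite measure.
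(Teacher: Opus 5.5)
Your route is the paper's: the same two-case check of the gradient inequality, the same reduction to the constant exponent $q^-$ via Proposition~\ref{monotonicity} and Lemma~\ref{mieszane}(i), and the same split of the sums at $k_L$. The gradient part and the $L^{p(\cdot)}(\ell^{q(\cdot)})$ estimate are correct as you argue them, because there you work pointwise with the fixed value $s(x)$.

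\textbf{The step that fails.} It is in the Besov estimate. You claim $g_k\le L\,2^{-k(1-s^+)}\chi_B$ for all $k\ge k_L$, but this holds only for $k\ge 0$. For $k<0$ one has $k(s(x)-1)=|k|(1-s(x))\ge |k|(1-s^+)$, so the inequality goes the other way. Indices $k_L\le k<0$ occur exactly when $k_L\le -1$, i.e.\ when $L<1/2$.

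This is not just a matter of constants. Summed over $k\ge k_L$, your bound would give a contribution of order $L^{s^+}\|\chi_B\|_{L^{p(\cdot)}(X,\mu)}$. That is false for small $L$ when $s$ is not constant on $B$. Indeed, the single term $k=k_L$ satisfies $g_{k_L}(x)\approx L^{s(x)}$, which is of size about $L^{s_B^-}\gg L^{s^+}$ on the part of $B$ where $s$ is near $s_B^-$. Your final stated bound is still true, but as written it is derived through a false inequality.

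\textbf{The repair.} Use the same sign split you already made for $k<k_L$.
\begin{itemize}
\item For $k_L\le k<0$, use $2^{k(s(x)-1)}\le 2^{|k|(1-s_B^-)}$ for a.e.\ $x\in B$. This piece is a finite geometric series increasing in $|k|$. For $q^-<\infty$ its ratio is at least $2^{(1-s^+)q^-}>1$, so it is bounded by a constant times its $k=k_L$ term. That term is at most $L\,2^{-k_L(1-s_B^-)}\|\chi_B\|_{L^{p(\cdot)}(X,\mu)}\le L^{s_B^-}\|\chi_B\|_{L^{p(\cdot)}(X,\mu)}$.
\item The terms $k\ge 0$ contribute at most a constant times $L\,\|\chi_B\|_{L^{p(\cdot)}(X,\mu)}$, and $L\le L^{s_B^-}$ because $L<1$.
\end{itemize}
Equivalently, as the paper does, bound $2^{k(s(x)-1)}\le\max\{2^{k(s_B^+-1)},2^{k(s_B^--1)}\}$ and sum both geometric series from $k_L$.

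In the $\ell^{q(\cdot)}(L^{p(\cdot)})$ norm you cannot argue pointwise. You must control $2^{ks(x)}$ uniformly on $B$, and whether $s_B^+$ or $s_B^-$ does this depends on the sign of $k$. This is exactly the bookkeeping your closing paragraph anticipates, and the $k\ge k_L$ bound gets it wrong.

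A minor point: $s^+\le 1$ gives $s\le1$ only almost everywhere. So the exceptional set in the gradient inequality should be the null set $\{s>1\}$, not the empty set.
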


\begin{proof}
We start by showing that $\left\{g_k\right\}_{k\in\mathbb Z}$ is a vector $s(\cdot)$-gradient of $u$. Let us fix $k\in \mathbb Z$ and $x,y\in X$ such that $2^{-k-1}\leq d(x,y)<2^{-k}$. If $x,y\notin B$, then there is nothing to prove. Without loss of generality we may assume that $x\in B$. Firstly, consider the case when $k\geq k_L$.  We have that
	\begin{align*}
		\left|u(x)-u(y)\right|&\leq Ld(x,y)=d(x,y)^{s(x)}Ld(x,y)^{1-s(x)}\\
				&\leq d(x,y)^{s(x)}L2^{k\left(s(x)-1\right)}\\
		& \leq d(x,y)^{s(x)}L2^{k\left(s(x)-1\right)}\chi_B(x)+d(x,y)^{s(y)}L2^{k\left(s(y)-1\right)}\chi_{B}(y).
	\end{align*}
	Now, assume that $k < k_L$. Then
	\begin{align*}
		\left|u(x)-u(y)\right|&\leq 2=2d(x,y)^{s(x)}d(x,y)^{-s(x)}\leq d(x,y)^{s(x)}2^{\left(k+1\right)s(x)+1} \\ 
		& \le d(x,y)^{s(x)}2^{\left(k+1\right)s(x)+1}\chi_{B}(x)+d(x,y)^{s(y)}2^{\left(k+1\right)s(y)+1}\chi_B(y).
	\end{align*}
	Therefore, we have proved that $\left\{g_k\right\}_{k\in\mathbb Z}$ is vector $s(\cdot)$-gradient of $u$. 
	
	Now we shall prove that
	\begin{equation*}\left\| \left\{g_k\right\}_{k\in\mathbb Z} \right\|_{L^{p(\cdot)}\left(\ell^{q(\cdot)} \left(X,\mu\right) \right)} \leq C_{\lip} \max\left\{L^{s_B^-},L^{s_B^+}\right\}\left\|\chi_{B}\right\|_{L^{p(\cdot)}\left(X,\mu\right)}.
	\end{equation*}
	By the virtue of Proposition~\ref{monotonicity} $(i)$, it suffices to prove that 
	\begin{equation*}\left\| \left\{g_k\right\}_{k\in\mathbb Z} \right\|_{L^{p(\cdot)}\left(\ell^{q^-} \left(X,\mu\right) \right)} \leq C_{\lip} \max\left\{L^{s_B^-},L^{s_B^+}\right\}\left\|\chi_{B}\right\|_{L^{p(\cdot)}\left(X,\mu\right)}.
	\end{equation*}
	If $q^-<\infty$, then for all $x\in B$, we have following string of estimates
	\begin{align*}
		\left\|\left\{g_k(x)\right\}_{k\in\mathbb Z}\right\|_{\ell^{q^-}}^{q^-}&=\sum_{k=-\infty}^{k_L-1} 2^{\left(k+1\right)q^-s(x)+q^-}+L^{q^-}\sum_{k=k_L}^{\infty}2^{kq^-\left(s(x)-1\right)}\leq  2^{q^-} \frac{2^{k_L q^- s(x)}}{1-2^{-q^-s^-}}+ L^{q^-}\frac{2^{k_Lq^-\left(s(x)-1\right)}}{1-2^{-q^-\left(1-s^+\right)}}\\
		& \leq 4^{q^-} \frac{L^{q^- s(x)}}{1-2^{-q^- s^-}} + L^{q^-}\frac{L^{q^-\left(s(x)-1\right)}}{1-2^{-q^-\left(1-s^+\right)}} = L^{q^-s(x)} \left[\frac{4^{q^-}}{1-2^{-q^-s^-}}+\frac{1}{1-2^{-q^-\left(1-s^+\right)}}\right]\\ & \leq \max\left\{L^{q^- s_B^+}, L^{q^- s_B^-}\right\} \left[\frac{4^{q^-}}{1-2^{-q^-s^-}}+\frac{1}{1-2^{-q^-\left(1-s^+\right)}}\right].
	\end{align*}
	Finally, for $x\in X$
	\begin{equation}\label{skon1}
		\left\|\left\{g_k(x)\right\}_{k\in\mathbb Z}\right\|_{\ell^{q^-}}\leq A_1 \max\left\{L^{s_B^+},L^{s_B^-}\right\} \chi_B(x),
	\end{equation}
	where 
	\begin{equation*}
		A_1:=\left(\frac{4^{q^-}}{1-2^{-q^-s^-}}+\frac{1}{1-2^{-q^-\left(1-s^+\right)}}\right)^{\frac{1}{q^-}}.
	\end{equation*}
	Now, let consider the case $q^-=\infty$. For all $k \geq k_L$ and $x\in B$, we have 
	\begin{equation*}
		L2^{k\left(s(x)-1\right)} \leq L2^{k_L\left(s(x)-1\right)}\leq L\cdot L^{s(x)-1}=L^{s(x)} \leq \max\left\{L^{s_B^+}, L^{s_B^-}\right\}.
	\end{equation*}
	Additionally, for $k <k_L $ and $x\in B$, we have 
	\begin{align*}
	2^{(k+1)s(x)+1} \leq 2^{k_Ls(x)+1} \leq 4L^{s(x)}\leq 4 \max\left\{L^{s_B^+},L^{s_B^-}\right\}.
	\end{align*}
	Therefore, for all $x\in X$,
	\begin{equation}\label{niesk1}
		\left\|\left\{g_k(x)\right\}_{k\in\mathbb Z}\right\|_{\ell^{\infty}}\leq 4 \max\left\{L^{s_B^+},L^{s_B^-}\right\}\chi_B(x).
	\end{equation}
	Taking $L^{p(\cdot)}$-quasi-norm in \eqref{skon1}, \eqref{niesk1} we obtain the first part of the theorem.
	
	Now we shall prove that the second inequality holds and $u\in \dot{N}^{s(\cdot)}_{p(\cdot),q(\cdot)}(X,d,\mu)$. By the virtue of Proposition~\ref{monotonicity} suffices to prove that
	\begin{equation*}
		\left\|g\right\|_{\ell^{q^-}\left(L^{p(\cdot)}(X,\mu)\right)}\leq C_{\lip}\max\left\{L^{s_B^+},L^{s_B^-}\right\}\left\|\chi_{B}\right\|_{L^{p(\cdot)}(X,\mu)}.
	\end{equation*}
	Let us recall that from Lemma~\ref{mieszane} $(i)$ we have
	\begin{equation*}
		\left\|g\right\|_{\ell^{q^-}\left(L^{p(\cdot)}(X,\mu)\right)}=\left\{\begin{array}{ll} \displaystyle\left(\sum_{k\in \mathbb Z} \left\|g_k\right\|_{L^{p(\cdot)}(X,\mu)}^{q^-}\right)^{\frac{1}{q^-}}& \textnormal{ if } q^- < \infty,\\ \displaystyle\max_{k\in \mathbb Z} \left\|g_k\right\|_{L^{p(\cdot)}(X,\mu)}& \textnormal{ if } q^- = \infty.
		\end{array}\right.
	\end{equation*}
	Hence, if $q^-<\infty$ we obtain
	\begin{align}\label{n3}
		\left\|g\right\|_{\ell^{q^-}\left(L^{p(\cdot)}(X,\mu)\right)} &\leq 2^{\frac{1}{q^-}} \left[\left(\sum_{k=-\infty}^{k_L-1} \left\|2^{(k+1)s +1}\chi_{B}\right\|_{L^{p(\cdot)}(X,\mu)}^{q^-}\right)^{\frac{1}{q^-}}\right.\nonumber\\
		&\qquad\qquad\qquad\left.+\left(\sum_{k=k_L}^{\infty} \left\|2^{k\left(s-1\right)}\chi_{B}\right\|_{L^{p(\cdot)}(X,\mu)}^{q^-}\right)^{\frac{1}{q^-}}\right].
	\end{align} 
	Observe that,
	\begin{align*}
		\sum_{k=-\infty}^{k_L-1} \left\|2^{(k+1)s +1}\chi_{B}\right\|_{L^{p(\cdot)}(X,\mu)}^{q^-}&\leq 2^{q^-}\left\|\chi_B\right\|_{L^{p(\cdot)}(X,\mu)}^{q^-}\sum_{k=-\infty}^{k_L-1} \max\left\{2^{(k+1)s_B^-},2^{(k+1)s_B^+}\right\}^{q^-} \\ & \leq 2^{q^-}\left\|\chi_B\right\|_{L^{p(\cdot)}(X,\mu)}^{q^-}\left(\sum_{k=-\infty}^{k_L-1} 2^{(k+1)s_B^-q^-} +\sum_{k=-\infty}^{k_L-1} 2^{(k+1)s_B^+q^-}\right) \\ & = 2^{q^-} \left\|\chi_B \right\|_{L^{p(\cdot)}(X,\mu)}^{q^-}\left(\frac{2^{k_Ls_B^-q^-}}{1-2^{-s_B^-q^-}} +\frac{ 2^{k_Ls_B^+q^-}}{1-2^{-s_B^+q^-}}\right)\\ & \leq \frac{2^{q^- +1}}{1-2^{-s^-q^-}}\left\|\chi_B \right\|_{L^{p(\cdot)}(X,\mu)}^{q^-}\max\left\{\left(2L\right)^{q^- s_B^+}, \left(2L\right)^{q^- s_B^-}\right\} \\ & \leq \frac{2^{2q^- +1}}{1-2^{s^-q^-}} \left\|\chi_B \right\|_{L^{p(\cdot)}(X,\mu)}^{q^-}\max\left\{L^{s_B^+},L^{s_B^-}\right\}^{q^-}
	\end{align*}
	and
	\begin{align*}
		\sum_{k=k_L}^{\infty} \left\| L 2^{k(s(x)-1)}\chi_B \right\|^{q^-}_{L^{p(\cdot)}(X,\mu)}&\leq \left\| \chi_B\right\|_{L^{p(\cdot)}(X,\mu)}^{q^-} L^{q^-} \sum_{k=k_L}^{\infty} \max\left\{2^{k\left(s_B^+ -1\right)}, 2^{k\left(s_B^- - 1\right)}\right\}^{q^-} \\ & \leq \left\| \chi_B\right\|_{L^{p(\cdot)}(X,\mu)}^{q^-} L^{q^-}\left(\sum_{k=k_L}^{\infty} 2^{k\left(s_B^+ -1 \right)q^-} +\sum_{k=k_L}^{\infty} 2^{k\left(s_B^- - 1\right)q^-} \right) \\ & = \left\| \chi_B\right\|_{L^{p(\cdot)}(X,\mu)}^{q^-} L^{q^-}\left(\frac{2^{k_L\left(s_B^+ - 1\right)q^-}}{1-2^{\left(s_B^+ -1\right)q^-}} + \frac{2^{k_L\left(s_B^- - 1\right)q^-}}{1-2^{\left(s_B^- - 1\right)q^-}}\right) \\ & \leq \frac{1}{1-2^{\left(s^+ - 1\right)q^-}} \left\| \chi_B\right\|_{L^{p(\cdot)}(X,\mu)}^{q^-} L^{q^-}\left(L^{\left(s_B^+ - 1\right)q^-} + L^{\left(s_B^- - 1\right)q^-}\right) \\ & \leq \frac{2}{1-2^{\left(s^+ -1\right)q^-}} \left\| \chi_B\right\|_{L^{p(\cdot)}(X,\mu)}^{q^-} \max\left\{L^{s_B^+},L^{s_B^-}\right\}^{q^-}.
	\end{align*}
	Combining these estimates with \eqref{n3} gives
	\begin{align*}
		\left\| g\right\|_{\ell^{q^-}\left(L^{p(\cdot)}\left(X,\mu\right)\right)} &\leq A_2\max\left\{L^{s_{B}^-},L^{s_{B}^+}\right\} \left\|\chi_B \right\|_{L^{p(\cdot)}(X,\mu)},
	\end{align*}
	where
	\begin{equation*}
		A_2:=2^{\frac{1}{q^-}} \left[\left(\frac{2^{2q^- +1}}{1-2^{-s^-q^-}}\right)^{\frac{1}{q^-}} + \left(\frac{2}{\left(1-2^{-q^-\left(1-s^+\right)}\right)^{\frac{1}{q^-}}}\right)^{\frac{1}{q^-}}\right].
	\end{equation*}
	Finally,
	\begin{equation*}
\left\|g\right\|_{\ell^{q(\cdot)}\left(L^{p(\cdot)}(X,\mu)\right)} \leq \left\|g\right\|_{\ell^{q^-}\left(L^{p(\cdot)}(X)\right)}\leq A_2\max\left\{L^{s^-_{B}},L^{s^+_{B}}\right\}\left\|\chi_{B}\right\|_{L^{p(\cdot)}(X,\mu)},
	\end{equation*}
	and the proof in this case is complete.
	
	If $q^-=\infty$, then for every $x\in B$ we have
	\begin{align*}
	\sup_{k\in \mathbb Z} \left\| g_k \right\|_{L^{p(\cdot)}(X,\mu)}&\leq \sup_{\substack{k \geq k_L \\ k \in \mathbb Z}} \left\|g_k \right\|_{L^{p(\cdot)}(X,\mu)} + \sup_{\substack{k < k_L \\ k \in \mathbb Z}} \left\|g_k \right\|_{L^{p(\cdot)}(X,\mu)} \\&=\left\| L2^{k_L\left(s-1\right)}\chi_B\right\|_{L^{p(\cdot)}(X,\mu)}+\left\| 2^{k_Ls+1}\chi_B \right\|_{L^{p(\cdot)}(X,\mu)} \\ &= \left\|L^{s} \chi_B \right\|_{L^{p(\cdot)}(X,\mu)} +4\left\|L^{s}\chi_B\right\|_{L^{p(\cdot)}(X,\mu)} = 5\left\| L^{s} \chi_B \right\|_{L^{p(\cdot)}(X,\mu)} \\ & \leq 5\max\left\{L^{s_B^+}, L^{s_B^-}\right\} \left\| \chi_B \right\|_{L^{p(\cdot)}(X,\mu)}
	\end{align*}
	and the proof of Theorem~\ref{lipschitz} is finished.
\end{proof}

\subsection{Necessity of the lower measure bound assumption}
We now prove the following theorem.
	
\begin{tw}\label{necessaryembedding}
Let $(X,d,\mu)$ be metric measure space, $\sigma \in [1,\infty)$, and suppose that $p,s\in \mathcal{P}_b^{\log}(X)$ and $q\in \mathcal{P}(X)$ satisfy $s^+ \preceq_{q^-} 1.$ Then, the following statements are valid.
\begin{enumerate}
\item[(i)] Suppose that there exists $\gamma \in \mathcal{P}^{\log}_b(X)$ such that $\gamma \gg p$ and
\begin{equation*}
	\dot{A}^{s(\cdot)}_{p(\cdot),q(\cdot)}(X,d,\mu) \hookrightarrow L^{\gamma(\cdot)}(X,\mu),
\end{equation*}
where $A=M$ or $A=N$. Then, the measure $\mu$ is lower Ahlfors $Q(\cdot)$-regular, where $\displaystyle Q:=\frac{\gamma sp}{\gamma-p}$.
\end{enumerate}
If the space $(X,d)$ is uniformly perfect\footnote{see Definition~\ref{uniformly perfect}.}, then the following statements also hold true:
\begin{enumerate}
\item[(ii)] Suppose that there exist $\gamma \in \mathcal{P}_b^{\log}(X)$ with $\gamma \gg p$ and positive constants $C_S$, and $\omega$ such that, for every ball $B_0:=B(x_0,r_0)\subseteq X$ with $r_0 \leq 1/\sigma$ and every $u \in \dot{A}^{s(\cdot)}_{p(\cdot),q(\cdot)}(\sigma B_0)$, there holds 
\begin{equation}\label{sobnecessary}
\inf_{c\in \mathbb R} \left\| u -c \right\|_{L^{\gamma(\cdot)}(B_0)} \leq C_S \left( \frac{\mu(B_0)}{r_0^{Q(x_0)}}\right)^{\omega} \left\|u \right\|_{\dot{A}^{s(\cdot)}_{p(\cdot),q(\cdot)}(\sigma B_0)},
\end{equation}
where $A=M$ or $A=N$ and $\displaystyle Q:=\frac{sp\gamma}{\gamma-p}.$ Then, the measure $\mu$ is lower Ahlfors $Q(\cdot)$-regular.
\item[(iii)] Suppose that there exist positive constants $C_{MT1}$, $C_{MT2}$, and $\omega$ such that, for every ball $B_0:=B(x_0,r_0)\subseteq X$ with $r_0 \leq 1/\sigma$ and every $u\in \dot{A}^{s(\cdot)}_{p(\cdot),q(\cdot)}(\sigma B_0)$ with $\left\|u \right\|_{\dot{A}^{s(\cdot)}_{p(\cdot),q(\cdot)}(\sigma B_0)}>0$, there holds
\begin{equation}\label{mosnecessary}
	\inf_{c\in \mathbb R} \fint_{B_0} \exp\left(C_{MT1} \frac{\left|u(x)-c\right|}{\left\|u \right\|_{\dot{A}^{s(\cdot)}_{p(\cdot),q(\cdot)}(\sigma B_0)}}\right)^{\omega} \mbox{d}\mu(x) \leq C_{MT2},
\end{equation}
where $A=M$ or $A=N.$ Then, the measure $\mu$ is lower Ahlfors $Q(\cdot)$-regular, where $Q:=sp.$
\item[(iv)] Suppose that there exists $\alpha\in \mathcal{P}^{\log}_b(X)$ such that
\begin{equation*}
	\dot{A}^{s(\cdot)}_{p(\cdot),q(\cdot)}(X,d,\mu) \hookrightarrow \dot{C}^{0,\alpha(\cdot)}(X,d),
\end{equation*}
where $A=M$ or $A=N$.
Then, $s \geq \alpha$ and the measure $\mu$ is lower Ahlfors $Q(\cdot)$-regular, where $Q:=p\left(s-\alpha\right)$.  Moreover, if there exists a point $x_0 \in X$ such that $s(x_0)=\alpha(x_0)$, then $\mu\left(\left\{x_0\right\}\right)>0$.	
\end{enumerate}
\end{tw}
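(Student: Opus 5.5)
The plan is to test each hypothesized inequality on Lipschitz cut-off functions and use Theorem~\ref{lipschitz} to control their smoothness norms. Fix $x\in X$ and $r\in(0,1]$, and set $B:=B(x,r)$. Take $u(y):=\max\{0,1-2d(y,x)/r\}$. It has values in $[0,1]$, vanishes outside $B$, is $L$-Lipschitz with $L=2/r$, and equals at least $1/2$ on $B(x,r/4)$. By Theorem~\ref{lipschitz} (this is where $s^+\preccurlyeq_{q^-}1$ is used),
\[
\|u\|_{\dot A^{s(\cdot)}_{p(\cdot),q(\cdot)}}\lesssim \max\{L^{s_B^-},L^{s_B^+}\}\|\chi_B\|_{L^{p(\cdot)}}\lesssim r^{-s(x)}\|\chi_B\|_{L^{p(\cdot)}}.
\]
The last step uses log-H\"older continuity of $s$ through Lemma~\ref{loglemma}.

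\textbf{Part (i).} A continuous embedding into $L^{\gamma(\cdot)}$ gives
\[
\|u\|_{L^{\gamma(\cdot)}}\le C\|u\|_{\dot A}.
\]
To make the inhomogeneous part harmless I will use the homogeneous seminorm directly. The lower bound is $\|u\|_{L^{\gamma(\cdot)}}\ge \tfrac12\|\chi_{B(x,r/4)}\|_{L^{\gamma(\cdot)}}$. Through Proposition~\ref{rel}, with $\mu(B)$ small, and the log-H\"older continuity of $p$ and $\gamma$ (Lemma~\ref{loglemma}), I expect to get
\[
\mu(B(x,r/4))^{1/\gamma(x)}\lesssim r^{-s(x)}\mu(B(x,r))^{1/p(x)}.
\]
This is a \emph{reverse doubling type iteration}. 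Setting $a_j:=\mu(B(x,4^{-j}r))$ gives
\[
a_{j+1}^{1/\gamma}\le \rho\,\tau^{j}a_j^{1/p},\qquad \tau=4^{s(x)}.
\]
Lemma~\ref{iterative lemma} (from \cite{AGH20}) then gives
\[
a_0^{1-p/\gamma}\rho^{p}\tau^{p\gamma/(\gamma-p)}\ge1,
\]
that is, $\mu(B(x,r))\gtrsim r^{s p\gamma/(\gamma-p)}=r^{Q(x)}$.

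Some care is needed. The sequence must be bounded below, which needs $\mu(\{x\})=0$; otherwise the bound is trivial for small $r$. The large-measure case $\mu(B)\ge1$ also needs to be handled, and constants have to be tracked at the fixed point $x$ using Lemma~\ref{loglemma}(iii). Making exponents uniform is the main obstacle: the iteration lemma needs constant $p,q$, so I will freeze the exponents at $x$ via log-H\"older estimates, with errors of the form $e^{C_{\log}}$.

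\textbf{Part (ii).} Under uniform perfectness the argument is local. By Lemma~\ref{pomocniczy}, it suffices to prove the bound for balls with $r\le 3\varphi_x(r)/\lambda^2$. For such balls the set $B(x,r)\setminus \overline B(x,\varphi_x(r))$ carries at least half the mass. I will use a test function equal to $1$ on $B(x,\varphi_x(r)/2)$ and $0$ outside $B(x,\varphi_x(r))$, with Lipschitz constant about $1/r$. For every constant $c$, $|u-c|\ge1/2$ on one of two sets, each of measure comparable to $\mu(B(x,\varphi_x(r)/2))$ or $\mu(B)/2$. The inequality (\ref{sobnecessary}) then gives the same recursive estimate, with the extra factor $(\mu(B)/r^{Q})^{\omega}$. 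I will absorb that factor by first assuming $\mu(B)\le r^{Q(x)}$, since otherwise there is nothing to prove. The iteration then runs along $\varphi_x^j(r)$ using Proposition~\ref{fi}(d).

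\textbf{Part (iii).} The same test functions are used. The exponential bound forces $\mu$ of the level set $\{|u-c|\ge1/2\}$ to satisfy
\[
\mu\gtrsim \mu(B)\exp\bigl(-C\|u\|^{-\omega}\bigr),
\]
and $\|u\|$ is comparable to $(\mu(B)/r^{sp})^{1/p}$. If $\mu(B)\ll r^{sp}$, this contradicts the two-sided mass splitting. I will argue by contradiction with a constant chosen small.

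\textbf{Part (iv).} I will use $u$ from part (i) with $y$ on the sphere at distance about $r/2$ (nonempty by uniform perfectness, or simply via $u(x)=1$). This gives
\[
1\lesssim r^{\alpha(x)}\,r^{-s(x)}\mu(B)^{1/p(x)},
\]
so $\mu(B)\gtrsim r^{p(s-\alpha)(x)}$. Letting $r\to0$ with $\mu$ finite forces $s\ge\alpha$. If $s(x_0)=\alpha(x_0)$, then $\mu(B(x_0,r))\gtrsim1$ for all small $r$, hence $\mu(\{x_0\})>0$.
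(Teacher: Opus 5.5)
Your overall strategy is the paper's: Lipschitz cut-offs, Theorem~\ref{lipschitz}, the iteration lemma, and Lemma~\ref{pomocniczy} in the uniformly perfect case. However, the way you set up the iteration destroys its key hypothesis.

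In (i) you take $a_j:=\mu(B(x,4^{-j}r))$, and in (ii) you iterate along $\varphi_x^j(r)$. In both cases the radii tend to $0$, so $a_j\downarrow\mu(\{x\})$. In the generic case $\mu(\{x\})=0$ there is no $a>0$ with $a\le a_j$ for all $j$, so Lemma~\ref{iterative lemma} is unavailable. Its mechanism is that failure of the conclusion forces $a_j\to0$ double-exponentially, and that is a contradiction only because $a_j\ge a>0$.

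Your remark that the lower bound ``needs $\mu(\{x\})=0$'' is backwards. Your scheme works only at atoms, where $a_j\ge\mu(\{x\})>0$, and Proposition~\ref{fi}(d) states outright that $\mu(B(x,\varphi_x^j(r)))\le 2^{-j}\mu(B(x,r))\to0$. Along $\varphi_x^j(r)$ there are two further problems. For $j\ge1$ you do not know $\varphi_x^j(r)\lesssim\varphi_x^{j+1}(r)$, so neither the Lipschitz constant nor the mass splitting is controlled after the first step. And the one-scale inequalities relating $\mu(B(x,\varphi_x(R)/2))$ to $\mu(B(x,\varphi_x(R)))$ do not chain. (Also, by Proposition~\ref{fi}(b) it is $B(x,r)\setminus B(x,\varphi_x(r))$, not the complement of the closed ball, that carries at least half the mass.)

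The missing idea is to keep all radii in a fixed annulus and let the Lipschitz constants, not the radii, generate $\tau^j$. Take $r_j:=(2^{-1}+2^{-j-1})r$ and let $u_j=1$ on $B(x,r_{j+1})$ and $u_j=0$ off $B(x,r_j)$; its Lipschitz constant is $2^{j+2}/r$. Theorem~\ref{lipschitz} then gives $\mu(B(x,r_{j+1}))^{1/\gamma^-_{B_r}}\lesssim 2^{js^+_{B_r}}r^{-s^+_{B_r}}\mu(B(x,r_j))^{1/p^+_{B_r}}$. Now $\mu(B(x,r/2))\le a_j\le\mu(B(x,r))$, and $\mu(B(x,r/2))>0$ because balls have positive measure; the conclusion involves only $a_1\le\mu(B(x,r))$.

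For (ii) and (iii), do the same with $\varphi_x(r)$ in place of $r$, after reducing to $r\le 3\varphi_x(r)/\lambda^2$ via Lemma~\ref{pomocniczy}. That inequality is used only once, to trade $\varphi_x(r)^{-s}$ for $r^{-s}$. The bound $\mu(B(x,\tilde r_{j+1}))\le\mu(B(x,\varphi_x(r)))\le\frac12\mu(B_r)$ then gives the mass splitting at every $j$. Your device of assuming $\mu(B_r)\le r^{Q(x)}$ to neutralize the factor $(\mu(B_0)/r_0^{Q(x_0)})^{\omega}$ is fine, and simpler than the paper's.

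Three further steps fail as written.

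\emph{Freezing exponents.} You cannot freeze the exponents of measures at $x$. For $\mu(E)\le1$, Proposition~\ref{rel} gives only $\|\chi_E\|_{L^{\gamma(\cdot)}}\ge\mu(E)^{1/\gamma^-_{B_r}}$ and $\|\chi_E\|_{L^{p(\cdot)}}\le\mu(E)^{1/p^+_{B_r}}$. Replacing these by $\mu(E)^{1/\gamma(x)}$ or $\mu(E)^{1/p(x)}$ goes the wrong way in both places. Log-H\"older continuity controls powers of $r$, not powers of $\mu(E)$, unless a polynomial lower bound on $\mu(E)$ is already known, which is circular. The same objection applies to your display in (iv). The paper keeps the ball-extremal exponents $1/q:=1/\gamma^-_{B_r}$ and $1/p:=s^-_{B_r}/Q^+_{B_r}+1/\gamma^+_{B_r}\le 1/p^+_{B_r}$, using $1/p=1/\gamma+s/Q$ pointwise. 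It secures the gap $\eta_r\ge\eta>0$ by taking $r<r'$ and applying Lemma~\ref{loglemma}(ii) to $\gamma$, and converts only powers of $r$ to exponents at $x$.

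\emph{Part (iii).} The exponential inequality gives an \emph{upper} bound $\mu(E)\lesssim\mu(B_r)\exp\bigl(-c\|u\|^{-\omega}\bigr)$ for $E=\{|u-c|\ge 1/2\}\cap B_r$, not the lower bound you wrote. Moreover, $E$ is only known to have measure at least $\mu(B(x,\varphi_x(r)/2))$, which need not be comparable to $\mu(B_r)$, so there is no immediate contradiction. The paper uses $\log y\le C_\epsilon y^{\epsilon}$ to convert this into the power recursion $\mu(\tilde B_{j+1})^{s^-_{B_r}/(2Q^+_{B_r})}\lesssim\rho\,\tau^j\mu(\tilde B_j)^{s^-_{B_r}/Q^+_{B_r}}$. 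It then applies the iteration lemma again on the same fixed-annulus radii.

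\emph{Part (iv).} This is otherwise the paper's argument, except that the cut-off must be supported in $B(x,\lambda r)$. Uniform perfectness only supplies a point of $B(x,r)\setminus B(x,\lambda r)$, and only when $X\setminus B(x,r)\neq\emptyset$; you can ensure this by assuming $\mu(B(x,r))<\mu(X)$. The fact $u(x)=1$ alone gives no zero of $u$ within distance $r$ of $x$.
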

\begin{proof}
We begin with the proof of the statement $(i).$ 	Fix $x\in X$ and $r\in (0,1]$. We shall denote $B(x,r)$ by $B_r$. It is enough to consider the case when $\mu\left(B_r\right)\leq 1$ since, otherwise, we easily have $\mu\left(B_r\right)>1\geq r^{Q(x)}$. Moreover, we can assume that $r\in (0,r')$ for some $r'<\frac{1}{4}$, which will be fixed later.

For each fixed $j\in \mathbb N$, set $r_j:=\left(2^{-j-1}+2^{-1}\right)r$ and $B_j:=B(x,r_j)$. Note that, for all $j\in \mathbb N$,
\begin{equation}\label{nier}
	\frac{1}{2}r<r_{j+1}<r_j\leq \frac{3}{4}r.
\end{equation}
Now, for each $j\in \mathbb N$, we define $u_j:X \to \mathbb [0,1]$ by setting for each $y\in X$,
\begin{equation*}
	u_j(y):=\left\{\begin{array}{ll} 1,& \textnormal{ if } y\in B_{j+1},\\ \displaystyle \frac{r_j-d(x,y)}{r_j-r_{j+1}},& \textnormal{ if } y\in B_j \setminus B_{j+1},\\ 0,& \textnormal{ if } y\in X \setminus B_j.\end{array} \right.
\end{equation*}
It is easy to see that $u_j$ is a $\left(r_j-r_{j+1}\right)^{-1}$-Lipschitz function on $X$. Hence, by Theorem~\ref{lipschitz}, applied with the function $u_j$, we have that
\begin{align*}
	\left\|u_j\right\|_{\dot{A}^{s(\cdot)}_{p(\cdot),q(\cdot)}(X)}&\leq C_{\lip}\left(r_j-r_{j+1}\right)^{-s_{B_j}^+}\left\|\chi_{B_j}\right\|_{L^{p(\cdot)}(X)}\leq C_{\lip}\frac{2^{s_{B_r}^+(j+2)}}{r^{s_{B_r}^+}}\left\|\chi_{B_j}\right\|_{L^{p(\cdot)}(B_r)}\\& \leq C_{\lip} \frac{ 2^{s_{B_r}^+(j+2)}}{r^{s_{B_r}^+}}\left[\mu\left( B_j \right)\right]^{ \frac{1}{p_{B_r}^+} }, 
\end{align*}
where we also have used Proposition~\ref{rel} and the fact that $\left(r_j-r_{j+1}\right)^{-1}=2^{j+2}r^{-1}$.

On the other hand, by Proposition~\ref{rel}, we have
\begin{equation*}
	\left\|u_j\right\|_{L^{\gamma(\cdot)}(X)}\geq \left\|\chi_{B_{j+1}}\right\|_{L^{\gamma(\cdot)}(B_r)}\geq \left[\mu(B_{j+1})\right]^{\frac{1}{\gamma_{B_r}^-}}.
\end{equation*}
The assumption that $\dot{A}^{s(\cdot)}_{p(\cdot),q(\cdot)}(X,d,\mu) \hookrightarrow L^{\gamma(\cdot)}(X,\mu)$ means that there exists a constant $C_{S,G}>0$ such that, for every $\dot{A}^{s(\cdot)}_{p(\cdot),q(\cdot)}(X,d,\mu)$,
\begin{equation*}
\left\|u\right\|_{L^{\gamma(\cdot)}(X)}\leq C_{S,G} \left\|u\right\|_{\dot{A}^{s(\cdot)}_{p(\cdot),q(\cdot)}(X)}.
\end{equation*}
Using the above estimates for $u=u_j$ we obtain
\begin{align*}
	\left[\mu\left(B_{j+1}\right)\right]^{\frac{1}{\gamma^-_{B_r}}} 
	&\leq C_{\lip}C_{S,G}\frac{2^{s_{B_r}^+(j+2)}}{r^{s_{B_r}^+}} \left[\mu\left(B_j\right)\right]^{\frac{1}{p_{B_r}^+}}\\
	&\leq C_{\lip}C_{S,G} \frac{2^{s_{B_r}^+(j+2)}}{r^{s_{B_r}^+}} \left[\mu(B_j)\right]^{\frac{1}{\gamma_{B_r}^+}+\frac{s_{B_r}^-}{Q_{B_r}^+}},
\end{align*}
where we have  also used the fact that $\displaystyle \frac{1}{\gamma^+_{B_r}}+\frac{s^-_{B_r}}{Q_{B_r}^+}\leq \frac{1}{p^+_{B_r}}$ and $\mu(B_r) \leq 1$.

Let
\begin{equation}\label{kwadracik}
	\displaystyle r':=\frac{1}{2}\min\left\{\frac{1}{4},\frac{1}{2}\exp\left(-\frac{C_{\textnormal{log}}(1/\gamma)Q^+}{s^-}\right)\right\} \hspace{5mm} \textnormal{and} \hspace{5mm} \eta:= \frac{s^-}{Q^+}-\frac{C_{\log}(1/\gamma)}{\log\left(\frac{1}{2r'}\right)}.
\end{equation}
By the definition of $r'$ we get that $\eta\in(0,\infty)$. We define
\begin{equation}\label{kwadracik1}
	\eta_r:=\frac{s_{B_r}^-}{Q_{B_r}^+}+\frac{1}{\gamma_{B_r}^+}-\frac{1}{\gamma_{B_r}^-}.
\end{equation}
Then, since $B_r \subseteq B_{r'}$, we obtain
\begin{equation*}
	\eta_r \geq \frac{s^-}{Q^+}+\frac{1}{\gamma_{B_{r'}}^+} -\frac{1}{\gamma_{B_{r'}}^-}.
\end{equation*}
Moreover, using Lemma~\ref{loglemma} $(ii)$ with $r:=r'$ and $t:=\gamma$, we get
\begin{equation*}
	\frac{1}{\gamma_{B_{r'}}^+}-\frac{1}{\gamma_{B_{r'}}^-} \geq -\frac{C_{\log}(1/\gamma)}{\log\left(\frac{1}{2r'}\right)}.
\end{equation*}
Therefore, $\eta_r \geq \eta>0$.

Since we have $\displaystyle \frac{1}{\gamma^-_{B_r}}<\frac{s^-_{B_r}}{Q_{B_r}^+}+\frac{1}{\gamma^+_{B_r}}$ and also $\mu\left(B(x,r/2)\right)\leq \mu\left(B_j\right)\leq \mu\left(B(x,r)\right)$, we can apply Lemma~\ref{iterative lemma} with
\begin{equation*}
	a_j:=\mu\left(B_j\right),\hspace{3mm} \frac{1}{p}:=\frac{s^-_{B_r}}{Q_{B_r}^+}+\frac{1}{\gamma^+_{B_r}},\hspace{2mm} \frac{1}{q}:=\frac{1}{\gamma^-_{B_r}},\hspace{2mm} \rho:=C_{S,G}C_{\lip}\frac{4^{s_{B_r}^+}}{r^{s_{B_r}^+}}, \hspace{2mm} \textnormal{and} \hspace{2mm}\tau:=2^{s_{B_r}^+}
\end{equation*}
to conclude that 
\begin{equation*}
	\left[\mu\left(B_1\right)\right]^{1-\frac{1}{t_r}}\rho^{\frac{\gamma^-_{B_r}}{t_r}}\tau^{\frac{1}{\eta_r}}\geq 1, 
\end{equation*}
where we denote $t_r:=\gamma^-_{B_r}\left(\frac{s^-_{B_r}}{Q_{B_r}^+}+\frac{1}{\gamma^+_{B_r}}\right)=1+\eta_r\gamma^-_{B_r}$. Hence,

\begin{equation}\label{rown2}
	\mu\left(B_r\right)\left(C_{S,G}C_{\lip}\frac{4^{s^+}}{r^{s_{B_r}^+}}\right)^{\frac{\gamma^-_{B_r}}{t_r-1}}2^{s^+\frac{t_r\gamma^-_{B_r}}{\left(t_r-1\right)^2}}\geq 1.
\end{equation}
Now, by using $\eta_r\geq \eta$ and $\gamma^-_{B_r}\geq \gamma^-$ we get
\begin{equation}\label{fds-45}
	\frac{\gamma^-_{B_r}}{t_r-1}=\frac{1}{\eta_r}\leq \frac{1}{\eta} \hspace{2mm} \textnormal{ and } \hspace{2mm} \frac{t_r\gamma^-_{B_r}}{\left(t_r-1\right)^2}= \frac{1}{\gamma^-_{B_r}\eta_r^2}+\frac{1}{\eta_r}\leq \frac{1}{\gamma^- \eta^2}+\frac{1}{\eta}.
\end{equation}
Therefore, inequality \eqref{rown2} yields
\begin{equation*}
	r^{\frac{s_{B_r}^+}{\eta_r}} \leq \max\left\{1,C_{S,G}C_{\lip}4^{s^+}\right\}^{\frac{1}{\eta}}2^{\frac{s^+}{\gamma^-\eta^2}+\frac{s^+}{\eta}}\mu\left(B(x,r)\right).
\end{equation*}
Set $c_1^{-1}:=\max\left\{1,C_{S,G}C_{\lip}4^{s^+}\right\}^{\frac{1}{\eta}}2^{\frac{s^+}{\gamma^- \eta^2}+\frac{s^+}{\eta}}$ and $\displaystyle\beta_r:=s_{B_r}^+-Q_{B_r}^+\eta_r$ to obtain
\begin{equation*}
	\mu\left(B(x,r)\right)\geq c_1 r^{Q_{B_r}^+} r^{\frac{\beta_r}{\eta_r}}.
\end{equation*}
Using Lemma~\ref{loglemma} $(ii)$ twice with $t:=1/\gamma$ and $t:=1/s$, we obtain that 
\begin{align*}
	r^{\gamma^+_{B_r}-\gamma^-_{B_r}}&\geq e^{-C_{\log}(\gamma)}2^{-\gamma_{B_r}^+} \geq e^{-C_{\log}(\gamma)}2^{-\gamma^+},\\
	r^{s_{B_r}^+-s_{B_r}^-}&\geq e^{-C_{\log}(s)}2^{-s_{B_r}^+} \geq e^{-C_{\log}(s)}2^{-s^+}.
\end{align*}
Then, keeping in mind that $r<1$, we have
\begin{equation}\label{kwadracik3}
	r^{\frac{\beta_r}{\eta_r}}=r^{\frac{s_{B_r}^+ - s_{B_r}^-}{\eta_r}}r^{\frac{Q_{B_r}^+}{\eta_r}\left(\frac{1}{\gamma_{B_r}^-} -\frac{1}{\gamma_{B_r}^+}\right)}\geq \left(e^{-C_{\log}(s)}2^{-s^+}\right)^{\frac{1}{\eta}} \left(e^{-C_{\log}(\gamma)}2^{-\gamma^+}\right)^{\frac{Q^+}{\eta \left(\gamma^-\right)^2}}=:c_2.
\end{equation}
Therefore,
\begin{equation*}
	\mu(B(x,r))\geq c_1c_2r^{Q_{B_r}^+}.
\end{equation*}
Finally, applying Lemma~\ref{loglemma} $(i)$ with $R:=2r$, $t:=1/Q$, and $z:=x$, we get
\begin{equation}\label{kwadracik4}
	e^{C_{\log}\left(Q\right)} \left(2r\right)^{Q_{B_r}^+} \geq \left(2r\right)^{Q(x)}.
\end{equation}
This easily yields that $\mu(B(x,r))\geq br^{Q(x)}$ with $b:=c_1c_2e^{-C_{\log}\left(Q\right)}2^{Q^- - Q^+}$, and the proof of statement $(i)$ is complete.

We shall now prove $(ii)$ and $(iii).$ Fix any $x\in X$ and $r\in(0,1]$ and set $B_r:=B(x,r)$. It is enough to consider the case when $\mu\left(B_r\right)\leq 1$ and $r<\diam X$. Moreover, we can assume that $r\in(0,r')$ for some fixed $r'<\min\left\{\frac{1}{4},\frac{1}{\sigma} \right\}.$

Let $\varphi_x$ be as in  Proposition~\ref{fi}. Since $X$ is uniformly perfect, due to Lemma~\ref{pomocniczy} it suffices to consider the case when $r\leq 3\varphi_x(r)/\lambda^2$, where $\lambda \in \left(0,1/5\right)$ is taken from the definition of uniform perfectness of $X$ (see Definition~\ref{uniformly perfect}). By the virtue of Proposition~\ref{fi} and the assumption that $r \leq 3\varphi_x(r)/\lambda^2$, it follows that $0<\varphi_x(r)<r$.

For each fixed $j\in \mathbb N$, set $\tilde{r}_j:=\left(2^{-j-1}+2^{-1}\right)\varphi_x(r)$ and $\tilde{B}_j:=B(x,\tilde{r}_j)$. Note that, for all $j\in \mathbb N$,
\begin{equation}\label{nierr1}
	\frac{1}{2}\varphi_x(r)<\tilde{r}_{j+1}<\tilde{r}_j \leq \frac{3}{4}\varphi_x(r).
\end{equation}
Now, for each $j\in \mathbb N$, we define $\tilde{u}_j: \sigma B_r\to \mathbb R$ by setting for each $y\in \sigma B_r$,
\begin{equation*}
	\tilde{u}_j(y):=\left\{\begin{array}{ll} 1,& \textnormal{ if } y\in \tilde{B}_{j+1},\\ \displaystyle\frac{\tilde{r}_j-d(x,y)}{\tilde{r}_j-\tilde{r}_{j+1}},& \textnormal{ if } y\in \tilde{B}_j \setminus \tilde{B}_{j+1},\\ 0,& \textnormal{ if } y\in \sigma B_r \setminus \tilde{B}_j.\end{array} \right.
\end{equation*}
It is easy to see that $\tilde{u}_j$ is a $\left(\tilde{r}_j-\tilde{r}_{j+1}\right)^{-1}$-Lipschitz function on $\sigma B_r$. Hence, by Theorem~\ref{lipschitz}, we have that
\begin{align*}
	\left\|\tilde{u}_j\right\|_{\dot{A}^{s(\cdot)}_{p(\cdot),q(\cdot)}(\sigma B_r)}&\leq C_{\lip}\left(\tilde{r}_j-\tilde{r}_{j+1}\right)^{-s_{\tilde{B_j}}^+}\left\|\chi_{\tilde{B}_j}\right\|_{L^{p(\cdot)}(\sigma B_r)}=C_{\lip}\frac{2^{s_{\tilde{B_j}}^+(j+2)}}{\left[\varphi_x(r)\right]^{s_{\tilde{B_j}}^+}}\left\|\chi_{\tilde{B}_j}\right\|_{L^{p(\cdot)}(\sigma B_r)}\\ &\leq C_{\lip}\frac{2^{s_{B_r}^+(j+2)}}{\left[\varphi_x(r)\right]^{s_{B_r}^+}}\left\|\chi_{\tilde{B}_j}\right\|_{L^{p(\cdot)}(\sigma B_r)},
\end{align*}
where we also used the fact that $\left(\tilde{r}_j-\tilde{r}_{j+1}\right)^{-1}=2^{j+2}\varphi_{x}(r)^{-1}$.

Now, observe that $\tilde{u}_j \equiv 1 $ on $\tilde{B}_{j+1}$ and $\tilde{u}_j \equiv 0$ on $ B_r\setminus \tilde{B}_j$. It follows that for each $c \in \mathbb R$,  $\left|\tilde{u}_j -c \right|\geq \frac{1}{2}$ pointwise on at least one of sets $\tilde{B}_{j+1}$ or $B_r\setminus \tilde{B}_j$. Now, by \eqref{nierr1} and Proposition~\ref{fi}, we have that
\begin{equation*}
	\mu\left(\tilde{B}_{j+1}\right) \leq \mu\left(B(x,\varphi_x(r))\right)\leq \frac{1}{2}\mu\left(B_r\right).
\end{equation*}
On the other hand
\begin{equation*}
	\mu\left(B_r \setminus \tilde{B}_j\right) = \mu\left(B_r\right) - \mu\left(\tilde{B}_j\right) \geq \mu\left(B_r\right) -\mu\left(B(x,\varphi_x(r))\right)\geq \frac{1}{2}\mu\left(B_r\right).
\end{equation*}
Therefore $\min\left\{\mu\left(\tilde{B}_{j+1}\right),\mu\left(B_r\setminus \tilde{B}_j\right)\right\}=\mu\left(\tilde{B}_{j+1}\right)$ and hence
\begin{equation*}
	\left|\tilde{u}_j -c\right|\geq \frac{1}{2} \textnormal{ on a subset of } B_r \textnormal{ having measure at least } \mu\left(\tilde{B}_{j+1}\right).
\end{equation*}
Denote this set by $E_j$. Since $\mu(E_j)\leq\mu(B_r)\leq1$, by Proposition~\ref{rel}, we have
\begin{align*}
	\inf_{c \in \mathbb R} \left\|\tilde{u}_j-c\right\|_{L^{\gamma(\cdot)}(B_r)}&\geq \inf_{c\in \mathbb R} \left\|\left(\tilde{u}_j-c\right)\chi_{E_j}\right\|_{L^{\gamma(\cdot)}(B_r)}\\
	&\geq  \frac{1}{2}\inf_{c\in \mathbb R}\left\|\chi_{E_j}\right\|_{L^{\gamma(\cdot)}(B_r)}\\
		&\geq \frac{1}{2}\inf_{c\in \mathbb R}\left[\mu\left(E_j\right)\right]^{\frac{1}{\gamma^-_{B_r}}}\geq \frac{1}{2}\left[\mu\left(\tilde{B}_{j+1}\right)\right]^{\frac{1}{\gamma^-_{B_r}}}.
\end{align*}
We shall prove $(ii)$. Using the above estimates together with \eqref{sobnecessary}, applied with  $u=\tilde{u}_j$ and $B_0=B_r$, we obtain
\begin{equation*}
	\frac{1}{2}\left[\mu\left(\tilde{B}_{j+1}\right)\right]^{\frac{1}{\gamma^-_{B_r}}} \leq C_{\lip}C_{S}\left(\frac{\mu(B_r)}{r^{Q(x)}}\right)^{\omega} \frac{2^{s_{B_r}^+(j+2)}}{\left[\varphi_x(r)\right]^{s_{B_r}^+}}\left\|\chi_{\tilde{B}_j}\right\|_{L^{p(\cdot)}(\sigma B_r)}.
\end{equation*}
Since $s^+ \preceq_{q^-} 1$, $\displaystyle r\leq \frac{3}{\lambda^2}\varphi_x(r)$, $\mu(\tilde{B}_j)\leq\mu(B_r)\leq1$, and $\displaystyle \frac{1}{\gamma^+_{B_r}}+\frac{s^-_{B_r}}{Q^+_{B_r}}\leq \frac{1}{p^+_{B_r}}$ we have that
\begin{align*}
	\left[\mu\left(\tilde{B}_{j+1}\right)\right]^{\frac{1}{\gamma^-_{B_r}}}&\leq C_{\lip}C_{S}\left(\frac{\mu(B_r)}{r^{Q(x)}}\right)^{\omega}\frac{24}{\lambda^2}\frac{2^{js_{B_r}^+}}{r^{s_{B_r}^+}}\left\|\chi_{\tilde{B}_j}\right\|_{L^{p(\cdot)}(\sigma B_r)} \\ & \leq c_3\left(\frac{\mu(B_r)}{r^{Q(x)}}\right)^{\omega}\frac{2^{js_{B_r}^+}}{r^{s_{B_r}^+}}\left[\mu\left(\tilde{B}_j\right)\right]^{\frac{1}{\gamma^+_{B_r}}+\frac{s^-_{B_r}}{Q^+_{B_r}}},
\end{align*}
where $\displaystyle c_3:=\frac{24}{\lambda^2}C_{\lip}C_{S}.$

Let $r'$, $\eta$, $\eta_r$ be as in \eqref{kwadracik} and \eqref{kwadracik1}. Then, $\eta_r \geq \eta>0$.	Since we have $\displaystyle \frac{1}{\gamma^-_{B_r}}<\frac{s^-_{B_r}}{Q^+_{B_r}}+\frac{1}{\gamma^+_{B_r}}$ and also $\mu\left(B(x,\varphi_x(r)/2)\right)\leq \mu\left(\tilde{B}_j\right)\leq \mu\left(B(x,r)\right)$, we can apply Lemma~\ref{iterative lemma} with
\begin{equation*}
	a_j:=\mu\left(\tilde{B}_j\right),\hspace{2mm} \frac{1}{p}:=\frac{s^-_{B_r}}{Q_{B_r}^+}+\frac{1}{\gamma^+_{B_r}},\hspace{2mm} \frac{1}{q}:=\frac{1}{\gamma^-_{B_r}},\hspace{2mm} \rho:=c_3\frac{\left[\mu(B_r)\right]^{\omega}}{r^{\omega Q(x)+s_{B_r}^+}}, \hspace{2mm} \textnormal{and} \hspace{2mm}\tau:=2^{s_{B_r}^+}
\end{equation*}
to conclude that
\begin{equation*}
	\left[\mu\left(\tilde{B}_1\right)\right]^{1-\frac{1}{t_r}}\rho^{\frac{\gamma^-_{B_r}}{t_r}}\tau^{\frac{1}{\eta_r}}\geq 1, 
\end{equation*}
where we denote $\displaystyle t_r:=\gamma^-_{B_r}\left(\frac{s^-_{B_r}}{Q^+_{B_r}}+\frac{1}{\gamma^+_{B_r}}\right)=1+\eta_r\gamma^-_{B_r}$. Hence

\begin{equation}\label{r2}
	\mu\left(B_r\right)\left(c_3\frac{\left[\mu(B_r)\right]^{\omega}}{r^{\omega Q(x)+s_{B_r}^+}}\right)^{\frac{\gamma^-_{B_r}}{t_r-1}}2^{s_{B_r}^+\frac{t_r\gamma^-_{B_r}}{\left(t_r-1\right)^2}}\geq 1.
\end{equation}
%Now, by using $\eta_r\geq \eta$ and $\gamma^-_{B_r}\geq \gamma^-$ we get
%\begin{equation*}
%	\frac{\gamma^-_{B_r}}{t_r-1}=\frac{1}{\eta_r}\leq \frac{1}{\eta} \hspace{2mm} \textnormal{ and } \hspace{2mm} \frac{t_r\gamma^-_{B_r}}{\left(t_r-1\right)^2}\leq \frac{1}{\gamma^-_{B_r}\eta_r^2}+\frac{1}{\eta_r}\leq \frac{1}{\gamma^- \eta^2}+\frac{1}{\eta}.
%\end{equation*}
Therefore, if we define $c_4:=\max\left\{1,c_3\right\}$, then inequality \eqref{r2} and \eqref{fds-45} yield
\begin{equation*}
	r^{ \frac{\omega Q(x)+s_{B_r}^+}{\eta_r}} \leq c_4^{\frac{1}{\eta}}2^{\frac{1}{\gamma^-\eta^2}+\frac{1}{\eta}}\left[\mu\left(B(x,r)\right)\right]^{\frac{t_r-1+\omega \gamma_{B_r}^-}{t_r-1}}=c_4^{\frac{1}{\eta}}2^{\frac{1}{\gamma^-\eta^2}+\frac{1}{\eta}}\left[\mu\left(B(x,r)\right)\right]^{\frac{\omega+\eta_r}{\eta_r}}.
\end{equation*}
Setting $c_5^{-1}:=c_4^{\frac{1}{\eta}}2^{\frac{1}{\gamma^-\eta^2}+\frac{1}{\eta}}$ and $\beta_r:=s_{B_r}^+-Q_{B_r}^+\eta_r$, and using the fact that $r\leq1$, we obtain
\begin{equation*}
	\mu\left(B(x,r)\right)\geq c_5^{\frac{\eta_r}{\omega+\eta_r}} r^{\frac{\omega Q(x)+s_{B_r}^+}{\omega+\eta_r}} \geq \min\left\{c_5,1\right\} r^{Q_{B_r}^+}r^{\frac{\beta_r}{\eta_r}}.
\end{equation*}
Therefore, from \eqref{kwadracik3} we have
\begin{equation*}
	\mu(B(x,r))\geq \min\left\{c_5,1\right\}c_2r^{Q_{B_r}^+}.
\end{equation*}
Finally, applying \eqref{kwadracik4} we get $\mu(B(x,r))\geq br^{Q(x)}$ with
\begin{equation*}
	b:=\min\left\{c_5,1\right\}c_2e^{-C_{\log}(Q)}2^{Q^- - Q^+}.
\end{equation*}
Hence, $(ii)$ is proven.

Now we turn our attention to the proof of $(iii)$. Using the above estimates together with \eqref{mosnecessary}, applied with $u=u_j$, we obtain
\begin{equation}\label{mt}
	\frac{\mu(\tilde{B}_{j+1})}{\mu(B_r)}\exp\left(C_{MT1} \frac{\lambda^2}{6C_{\lip}} \frac{r^{s_{B_r}^+}}{2^{s_{B_r}^+\left(j+2\right)}\left[\mu(\tilde{B}_j)\right]^{\frac{s_{B_r}^-}{Q_{B_r}^+}}} \right)^{\omega} \leq C_{MT2}.
\end{equation}
Without loss of generality we may assume that $C_{MT2}>1$. Then, using the estimate
\begin{equation*}
	\log(y) \leq 2Q_{B_r}^+\left(s_{B_r}^- \omega\right)^{-1}y^{\frac{s_{B_r}^- \omega}{2Q_{B_r}^+}},
\end{equation*}
which holds for every $y\in (0,\infty)$, it follows from \eqref{mt} that
\begin{equation*}
	\frac{\lambda^2 C_{MT1}}{6C_{\lip}} \frac{r^{s_{B_r}^+}}{2^{s_{B_r}^+\left(j+2\right)}\left[\mu(\tilde{B}_j)\right]^{\frac{s_{B_r}^-}{Q_{B_r}^+}}} \leq \left[\log\left( C_{MT2} \frac{\mu(B_r)}{\mu(\tilde{B}_{j+1})} \right)\right]^{\frac{1}{\omega}}\leq \left(\frac{2Q^+}{s^- \omega}\right)^{\frac{1}{\omega}} C_{MT2}^{\frac{s^+}{2Q^-}} \left(\frac{\mu(B_r)}{\mu(\tilde{B}_{j+1})}\right)^{\frac{s_{B_r}^-}{2Q_{B_r}^+}}.
\end{equation*}
Therefore,
\begin{equation*}
	\left[\mu(\tilde{B}_{j+1})\right]^{\frac{s_r^-}{2Q_r^+}} \leq \frac{c_6}{r^{s_{B_r}^+}} 2^{s_{B_r}^+ j} \frac{\mu(B_r)^{\frac{s_{B_r}^-}{2Q_{B_r}^+}}}{r^{s_{B_r}^+}} 2^j \left[\mu(\tilde{B}_j)\right]^{\frac{s_{B_r}^-}{Q_{B_r}^+}},
\end{equation*}
where
\begin{equation*}
	c_6:= \left(\frac{2Q^+}{s^- \omega}\right)^{\frac{1}{\omega}} C_{MT2}^{\frac{s^+}{2Q^-}} \frac{24}{\lambda^2}\frac{C_{\lip}}{C_{MT1}}.
\end{equation*}
Now, by the virtue of Lemma~\ref{iterative lemma}, applied with
\begin{equation*}
	a_j:=\mu(\tilde{B_j}), \hspace{4mm} \tau:=2^{s_{B_r}^+}, \hspace{4mm} \rho:=c_6 \frac{\left[\mu(B_r)\right]^{\frac{s_{B_r}^-}{2Q_{B_r}^+}}}{r^{s_{B_r}^+}}, \hspace{4mm} \frac{1}{q}:=\frac{s_{B_r}^-}{2Q_{B_r}^+},\hspace{2mm} \textnormal{and} \hspace{2mm} \frac{1}{p}:=\frac{s_{B_r}^-}{Q_{B_r}^+}
\end{equation*}
we obtain
\begin{equation}\label{poiteracji}
	\left[\mu(\tilde{B}_1)\right]^{\frac{1}{2}} c_6^{\frac{Q_{B_r}^+}{s_{B_r}^-}} \frac{\left[\mu(B_r)\right]^{\frac{1}{2}}}{r^{\frac{Q_{B_r}^+s_{B_r}^+}{s_{B_r}^-}}} 2^{\frac{2Q_{B_r}^+}{s_{B_r}^-}}\geq 1.
\end{equation}
Hence, by Lemma~\ref{loglemma} $(i)$, applied with $z:=x$, $R:=2r$ and respectively $t:=1/Q$, $t:=1/s$, and $t:=s$, we get the following three inequalities
\begin{align*}
e^{C_{\log}(Q)}(2r)^{Q_{B_r}^+} &\geq (2r)^{Q(x)},\\
e^{C_{\log}(s)}(2r)^{s_{B_r}^+} &\geq (2r)^{s(x)},\\
e^{C_{\log}(1/s)}(2r)^{\frac{1}{s_{B_r}^-}} &\geq (2r)^{\frac{1}{s(x)}},
\end{align*}
which imply that
\begin{equation}\label{r}
	r^{\frac{Q_{B_r}^+s_{B_r}^+}{s_{B_r}^-}} \geq c_7 r^{Q(x)},
\end{equation}
where
\begin{equation*}
c_7:=e^{-C_{\log}(s)\frac{Q^+}{s^-}}e^{-C_{\log}(1/s)s^+Q^+}e^{-C_{\log}(Q)\frac{s^+}{s^-}}2^{-\frac{Q^+s^+}{s^-}}.
\end{equation*}
Finally, from \eqref{poiteracji} and \eqref{r} we get
\begin{equation*}
	\mu(B_r) \geq br^{Q(x)},
\end{equation*}
where
\begin{equation*}
	b:=\max\left\{c_6,1\right\}^{-\frac{Q^+}{s^-}}2^{-\frac{2s^+Q^+}{s^-}}
\end{equation*}
and the proof of $(iii)$ is complete.

Now we prove $(iv).$ Let $x\in X$ and $r\in (0,1]$. It is enough to consider the case when $\mu(B(x,r)) < \min\left\{1,\mu(X)\right\}$. Then, $X \setminus B(x,r)\neq \emptyset$. Since $X$ is uniformly perfect, there exists a constant $\lambda \in (0,1)$, independent of $B(x,r)$, such that $B(x,r) \setminus B(x,\lambda r) \neq \emptyset$. We define the function ${u}: X\to \mathbb R$ by setting for each $y\in X$,
\begin{equation*}
	u(y):= \left\{ \begin{array}{lll} 1, & \textnormal{ for } y=x, \\ \displaystyle 1-\frac{d(x,y)}{\lambda r}, & \textnormal{ for } y\in B(x, \lambda r) \setminus \left\{x\right\},\\ 0, & \textnormal{ for } y \in X \setminus B(x, \lambda r). \end{array} \right.
\end{equation*}
It is easy to see that ${u}$ is a $(\lambda r)^{-1}$-Lipschitz function on $X$ which is zero outside of $B(x, \lambda r)$. Hence, $u\in\dot{A}^{s(\cdot)}_{p(\cdot),q(\cdot)}(X,d,\mu)$ (see Theorem~\ref{lipschitz}) and therefore, by the assumption in statement $(iv)$, we know that there exists a constant $D_{H,G}>0$ such that, for every $y,z\in X$,
\begin{equation*}
	\left|u(y)-u(z)\right| \leq D_{H,G} \left\|u \right\|_{\dot{A}^{s(\cdot)}_{p(\cdot),q(\cdot)}(X,d,\mu)}d(y,z)^{\alpha(y)}.
\end{equation*}
Take $y=x$ and $z\in B(x, r) \setminus B(x,\lambda r)$. Then
\begin{equation*}
	1\leq D_{H,G} \left\| u \right\|_{\dot{A}^{s(\cdot)}_{p(\cdot),q(\cdot)}(X,d,\mu)} r^{\alpha(x)},
\end{equation*}
which, together with Theorem~\ref{lipschitz} and Proposition~\ref{rel}, gives
\begin{equation*}
	1\leq D_{H,G}C_{\lip} \frac{1}{\left(\lambda r\right)^{s_{B_r}^+}} \left[\mu(B(x,r))\right]^{\frac{1}{p_{B_r}^+}} r^{\alpha(x)}\leq \frac{D_{H,G}C_{\lip} }{\lambda^{s^+}}\left[\mu(B(x,r))\right]^{\frac{1}{p_{B_r}^+}}r^{\alpha_{B_r}^- - s_{B_r}^+}.
\end{equation*}
Thus,
\begin{equation}\label{r6}
	\mu(B(x,r)) \geq \frac{\lambda^{p_{B_r}^+s^+}}{\left(D_{H,G}C_{\lip}\right)^{p_{B_r}^+}} r^{p_{B_r}^+(s_{B_r}^+-\alpha_{B_r}^-)} \geq c_8 r^{p_{B_r}^+(s_{B_r}^+-\alpha_{B_r}^-)},
\end{equation}
where
\begin{equation*}
	c_8:=\frac{\lambda^{p^+s^+}}{\max\left\{1,D_{H,G}C_{\lip}\right\}^{p^+}}.
\end{equation*}
Finally, applying Lemma~\ref{loglemma} twice with $z:=x$, $R=2r$ and respectively $t:=1/s$ and $t:=1/\alpha$, we obtain
\begin{align*}
	\left(2r\right)^{s_{B_r}^+}& \geq e^{C_{\log}(s)}\left(2r\right)^{s(x)}, \\
	\left(2r\right)^{-\alpha_{B_r}^-} & \geq e^{-C_{\log}(\alpha)} \left(2r\right)^{-\alpha(x)}.
\end{align*}
Therefore
\begin{equation}\label{n4}
	r^{p_{B_r}^+(s_{B_r}^+-\alpha_{B_r}^-)} \geq \left(e^{C_{\log}(s)-C_{\log}(\alpha)} 2^{-\alpha^+-s^+}\right)^{p_{B_r}^+} r^{p_{B_r}^+\left(s(x)-\alpha(x)\right)} \geq  c_9 r^{p_{B_r}^+\left(s(x)-\alpha(x)\right)},
\end{equation}
where
\begin{equation*}
	c_9:=\min\left\{1,e^{C_{\log}(s)-C_{\log}(\alpha)} 2^{-\alpha^+-s^+}\right\}^{p^+}.
\end{equation*}
From this and \eqref{r6}, we have
\begin{equation}\label{r6-X}
	\mu(B(x,r)) \geq  c_8c_9 r^{p_{B_r}^+\left(s(x)-\alpha(x)\right)}.
\end{equation}
Now, if $s(x) < \alpha(x)$, then by letting $r\to 0^+$ in \eqref{r6-X}, we obtain that $\mu\left(\left\{x\right\}\right)=\infty$, which obviously is a contradiction. Therefore $s(x) \geq \alpha(x)$. 

Using Lemma~\ref{loglemma} $(i)$ again, with $R:=2r$, $t:=1/p$, and $z:=x$, we can conclude that
\begin{equation*}
	\left(2r\right)^{p_{B_r}^+} \geq e^{C_{\log}(p)} \left(2r\right)^{p(x)},
\end{equation*}
and thus,
\begin{equation*}
	\mu(B(x,r)) \geq c_8c_9 r^{p_{B_r}^+\left(s(x)-\alpha(x)\right)} \geq  br^{p(x)(s(x)-\alpha(x))}=br^{Q(x)},
\end{equation*}
where 
\begin{equation*}
	b:=c_8c_9\min\left\{e^{C_{\log}(p)}2^{p^- - p^+},1\right\}^{s^+}.
\end{equation*}
There remains to prove the last part of $(iv)$. Assume that $s(x_0)=\alpha(x_0)$ for some $x_0\in X$. From what we have just proven,
we know that there exists $b\in(0,1]$ such that, for all $x\in X$ and $r\in (0,1]$, there holds
\begin{equation*}
	\mu\left(B(x,r)\right) \geq br^{p(x)\left(s(x)-\alpha(x)\right)}.
\end{equation*}
Taking $x=x_0$ we get
\begin{equation}\label{n5}
	\mu(B(x_0,r))\geq br^{p(x_0)(s(x_0)-\alpha(x_0))}=b
\end{equation}
for every $r\in (0,1]$. Passing to the limit in \eqref{n5} as $r\to 0^+$ we obtain that $\mu\left(\left\{x_0\right\}\right)\geq b>0$, as wanted.
This completes the proof of $(iv)$ and hence, the proof of Theorem~\ref{necessaryembedding}.
\end{proof}

%\begin{cor}
%Let $(X,d,\mu)$ be an uniformly perfect metric measure space. Assume that there exist $s,p\in \mathcal{P}^{\log}_b(X)$, $\alpha \in \mathcal{P}_b^{\log}(X)$ and $q\in \mathcal{P}(X)$ such that
%\begin{equation*}
%	\dot{A}^{s(\cdot)}_{p(\cdot),q(\cdot)}(X,d,\mu) \hookrightarrow \dot{C}^{0,\alpha(\cdot)}(X,d)
%\end{equation*}
%where $A=M$ or $A=N$. If there is $x_0 \in X$ such that $\alpha(x_0)=s(x_0)$, then $\mu\left(\left\{x_0\right\}\right)>0$.	
%\end{cor}
%
%\begin{proof}
%From Theorem~\ref{necessaryembedding} $(iv)$ we know that there exists $b>0$ such that for $x\in X$ and $r\in (0,1]$ it holds
%\begin{equation*}
%	\mu\left(B(x,r)\right) \geq br^{p(x)\left(s(x)-\alpha(x)\right)}.
%\end{equation*}
%Taking $x=x_0$ we get
%\begin{equation}\label{n5}
%	\mu(B(x_0,r))\geq br^{p(x_0)(s(x_0)-\alpha(x_0))}=b
%\end{equation}
%for every $r\in (0,1]$. Passing in \eqref{n5} to the limit with $r\to 0^+$ we obtain that $\mu\left(\left\{x_0\right\}\right)\geq b>0$, which finishes the proof.
%\end{proof}
%
\textbf{Acknowledgements:} 
Some parts of this paper were developed during P.G.’s visits to Amherst College and the Birla Institute of Technology and Science, Pilani. P.G. gratefully acknowledges the hospitality received during these visits.

\bigskip
{\small Ryan Alvarado}\\
\small{Department of Mathematics,}\\
\small{Amherst College,}\\
\small{Amherst, MA, USA} \\
{\tt rjalvarado@amherst.edu}\\
\\
{\small Micha{\l} Dymek}\\
\small{Faculty Mathematics and Information Sciences,}\\
\small{Warsaw University of Technology,}\\
\small{Pl. Politechniki 1, 00-661 Warsaw, Poland} \\
{\tt michal.dymek.dokt@pw.edu.pl}\\
\\
{\small Przemys{\l}aw  G\'orka}\\
\small{Faculty of Mathematics and Information Sciences,}\\
\small{Warsaw University of Technology,}\\
\small{Pl. Politechniki 1, 00-661 Warsaw, Poland} \\
{\tt przemyslaw.gorka@pw.edu.pl}\\
\\
{\small Nijjwal Karak}\\
\small{Department of Mathematics,}\\
\small{Birla Institute of Technology and Science-Pilani, Hyderabad Campus,}\\
\small{Hyderabad 500078, India} \\
{\tt nijjwal@hyderabad.bits-pilani.ac.in}

\begin{thebibliography}{99999}
\bibitem{AF03} {\sc R. A. Adams, J. J. F. Fournier}, Sobolev Spaces, 2nd ed., Pure and Applied Mathematics, vol. 140, Elsevier, 2003.


\bibitem{AH} {\sc A. Almeida, P. H\"ast\"o}, Besov spaces with variable smoothness and integrability, J. Funct. Anal. 258 (2010), 1628-1655.

\bibitem{AS09} {\sc A. Almeida, S. Samko}, Embeddings of variable Haj\l asz–Sobolev spaces into H\"older spaces of variable order, J. Math. Anal. Appl. (2009) 353: 489-496.

\bibitem{AGH20}  {\sc R. Alvarado, P. G\'{o}rka, P. Haj{\l}asz}, Sobolev embedding for $M^{1,p}$ spaces is equivalent to a lower bound of the measure,  J. Funct. Anal. (2020), 279: 108628.

\bibitem{zwartestale} {\sc R. Alvarado, P. Górka, A. S{\l}abuszewski}, Compact embeddings of Sobolev, Besov, and Triebel-Lizorkin
spaces, J. Differential Equations 446, (2025), 113598. 64 pp.

\bibitem{AYY24}
{\sc R. Alvarado, D. Yang, D., and W. Yuan},
Optimal embeddings for {Triebel--Lizorkin} and {Besov} spaces on quasi-metric measure spaces, Math. Z. 307, No. 3 (2024), Paper No. 50. 59 pp.

\bibitem{AYY22}
{\sc R. Alvarado, D. Yang, D., and W. Yuan}, A measure characterization of embedding and extension domains for Sobolev, Triebel–Lizorkin, and Besov spaces in spaces of homogeneous type, J. Funct. Anal. 283 (2022), Paper No. 109687, 71 pp.

\bibitem{AWYY21}  {\sc R. Alvarado, F. Wang, D. Yang and W. Yuan},
Pointwise characterization of Besov and Triebel--Lizorkin spaces on spaces of homogeneous type, Studia Math. 268 (2023),  121--166. 
%\bibitem{bandaliyev} {\sc R. Bandaliyev, P. Górka}, Relatively compact sets in variable-exponent Lebesgue spaces, Banach J. Math. Anal. 12 (2018), 331-346.

\bibitem{BK96}
{\sc S. M. Buckley and P. Koskela}, Criteria for imbeddings of Sobolev--Poincar\'e type,
Int. Math. Res. Not. IMRN, 1996,  881--901.

\bibitem{cheeger}
{\sc J. Cheeger}, Differentiability of Lipschitz functions on metric measure spaces,
 Geom.\ Funct.\ Anal. 9 (1999), 428--517.
 
\bibitem{Coifman} {\sc R. R. Coifman, G. Weiss}, Analyse harmonique non-commutative sur certains espaces	homog`enes, Lecture Notes in Math., vol. 242, Springer-Verlag, Berlin, 1971. MR 58:17690.

\bibitem{CUF13}
{\sc D. Cruz-Uribe, A. Fiorenza}, Variable Lebesgue spaces. Foundations and Harmonic analysis, Birkh\"auser/Springer, Heidelberg, 2013

\bibitem{DHHR11} {\sc L. Diening, P. Harulehto, P. H\"{a}st\"{o}, M. R\u u\^zi\^cka}, Lebesgue and Sobolev Spaces with Variable Exponents, Lecture Notes in Mathematics, Springer, Heidelberg, 2011.

\bibitem{DG} {\sc M. Dymek, P. Górka}, Compactness in the spaces of variable integrability and summability, Math. Nachr. 296 (2023), 4317-4334.

\bibitem{Gaczkowski}  {\sc M. Gaczkowski, P. G\'{o}rka}, Sobolev spaces with variable exponents on complete manifolds, J. Funct. Anal. 270, 2016, 1379--1415.

\bibitem{gag}
{\sc E. Gagliardo},
Propriet\`a di alcune classi di funzioni in pi\`u variabili,
Ricerche Mat. 7 (1958), 102--137.

\bibitem{GG} {\sc A. Ghorbanalizadeh, P. G\'{o}rka}, Completeness and separability of the spaces of variable integrability and summability, Proc. Amer. Math. Soc. 149 (2021), 3873-3879.

%\bibitem{Górka} {\sc P. G\'{o}rka}, Looking For Compactness In Sobolev Spaces On Noncompact etric Spaces, Ann. Acad. Sci. Fenn., Vol 43, 2018, 531-540.

\bibitem{G} {\sc P. G\'{o}rka}, Separability of a Metric Space Is Equivalent to the Existence of a Borel Measure. Amer. Math. Monthly 128 (2021), no. 1, 84--86. 

\bibitem{GKP} {\sc P. G\'{o}rka, N. Karak, D. J. Pons}, Variable exponent Sobolev spaces and regularity of domains,  J. Geom. Anal. 31 (2021), no. 7, 7304–7319.

\bibitem{GS} {\sc P. G\'{o}rka, A. S{\l}abuszewski}, Embeddings of the fractional Sobolev spaces on
metric-measure spaces, Nonlinear Analysis 221 (2022), s. 112867.

\bibitem{Haj03} {\sc P. Haj{\l}asz}, Sobolev spaces on metric-measure spaces, In: Heat Kernels and Analysis on Manifolds, Graphs, and Metric Spaces (Paris, 2002), pp. 173–218, Contemp. Math. 338, Amer. Math. Soc., Providence, RI, 2003.

\bibitem{Haj96} {\sc P. Haj{\l}asz}, Sobolev spaces on an arbitrary metric space, Potential Anal. 5 (1996), 403--415.

\bibitem{SMP}
{\sc P. Haj\l{}asz and P. Koskela},
Sobolev met Poincar\'e. Mem.\ Amer.\ Math.\ Soc. 145 (2000), no. 688.

\bibitem{SMP2}
{\sc P. Haj\l{}asz and P. Koskela},
Sobolev Meets Poincar\'e,
C. R. Acad.\ Sci.\ Paris S\'er.\ I Math. 320 (1995), 1211--1215.


\bibitem{hajlaszkt1}
{\sc P. Haj\l{}asz, P. Koskela and H. Tuominen},
Sobolev embeddings, extensions and measure density condition,
J. Funct. Anal. 254 (2008), 1217--1234.

\bibitem{HHL06} {\sc P. Harjulehto, P. H\"ast\"o, V. Latvala}, Sobolev embeddings in metric measure
spaces with variable dimension, Math. Z. (2006) 254: 591–609.

\bibitem{Rn} {\sc Y. He, Q. Sun, C. Zhuo}, Pointwise characterizations of variable Besov and Triebel-Lizorkin spaces via Haj{\l}asz gradients. Fract. Calc. Appl. Anal. 27, 944–969 (2024).



\bibitem{K19}
{\sc N. Karak} Measure density and embeddings of Haj\l{}asz--Besov and Haj\l{}asz--Triebel--Lizorkin
spaces, J. Math. Anal. Appl. 475 (2019), 966–984.


\bibitem{K20}
{\sc N. Karak}, Lower bound of measure and embeddings of Sobolev, Besov, and Triebel--Lizorkin spaces, Math. Nachr. 293 (2020), 120–128.


\bibitem{KYZ11} {\sc P. Koskela, D. Yang, Y. Zhou},  Pointwise characterizations of Besov and Triebel--Lizorkin spaces and quasiconformal mappings, Adv. Math. 226 (2011), 3579–3621.

\bibitem{LLP10} {\sc F. Li, Z. Li, L. Pi,} Variable exponent functionals in image restoration. Appl. Math. Comput. 216(3), 870–882 (2010)

\bibitem{lukkainen} {\sc J. Luukkainen, E. Saksman}, Every complete doubling metric space carries a doubling measure, Proc. of the Amer. Math. Soc.,	Vol. 126, No. 2, 1998, pp. 531-534

%\bibitem{MS79} {\sc R. A. Macías, C. Segovia}, A decomposition into atoms of distributions on spaces of homogeneous type, Adv. Math. (1979) 33: 271–309.

\bibitem{Niren}
{\sc L. Nirenberg},
On elliptic differential equations, Ann. Scuola Norm. Pisa (III) 13 (1959), 1--48.

\bibitem{R00}
{\sc M. R\r{u}\v{z}i\v{c}ka}, Electrorheological Fluids: Modeling and Mathematical Theory, Springer-Verlag, Berlin, 2000.


\bibitem{shanmugalingam}
{\sc N. Shanmugalingam},
Newtonian spaces: an extension of Sobolev spaces to metric measure spaces,
Rev.\ Mat.\ Iberoamericana 16 (2000), 243--279.

%\bibitem{Shv07} {\sc P. Shvartsman}, On extensions of Sobolev functions defined on regular subsets of metric measure spaces, J. Approx. Theory. (2007) 144(2): 139-161.

\bibitem{sob36}
{\sc S. L. Sobolev},
On the estimates relating to families of functions having derivatives that are square integrable,
Dokl. Akad. Nauk SSSR 1 (1936), 267--270 (Russian).


\bibitem{sob38}
{\sc S. L. Sobolev},
On a theorem of functional analysis,
Mat. Sbornik 46 (1938), 471--497 (Russian).

\bibitem{Tri} {\sc H. Triebel}, Theory of Function Spaces, Monographs in Mathematics, vol. 78, Birkh\"auser, Basel, 1983.

\bibitem{trud}
{\sc N. Trudinger},
On imbeddings into Orlicz spaces and some applications,
J. Math. Mech. 17 (1967), 473--483.

\bibitem{Z87}
{\sc V.V. Zhikov}, Averaging of functionals of the calculus of variations and elasticity theory, Math. USSR–Izv. (29) (1987), 675–710

\end{thebibliography}
\end{document}